\theoremstyle{plain}
\newtheorem{theorem}{Theorem}[section]
\newtheorem{lemma}[theorem]{Lemma}
\newtheorem{proposition}[theorem]{Proposition}
\newtheorem{prop}[theorem]{Proposition}
\newtheorem{corollary}[theorem]{Corollary}
\newtheorem{cor}[theorem]{Corollary}
\newtheorem*{theorem*}{Theorem}
\theoremstyle{definition}
\newtheorem{definition}[theorem]{Definition}
\newtheorem{example}[theorem]{Example}
\newtheorem{conj}[theorem]{Conjecture}
\newtheorem{convention}[theorem]{Convention}
\theoremstyle{remark}
\newtheorem{remark}[theorem]{Remark}
\def\Q{{\bf Q}}
\def\Z{{\bf Z}}
\def\N{{\bf N}}
\def\R{{\bf R}}
\def\O{{\mathcal{O}}}
\def\zp{{\Z_p}}
\def\qp{{\Q_p}}
\def\Hom{\mathrm{Hom}}
\def\Ext{\mathrm{Ext}}
\def\iRHom{R\underline{\mathrm{Hom}}}
\def\Dist{\mathcal{D}}
\DeclareMathOperator{\hocolim}{hocolim}
\def\epsilon{\varepsilon}
\def\GL{\mathbf{GL}}
\def\Cond{\mathrm{Cond}}
\def\cond{\mathrm{cond}}
\def\solid{\mathrm{solid}}
\def\iHom{\underline{\mathrm{Hom}}}
\def\sol{{\blacksquare}}
\DeclareMathOperator{\Mod}{Mod}
\DeclareMathOperator{\Spa}{Spa}
\DeclareMathOperator{\gr}{gr}
\DeclareMathOperator{\Lie}{Lie}
\DeclareMathOperator{\Top}{Top}
\DeclareMathOperator{\Cont}{{Cont}}
\DeclareMathOperator{\Sets}{Sets}
\DeclareMathOperator{\Extdis}{Extdis}
\DeclareMathOperator{\Condab}{CondAb}
\newcommand{\n}[1]{\mathcal{#1}}
\newcommand{\bb}[1]{\mathbb{#1}}
\newcommand{\f}[1]{\mathfrak{#1}}
\newcommand{\bbf}[1]{\mathbf{#1}}
\DeclareMathOperator{\proet}{proet}
\def\SolidK{\Mod_K^{\solid}}
\def\SolidOK{\Mod_{\O_K}^\solid}
\def\SolidKG{\Mod^{\solid}_{K_\sol[G]}}
\def\SolidOKG{\Mod^{\solid}_{\O_{K,\sol}[G]}}
\def\hcoind{\text{$h$-}\mathrm{coind}}
\def\hind{\text{$h$-}\mathrm{ind}}
\title{Solid locally analytic representations of $p$-adic Lie groups}
\def\@tocline#1#2#3#4#5#6#7{\relax
  \ifnum #1>\c@tocdepth 
  \else
    \par \addpenalty\@secpenalty\addvspace{#2}%
    \begingroup \hyphenpenalty\@M
    \@ifempty{#4}{%
      \@tempdima\csname r@tocindent\number#1\endcsname\relax
    }{%
      \@tempdima#4\relax
    }%
    \parindent\z@ \leftskip#3\relax \advance\leftskip\@tempdima\relax
    \rightskip\@pnumwidth plus4em \parfillskip-\@pnumwidth
    #5\leavevmode\hskip-\@tempdima
      \ifcase #1
       \or\or \hskip 1em \or \hskip 2em \else \hskip 3em \fi%
      #6\nobreak\relax
    \dotfill\hbox to\@pnumwidth{\@tocpagenum{#7}}\par
    \nobreak
    \endgroup
  \fi}
\begin{document}

\begin{abstract}
We develop the theory of locally analytic representations of compact $p$-adic Lie groups from the perspective of the theory of condensed mathematics of Clausen and Scholze. As an application, we generalise Lazard's isomorphisms between continuous, locally analytic and Lie algebra cohomology to solid representations. We also prove a comparison result between the group cohomology of a solid representation and of its analytic vectors.
\end{abstract}

\author{Joaqu\'in Rodrigues Jacinto and Juan Esteban Rodr\'iguez Camargo} 
\maketitle
\selectlanguage{english}

\setcounter{tocdepth}{2}
\tableofcontents

\section{Introduction}

The theory of $p$-adic representations of $p$-adic  groups has a long history and it has played a key role in the field of Number Theory during the last decades, as witnessed, e.g., in the study of the $p$-adic Langlands correspondence \cite{ColmezPhiGamma}, \cite{CDP}. In this article, we intend to reformulate the theory of locally analytic representations of $p$-adic Lie groups as developed in \cite{SchTeitGl2}, \cite{SchTeitUg}, \cite{SchTeitDist}, \cite{SchTeitDuality}, \cite{Emerton}, using the theory of condensed mathematics of  Clausen and Scholze. \\

More precisely, we define and study the notions of analytic and locally analytic representations of $p$-adic Lie groups on solid modules. One of our main new results, which was the departing point of our investigations, is a generalisation of Lazard's comparison theorems \cite[Th\'eor\`emes  V.2.3.10 et V.2.4.10]{Lazard} between continuous, locally analytic and Lie algebra  cohomology of a finite dimensional representation of a compact $p$-adic Lie group over $\qp$ to arbitrary solid locally analytic representations\footnote{In particular, to any complete compactly generated locally convex vector space, e.g. metrizable.}. Generalisations of Lazard's comparison between locally analytic and Lie algebra cohomology have already been considered in \cite{HuberKingsNaumann}, \cite{Lechner}, \cite{Tamme}. We also give, following the lines of \cite{Tamme}, a proof of this result in the solid context. Our second main new result is a comparison between continuous cohomology of solid representation and the continuous cohomology of its locally analytic vectors. This results can be seen as a $p$-adic analogue of  theorems of G. D. Mostow \cite{Mostow} and P. Blanc \cite{Blanc}.  \\

\subsection{Background}

Let $G$ be a topological group, one usual way of studying continuous $G$-representations   is through their cohomology. In a favorable situation, the cohomology groups are $\Ext$-groups in certain abelian categories. For instance, this is the case for the category of continuous representations of $G$ on discrete modules.  On the other hand, since the category of continuous $G$-representations  on topological abelian groups is not abelian, it is not clear that one can define cohomology groups as $\Ext$-groups in this setting. The continuous group cohomology of $G$ with values in a topological representation is usually  defined via a complex of  continuous cochains.  This definition lacks of some conceptual advantages.  For example:   it is not clear whether the cohomology groups carry a natural topology,  short exact sequences do not necessarily induce long exact sequences in cohomology,  and certain basic results such  as  Hochschild-Serre need to be proved by hand  (cf. \cite[\S 2]{SerreGaloisCoh}).   \\



These inconveniences  would be overcome if one is able to find a suitable abelian category where  topological representations can be embedded. The theory of condensed mathematics developed by Clausen and Scholze \cite{ClausenScholzeCondensed2019}, \cite{ClauseScholzeanalyticspaces},  \cite{ClausenScholzeCM} provides a natural approach to making this work. Very vaguely, the condensed objects in a category $\mathscr{C}$ can be defined as sheaves on the pro\'etale site of a point with values in $\mathscr{C}$. It is shown in \cite[Theorem  2.2]{ClausenScholzeCondensed2019} that the category of condensed abelian groups is an abelian category satisfying Grothendieck's axioms. Hence, one is on a good footing for doing homological algebra. \\

Let $p$ be a prime number. In this paper we will be interested in the case where $G$ is a compact $p$-adic Lie group.  The classical theory of $p$-adic representations of $G$ comes in different flavours: there exist  the notions of continuous, analytic and locally analytic representations $V$ of $G$,  according to whether the orbit maps $o_v : G \to V$, $g \mapsto g \cdot v$ (for all $v \in V$)  are  continuous, resp. analytic, resp. locally analytic functions of  $G$. \\

As an example, in order to study locally analytic representations from an algebraic perspective, in \cite{SchTeitDist}, Schneider and Teitelbaum defined the notion of a locally analytic  admissible representation, and they showed  that these objects form an abelian category. Even with this result at hand, there are still various obstacles to define a satisfactory cohomological theory of admissible locally analytic representations. For instance, it not clear how to show that the $\Ext$-groups in this abelian category coincide with the cohomology groups defined via locally analytic cochain complexes;  this is one of the main results of \cite{Kohlhaase}.    \\

 


\subsection{Statement of the main results}

Let us now describe with some more detail what is carried out in this article.

\subsubsection{Solid non-archimedean functional analysis} 


Let $K$ be a finite extension of $\qp$. The field $K$ naturally defines a condensed ring which, moreover, has an analytic ring structure $K_{\sol}$ in the sense of \cite[Definition 7.4]{ClausenScholzeCondensed2019}, usually called the solid ring structure on $K$. We let $\SolidK$ be the category of solid $K$-vector spaces. This category is stable under limits, colimits and extensions, it has a tensor product $\otimes_{K_\sol}$ and an internal Hom denoted by $\iHom_K(-, -)$ \cite[Proposition 7.5]{ClausenScholzeCM}.
Let us point out that all the important spaces in the classical theory of non-archimedean functional analysis \cite{SchneiderNFA} live naturally in $\SolidK$. Indeed,   there is a natural functor
\begin{equation} \label{FunctorLCSolid}
\mathcal{LC}_K \to \SolidK
\end{equation}
from the category of complete locally convex $K$-vector spaces to solid $K$-vector spaces, as any complete locally convex $K$-vector space can be written as a cofiltered limit of Banach spaces. Moreover, it is fully faithful on a very large class of complete locally convex $K$-vector spaces, e.g. all compactly generated ones, e.g. all metrizable ones. The main notions of the theory of condensed non-archimedean functional analysis we use are due to Clausen and Scholze \cite{ClausenScholzeCondensed2019}, \cite{ClausenScholzeCM}, \cite{GuidoDrinfeld}.  All the vector spaces considered in this text are solid $K$-vector spaces, unless otherwise specified. \\


Our first result is an anti-equivalence between two special families of solid $K$-vector spaces. Let us first give some definitions. A Smith space is a $K$-vector space of the form $\iHom_{K}(V, K)$, where $V$ is a Banach space. In classical terms, a Smith space is the dual of a Banach space equipped with the compact-open topology. An $LS$ space is a countable filtered inductive limit of Smith spaces with injective transition maps. We then have the following result.

\begin{theorem} [Theorem \ref{theorem:duality}]
\label{Theodualityintro} 
The functor $V \mapsto V^\vee:=\underline{\Hom}_K(V,K)$ induces an anti-equivalence between Fr\'echet and $LS$ spaces such that $\iHom_K(V,V')=\iHom_K(V'^{\vee},V^{\vee})$.
\end{theorem}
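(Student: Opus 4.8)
The plan is to bootstrap from the elementary duality between Banach and Smith spaces along countable (co)limits. We will freely use the following facts of solid non-archimedean functional analysis (see \cite{ClausenScholzeCM}, \cite{GuidoDrinfeld}): for a $K$-Banach space $B$ the space $B^\vee=\underline{\Hom}_K(B,K)$ is a Smith space, the evaluation map $B\to (B^\vee)^\vee$ is an isomorphism, and $\underline{\Ext}^i_K(B,K)=0$ for $i>0$; dually, every Smith space is the dual of its Banach dual. Moreover $\underline{\Hom}_K(-,K)$ exchanges countable products of Banach spaces with countable coproducts of Smith spaces in a degree-preserving way, $\underline{\Hom}_K(\prod_n B_n,K)=\bigoplus_n B_n^\vee$ with vanishing higher $\underline{\Ext}$ (this uses the solid structure crucially, as the analogue fails for topological vector spaces), while $\underline{\Hom}_K(\bigoplus_n B_n,K)=\prod_n B_n^\vee$ is formal. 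Finally, a transition map with dense image dualizes to an injection and conversely, and a countable inverse limit of Banach spaces along dense-image maps has vanishing $R^1\varprojlim$ by Mittag--Leffler.

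First I would treat a Fr\'echet space $V$. Write $V=\varprojlim_n V_n$ with $V_n$ Banach and transition maps $f_n\colon V_{n+1}\to V_n$ of dense image (e.g.\ the Hausdorff completions of $V$ at an increasing sequence of defining seminorms). Mittag--Leffler yields an exact sequence of solid $K$-vector spaces
\[
0\longrightarrow V\longrightarrow \prod_n V_n\xrightarrow{\ 1-\mathrm{shift}\ }\prod_n V_n\longrightarrow 0,
\]
the shift built from the $f_n$. Applying $\underline{\Hom}_K(-,K)$ and the facts above gives $\underline{\Ext}^i_K(V,K)=0$ for $i>0$ and an exact sequence
\[
0\longrightarrow \bigoplus_n V_n^\vee\xrightarrow{\ 1-\mathrm{shift}^\vee\ }\bigoplus_n V_n^\vee\longrightarrow V^\vee\longrightarrow 0 .
\]
Since the duals $f_n^\vee\colon V_n^\vee\to V_{n+1}^\vee$ are injective, the left-hand cokernel is the filtered colimit of the $V_n^\vee$ along these injections, so $V^\vee\cong\varinjlim_n V_n^\vee$ is an $LS$ space. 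Symmetrically, for an $LS$ space $W=\varinjlim_n W_n$ with $W_n=B_n^\vee$ Smith and injective transitions, the sequence $0\to\bigoplus_n W_n\xrightarrow{1-\mathrm{shift}}\bigoplus_n W_n\to W\to 0$ is exact; applying $\underline{\Hom}_K(-,K)$, using $\underline{\Hom}_K(\bigoplus_n W_n,K)=\prod_n B_n$, $\underline{\Ext}^{>0}_K(\bigoplus_n W_n,K)=0$, and the fact that the maps $B_{n+1}\to B_n$ (being dual to the injections $W_n\hookrightarrow W_{n+1}$) have dense image so that $R^1\varprojlim_n B_n=0$, gives $\underline{\Ext}^{>0}_K(W,K)=0$ and $W^\vee\cong\varprojlim_n B_n$, a Fr\'echet space.

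Next I would assemble the anti-equivalence. For $V$ Fr\'echet, applying the $LS$-computation to $V^\vee=\varinjlim_n V_n^\vee$ and using Banach--Smith reflexivity gives $(V^\vee)^\vee\cong\varprojlim_n(V_n^\vee)^\vee=\varprojlim_n V_n=V$, naturally in $V$; symmetrically $(W^\vee)^\vee\cong W$ for $W$ an $LS$ space. Hence $V\mapsto V^\vee$ and $W\mapsto W^\vee$ are mutually inverse anti-equivalences between Fr\'echet and $LS$ spaces. For the internal-Hom identity, for $V,V'$ in either of the two categories one computes, using this anti-equivalence, the tensor--Hom adjunction for $(\otimes_{K_\sol},\underline{\Hom}_K)$, and the symmetry of $\otimes_{K_\sol}$:
\[
\underline{\Hom}_K(V,V')=\underline{\Hom}_K\bigl(V,(V'^\vee)^\vee\bigr)=\underline{\Hom}_K(V\otimes_{K_\sol}V'^\vee,K)=\underline{\Hom}_K(V'^\vee\otimes_{K_\sol}V,K)=\underline{\Hom}_K(V'^\vee,V^\vee).
\]
One checks these hold for the underived internal Hom via the $\underline{\Ext}$-vanishing above together with flatness of the spaces involved; alternatively the identity holds verbatim for $R\underline{\Hom}_K$.

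The hard part is not the bookkeeping with (co)limits but the functional-analytic input of the first paragraph: reflexivity of Banach spaces, vanishing of higher $\underline{\Ext}$ into $K$, and above all the identity $\underline{\Hom}_K(\prod_n B_n,K)=\bigoplus_n B_n^\vee$ converting a countable product into a countable coproduct. This is exactly where the passage to solid vector spaces is indispensable, and it is what turns the (derived) inverse limit presenting a Fr\'echet space into the honest filtered colimit presenting an $LS$ space; a secondary point requiring care is the Mittag--Leffler vanishing $R^1\varprojlim=0$ in the solid setting.
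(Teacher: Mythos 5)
Your $LS\to$Fr\'echet direction is sound: Smith spaces are projective, so dualizing $0\to\bigoplus_n W_n\to\bigoplus_n W_n\to W\to 0$ only needs $\iHom_K(\bigoplus_n W_n,K)=\prod_n W_n^\vee$ plus topological Mittag--Leffler (Lemma \ref{LemmaML}), and your deduction of $\iHom_K(V,V')=\iHom_K(V'^{\vee},V^{\vee})$ from reflexivity and tensor--Hom adjunction is correct (and slicker than the paper's nuclearity argument). The gap is in the Fr\'echet$\to LS$ direction. To read off $V^\vee=\varinjlim_n V_n^\vee$ from the long exact sequence attached to $0\to V\to\prod_nV_n\to\prod_nV_n\to0$ you need two inputs: (a) $\iHom_K(\prod_nV_n,K)=\bigoplus_nV_n^\vee$, and (b) $\underline{\Ext}^1_K(\prod_nV_n,K)=0$ --- without (b) the connecting map only gives an injection $\varinjlim_nV_n^\vee\hookrightarrow V^\vee$, not surjectivity. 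Neither is a quotable black box. The available product formula (Lemma \ref{LemmaDuals1}) computes $\iHom_{\O_K}(\prod_I\O_K,\O_K)=\widehat{\bigoplus}_I\O_K$, a \emph{completed} direct sum, and concerns the Smith space $\prod_I\O_K$, not the Fr\'echet space $\prod_nB_n$ for general Banach $B_n$; your formula with an honest $\bigoplus$ is precisely the Fr\'echet/$LS$ duality applied to the particular Fr\'echet space $\prod_nV_n$, so assuming it is circular. As for (b), it is of the same nature as $R\iHom_K(\widehat{\bigoplus}_{i\in I}K,K)$, which the paper explicitly declares the authors unable to compute (see the remark after Lemma \ref{LemmaDuals1} and Conjecture \ref{ConjectureLocallyanalytic}); derived duals into $K$ of infinite-dimensional Banach/Fr\'echet spaces are genuinely delicate in the solid setting and cannot be waved through.

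The paper avoids all derived issues by staying underived: the real content is Lemma \ref{lemmaFrechetToNormed}, that any map from a Fr\'echet space $\varprojlim_nV_n$ (dense transition maps) to a Banach space factors through some finite stage $V_n$, proved by a direct sequential/Baire-type argument; applying it to the Banach space $\underline{\Cont}(S,K)$ for $S$ extremally disconnected gives $\iHom_K(V,K)=\varinjlim_nV_n^\vee$ on the nose, and Lemma \ref{Lemmanoqs} supplies injectivity of the transition maps. Your argument needs either this factorization lemma or an actual computation of $R\iHom_K(\prod_nB_n,K)$; as written it supplies neither.
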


\begin{remark}
The previous theorem restricts in particular to an anti-equivalence between classical nuclear Fr\'echet spaces and $LB$ spaces of compact type (see, e.g., \cite[Theorem 1.3]{SchTeitGl2}). 
\end{remark}

\subsubsection{Representation theory}

Let $G$ be a compact $p$-adic Lie group. A representation of $G$ on a solid $K$-vector space $V$ is a map of condensed sets $G \times V \to V$ satisfying the usual axioms. Define the Iwasawa algebra of $G$ with coefficients in $K$ as $K_\sol[G]$;  explicitly, 
\[
K_\sol[G]=\big(\varprojlim_N\O_K[G/N]\big)[\frac{1}{p}],
\]
where $N$ runs over all the open normal subgroups of $G$. This is the solid algebra defined by the classical Iwasawa algebra endowed with the weak topology. The category of $G$-representations on solid $K$-vector spaces is equivalent to the category $\SolidKG$ of solid $K_{\sol}[G]$-modules. Observe that the category of continuous representations of $G$ on complete locally convex $K$-vector spaces lives naturally in $\SolidKG$ via the functor \eqref{FunctorLCSolid}. \\

Inspired by Emerton's treatment \cite{Emerton}, we define analytic and locally analytic vectors of solid representations of $G$. Roughly speaking, they are defined as those vectors whose induced orbit map is analytic or locally analytic. One advantage of our approach is that definitions make sense at the level of derived categories, so one can speak about derived (locally) analytic vectors of complexes $C \in D(K_\sol[G])$ in the derived category of $K_{\sol}[G]$-modules. The derived functors of the locally analytic vectors (for Lie groups over finite extensions of $\qp$) for admissible representations have been considered in \cite{Schmidt}. \\

More precisely, let $\bb{G}$ be an affinoid  group neighbourhood of $G$, i.e., an analytic affinoid group over $\Spa (\qp,\zp)$ such that  $G=\bb{G}(\qp,\zp)$.  Suppose in addition that $\bb{G}$ is isomorphic to a finite disjoint union of polydiscs.   In practice, the group  $\bb{G}$ will be constructed using some local charts of $G$,  see Remark \ref{remintroex} below for a more detailed description. Let $C(\bb{G},K):= \mathscr{O}(\bb{G})\otimes_{\qp} K$ be the algebra of functions of $\bb{G}$. The affinoid algebra $C(\bb{G},K)$ has a natural analytic ring structure denoted by $C(\bb{G},K)_{\sol}$, see  \cite[Theorem 3.27]{Gregory}.   We denote by  $\n{D}(\bb{G},K)=\mathscr{O}(\bb{G})^\vee \otimes_{\mathbf{Q}_p} K$  the distribution algebra of the affinoid group $\bb{G}$.   We define the derived $\bb{G}$-analytic vectors of an object $C \in D(K_{\blacksquare} [G])$ to be the complex\footnote{The tensor product ${\otimes_{K_\blacksquare}} C(\bb{G},K)_{\blacksquare}$ is the derived base change of modules over analytic rings, see  \cite[Proposition 7.7]{ClausenScholzeCondensed2019}.}
\begin{equation}
\label{eqLAaffine}
C^{R\bb{G}-an}:= R\underline{\Hom}_{K_{\blacksquare}[G]}(K, C\otimes^L_{K_\blacksquare} C(\bb{G},K)_{\blacksquare} ),
\end{equation}
where $K$ is the trivial representation, and  the $G$-action on $C\otimes^L_{K_\blacksquare} C(\bb{G},K)_{\blacksquare}$ is the  diagonal one induced by the action on $C$ and the left regular action on $C(\bb{G},K)$. We endow $C^{R \mathbb{G}-an}$ with the right regular action of $G$. It turns out that there is a natural map $C^{\bb{G}-an}\to C$, and we say that $C$ is derived $\bb{G}$-analytic if this map is a quasi-isomorphism. If $V$ is a Banach $G$-representation, then $V \otimes^L_{K_\sol} C(\bb{G},K )_\sol= V \otimes_{K_\sol} C(\bb{G},K)$ coincides with the projective tensor product of Banach spaces. Thus, our definition of derived $\bb{G}$-analytic vectors is the derived extension of \cite[Definition  3.3.13]{Emerton}.    \\

Now let $\mathring{\bb{G}}$ be a Stein group neighbourhood of $G$,  i.e.  an analytic group over $\Spa(\mathbf{Q}_p,  \mathbf{Z}_p)$  which is written as a strict  increasing union of affinoid group neighbourhoods $\bb{G}^{(h)}$ of $G$.   In practice, $\mathring{\mathbb{G}}$ will be as in Remark \ref{remintroex} below. We also denote by $\n{D}(\mathring{\bb{G}},K):=\mathscr{O}(\mathring{\bb{G}})^{\vee}\otimes_{\qp} K = \varinjlim_{h} \n{D}(\bb{G}^{(h^+)},K)$  the distribution algebra of $\mathring{\bb{G}}$. Then the derived $\mathring{\bb{G}}$-analytic vectors of $C\in D(K_\blacksquare[G])$ are defined as the complex 
\begin{equation}
\label{eqlaStein}
C^{R\mathring{\bb{G}}-an}:= R\varprojlim_{h} C^{R\bb{G}^{(h)}-an}, 
\end{equation}
and we say that $C$ is derived  $\mathring{\bb{G}}$-analytic if the natural map $C^{R\mathring{\bb{G}}-an}\to C$ is a quasi-isomorphism. Again, if $V$ is a Banach $G$-representation, this definition is compatible with the $\mathring{\bb{G}}$-analytic vectors of \cite[Definition 3.4.1]{Emerton}. The main theorem is the following:
\begin{theorem} [Theorem \ref{TheoMain}]
\label{TheoMainintro}
 A complex $C\in D(K_{\blacksquare}[G])$ is  derived  $\mathring{\bb{G}}$-analytic if and only if it is a module over $\n{D}(\mathring{\bb{G}},K)$.  
\end{theorem}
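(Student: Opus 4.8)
The plan is to prove both implications by carefully analyzing the structure of the complex $C^{R\mathring{\bb{G}}-an}$ and the map $C^{R\mathring{\bb{G}}-an}\to C$, reducing everything to the affinoid case via the $R\varprojlim_h$ presentation in \eqref{eqlaStein}. First I would establish the key algebraic input: that for each affinoid neighbourhood $\bb{G}=\bb{G}^{(h)}$, the complex $C^{R\bb{G}-an}$ is naturally a module over the distribution algebra $\n{D}(\bb{G},K)$. This should follow from unwinding the definition \eqref{eqLAaffine}: the object $C\otimes^L_{K_\blacksquare} C(\bb{G},K)_\blacksquare$ carries, besides the diagonal $G$-action used to take invariants, a \emph{second} commuting $G$-action coming from the right regular action on $C(\bb{G},K)$, and more: the right regular action of $G$ on $C(\bb{G},K)$ extends to an action of the affinoid group $\bb{G}$ itself, hence (dualizing) makes $C(\bb{G},K)$ a comodule, equivalently endows $C^{R\bb{G}-an}=R\underline{\Hom}_{K_\blacksquare[G]}(K, -)$ with a $\n{D}(\bb{G},K)$-module structure. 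Taking $R\varprojlim_h$ and using $\n{D}(\mathring{\bb{G}},K)=\varinjlim_h \n{D}(\bb{G}^{(h)},K)$ (a colimit of the relevant Banach/Smith pieces) gives a natural $\n{D}(\mathring{\bb{G}},K)$-module structure on $C^{R\mathring{\bb{G}}-an}$.

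For the ``only if'' direction: if $C$ is derived $\mathring{\bb{G}}$-analytic, then $C\simeq C^{R\mathring{\bb{G}}-an}$ as objects of $D(K_\blacksquare[G])$, and by the previous paragraph the right-hand side is a $\n{D}(\mathring{\bb{G}},K)$-module; one checks the module structure is compatible with the $K_\blacksquare[G]$-structure via the natural map $K_\blacksquare[G]\to \n{D}(\mathring{\bb{G}},K)$ (which sends $g$ to the corresponding Dirac distribution $\delta_g$), so $C$ lifts to $D(\n{D}(\mathring{\bb{G}},K))$.

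For the converse: suppose $C$ is a $\n{D}(\mathring{\bb{G}},K)$-module. I would show the natural map $C^{R\mathring{\bb{G}}-an}\to C$ is an isomorphism. By \eqref{eqlaStein} and the fact that $\n{D}(\mathring{\bb{G}},K)=R\varprojlim_h \n{D}(\bb{G}^{(h)},K)$ is an $R\varprojlim$ of the affinoid distribution algebras with the derived $\varprojlim^1$ vanishing (Mittag--Leffler, as the transition maps between the Fr\'echet-Stein pieces are dense/nuclear), it suffices to treat each affinoid layer: show that if $C$ is a $\n{D}(\bb{G},K)$-module then $C^{R\bb{G}-an}\xrightarrow{\sim} C$. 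The crucial computation is that for a $\n{D}(\bb{G},K)$-module $M$, the base change $M\otimes^L_{K_\blacksquare}C(\bb{G},K)_\blacksquare$ with its diagonal $G$-action becomes, after the standard ``untwisting'' change of coordinates (the automorphism of $C(\bb{G},K)$-modules $m\otimes f\mapsto$ (orbit of $m$) applied to $f$, available precisely because $M$ is a $\bb{G}$-module, i.e. the $G$-action extends analytically), isomorphic to $M\otimes^L_{K_\blacksquare}C(\bb{G},K)_\blacksquare$ with $G$ acting only through the second factor. Then $R\underline{\Hom}_{K_\blacksquare[G]}(K, M\otimes^L_{K_\blacksquare}C(\bb{G},K)_\blacksquare)\simeq M\otimes^L_{K_\blacksquare}R\underline{\Hom}_{K_\blacksquare[G]}(K, C(\bb{G},K)_\blacksquare)\simeq M$, using that $C(\bb{G},K)$ is an acyclic (``induced'') $G$-representation with $G$-invariants equal to $K$ — i.e. $R\underline{\Hom}_{K_\blacksquare[G]}(K,C(\bb{G},K)_\blacksquare)=K$ concentrated in degree $0$, which is the representation-theoretic heart of the argument and should reduce to the contractibility of the relevant bar-type resolution, or equivalently to the fact that $C(\bb{G},K)$ is a "free rank-one" object under the $\bb{G}$-coaction.

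The main obstacle I anticipate is the derived/solid bookkeeping in that last step: making the untwisting isomorphism $M\otimes^L C(\bb{G},K)\simeq M\otimes^L C(\bb{G},K)$ (swapping diagonal for second-factor action) precise at the level of \emph{derived} solid modules and \emph{complexes} $M$, rather than just for a single Banach representation where it is Emerton's classical computation, and checking that $R\underline{\Hom}_{K_\blacksquare[G]}(K,-)$ commutes with the relevant derived tensor product $-\otimes^L_{K_\blacksquare} C(\bb{G},K)_\blacksquare$ (this needs some finiteness/flatness property of $C(\bb{G},K)$ over $K_\blacksquare$, e.g. that it is a nuclear Fr\'echet/Smith space so that $\otimes_{K_\blacksquare}$ is exact on the relevant subcategory). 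A secondary technical point is the interchange of $R\varprojlim_h$ with the tensor products and internal Homs when passing from the affinoid statement to the Stein one; this should be handled by the $LS$/Fr\'echet duality of Theorem~\ref{Theodualityintro} and the nuclearity of the transition maps, ensuring all $R\varprojlim^1$ terms vanish.
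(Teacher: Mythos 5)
There is a genuine gap in the converse direction (a $\n{D}(\mathring{\bb{G}},K)$-module structure implies derived $\mathring{\bb{G}}$-analyticity), and it sits exactly where you flag your ``main obstacle''. Your untwisting isomorphism between $M\otimes^L_{K_\sol}C(\bb{G},K)_{\sol}$ with the diagonal action and the same object with $G$ acting only through the second factor requires a coaction $M\to M\otimes_{K_\sol}C(\bb{G},K)$, i.e.\ that the orbit maps of $M$ are $\bb{G}$-analytic. A module structure over the \emph{dual} algebra $\n{D}(\bb{G},K)$ does not provide this: the passage from modules over the distribution algebra to comodules over the function algebra is precisely the content of the theorem, so the argument as written is circular. (The analogous untwisting in the paper, in Proposition \ref{PropCoinductionTrivial} and via the map $\phi$ in Proposition \ref{PropKeyLemma}, is only ever applied to $\underline{\Cont}(G,-)$ or to the distribution algebra itself, where the coaction is available for free.) Moreover, your reduction to a single affinoid layer asserts something false: a $\n{D}(\bb{G}^{(h)},K)$-module need not be derived $\bb{G}^{(h)}$-analytic, which is exactly why the theorem is stated only for the Stein group $\mathring{\bb{G}}$. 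A smaller slip: $\n{D}(\mathring{\bb{G}},K)$ is a colimit $\varinjlim_{h'>h}\n{D}(\bb{G}^{(h')},K)$ of the affinoid algebras (an $LS$ space), not an $R\varprojlim$ of them.

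The missing key ingredient is the idempotency $\n{D}(\mathring{\bb{G}},K)\otimes^L_{K_\sol[G]}\n{D}(\mathring{\bb{G}},K)=\n{D}(\mathring{\bb{G}},K)$ (Lemma \ref{LemmaDhtensor}, Corollary \ref{CoroTechnicalDhplus}). The paper obtains it by (i) the finite free Lazard--Serre resolution of the trivial module over the Iwasawa algebra (Theorem \ref{LazardSerre}) and Kohlhaase's extension of it to the norm-completions $\n{D}_{(h)}(G_0,K)$ (Theorem \ref{propKohlhaase}); (ii) the interleaving $\n{D}_{(h'')}\subseteq \n{D}^{(h')}\subseteq \n{D}_{(h')}$, which lets the statement pass to the colimit $\n{D}^{(h^+)}$ but is unavailable for a single affinoid algebra; and (iii) an untwisting applied only to $\n{D}_{(h)}\otimes_K\n{D}_{(h)}$ itself, where the coaction exists tautologically. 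Combined with the adjunction $C^{R\bb{G}^{(h)}-an}=R\iHom_{K_\sol[G]}(\n{D}^{(h)}(G,K),C)$ from Corollary \ref{PropsixFunctorTate} and the formalism of analytic rings (Theorem \ref{TheoLemmaAnalyticrings}), this idempotency yields both directions at once; none of these three inputs appears in your proposal, and without them the converse direction does not go through.
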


This theorem is a generalisation  of  the integration map constructed by Schneider and Teitelbaum  to solid $G$-modules, cf. \cite[Theorem 2.2]{SchTeitGl2}. It will serve us as a bridge between solid analytic representations and solid modules over the distribution algebras.

\begin{remark} \label{remintroex}
Suppose that $G$ is a uniform pro-$p$-group and $\phi : \Z_p^d \to G$ is an analytic chart given by a basis of $G$ (see, e.g., Example \ref{exampleGL2}).   Using this chart, one can define for any $h \in \Q_{>0}$ affinoid group neighbourhoods $\bb{G}^{(h)}$ of $G$ whose underlying adic spaces are isomorphic to   $\mathbf{Z}_p^d + p^h \mathbb{D}^d_{\mathbf{Q}_p} \subset \bb{D}^d_{\mathbf{Q}_p}$,  where $\bb{D}_{\mathbf{Q}_p}=\Spa(\mathbf{Q}_p\langle T \rangle , \mathbf{Z}_p \langle T \rangle ) $ is the closed unit disc. The $p^h$-analytic functions on $G$ with respect to the chart $\phi$ (or simply the $\bb{G}^{(h)}$-analytic functions)  are defined as the rigid functions of  $\mathbb{G}^{(h)}$.  In this situation, the $\bb{G}^{(h)}$-analytic vectors are those whose orbit map is $p^h$-analytic. Given $h > 0$, the group $\mathbb{G}^{(h^+)} := \bigcup_{h' > h} \mathbb{G}^{(h)}$ is a Stein group neighbourhood of $G$. In the definition of derived analytic vectors of equations \eqref{eqLAaffine} and \eqref{eqlaStein} we will take $\bb{G}=\bb{G}^{(h)}$ and $\mathring{\bb{G}}=\bb{G}^{(h^+)}$.  The locally analytic functions on $G$ are \[ C^{la}(G, K) = \varinjlim_{h \to +\infty} C(\mathbb{G}^{(h)}, K) = \varinjlim_{h \to +\infty} C(\mathbb{G}^{(h^+)}, K). \]
Now let $\mathcal{D}^{la}(G, K) = C^{la}(G, K)^\vee$ be the algebra of locally analytic distributions. We point out that the analogous statement of Theorem \ref{TheoMainintro} does not hold in general for locally analytic representations and $\mathcal{D}^{la}(G, K)$-modules. In the particular case of locally analytic representations on $LB$ spaces of compact type, this is nevertheless true, and was already known by \cite[Theorem 2.2]{SchTeitGl2}.
\end{remark}


\subsubsection{Comparison theorems in cohomology}


We finish this introduction by describing the main applications of Theorems \ref{Theodualityintro} and \ref{TheoMainintro} to the study of the cohomology of continuous representations. \\

For $C \in D(K_\sol[G])$, we define the solid group cohomology of $C$ to be the complex
\[ R\iHom_{K_{\sol}[G]}(K, C). \]
Let $\f{g}$ be the Lie algebra of $G$ and $U(\f{g})$ its universal enveloping algebra. Let $\mathring{\bb{G}}$ be a Stein group neighbourhood of $G$ as in Theorem \ref{TheoMainintro} and $\n{D}(\mathring{\bb{G}},K)$ the distribution algebra of $\mathring{\bb{G}}$. If in addition $C$ is $\mathring{\mathbb{G}}$-analytic, then it is equipped with an action of $\mathfrak{g}$ by derivations and we define the $\mathring{\mathbb{G}}$-analytic cohomology (resp.  the Lie algebra cohomology) of $C$ as
$ R\iHom_{\n{D}(\mathring{\bb{G}}, K)}(K, C)$ (resp. $R\iHom_{U(\f{g})}(K, C)$). Using bar resolutions and Theorem \ref{TheoMainintro}, one verifies that these definitions recover the usual continuous, analytic, and Lie algebra cohomology groups. \\

Our first new result compares continuous   cohomology of a solid representation and the continuous  cohomology of its locally analytic vectors. This result can be seen as a $p$-adic analogue of a theorem of P. Blanc \cite{Blanc} and G. D. Mostow \cite{Mostow} in the archimedean setting, which compares continuous and differentiable cohomology of a real Lie group $G$. \\

Let $C\in D(K_\sol[G])$, we define the derived locally analytic vectors of $C$ as the homotopy colimit 
\[
C^{Rla}= \underset{\bb{G}}{\hocolim}\; C^{R \bb{G}-an},
\]
where $\bb{G}$ runs over all the affinoid neighbourhoods of $G$. We say that $C$ is derived locally analytic if $C^{Rla} = C$. If $V$ is a Banach representation, then $H^0(V^{Rla})$ coincides with the locally analytic vectors of $V$ in the sense of \cite[Definition 3.5.3]{Emerton}.  We have the following theorem.

\begin{theorem}[Theorem \ref{theocohom1}]
\label{theocoholaintro}
Let $C \in D(K_\sol[G])$ and let $C^{Rla}$ be the complex of derived locally analytic vectors of $C$. Then
\[ R\iHom_{K_{\sol}[G]}(K, C) \cong R\iHom_{K_{\sol}[G]}(K, C^{Rla}). \]
In particular, if $V\in \SolidKG$ then,  setting $V^{R^ila} := H^i(V^{Rla})$ for $i\geq 0$ there is a spectral sequence of solid $K$-vector spaces
\[ E_2^{i,j} := \underline{\Ext}^i_{K_{\sol}[G]}(K, V^{R^jla}) \implies \underline{\Ext}^{i + j}_{K_{\sol}[G]}(K, V). \]
\end{theorem}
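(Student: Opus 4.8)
The essential point is to show that the natural map $C^{Rla} \to C$ becomes an isomorphism after applying $R\iHom_{K_\sol[G]}(K, -)$. The plan is to reduce this to the corresponding statement at each finite level, i.e., to show that $R\iHom_{K_\sol[G]}(K, C^{R\bb{G}-an}) \to R\iHom_{K_\sol[G]}(K, C)$ is an isomorphism after passing to the homotopy colimit over affinoid neighbourhoods $\bb{G}$ of $G$. Since $R\iHom_{K_\sol[G]}(K, -)$ is an exact functor of triangulated categories and commutes with homotopy colimits (the relevant colimit being a filtered colimit of solid modules, over which $R\iHom_{K_\sol[G]}(K,-) = R\varprojlim$ of a bar complex commutes — this requires checking that the bar complex computing $R\iHom_{K_\sol[G]}(K,-)$ is built from compact projective generators so that the relevant $\Hom$s commute with the colimit), it suffices to identify $\hocolim_{\bb{G}} R\iHom_{K_\sol[G]}(K, C^{R\bb{G}-an})$ with $R\iHom_{K_\sol[G]}(K, C)$.

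The key input is Theorem \ref{TheoMainintro}: for a Stein group neighbourhood $\mathring{\bb{G}}$, being derived $\mathring{\bb{G}}$-analytic is equivalent to being a $\n{D}(\mathring{\bb{G}},K)$-module, and one has the analogous affinoid statement. I would first establish the base-change identity
\[
R\iHom_{K_\sol[G]}(K, C \otimes^L_{K_\sol} C(\bb{G},K)_\sol) \cong R\iHom_{K_\sol[G]}(K, C) \otimes^L_{K_\sol} C(\bb{G},K)_\sol
\]
is \emph{not} quite what we want; rather, the right statement is that $C^{R\bb{G}-an}$, viewed with its residual $G$-action, satisfies $R\iHom_{K_\sol[G]}(K, C^{R\bb{G}-an}) \cong R\iHom_{C(\bb{G},K)_\sol}(K, C^{R\bb{G}-an})$-type comparison, after which one uses that $\hocolim_{\bb{G}} C(\bb{G},K) = C^{la}(G,K)$ and that the trivial representation $K$ is, in an appropriate sense, already locally analytic. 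Concretely: $C^{R\bb{G}-an}$ is $\bb{G}$-analytic, so $R\iHom_{K_\sol[G]}(K, C^{R\bb{G}-an})$ can be computed using a resolution of $K$ adapted to the affinoid group, and the compatibility of $C^{R\bb{G}-an} \to C$ with the $G$-actions gives a map of these cohomology complexes; taking $\hocolim_{\bb{G}}$ and using that the locally analytic vectors of the trivial $G$-representation $K$ are all of $K$ (i.e.\ $K^{Rla} = K$, which follows since the orbit maps are constant) collapses the comparison.

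The main obstacle I anticipate is controlling the interaction between the homotopy colimit defining $C^{Rla}$ and the derived $\Hom$ out of $K$: one must ensure $R\iHom_{K_\sol[G]}(K,-)$ genuinely commutes with the filtered homotopy colimit over $\bb{G}$. This is delicate because $R\iHom_{K_\sol[G]}(K,-)$ involves an $R\varprojlim$ (for the Stein-level analytic vectors, hidden inside each term) against a filtered colimit, and such interchanges can fail without a compactness or finiteness hypothesis. I would resolve this by working with the explicit bar resolution $K_\sol[G^{\bullet+1}]$ of $K$ over $K_\sol[G]$ — whose terms are (ind-)compact projective solid $K_\sol[G]$-modules — so that $R\iHom_{K_\sol[G]}(K_\sol[G^{\bullet+1}], -)$ is a termwise-continuous functor commuting with filtered colimits of solid modules, and then invoking that a homotopy colimit of complexes followed by a termwise-colimit-preserving functor agrees with the homotopy colimit of the images. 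The spectral sequence in the second assertion is then the hypercohomology (hyper-$\underline{\Ext}$) spectral sequence for the complex $V^{Rla}$ in $D(K_\sol[G])$ applied to the functor $R\iHom_{K_\sol[G]}(K,-)$, combined with the isomorphism just proved; its $E_2$-page is $\underline{\Ext}^i_{K_\sol[G]}(K, H^j(V^{Rla})) = \underline{\Ext}^i_{K_\sol[G]}(K, V^{R^j la})$, abutting to $\underline{\Ext}^{i+j}_{K_\sol[G]}(K, V^{Rla}) = \underline{\Ext}^{i+j}_{K_\sol[G]}(K, V)$, with convergence guaranteed because $\underline{\Ext}^i$ vanishes for $i$ outside a bounded range (the cohomological dimension of the compact $p$-adic Lie group $G$ being finite).
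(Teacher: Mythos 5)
Your proposal does not close the key step, and that step is where essentially all of the content of the theorem lies. After reducing (correctly) to a fixed analytic level, one must show that the natural map
\[
R\iHom_{K_\sol[G]}(K, C^{R\bb{G}^{(h^+)}-an}) \longrightarrow R\iHom_{K_\sol[G]}(K, C)
\]
is an isomorphism, and your argument for this is the assertion that the comparison ``collapses'' because $K^{Rla}=K$. That is not the statement that is needed. The paper's proof rewrites $C^{R\bb{G}^{(h^+)}-an} = \iRHom_{K_\sol[G]}(\n{D}^{(h^+)}(G,K), C)$ (Theorem \ref{TheoMain}(1) with $W=K$), applies tensor--Hom adjunction to get
\[
R\iHom_{K_\sol[G]}(K, C^{R\bb{G}^{(h^+)}-an}) = R\iHom_{K_\sol[G]}\big(K\otimes^L_{K_\sol[G]}\n{D}^{(h^+)}(G,K),\, C\big),
\]
and then invokes the identity $K\otimes^L_{K_\sol[G]}\n{D}^{(h^+)}(G,K)=K$ (Theorem \ref{propKohlhaase}, due to Kohlhaase). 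This is a derived flatness statement --- the vanishing of all higher $\mathrm{Tor}$ of the trivial module against the distribution algebra over the Iwasawa algebra --- and it is proved by showing that the differentials and chain homotopies of the Lazard--Serre finite free resolution of the trivial module over $\Z_{p,\sol}[G_0]$ extend continuously to the algebras $\n{D}_{(h)}(G_0,K)$. Your observation that orbit maps of the trivial representation are constant only identifies $H^0$ of the analytic vectors of $K$; it says nothing about the derived tensor product. Moreover, your intermediate comparison of $R\iHom_{K_\sol[G]}(K,-)$ with an $R\iHom$ over the (dis)tribution algebra is circular here: that comparison is Theorem \ref{theocohom2}, whose proof rests on the same Kohlhaase input.

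A secondary point: to commute $R\iHom_{K_\sol[G]}(K,-)$ with the homotopy colimit over $h$ for unbounded $C$, the bar resolution is not the right tool --- its totalization introduces an infinite limit against the colimit, which is exactly the interchange you worry about. The clean argument is that $K$ is a compact (perfect) object of $D(K_\sol[G])$ because the Lazard--Serre resolution (Theorem \ref{LazardSerre}) is finite and free, so $R\iHom_{K_\sol[G]}(K,-)$ commutes with all filtered homotopy colimits. The hypercohomology spectral sequence in the second assertion is fine once the isomorphism is established.
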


We give an application in the classical context. Let $V$ be a continuous representation of $G$ on a complete locally convex $K$-vector space. Denote $H^i_{cont}(G, V)$ the usual continuous cohomology groups. These coincide with the underlying sets of the solid cohomology groups $\underline{\Ext}^i_{K_{\sol}[G]}(K, V)$. We say that $V$ has no higher locally analytic vectors if $V^{R^i la} = 0$ for all $i > 0$. This is the case for admissible representations (cf. \cite[Theorem 2.2.3]{LuePan} or Proposition \ref{PropAdmissible}). One deduces the following corollary.

\begin{corollary}
If $V$ has no higher locally analytic vectors, then for all $i\geq 0$,
\[ H^i_{cont}(G, V) = H^i_{cont}(G, V^{la}). \]
\end{corollary}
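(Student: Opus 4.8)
The plan is to deduce the corollary directly from Theorem~\ref{theocoholaintro} together with the hypothesis that $V$ has no higher locally analytic vectors. First I would note that the hypothesis $V^{R^i la} = 0$ for all $i > 0$, together with the fact that $H^0(V^{Rla}) = V^{la}$ (by the identification recalled just before Theorem~\ref{theocoholaintro}, since $V$ is a Banach space, or more generally a complete locally convex space, via the functor \eqref{FunctorLCSolid}), means that the complex $V^{Rla} \in D(K_{\sol}[G])$ is concentrated in degree $0$ and canonically quasi-isomorphic to $V^{la}$ placed in degree $0$. Hence $R\iHom_{K_{\sol}[G]}(K, V^{Rla}) \cong R\iHom_{K_{\sol}[G]}(K, V^{la})$ in $D(K)$.

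Next I would invoke Theorem~\ref{theocoholaintro} applied to $C = V$, which gives a quasi-isomorphism $R\iHom_{K_{\sol}[G]}(K, V) \cong R\iHom_{K_{\sol}[G]}(K, V^{Rla})$. Combining this with the previous paragraph yields $R\iHom_{K_{\sol}[G]}(K, V) \cong R\iHom_{K_{\sol}[G]}(K, V^{la})$ in $D(K)$, and in particular an isomorphism of solid $K$-vector spaces $\underline{\Ext}^i_{K_{\sol}[G]}(K, V) \cong \underline{\Ext}^i_{K_{\sol}[G]}(K, V^{la})$ for every $i \geq 0$. Passing to underlying sets (or abelian groups), and using the identification $H^i_{cont}(G, -) = \underline{\Ext}^i_{K_{\sol}[G]}(K, -)$ for continuous representations on complete locally convex spaces recalled in the paragraph preceding the corollary, gives $H^i_{cont}(G, V) = H^i_{cont}(G, V^{la})$, which is the claim. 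One should also check that $V^{la}$ is itself (the image under \eqref{FunctorLCSolid} of) a continuous $G$-representation on a complete locally convex space, so that the right-hand side $H^i_{cont}(G, V^{la})$ has its classical meaning; this follows from the classical theory (the locally analytic vectors of a Banach, or admissible, representation form an $LB$ space of compact type, hence live in the relevant subcategory) together with the compatibility of our $V^{la}$ with Emerton's.

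The only genuine input beyond pure bookkeeping is the fact, cited from \cite[Theorem 2.2.3]{LuePan} or Proposition~\ref{PropAdmissible}, that admissible representations have no higher locally analytic vectors; this is what makes the corollary non-vacuous and is the place where representation-theoretic content enters. For the corollary itself as stated the hypothesis is given, so there is no real obstacle: the main point to be careful about is the degreewise collapse of the spectral sequence $E_2^{i,j} = \underline{\Ext}^i_{K_{\sol}[G]}(K, V^{R^j la}) \Rightarrow \underline{\Ext}^{i+j}_{K_{\sol}[G]}(K, V)$, which degenerates to the edge isomorphisms $\underline{\Ext}^i_{K_{\sol}[G]}(K, V^{0 la}) \cong \underline{\Ext}^i_{K_{\sol}[G]}(K, V)$ precisely because all rows with $j > 0$ vanish. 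Equivalently, and more cleanly, one uses the derived statement of Theorem~\ref{theocoholaintro} directly and avoids the spectral sequence altogether.
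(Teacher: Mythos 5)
Your proposal is correct and is essentially the argument the paper intends: the corollary is deduced directly from Theorem \ref{theocoholaintro}, either by observing that the hypothesis forces $V^{Rla}$ to be concentrated in degree $0$ and equal to $V^{la}$ (so the derived comparison gives the isomorphism on each $\underline{\Ext}^i$), or equivalently by noting that the spectral sequence collapses since all rows with $j>0$ vanish. Your additional remarks on $H^0(V^{Rla})=V^{la}$ and on $V^{la}$ being a classical locally convex space are correct and only make explicit what the paper leaves implicit.
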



Our last result concerns a generalisation of Lazard's comparison between continuous, (locally) analytic and Lie algebra cohomology from finite dimensional representations $V$ to arbitrary solid derived (locally) analytic representations. We have the following theorem.

\begin{theorem} [Continuous vs. analytic  vs. Lie algebra cohomology, Theorem \ref{theocohom2}] \label{theocohom2intro}
 Let $C \in D(K_{\sol}[G])$ be a   derived $\mathring{\mathbb{G}}$-analytic complex. Then\footnote{The Lie algebra cohomology $R\underline{\Hom}_{U(\f{g})}(K,C)$ lands naturally in the derived category of smooth representations of $G$ on solid $K$-vector spaces.  Since $K$ is of characteristic $0$,  taking $G$-invariants in this category is exact and  the superscript $G$ means the composition with this functor,     cf. Remark \ref{RemLiealgcohom}.}
\[ 
R\iHom_{K_{\sol}[G]}(K, C) \cong R\iHom_{\n{D}(\mathring{\bb{G}}, K)}(K, C)  \cong (R\iHom_{U(\f{g})}(K, C))^G.
\]
\end{theorem}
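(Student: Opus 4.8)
The plan is to prove the two isomorphisms separately, using Theorem~\ref{TheoMainintro} (identifying derived $\mathring{\bb{G}}$-analytic complexes with $\n{D}(\mathring{\bb{G}},K)$-modules) as the conceptual hinge. For the first isomorphism $R\iHom_{K_{\sol}[G]}(K,C) \cong R\iHom_{\n{D}(\mathring{\bb{G}},K)}(K,C)$, the key observation is that the forgetful functor from solid $\n{D}(\mathring{\bb{G}},K)$-modules to solid $K_\sol[G]$-modules has a left adjoint given by $M \mapsto M^{R\mathring{\bb{G}}-an}$ (or rather the base change along $K_\sol[G] \to \n{D}(\mathring{\bb{G}},K)$), and on a derived $\mathring{\bb{G}}$-analytic complex this adjunction unit is an equivalence by Theorem~\ref{TheoMainintro}. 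Concretely, I would first reduce to the affinoid case: by \eqref{eqlaStein}, $C = C^{R\mathring{\bb{G}}-an} = R\varprojlim_h C^{R\bb{G}^{(h)}-an}$, and both cohomology functors commute with this $R\varprojlim$ (the derived $\iHom$ out of $K$ commutes with limits; on the distribution side one uses that $\n{D}(\mathring{\bb{G}},K) = \varinjlim_h \n{D}(\bb{G}^{(h^+)},K)$ together with the Fréchet–Stein-type flatness of the transition maps, which lets one pass the limit through). Thus it suffices to show $R\iHom_{K_\sol[G]}(K, C) \cong R\iHom_{\n{D}(\bb{G},K)}(K,C)$ for a single affinoid $\bb{G}$ and a derived $\bb{G}$-analytic $C$. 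For this I would compute both sides with bar-type resolutions: the left side is $R\varprojlim$ over the cosimplicial object $[n] \mapsto \iHom_K(K_\sol[G]^{\otimes n}, C)$, and using the defining formula \eqref{eqLAaffine} and the self-duality of $C(\bb{G},K)_\sol$ one identifies this with the analogous cobar complex computing $R\iHom_{\n{D}(\bb{G},K)}(K,C)$ — here the crucial input is that $C$ being $\bb{G}$-analytic means $C \cong R\iHom_{K_\sol[G]}(K, C \otimes_{K_\sol} C(\bb{G},K)_\sol)$, which is exactly the statement that $C$ is induced from the $G$-action built into $C(\bb{G},K)$.

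For the second isomorphism $R\iHom_{\n{D}(\mathring{\bb{G}},K)}(K,C) \cong (R\iHom_{U(\f{g})}(K,C))^G$, the idea is to interpose the algebra $\n{D}(\mathring{\bb{G}},K)$ between $U(\f{g}) \otimes_{\qp} K$ and the smooth group algebra. The point is that $\n{D}(\mathring{\bb{G}},K)$ should be, up to a completion, a crossed-product / skew group algebra of $U(\f{g})$ with the "locally constant" part of $G$; more precisely one has a map $U(\f{g}) \otimes_{\qp} K \to \n{D}(\mathring{\bb{G}},K)$ (differentiating the group action) and a compatible $G$-action, and the claim is that $R\iHom_{\n{D}(\mathring{\bb{G}},K)}(K,-)$ factors as "first take $R\iHom_{U(\f{g})}(K,-)$, then take $G$-invariants". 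I would prove this by again using a bar resolution for $U(\f{g})$ (the Chevalley–Eilenberg complex $\Lambda^\bullet \f{g}^\vee \otimes_K C$) and showing that this complex carries a natural $G$-action compatible with the one on $C$, that it computes $R\iHom_{\n{D}(\mathring{\bb{G}},K)}(K,C)$ after taking $G$-invariants, and that taking $G$-invariants is exact here because $K$ has characteristic $0$ and the relevant $G$-representations are smooth (so $G$-invariants is a direct summand, using a suitable averaging / Haar measure argument on the smooth vectors, as indicated in the footnote and Remark~\ref{RemLiealgcohom}). This is essentially a derived, solid version of the standard fact that for an analytic group the analytic cohomology is computed by the Lie algebra cohomology together with the action of the (discrete) component group.

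Regarding the order of operations: I would (i) handle the reduction to the affinoid case via $R\varprojlim$-compatibility, (ii) prove the affinoid case of the first isomorphism by comparing cobar complexes and invoking the $\bb{G}$-analyticity of $C$, (iii) set up the Chevalley–Eilenberg complex with its $G$-action, (iv) prove exactness of $G$-invariants on smooth solid representations over $K$, and (v) assemble everything, checking that the $G$-action on the Lie algebra side matches the residual $G$-action coming from the right regular action in \eqref{eqLAaffine}.

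The main obstacle I anticipate is step (ii): making precise and verifying the self-duality/adjunction that identifies $R\iHom_{K_\sol[G]}(K, C\otimes_{K_\sol} C(\bb{G},K)_\sol)$ with an induced module in a way that is compatible with the $\n{D}(\bb{G},K)$-module structure, all at the derived and solid level. One must be careful that $C(\bb{G},K)_\sol$ is used with its analytic ring structure (so the tensor product is the derived solid base change, per the footnote after \eqref{eqLAaffine}) and that $\n{D}(\bb{G},K)$ is the continuous dual — controlling the interaction of these two analytic-ring structures, and checking the relevant tensor–Hom adjunctions hold on the nose in $\SolidKG$ rather than just up to some completion, is where the real work lies. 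A secondary subtlety is the $R\varprojlim$ in step (i): one needs the pro-system $\{C^{R\bb{G}^{(h)}-an}\}_h$ to be sufficiently well-behaved (e.g. Mittag-Leffler on cohomology, or the transition maps to be flat/nuclear as in the Fréchet–Stein picture) so that the limit commutes with $R\iHom_{\n{D}(\mathring{\bb{G}},K)}(K,-)$; this should follow from Theorem~\ref{Theodualityintro} and the structure theory of $\n{D}(\mathring{\bb{G}},K)$ but must be stated carefully.
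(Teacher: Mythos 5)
Your overall architecture matches the paper's: two separate isomorphisms, each obtained by an extension-of-scalars adjunction along $K_\sol[G]\to\n{D}(\mathring{\bb{G}},K)$ and $U(\f{g})\to\n{D}(\mathring{\bb{G}},K)$ respectively, with the Chevalley--Eilenberg complex and exactness of $G/G_h$-invariants in characteristic $0$ handling the Lie-algebra side. But the proposal treats as formal, or as a consequence of the analyticity of $C$, what is in fact the entire mathematical content. Writing $R\iHom_{K_\sol[G]}(K,C)=R\iHom_{\n{D}}(\n{D}\otimes^L_{K_\sol[G]}K,\,C)$ reduces the first isomorphism to the single computation $\n{D}(\mathring{\bb{G}},K)\otimes^L_{K_\sol[G]}K=K$. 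This is a statement about the ring map, not about $C$, and it does not follow from $C\cong C^{R\mathring{\bb{G}}-an}$ nor from any cobar/self-duality manipulation: it is Lazard's theorem in disguise, and the paper proves it via the Lazard--Serre finite free Koszul-type resolution of $K$ over $\O_{K,\sol}[G_0]$ together with Kohlhaase's result that its differentials and contracting homotopy extend to the distribution algebras $\n{D}_{(h)}(G_0,K)$, whose colimit over $h'>h$ is $\n{D}^{(h^+)}(G,K)$. Your step (ii) ("one identifies this with the analogous cobar complex") is exactly the non-formal step, and the "crucial input" you name for it is insufficient to close it.

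The same issue, in sharper form, affects the second isomorphism and your reduction strategy. The input needed is $\n{D}(\bb{G}_{h^+},K)\otimes^L_{U(\f{g})}K=K$, which the paper gets by base-changing the Chevalley--Eilenberg complex to the dual of the de Rham complex of the \emph{open} polydisc $\bb{G}_{h^+}$ and invoking the rigid-analytic Poincar\'e lemma. This is precisely where the Stein hypothesis is used: the Poincar\'e lemma fails for a closed affinoid polydisc (term-by-term integration loses radius of convergence), so the comparison with Lie algebra cohomology is false at the level of a single affinoid $\bb{G}^{(h)}$. Consequently your step (i), reducing everything to the affinoid case and reassembling via $R\varprojlim$, cannot work for the full chain of isomorphisms; the paper never performs such a reduction and instead works directly with the Stein distribution algebras $\n{D}^{(h^+)}(G,K)=\varinjlim_{h'>h}\n{D}^{(h')}(G,K)$, for which the interleaving with the localized Iwasawa algebras $\n{D}_{(h')}(G,K)$ makes the Kohlhaase resolution available. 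To repair the proof you should drop the affinoid reduction and the cobar comparison, and instead supply the two resolution statements above as the key lemmas powering the two adjunctions.
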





\subsection{Organisation of the paper}

In Section \ref{Sectionrecollections} we review very briefly the theory of condensed mathematics and solid abelian groups. We recall the notion of analytic ring and give some examples that will be used throughout the text. \\

In Section \ref{SectionNonArch}   we develop the theory of solid $K$-vector spaces following the appendix of \cite{GuidoDrinfeld}. Most of the results exposed in this section will be  presented in the forthcoming work of Clausen and Scholze \cite{ClausenScholzeCM}. We review in particular the main properties of classical vector spaces: Banach, Fr\'echet, $LB$ and  $LF$ spaces. Our main original result is Theorem \ref{Theodualityintro} generalising the classical anti-equivalence  between $LB$ spaces of compact type and nuclear Fr\'echet vector spaces. \\

In Section \ref{SectionRepTheory} we introduce the different analytic neighbourhoods $\bb{G}^{(h)}$ of our $p$-adic Lie  group $G$. We begin in \S \ref{subsecFuncDist} by introducing spaces of analytic functions, following \cite[\S 2]{Emerton} closely. Then, we define the algebras of distributions as the duals of the spaces of analytic functions. We also introduce another class of distribution algebras, used already in \cite[\S 4]{SchTeitDist}, which are more adapted to the coordinates of the Iwasawa algebra. In \S \ref{subsecAnalyticrep},  we define the notion of analytic and derived analytic representation, we prove Theorem \ref{TheoMainintro}, except for a technical lemma whose proof is postponed to Section \ref{SecCohomology}.  We finish the section with some applications to locally analytic  and admissible representations. \\

Finally, in Section \ref{SecCohomology},  we recall (Theorem \ref{LazardSerre}) a lemma of Serre used by Lazard to construct finite free resolutions of the trivial representation when $G$ is a uniform pro-p-group.  We use this result, as well as its enhancement due to Kohlhaase (Theorem \ref{propKohlhaase}) to    prove the technical lemma  necessary for  Theorem \ref{TheoMainintro}.  We state Theorems \ref{theocoholaintro} and  \ref{theocohom2intro} in \S \ref{SubsecMainResults} and give a proof in \S \ref{Subsectionproof}. We conclude with some formal consequences, namely by showing a solid version of Hochschild-Serre and proving a duality  between group  homology and cohomology.

\subsection{Acknowledgements}

The authors learnt the theory of non-archimedean functional analysis from Guido Bosco in the context of a study group held in La Tourette, November 2020. We would like to express our gratitude to Guido for this, and for sharing with us his notes and an earlier version of \cite{GuidoDrinfeld}. We would also like to thank all the other participants of La Tourette, from whom we have profited from several fruitful conversations; special thanks to Andreas Bode, George Boxer and Vincent Pilloni, for their comments and corrections of an earlier version of this paper. We also thank Antonio Cauchi for useful comments, Gabriel Dospinescu for his interest in our cohomological comparison results, Arthur-C\'esar Le Bras for suggesting the duality result, and Lucas Mann for sharing with us some sections of his Ph.D. thesis. We thank Peter Scholze for his many comments and careful corrections. Finally,  we thank the anonymous referee for the careful proofreading of the document and all the  numerous comments and corrections that  considerably improved the exposition. The first named author was supported by the ERC-2018-COG-818856-HiCoShiVa.

\section{Recollections in condensed mathematics}
\label{Sectionrecollections}

First,  we review some elementary  notions in condensed mathematics: we recall the definitions of  condensed sets,  solid abelian groups and analytic rings. In the future we will be only interested  in the categories of solid modules, and modules of analytic rings over $\Z_\sol$.

\subsection{Condensed objects}

In their recent work \cite{ClausenScholzeCondensed2019} and \cite{ClauseScholzeanalyticspaces},  Clausen and Scholze have introduced the new world of condensed mathematics, which aims to be the good framework where algebra and topology live together. Let $*_{\proet}$ be the   pro-\'etale site of a geometric point, equivalently, the category of profinite sets with continuous maps and coverings given by finitely many jointly surjective  maps. 

\begin{definition}[{\cite[Definitions 2.1 and 2.11]{ClausenScholzeCondensed2019}}]
A condensed set/group/ring/...  $\n{F}$ is a sheaf over $*_{\proet}$ with values in  $Sets$, $Groups$,  $Rings$,... We denote by $\Cond$ the category of condensed sets and $\Condab$ that of condensed abelian groups.  If $R$ is  a condensed  ring, we denote by $\Mod_{R}^{\cond}$ the category of condensed $R$-modules.
\end{definition}

\begin{remark}
There are set theoretical subtilities  with this definition as $*_{\proet}$ is not small. What Clausen and Scholze do (cf. \cite[Lecture II]{ClausenScholzeCondensed2019}) is to cut-off by an uncountable strong limit cardinal  $\kappa$, considering the category of sheaves of  $\kappa$-small profinite sets $*_{\proet}$ and take the direct limit. Hence the category of condensed sets is not the category of sheaves on a site. In this article we proceed in the same manner and we subsequently avoid any further mention to $\kappa$.
\end{remark}

Let $\Top$ denote the category of $T1$ topological spaces, we define the functor $( \underline{\hspace{8pt}}): \Top \to  \Cond$ mapping a topological space $T$ to the condensed set 
\[
T: S \mapsto  \underline{T}(S)=  \Cont(S,T),
\]
where $\Cont(S,T)$ is the set of continuous functions from $S$ to $T$. Recall that a topological space $X$ is compactly generated if a map $X\to Y$ to another topological space $Y$ is continuous  if and only if the composition $S\to X\to Y$ is continuous for all maps $S\to X$ from a compact Hausdorff space. Then the restriction of the functor $T \mapsto \underline{T}$ to the category of compactly generated Hausdorff topological spaces is fully faithful. 

A compact Hausdorff space can be written as a quotient of a profinite set. Indeed, if $S$ is a compact Hausdorff space, let $S_{dis}$ denote the underlying set with the discrete topology, and $\beta S_{dis}$ its Stone-\v{C}ech compactification. Then $\beta S_{dis}$ is profinite and the natural map $\beta S_{dis}\to S$ is a surjective map of compact Hausdorff spaces. In particular, we can test if a topological space is compactly generated by restricting to  profinite sets.

Recall that a condensed set $X$ is called quasicompact if there is a profinite set $S$ and a surjective map $\underline{S} \to X$. Similarly, a condensed set $X$ is quasiseparated if for any pair of profinite  sets $S$ and $S'$ over $X$ the fiber product $\underline{S}\times_{X} \underline{S'}$ is quasicompact. More generally,  a map $Y\to X$ of condensed sets is quasicompact if for any profinite set $S$ and any map $S\to X$,  the fiber product $S\times_X Y$ is quasicompact,  the map $Y\to X$ is quasiseparated if the diagonal $Y \to  Y\times_X Y$  is quasicompact.

From now on we identify a profinite set $S$ with the condensed set $\underline{S}$.  Given a condensed set $X$ and a profinite set $S$ we denote by $\Cont(S,X)=X(S)$ the maps from $S$ to $X$ as condensed sets,  we also define    $\underline{\Cont}(S,X)$  to be the condensed set whose value at a profinite set $S'$ is
\[
\underline{\Cont}(S,X)(S')= \Cont(S\times S',X)=X(S\times S'). 
\]
We have the following result.

\begin{prop}[{\cite[Proposition 1.2]{ClauseScholzeanalyticspaces}}]
\label{propFunctorTopCond}
Consider the functor $T \mapsto \underline{T}$ from $\Top$ to $\Cond$

\begin{enumerate}
    \item The functor has a left adjoint $X\mapsto X(*)_{\mathrm{top}}$ sending any condensed set $X$ to the set $X(*)$ equipped with the quotient topology arising from the map 
    \[
    \bigsqcup_{S, a\in X(S)} S \to X(*)
    \]
    with $S$ profinite. 
    
    \item Restricted to compactly generated topological spaces, the functor is fully faithful. 
    
    \item The functor induces an equivalence between the category of compact Hausdorff spaces and qcqs condensed sets. 
    
    \item The functor induces a fully faithful functor from the category of compactly generated weak Hausdorff spaces, to quasiseparated condensed sets. The category of quasiseparated condensed sets is equivalent to the category of ind-compact Hausdorff spaces ``$\varinjlim_i T_i$'' where all transition maps $T_i\to T_j$ are closed immersions.  If $X_0\to X_1\to \cdots$ is a sequence of compact Hausdorff spaces with closed immersions and $X=\varinjlim_n X_n$ as topological spaces,  then the map 
    \[
    \varinjlim_n \underline{X}_n \to \underline{X}
    \]
    is an isomorphism  of condensed sets. 
    
\end{enumerate}

\end{prop}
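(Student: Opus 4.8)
The plan is to prove the four assertions in turn. Assertion (1) is a formal adjunction; the other three reduce, via (1), to a handful of concrete facts: that profinite sets generate the condensed site, that extremally disconnected profinite sets are projective and form a basis of $*_{\proet}$, and that a quotient of a compact Hausdorff space by a closed equivalence relation is compact Hausdorff.

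\emph{Part (1).} Given a condensed set $X$, equip the set $X(*)$ with the quotient topology along $\bigsqcup_{S,\, a \in X(S)} S \to X(*)$, $s \mapsto a|_s$, where $a|_s$ is the image of $a$ under $X(S) \to X(*)$ induced by the point $s \colon * \to S$; call the result $X(*)_{\mathrm{top}}$. A map of condensed sets $f \colon X \to \underline{T}$ is a system of maps $X(S) \to \Cont(S,T)$ compatible with restrictions; evaluating on $*$ gives a set map $g \colon X(*) \to T$, and naturality against $s \colon * \to S$ forces, for every $a \in X(S)$, the composite $S \to X(*) \xrightarrow{g} T$ to be continuous, i.e.\ $g \colon X(*)_{\mathrm{top}} \to T$ is continuous. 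Conversely a continuous $g \colon X(*)_{\mathrm{top}} \to T$ makes each $S \to X(*) \to T$ continuous by definition of the quotient topology, hence yields maps $X(S) \to \Cont(S,T)$; these are compatible and recover $g$ on $*$-points. This gives the bijection $\Hom_{\Top}(X(*)_{\mathrm{top}},T) = \Hom_{\Cond}(X,\underline{T})$, natural in both variables, so $X \mapsto X(*)_{\mathrm{top}}$ is left adjoint to $T \mapsto \underline{T}$.

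\emph{Parts (2) and (3).} Faithfulness of $T \mapsto \underline{T}$ on compactly generated spaces is clear, since a continuous map is determined by the underlying set map $\underline{T}(*) \to \underline{T'}(*)$. For fullness, given $f \colon \underline{T} \to \underline{T'}$ set $g = f_* \colon T \to T'$; naturality against points shows $f_S$ sends $h \in \Cont(S,T)$ to $g \circ h$, so it remains to see $g$ is continuous, which by compact generation of $T$ can be tested on profinite $S$ (as noted above, compact generation is detected on profinite sets); but for such $S$ and continuous $h \colon S \to T$ we have $g \circ h = f_S(h) \in \Cont(S,T')$. Hence $f = \underline{g}$. Since compact Hausdorff spaces are compactly generated, the functor is in particular fully faithful on them. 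If $T$ is compact Hausdorff, the surjection $\beta T_{\mathrm{dis}} \twoheadrightarrow T$ induces an epimorphism of sheaves $\underline{\beta T_{\mathrm{dis}}} \to \underline{T}$ (lift over extremally disconnected sets, which are projective), so $\underline{T}$ is quasicompact; and for profinite $S_1, S_2$ over $\underline{T}$ one has $\underline{S_1} \times_{\underline{T}} \underline{S_2} = \underline{S_1 \times_T S_2}$ (the functor preserves limits, being a right adjoint by (1)), with $S_1 \times_T S_2$ closed in $S_1 \times S_2$ since $T$ is Hausdorff, hence profinite; so $\underline{T}$ is qcqs. Conversely, let $X$ be qcqs and choose a surjection $\underline{S} \twoheadrightarrow X$ with $S$ profinite; as every epimorphism of sheaves is effective, $X$ is the coequaliser of its kernel pair $R = \underline{S} \times_X \underline{S} \rightrightarrows \underline{S}$. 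Here $R$ is quasicompact (quasiseparatedness of $X$) and monic into $\underline{S \times S}$; using projectivity of extremally disconnected sets one identifies $R = \underline{Z}$ for a \emph{closed} equivalence relation $Z \subseteq S \times S$. Then $T := S/Z$ is compact Hausdorff, the kernel pair of the surjection $\underline{S} \to \underline{T}$ is $\underline{S \times_T S} = \underline{Z} = R$, and since $X$ and $\underline{T}$ are coequalisers of the same effective equivalence relation $R \rightrightarrows \underline{S}$ we conclude $X \cong \underline{T}$.

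\emph{Part (4).} Full faithfulness on compactly generated weak Hausdorff spaces is a special case of (2). A quasiseparated condensed set $X$ is the filtered union of the images of all maps $\underline{S} \to X$ from profinite sets; each such image is quasicompact and, being a subobject of the quasiseparated $X$, is also quasiseparated, hence qcqs, hence of the form $\underline{T_i}$ with $T_i$ compact Hausdorff by (3). The transition maps are monomorphisms $\underline{T_i} \hookrightarrow \underline{T_j}$, and a monomorphism between such objects comes from an injective continuous map of compact Hausdorff spaces, which is automatically a closed immersion; thus $X = \varinjlim_i \underline{T_i}$ realizes $X$ as an ind-object of compact Hausdorff spaces along closed immersions, and conversely such colimits are quasiseparated. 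Finally, if $X_0 \hookrightarrow X_1 \hookrightarrow \cdots$ are closed immersions of compact Hausdorff spaces with $X = \varinjlim_n X_n$ as a topological space, then for profinite $S$ one has $\Cont(S,X) = \varinjlim_n \Cont(S,X_n)$: the image of a continuous map $S \to X$ is compact, and a compact subset of a $T_1$-space presented as the colimit of an increasing chain of closed subspaces lies in some $X_n$ (otherwise a choice of points outside successive stages produces an infinite subset meeting every $X_n$ finitely, hence a closed discrete subset of a compact set, a contradiction). This yields the isomorphism $\varinjlim_n \underline{X_n} \to \underline{X}$.

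\emph{Main obstacle.} The crux is the essential surjectivity in part (3): presenting the kernel pair $R$ of $\underline{S} \twoheadrightarrow X$ as $\underline{Z}$ for an honest \emph{closed} equivalence relation $Z$, and then recognising $X$ as $\underline{S/Z}$. This uses that epimorphisms of condensed sets are effective, that extremally disconnected profinite sets are projective and detect surjectivity, and that $\underline{(-)}$ commutes with the relevant limits; once these are in place the identification is forced. A secondary delicate point is the compactness argument in part (4), which genuinely needs the weak Hausdorff ($T_1$) hypothesis.
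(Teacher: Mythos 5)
The paper does not prove this proposition --- it is quoted directly from Clausen--Scholze with a citation and no argument --- so there is nothing in the text to compare against; your proof is correct and follows the standard argument from that source (adjunction by unwinding naturality, full faithfulness by testing compact generation on profinite sets, essential surjectivity in (3) via the effective-epimorphism/closed-equivalence-relation argument, and the compactness argument for sequential colimits in (4)). The one point you leave implicit is that $\underline{T}$ actually lands in quasiseparated condensed sets when $T$ is compactly generated weak Hausdorff: for profinite $S_1,S_2\to T$ one must check that $S_1\times_T S_2$ is closed in $S_1\times S_2$, which is exactly where the weak Hausdorff hypothesis is used; this is a one-line addition to your part (4).
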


Among the class of profinite sets there is the special class of extremally disconnected sets, which are the projective objects in the category $*_{\proet}$.  Moreover, all of them are retractions of a Stone-\v{C}ech compactification of a discrete set. Let $\rm Extdis$ denote the full subcategory of extremally disconnected sets.    The condensed sets can be defined using only this kind of profinite sets. 

\begin{prop}[{\cite[Proposition 2.7]{ClausenScholzeCondensed2019}}]
\label{PropExtDisc}
Consider the site of extremally disconnected sets with covers given by finite families of jointly surjective maps.  Its category  of sheaves is equivalent to the category of condensed sets via the  restriction from profinite sets.  Hence,  a condensed set is a functor $X:  \Extdis\to \Sets$ such that $X(\emptyset)=*$ and $X(S_1\bigsqcup S_2)= X(S_1)\times X(S_2)$. 
\end{prop}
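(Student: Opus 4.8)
The statement is \cite[Proposition 2.7]{ClausenScholzeCondensed2019}; here is the strategy I would follow. The plan is to compare the two sites by a standard ``full subcategory'' argument. Write $u\colon \Extdis\hookrightarrow *_{\proet}$ for the inclusion of the full subcategory of extremally disconnected profinite sets, endowed with the Grothendieck topology induced from $*_{\proet}$, so that a finite family $\{T_i\to S\}$ of maps between extremally disconnected sets is a covering precisely when it is jointly surjective. There are two things to do: (i) construct an equivalence $\Sh(\Extdis)\simeq\Sh(*_{\proet})=\Cond$ whose quasi-inverse is restriction along $u$; and (ii) unwind the sheaf axiom on $\Extdis$ into the asserted concrete description.

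For (i) I would invoke the two basic inputs on profinite sets. By Gleason's theorem the extremally disconnected sets are exactly the projective objects of $*_{\proet}$, and every profinite set $S$ admits a surjection $\beta S_{dis}\twoheadrightarrow S$ from an extremally disconnected one (the Stone-\v{C}ech compactification of the underlying discrete set); moreover a finite disjoint union of extremally disconnected sets is again extremally disconnected. Hence every object of $*_{\proet}$ has a covering by objects of $\Extdis$, and any covering in $*_{\proet}$ of an extremally disconnected $S$ can be refined to one whose members lie in $\Extdis$ (precompose each member with a surjection from an extremally disconnected set). Under exactly these hypotheses the comparison lemma for sites applies and shows that restriction $\Sh(*_{\proet})\to\Sh(\Extdis)$ is an equivalence, with quasi-inverse the (sheafified) left Kan extension along $u$. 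This gives the first assertion, and it is compatible with the cardinal cut-off $\kappa$ of the preceding remark.

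For (ii) the key observation is that every covering inside $\Extdis$ splits: if $\{T_i\to S\}$ is jointly surjective with all terms extremally disconnected, then $T:=\coprod_i T_i$ is extremally disconnected and the resulting surjection $p\colon T\twoheadrightarrow S$ admits a section $g$ because $S$ is projective. Now let $X$ be a presheaf on $\Extdis$ with $X(\emptyset)=*$ and $X(S_1\sqcup S_2)=X(S_1)\times X(S_2)$; then $X(T)=\prod_i X(T_i)$, and a matching family for $\{T_i\to S\}$ is the same datum as an element $t\in X(T)$ with $X(a)(t)=X(b)(t)$ for every $R\in\Extdis$ and every pair $a,b\colon R\to T$ with $pa=pb$. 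Given such $t$, set $s:=X(g)(t)$; taking $R=T$, $a=gp$, $b=\mathrm{id}_T$ (so that $p\circ gp=p=p\circ\mathrm{id}_T$) yields $X(p)(s)=X(gp)(t)=t$, and if $X(p)(s')=t$ then $s'=X(g)(X(p)(s'))=X(g)(t)=s$; hence $X$ is a sheaf. Conversely, the sheaf condition for the empty covering of $\emptyset$ forces $X(\emptyset)$ to be a one-point set, and the sheaf condition for the covering $\{S_1\hookrightarrow S_1\sqcup S_2\hookleftarrow S_2\}$ — for which every pair $(s_1,s_2)$ is automatically a matching family, $S_1$ and $S_2$ being disjoint inside $S_1\sqcup S_2$ — gives $X(S_1\sqcup S_2)=X(S_1)\times X(S_2)$. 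Thus sheaves on $\Extdis$ are exactly the functors $X\colon\Extdis\to\Sets$ carrying finite coproducts to finite products, as claimed.

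The step I expect to be the real obstacle is the essential surjectivity of restriction in (i): one has to check that the left Kan extension along $u$ of a product-preserving presheaf on $\Extdis$ is already separated and satisfies descent for coverings of arbitrary profinite sets. This is the point where one genuinely uses that a fibre product $T\times_S T$ of extremally disconnected sets over an extremally disconnected base — which need not itself be extremally disconnected — can nevertheless be resolved by extremally disconnected sets, so that the descent datum along $T\twoheadrightarrow S$ is computed correctly; the full argument is carried out in \cite[Proposition 2.7]{ClausenScholzeCondensed2019}.
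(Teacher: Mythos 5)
The paper offers no proof of this statement---it is quoted directly from \cite[Proposition 2.7]{ClausenScholzeCondensed2019}---and your argument is the standard one from that source: the comparison lemma for the full subcategory of extremally disconnected sets (every profinite set is covered by one via the Stone--\v{C}ech compactification of its underlying discrete set), combined with the observation that any cover of an extremally disconnected set splits by projectivity, so that the sheaf condition on $\Extdis$ reduces to sending finite disjoint unions to finite products. Your sketch is correct, and the worry in your final paragraph is already discharged by the comparison lemma you invoke, whose hypotheses (full faithfulness, induced topology, every object admitting a cover by objects of the subcategory) you have verified.
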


Extremally disconnected sets play a similar role as points do for locally ringed spaces.  Namely, if $F\to G$ is a map of condensed sets, it is injective (resp. surjective) if and only if it is so after evaluating at all extremally disconnected sets. 

The inclusion $\Condab\to \Cond$ admits a left adjoint $T\mapsto \Z[T]$, where $\Z[T]$ is the sheafification of $S\mapsto \Z[T(S)]$. Let $S$ be an extremally disconnected set,  Proposition \ref{PropExtDisc} implies that  the object $\Z[S]$ is projective in the category $\Condab$. We have a tensor product in $\Condab$ given by the sheafification of the usual tensor product at the level of points.  We also have an internal $\underline{\Hom}$ in $\Condab$ defined as 
\[
\underline{\Hom}_{\Z}(M,N)(S)= \Hom_{\Z}(M\otimes \Z[S], N)
\]
for any profinite set $S$. If $R$ is a condensed ring we write $R[S]:= R\otimes \Z[S]$. They form a family of compact projective generators of $\Mod_{R}^{\cond}$. Then, if $X$ is a condensed abelian group and $S$ is a profinite set we have $\underline{\Cont}(S,X)= \underline{\Hom}_{\Z}(\Z[S], X)$.  All the nice properties of the category of condensed abelian groups are summarised in the following theorem 

\begin{theorem}[{\cite[Theorem 2.2]{ClausenScholzeCondensed2019}}] \label{TheoScholzeAB}
The category of condensed abelian groups is an abelian category which satisfies the Grothendieck axioms $(AB3)$,  $(AB4)$,  $(AB5)$,  $(AB6)$,  $(AB3^*)$ and $(AB4^*)$:  all limits $(AB3^*)$ and colimits $(AB3)$ exist, arbitrary products $(AB4^*)$, arbitrary direct sums $(AB4)$ and filtered colimits $(AB5)$ are exact, and $(AB6)$: for all index sets $J$ and filtered categories $I_j$, $j\in J$,  with functor $i\mapsto M_i$, from $I_j$ to condensed abelian groups, the natural map 
\[
\varinjlim_{(i_j\in I_j)_{j\in J}} \prod_{j\in J} M_{i_j} \to \prod_{j\in J} \varinjlim_{i_j\in I_j} M_{i_j}
\]
is an isomorphism. Moreover, the category of condensed abelian groups is generated by compact projective objects given by $\Z[S]$ with $S$ an extremally disconnected set. 
\end{theorem}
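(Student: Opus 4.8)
The plan is to reduce every assertion to the corresponding classical fact about the category $\mathrm{Ab}$ of abelian groups, using evaluation at extremally disconnected sets as a conservative, exact family of functors that moreover commutes with \emph{all} limits and colimits. The key point is the following. Fix $S\in\Extdis$. Since extremally disconnected sets are the projective objects of $*_{\proet}$, every cover of $S$ in the site (a finite jointly surjective family into $S$) admits a section, so the sheaf condition imposes nothing on the value at $S$; equivalently, for any presheaf $\n{F}$ on $\Extdis$ the sheafification map $\n{F}(S)\to\n{F}^{\#}(S)$ is an isomorphism. Limits of condensed abelian groups are already computed objectwise at the presheaf level, and colimits are the sheafification of the objectwise presheaf colimit; since sheafification is trivial at $S$, the functor $(-)(S)\colon\Condab\to\mathrm{Ab}$ preserves all limits and all colimits, and it is exact (kernels and cokernels are read off objectwise on $\Extdis$). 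By the remark preceding Theorem \ref{TheoScholzeAB} (a map of condensed sets is injective, resp.\ surjective, iff it is so after evaluation at all $S\in\Extdis$), the family $\{(-)(S)\}_{S\in\Extdis}$ also reflects exactness, isomorphisms and the zero object.

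\textbf{The Grothendieck axioms.}
Abelianness of $\Condab$ is the standard fact that sheaves of abelian groups on a site form an abelian category; alternatively, kernels and cokernels are computed objectwise on $\Extdis$ and $\mathrm{Ab}$ is abelian. All small limits and colimits exist, giving (AB3) and (AB3$^*$). For (AB4), (AB4$^*$), (AB5): arbitrary direct sums, arbitrary products and filtered colimits are exact in $\mathrm{Ab}$; by the first paragraph these operations in $\Condab$ are computed objectwise on $\Extdis$ and exactness is detected there, so the same exactness holds in $\Condab$. For (AB6), apply $(-)(S)$ for $S\in\Extdis$ to the canonical comparison map
\[
\varinjlim_{(i_j\in I_j)_{j\in J}}\ \prod_{j\in J} M_{i_j} \ \longrightarrow\ \prod_{j\in J}\ \varinjlim_{i_j\in I_j} M_{i_j};
\]
since products, filtered colimits and evaluation at $S$ all commute, this becomes the analogous comparison map for the families $(M_{i}(S))$ in $\mathrm{Ab}$, which is an isomorphism because $\mathrm{Ab}$ satisfies (AB6). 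As this holds for every $S\in\Extdis$, the original map is an isomorphism in $\Condab$.

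\textbf{Compact projective generators.}
For $S\in\Extdis$ one has $\Hom_{\Condab}(\Z[S],X)=\Hom_{\Cond}(\underline{S},X)=X(S)$, the first equality by the adjunction $\Z[-]\dashv(\text{forget})$ and the second by the (sheaf) Yoneda lemma applied to the representable sheaf $\underline{S}$. Thus $\Hom_{\Condab}(\Z[S],-)$ is the functor $X\mapsto X(S)$, which is exact (so $\Z[S]$ is projective) and commutes with filtered colimits and arbitrary direct sums (so $\Z[S]$ is compact). Finally, by conservativity, an $X$ with $\Hom_{\Condab}(\Z[S],X)=0$ for all $S\in\Extdis$ satisfies $X(S)=0$ for all such $S$, hence $X=0$; so $\{\Z[S]:S\in\Extdis\}$ is a generating set of compact projective objects.

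\textbf{Main obstacle.}
The only non-formal step is the first paragraph, and specifically the claim that evaluation at an extremally disconnected set commutes with colimits, i.e.\ that sheafification does nothing there. This is exactly where projectivity of extremally disconnected sets in $*_{\proet}$ is used: without it, colimits and products of sheaves are only the ``sheafification of the naive answer'', and then exactness of products (AB4$^*$) and the (AB6) isomorphism can genuinely fail, as they do for sheaves on a general site. Everything else is a transfer along the conservative exact family $\{(-)(S)\}$. One should also keep in mind the set-theoretic truncation by an uncountable strong limit cardinal mentioned after the definition of condensed sets, needed so that $\Extdis$ is essentially small; we suppress it as in the rest of the text.
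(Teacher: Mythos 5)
The paper does not prove this statement; it is quoted verbatim from \cite[Theorem 2.2]{ClausenScholzeCondensed2019}, so there is no internal proof to compare against. Your argument is the standard one from that source and is essentially correct: everything reduces to $\mathrm{Ab}$ because the functors $X\mapsto X(S)$, $S\in\Extdis$, form a conservative exact family commuting with all limits and colimits, and this is also consistent with how the paper itself uses Proposition \ref{PropExtDisc} and the remark following it. One point to tighten: it is not true that sheafification is the identity on $\mathcal{F}(S)$ for an \emph{arbitrary} presheaf $\mathcal{F}$ on $\Extdis$ (it still enforces that finite disjoint unions go to finite products); the correct justification that colimits of condensed abelian groups are computed objectwise is that a presheaf on $\Extdis$ is a sheaf iff it preserves finite products (Proposition \ref{PropExtDisc}), and this property is preserved by objectwise colimits and products of abelian groups since finite products in $\mathrm{Ab}$ are biproducts.
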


\subsection{Quasiseparated condensed sets}

We collect here some basic facts on quasiseparated condensed sets that will be needed later. 

\begin{proposition} [{\cite[Proposition 4.13]{ClauseScholzeanalyticspaces}}]
\label{PropQuasiseparatedCondensed}
Let $X$ be a quasiseparated condensed set. Then quasicompact injections $\iota: Z \to X$ are equivalent to closed subspaces $W \subseteq X(*)_{\mathrm{top}}$ via $Z \mapsto Z(*)_{\mathrm{top}}$, resp. sending a closed subspace $W \subseteq X(*)_{\mathrm{top}}$ to the subspace $Z \subseteq X$ given by
\[ Z(S) = X(S) \times_{\mathrm{Cont}(S, X(*)_{\mathrm{top}})} \mathrm{Cont}(S, W). \]
\end{proposition}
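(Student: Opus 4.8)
The plan is to verify that the two assignments described are mutually inverse, using the equivalence (Proposition \ref{propFunctorTopCond}(4)) between quasiseparated condensed sets and ind-systems of compact Hausdorff spaces along closed immersions, together with the fact that a map of condensed sets is a monomorphism, resp. an isomorphism, iff it is so after evaluation on every extremally disconnected (equivalently, profinite) set (Proposition \ref{PropExtDisc}). First I would record the easy direction: given a closed subspace $W \subseteq X(*)_{\mathrm{top}}$, define $Z$ by the displayed fiber-product formula and check it is a well-defined subsheaf of $X$ — this is immediate since $S \mapsto \Cont(S,W)$ is a sub-presheaf of $S\mapsto \Cont(S,X(*)_{\mathrm{top}})$ and the defining square is a pullback, so $Z$ inherits the sheaf property from $X$. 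The inclusion $\iota\colon Z\hookrightarrow X$ is then a monomorphism by construction, and I would check it is quasicompact by exhibiting, for any profinite $T$ and any map $T\to X$, a surjection from a profinite set onto $T\times_X Z$; here one uses that $T\times_X Z \subseteq T$ is the closed subset $(\text{image of }T\to X(*)_{\mathrm{top}})^{-1}(W)$, hence itself profinite, so quasicompact.

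Next I would compute $Z(*)_{\mathrm{top}}$ for this $Z$ and confirm it recovers $W$. Evaluating the fiber-product formula at the point $S=*$ gives $Z(*) = X(*)\times_{X(*)_{\mathrm{top}}} W = W$ as sets; the point is that the topology on $Z(*)$ as a condensed set, namely the quotient topology from $\bigsqcup_{S,a\in Z(S)} S \to Z(*)$, agrees with the subspace topology from $W\subseteq X(*)_{\mathrm{top}}$. For this I would use that $X$ quasiseparated means $X = \varinjlim_i X_i$ with $X_i$ compact Hausdorff and closed transition maps, so $X(*)_{\mathrm{top}} = \varinjlim_i X_i$ as a topological space (Proposition \ref{propFunctorTopCond}(4)); intersecting with the closed set $W$ one gets $W = \varinjlim_i (X_i\cap W)$ with $X_i\cap W$ compact Hausdorff, and the condensed set $Z$ defined above is exactly $\varinjlim_i \underline{X_i\cap W}$, whose associated topological space is $W$ by the last sentence of Proposition \ref{propFunctorTopCond}(4). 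So $Z(*)_{\mathrm{top}} = W$, giving one of the two round-trips.

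For the other round-trip, start from an arbitrary quasicompact injection $\iota\colon Z\to X$ with $X$ quasiseparated; I would first show $Z$ is itself quasiseparated (a quasicompact subobject of a quasiseparated object is quasiseparated, since its diagonal is a base change of that of $X$), hence $Z = \varinjlim_j Z_j$ along closed immersions of compact Hausdorff spaces. Then $W := Z(*)_{\mathrm{top}}$ is a subspace of $X(*)_{\mathrm{top}}$, and I must check it is \emph{closed} and that feeding it back into the fiber-product formula returns $Z$. Closedness is where quasicompactness of $\iota$ is used: for each compact Hausdorff $T\to X(*)_{\mathrm{top}}$ factoring through some $X_i$, the preimage $T\times_{X}Z$ is quasicompact, i.e.\ compact Hausdorff, hence its image $T\cap W$ is closed in $T$; since $X(*)_{\mathrm{top}} = \varinjlim_i X_i$ carries the colimit topology, $W\cap X_i$ closed for all $i$ forces $W$ closed. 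Finally, to see that the $Z'$ reconstructed from $W$ equals $Z$: both are subsheaves of $X$, so by Proposition \ref{PropExtDisc} it suffices to compare values on profinite $S$; $Z'(S) = X(S)\times_{\Cont(S,X(*)_{\mathrm{top}})}\Cont(S,W)$ by definition, and one shows $Z(S)$ satisfies the same universal property, i.e.\ a map $S\to X$ factors through $Z$ iff the induced $S\to X(*)_{\mathrm{top}}$ lands in $W$ — the nontrivial implication ``$\Rightarrow$'' is clear, and ``$\Leftarrow$'' follows because $S\to X(*)_{\mathrm{top}}$ landing in $W$ means it factors through some $Z_j$ (as $S$ is compact, its image meets finitely much of the ind-system, and the image is a compact subset of $W = \varinjlim_j Z_j$), hence through $Z$.

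\textbf{Main obstacle.} The bookkeeping is routine once one commits to the ind-system description; the one genuinely delicate point I expect to spend effort on is the agreement of topologies in the second paragraph — that the condensed-set quotient topology on $Z(*)$ coincides with the subspace topology inherited from $X(*)_{\mathrm{top}}$ — and, dually, in the last paragraph, the claim that a map $S\to X(*)_{\mathrm{top}}$ out of a profinite set with image in $W$ automatically lifts to $Z$ as a map of condensed sets. Both hinge on controlling colimits of compact Hausdorff spaces along closed immersions and the precise content of Proposition \ref{propFunctorTopCond}(4), so I would isolate that as the technical heart and treat everything else as formal consequences of ``monos/isos are detected on extremally disconnected sets''.
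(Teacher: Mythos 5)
The paper itself gives no proof of this proposition --- it is quoted directly from Clausen--Scholze --- so there is nothing internal to compare against; I am judging your sketch on its own terms. The first three paragraphs are essentially correct and follow the standard route: the fibre-product formula defines a subsheaf, its quasicompactness over $X$ reduces to the observation that $T\times_X Z$ is the (profinite) closed preimage of $W$ in $T$, and the identification of the quotient topology on $Z(*)$ with the subspace topology on $W$ goes through because $W$ is closed in the colimit topology on $X(*)_{\mathrm{top}}=\varinjlim_i X_i$. The closedness of $W:=Z(*)_{\mathrm{top}}$ in the converse direction is also argued correctly.

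The genuine gap is in your final implication ``$\Leftarrow$''. You pass from ``the continuous map $S\to X(*)_{\mathrm{top}}$ lands in some compact piece $Z_j$'' to ``the condensed-set map $S\to X$ factors through $Z$''. But upgrading a factorization of the underlying continuous map through the closed subset $Z_j\subseteq X(*)_{\mathrm{top}}$ to a factorization of the condensed map through the subobject $\underline{Z_j}\hookrightarrow X$ is precisely an instance of the equivalence being proven (applied to the quasicompact injection $\underline{Z_j}\to X$), so as written the step is circular; moreover, the auxiliary claim that a compact subset of a general filtered colimit along closed immersions sits inside a single piece is only standard for countable systems. The missing idea is to use quasicompactness of $\iota$ at this point rather than only for the closedness of $W$: form $S\times_X Z$. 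It is quasicompact (base change of $\iota$) and quasiseparated (a subobject of $S$, and monomorphisms let you replace fibre products over $S\times_X Z$ by fibre products over $S$), hence by Proposition \ref{propFunctorTopCond}(3) it equals $\underline{C}$ for a compact Hausdorff space $C=(S\times_X Z)(*)_{\mathrm{top}}$. Its underlying set is $S(*)\times_{X(*)}Z(*)$, which is all of $S$ precisely when $S\to X(*)_{\mathrm{top}}$ lands in $W$; the resulting continuous bijection $C\to S$ of compact Hausdorff spaces is a homeomorphism, so $S\times_X Z\to S$ is an isomorphism and $S\to X$ factors through $Z$. With this replacement (which also dispenses with the ind-system in this step) your argument closes.
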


From now on, we will refer to quasicompact injections $\iota : Z \to X$ as closed subspaces of $X$.

\begin{lemma}[{\cite[Lemma 4.14]{ClauseScholzeanalyticspaces}}]
\label{LemmaQuasiseparatedQuotient}
Let $X$ be a condensed set. The inclusion of the category of quasiseparated condensed sets into the category of all condensed sets admits a left adjoint $X \mapsto X^{qs}$, with the unit $X \to X^{qs}$ being a surjection of condensed sets. The functor $X \mapsto X^{qs}$ preserves finite products. Moreover, for any quasiseparated condensed ring $A$, this functor defines a similar left adjoint for the inclusion of quasiseparated condensed $A$-moules into all condensed $A$-modules.
\end{lemma}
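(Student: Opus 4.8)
The plan is to construct $X^{qs}$ as a suitable quotient of $X$, check the universal property directly, then deduce finite-product preservation by a currying argument, and finally transport everything to modules over a quasiseparated ring. Throughout I use that the category of condensed sets has the formal features of a topos --- all small limits and colimits, epimorphism--monomorphism factorisations, an internal Hom $\iHom(-,-)$, well-poweredness, and the fact that every condensed set is a small colimit of profinite sets --- together with two elementary closure properties. First, any subobject of a quasiseparated condensed set is quasiseparated, since fibre products over a subobject coincide with those over the ambient object. Second, any small product $\prod_i Z_i$ of quasiseparated condensed sets is quasiseparated: for profinite $S, S'$ over $\prod_i Z_i$ one has $\underline S\times_{\prod_i Z_i}\underline{S'}=\bigcap_i(\underline S\times_{Z_i}\underline{S'})$ inside the profinite set $\underline{S\times S'}$, and by Proposition~\ref{PropQuasiseparatedCondensed} each $\underline S\times_{Z_i}\underline{S'}$ is a closed subset, so the intersection is a closed, hence profinite and quasicompact, subset.

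First I would set $X^{qs} := \operatorname{im}\bigl(X\to\prod_Q Q\bigr)$, where $Q$ runs over the (small, by well-poweredness) set of isomorphism classes of quasiseparated quotients of $X$. Then $X^{qs}$ is a subobject of a quasiseparated condensed set, hence quasiseparated, and the unit $X\to X^{qs}$ is a surjection. For the universal property, given $f\colon X\to Z$ with $Z$ quasiseparated, factor $f=(X\twoheadrightarrow Q_0\hookrightarrow Z)$ with $Q_0=\operatorname{im}(f)$; since $Q_0$ is a subobject of $Z$ it is quasiseparated, so it is one of the $Q$'s, and the corresponding projection restricts to a map $X^{qs}\to Q_0\hookrightarrow Z$ factoring $f$. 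Uniqueness is automatic because $X\to X^{qs}$ is epic. This exhibits $X\mapsto X^{qs}$ as the left adjoint to the inclusion, with surjective unit.

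The heart of the matter is finite-product preservation, for which I would prove: for all condensed sets $X,Y$ and every quasiseparated $Z$, any map $X\times Y\to Z$ factors uniquely through $X\times Y\to X\times Y^{qs}$. Applying this in each variable shows that any map $X\times Y\to Z$ factors uniquely through $X\times Y\to X^{qs}\times Y^{qs}$, and since $X^{qs}\times Y^{qs}$ is quasiseparated it satisfies the defining property of $(X\times Y)^{qs}$, whence $(X\times Y)^{qs}\cong X^{qs}\times Y^{qs}$. To prove the factorisation statement I would curry in $X$: the map $X\times Y\to Z$ factors through $X\times Y^{qs}$ if and only if the adjoint map $X\to\iHom(Y,Z)$ lands in the subobject $\iHom(Y^{qs},Z)\hookrightarrow\iHom(Y,Z)$ (a subobject because $Y\twoheadrightarrow Y^{qs}$ is epic and $\iHom(-,Z)$ is left exact). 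As $X$ is a colimit of profinite sets, this can be checked after precomposing with each map $\underline S\to X$ from a profinite set, reducing to: every map $\underline S\times Y\to Z$ factors through $\underline S\times Y^{qs}$. Currying now in $\underline S$, such a map corresponds to a map $Y\to\iHom(\underline S,Z)$, so it suffices that $\iHom(\underline S,Z)$ be quasiseparated --- then it factors through $Y\to Y^{qs}$ by the universal property already established. Writing $Z=\varinjlim_i\underline{T_i}$ as an ind-(compact Hausdorff) object with closed-immersion transition maps (Proposition~\ref{propFunctorTopCond}(4)) and using that a continuous map from the quasicompact set $S$ into such a colimit factors through a stage, one gets $\iHom(\underline S,Z)=\varinjlim_i\iHom(\underline S,\underline{T_i})$; each $\iHom(\underline S,\underline{T_i})$ is the condensed set attached to the mapping space $\Cont(S,T_i)$ with its compact--open topology, which is a filtered colimit of finite powers of $\underline{T_i}$ along closed immersions (write $S=\varprojlim_\alpha S_\alpha$ with $S_\alpha$ finite), hence quasiseparated; and a filtered colimit of quasiseparated condensed sets along closed immersions is again quasiseparated, again by Proposition~\ref{propFunctorTopCond}(4).

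For the module statement, let $A$ be a quasiseparated condensed ring, so $A=A^{qs}$. For a condensed $A$-module $M$ I would form $M^{qs}$ as above; finite-product preservation gives canonical identifications $(A\times M)^{qs}\cong A\times M^{qs}$, $(M\times M)^{qs}\cong M^{qs}\times M^{qs}$, and so on, so applying $(-)^{qs}$ to the structure maps of $M$ (addition, the $A$-action, units) and using functoriality equips $M^{qs}$ with a condensed $A$-module structure for which $M\to M^{qs}$ is $A$-linear; and the universal property is inherited from the set-level one, since a condensed-set map out of $M^{qs}$ is $A$-linear as soon as its restriction to $M$ is (the relevant diagrams commute after the epimorphisms $M\times M\to M^{qs}\times M^{qs}$ and $A\times M\to A\times M^{qs}$). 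I expect the main obstacle to be the quasiseparatedness of $\iHom(\underline S,Z)$, i.e.\ the control of where continuous maps out of a profinite set into a quasiseparated $Z$ land (namely, through a compact Hausdorff stage of $Z$); everything else is formal topos-theoretic bookkeeping together with Proposition~\ref{PropQuasiseparatedCondensed}.
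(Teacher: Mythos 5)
The paper does not actually prove this lemma (it is quoted from \cite[Lemma 4.14]{ClauseScholzeanalyticspaces}), so I can only judge your argument on its own terms. Its architecture is the standard one and is essentially sound: build $X^{qs}$ as the universal quasiseparated quotient (the image of $X$ in the product of all its quasiseparated quotients), reduce product-preservation, by currying and by covering $X$ with profinite sets, to the single claim that $\iHom(\underline{S},Z)$ is quasiseparated for $S$ profinite and $Z$ quasiseparated, and then transport the adjunction to $A$-modules using product-preservation and the fact that the units are epimorphisms. The supporting formal steps (subobjects and products of quasiseparated objects are quasiseparated; $\iHom(Y^{qs},Z)\hookrightarrow\iHom(Y,Z)$ is a monomorphism because $Y\to Y^{qs}$ is epic; factorization through a subobject may be tested after an epimorphism from a disjoint union of profinite sets; a filtered colimit of quasiseparated objects along monomorphisms is quasiseparated, which is exactly the argument of Proposition \ref{Propqs} rather than of Proposition \ref{propFunctorTopCond}(4)) are all correct.

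The one genuine error is in your justification of the key point. You assert that $\Cont(S,T_i)$ with the compact--open topology ($S$ profinite, $T_i$ compact Hausdorff) ``is a filtered colimit of finite powers of $\underline{T_i}$ along closed immersions'' obtained by writing $S=\varprojlim_\alpha S_\alpha$ with $S_\alpha$ finite. This is false: $\varinjlim_\alpha T_i^{S_\alpha}$ is only the subspace of \emph{locally constant} maps $S\to T_i$, a proper (dense) subspace in general --- for instance the identity $\Z_p\to\Z_p$ is not locally constant, so $\Cont(\Z_p,\Z_p)\neq\varinjlim_n \Z_p^{\Z/p^n\Z}$. (The functor $\iHom(-,T)$ turns colimits into limits; there is no reason for it to turn the cofiltered limit $\varprojlim_\alpha S_\alpha$ into a colimit.) Fortunately the conclusion you need, namely that $\iHom(\underline{S},\underline{T_i})$ is quasiseparated, is true and admits a direct proof: given profinite $S_1,S_2\to\iHom(\underline{S},\underline{T_i})$, corresponding to continuous $f:S\times S_1\to T_i$ and $g:S\times S_2\to T_i$, the fibre product $\underline{S_1}\times_{\iHom(\underline{S},\underline{T_i})}\underline{S_2}$ is the condensed set associated to $\bigcap_{s\in S}\{(x,y)\in S_1\times S_2 : f(s,x)=g(s,y)\}$, a closed (because $T_i$ is Hausdorff) hence profinite subset of $S_1\times S_2$, so it is quasicompact. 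Equivalently, $\Cont(S,T_i)$ is Hausdorff and $\underline{Y}$ is quasiseparated for any Hausdorff topological space $Y$. With this local repair your proof goes through.
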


\subsection{Analytic rings}

Next, we recall the notion of analytic ring

\begin{definition}[{ \cite[Definitions 7.1, 7.4]{ClausenScholzeCondensed2019}  and \cite[Definition 6.12]{ClauseScholzeanalyticspaces}}]
A pre-analytic ring $(\n{A}, \n{M})$ is the data of a condensed ring $\n{A}$  (called the underlying condensed ring of the analytic ring) equipped with a functor 
\[
\Extdis \to \Mod_{\n{A}}^{\cond}, \;\;\; S\mapsto \n{M}[S],
\]
called the functor of measures of $(\n{A}, \n{M})$, that sends finite disjoint union into products, and a natural transformation  of functors
 $S \to \n{M}[S]$.

A pre-analytic ring is said to be analytic, if for any complex $C: \hdots \to C_i \to \hdots \to C_1 \to C_0 \to 0$ of $\n{A}$-modules such that each $C_i$ is a direct sum of objects of the form $\n{M}[T]$ for varying extremally disconnected sets $T$, the map


 \[
 R\underline{\Hom}_{\n{A}}(\n{M}[S], C) \to R \underline{\Hom}_{\n{A}}(\n{A}[S],C)
 \]
is an isomorphism for all extremally disconnected sets $S$. An analytic ring $(\n{A}, \n{M})$ is normalized if $\n{A}\to \n{M}[*]$ is an isomorphism. 
\end{definition}

\begin{example} \leavevmode
\label{Exampleanalyticrings}
\begin{enumerate}
    \item  (\cite[Theorem 5.8]{ClausenScholzeCondensed2019}).  We define the pre-analytic ring $\Z_{\sol}$ with underlying ring $\Z$ and functor of measures mapping an extremally disconnected $S=\varprojlim_{i}S_i$, written as an inverse limit of finite sets, to the condensed abelian group $\Z_{\sol}[S]:= \varprojlim_{i} \Z[S_i]$. Then $\Z_{\sol}$ is an analytic ring.
    
    \item (\cite[Theorem 8.1]{ClausenScholzeCondensed2019}). More generally, let $A$ be a discrete commutative algebra, and let $S=\varprojlim_i S_i$ be an extremally disconnected set.  We have an analytic ring $A_{\sol}$ with underlying condensed ring $A$, and  with functor of measures  given by 
    \[
    A_{\sol}[S]= \varinjlim_{B\subset A} \varprojlim_i B[S_i],
    \]
    where $B$ runs over all the $\Z$-algebras of finite type in $A$.  
    
    \item (\cite[Proposition 7.9]{ClausenScholzeCondensed2019}) Let $p$ be a prime number,  $K$ a finite extension of $\Q_p$ and $\O_K$ its valuation ring. Let $S=\varprojlim_i S_i$ be an extremally disconnected set.  We have analytic structures for the rings $\O_K$ and $K$, denoted $\O_{K,\sol}$ and $K_{\sol}$ respectively, given by 
    \[
    \O_{K,\sol}[S]:= \varprojlim_i \O_K[S_i]\mbox{ and } K_{\sol}[S]:= \O_{K,\sol}[S][\frac{1}{p}]= K\otimes_{\O_K} \O_{K,\sol}[S]. 
    \] 
     This analytic ring structure is induced from the analytic ring structure of $\Z_{\sol}$ by base change to $\n{O}_K$.
    
    \item (\cite[Theorem 1.5]{Gregory}) Let $(A,A^+)$ be a  complete Huber pair. Andreychev defines an analytic ring $(A,A^+)_{\sol}$ associated to $(A, A^+)$, whose underlying ring is $\underline{A}$, and with functor of measures 
    \[
    (A,A^+)_{\sol}[S] = \varinjlim_{B\to A^+,M} \varprojlim_i \underline{M}[S_i],
    \]
    where  the colimit is taken over all the finitely generated subrings $B\subset A^+$ and all the quasi-finitely generated $B$-submodules $M$ of $A$ (cf. \cite[Definition 3.4]{Gregory}). If $A^+=A^{\circ}$ is the ring of power bounded elements,   we simply write $A_{\sol}$ for $(A,A^{\circ})_{\sol}$. 
    
    \item (\cite[Lemma 4.7]{Gregory}) A particular example of the previous case is the analytic ring associated to the Tate algebra $(K\langle T \rangle , \O_K \langle T \rangle)$, namely, the analytic ring  $K\langle T  \rangle_{\sol}$ whose functor of measures is given by 
    \[
    K\langle T \rangle_{\sol}[S]= K[S] \otimes_{\Z_{\sol}} \Z[T]_{\sol}.    
    \]
    
\end{enumerate}
\end{example}

The following theorem explains the importance of the analytic rings 
\begin{theorem}[{\cite[Proposition 7.5]{ClausenScholzeCondensed2019}}]
\label{TheoLemmaAnalyticrings}
Let $(\n{A}, \n{M})$ be an analytic ring. 
\begin{enumerate}
    \item  The full subcategory 
    \[
    \Mod_{(\n{A},\n{M})}^{\cond} \subset \Mod_{\n{A}}^{\cond}
    \]
of all $\n{A}$-modules $M$ such that for all extremally disconnected set $S$, the map 
\[
\Hom_{\n{A}}(\n{M}[S],  M) \to \Hom_{\n{A}}(\n{A}[S],  M)
\]
is an isomorphism,  is an abelian subcategory stable under all limits, colimits and extensions.  Objects of the form $\n{M}[S]$,  where $S$ is an extremally disconnected profinite set, constitute a family of compact projective generators of $\Mod_{(\n{A}, \n{M})}^{\cond}$.  The inclusion functor admits a left adjoint 
\[
\Mod_{\n{A}}^{\cond}\to \Mod_{(\n{A},\n{M})}^{\cond}, \;\;\; M\mapsto M \otimes_{\n{A}}(\n{A}, \n{M})
\]
which is the unique colimit preserving extension of the functor given by $\n{A}[S]\mapsto \n{M}[S]$. Finally, if $\n{A}$ is commutative,   there is a unique symmetric monoidal tensor product $-\otimes_{(\n{A},\n{M})} -$ on $\Mod_{(\n{A},\n{M})}^{\cond}$ making the functor $-\otimes_{\n{A}}(\n{A},\n{M})$ symmetric monoidal.  We call $ \Mod_{(\n{A},\n{M})}^{\cond}$  the category of $(\n{A},  \n{M})$-modules.   

\item The functor of derived categories 
\[
D(\Mod_{(\n{A},\n{M})}^{\cond}) \to D(\Mod_{\n{A}}^{\cond})
\]
is fully faithful and its essential image  is stable under all limits and colimits and given by those $C\in  D(\Mod_{\n{A}}^{\cond})$ for which the map 
\[
R\Hom_{\n{A}}(\n{M}[S], C) \to R\Hom_{\n{A}}(\n{A}[S],C)
\]
is an isomorphism for all extremally disconnected set  $S$.  In that case, the map 
    \[
    R\underline{\Hom}_{\n{A}}(\n{M}[S], C) \to R\underline{\Hom}_{\n{A}}(\n{A}[S],C)
    \]
    is also an isomorphism. 
    
    An object $C\in D(\Mod_{\n{A}}^{\cond})$ lies in $   D(\Mod_{(\n{A},\n{M})}^{\cond})$ if and only if for each $n\in \Z$,  the cohomology group $H^n(C)$ lies in $\Mod_{(\n{A},\n{M})}^{\cond}$. The inclusion functor     $D(\Mod_{(\n{A},\n{M})}^{\cond}) \subset  D(\Mod_{\n{A}}^{\cond})$ admits a left adjoint 
    \[
    D(\Mod_{\n{A}}^{\cond}) \to D(\Mod_{(\n{A},\n{M})}^{\cond}), \;\;\; C \mapsto C\otimes^{L}_{\n{A}} (\n{A}, \n{M})
    \]
    which is the left derived functor of $\n{M}\mapsto \n{M}\otimes_{\n{A}}(\n{A},\n{M})$.  Finally, if $\n{A}$ is commutative, there is a unique symmetric monoidal tensor product $-\otimes^{L}_{(\n{A},\n{M})}-$ on $D(\Mod_{(\n{A},\n{M})}^{\cond})$ making the functor $-\otimes_{\n{A}}^{L} (\n{A},\n{M}) $ symmetric monoidal. 
\end{enumerate}
\end{theorem}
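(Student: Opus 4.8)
The plan is to derive the entire statement from the one defining property of an analytic ring recalled above: for any complex $C\colon\cdots\to C_1\to C_0\to 0$ of $\n{A}$-modules whose terms are direct sums of objects $\n{M}[T]$, and any $S\in\Extdis$, the map $R\underline{\Hom}_{\n{A}}(\n{M}[S],C)\to R\underline{\Hom}_{\n{A}}(\n{A}[S],C)$ is an isomorphism. Write $\mathscr{C}=\Mod_{\n{A}}^{\cond}$ and $\mathscr{D}=\Mod_{(\n{A},\n{M})}^{\cond}$; recall (Theorem \ref{TheoScholzeAB} and the discussion preceding it) that $\mathscr{C}$ is a Grothendieck abelian category generated by the compact projectives $\n{A}[S]$, and that $\Hom_{\mathscr{C}}(\n{A}[S],M)=M(S)$ is exact and commutes with all colimits. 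The first thing I would do is extract from the axiom its consequences for a \emph{single} module: applying it to $C=\n{M}[S]$ shows $\n{M}[S]\in\mathscr{D}$ and that $R\Hom_{\mathscr{C}}(\n{M}[S'],\n{M}[S])=R\Hom_{\mathscr{C}}(\n{A}[S'],\n{M}[S])$ sits in degree $0$; then, for $M\in\mathscr{D}$, the tautological square built from $\bigoplus_{S,a\in M(S)}\n{A}[S]\twoheadrightarrow M$ and the maps $\n{A}[S]\to\n{M}[S]$ shows $\bigoplus_{S,a}\n{M}[S]\twoheadrightarrow M$, so every object of $\mathscr{D}$ has a resolution $\n{M}(P_\bullet)\xrightarrow{\sim}M$ by direct sums of $\n{M}[T]$'s; feeding this back into the axiom yields the key vanishing $\Ext^{>0}_{\mathscr{C}}(\n{M}[S],M)=0$ and $\Hom_{\mathscr{C}}(\n{M}[S],M)=M(S)$ for all $M\in\mathscr{D}$.

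With this in hand, part~(1) is formal. $\mathscr{D}$ is stable under limits because $\Hom$ turns limits in the second variable into limits; under arbitrary direct sums because one resolves $\bigoplus_i M_i$ by $\n{M}$'s and reapplies the axiom; under cokernels of its own maps by passing to the cone, which is quasi-isomorphic to a bounded-above complex of direct sums of $\n{M}[T]$'s, applying the axiom, and chasing the triangle $\ker[1]\to\operatorname{cone}\to\operatorname{coker}$ using $\Ext^{>0}_{\mathscr{C}}(\n{M}[S],\ker)=0$; and under extensions by the five lemma in the long exact $\Ext_{\mathscr{C}}(\n{M}[S],-)$-sequence. Hence $\mathscr{D}$ is an abelian subcategory of $\mathscr{C}$ stable under limits, colimits and extensions; the $\n{M}[S]$ generate it, are projective (as $(-)(S)$ is exact) and compact (filtered colimits in $\mathscr{D}$ are computed in $\mathscr{C}$). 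The left adjoint $M\mapsto M\otimes_{\n{A}}(\n{A},\n{M})$ is defined by $\operatorname{coker}(\bigoplus\n{M}[T_1]\to\bigoplus\n{M}[T_0])$ from a presentation $\bigoplus\n{A}[T_1]\to\bigoplus\n{A}[T_0]\to M\to 0$; the adjunction for $N\in\mathscr{D}$ follows from $\Hom_{\mathscr{C}}(\n{M}[T],N)=N(T)$ and the two presentations, and colimit-preservation and uniqueness are automatic. In the commutative case $M\otimes_{(\n{A},\n{M})}N:=(M\otimes_{\n{A}}N)\otimes_{\n{A}}(\n{A},\n{M})$ gives the monoidal structure, the only point being the idempotent-reflection compatibility $\n{M}[S]\otimes_{\n{A}}\n{M}[S']\xrightarrow{\sim}\n{M}[S\times S']$ after base change, which again comes from the axiom.

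For part~(2) the central construction is a functor $L\colon D(\mathscr{C})\to D(\mathscr{C})$: resolve $C$ by a K-projective complex $P_\bullet$ with terms direct sums of $\n{A}[T]$'s and apply termwise the additive functor $\n{A}[T]\mapsto\n{M}[T]$, which is well defined because a map $\n{A}[T]\to\n{A}[T']$ is an element of $\n{A}[T'](T)$, mapping to $\n{M}[T'](T)=\Hom_{\mathscr{C}}(\n{M}[T],\n{M}[T'])$ since $\n{M}[T']\in\mathscr{D}$. Two such resolutions being homotopy equivalent, $L$ is well defined, and the transformation $S\to\n{M}[S]$ gives a natural map $C\to L(C)$. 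The axiom (for bounded-above $C$; the general case via the criterion below) puts $L$ into the full subcategory $\mathscr{D}'\subset D(\mathscr{C})$ defined by $R\Hom_{\mathscr{C}}(\n{M}[S],C)\xrightarrow{\sim}R\Hom_{\mathscr{C}}(\n{A}[S],C)$, and $R\Hom_{\mathscr{C}}(\bigoplus\n{M}[T_i],D)=\prod R\Hom_{\mathscr{C}}(\n{A}[T_i],D)$ together with a filtration of $P_\bullet$ makes $L$ left adjoint to $\mathscr{D}'\hookrightarrow D(\mathscr{C})$ (so $C\to L(C)$ is an isomorphism exactly for $C\in\mathscr{D}'$). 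Full faithfulness of $D(\mathscr{D})\to D(\mathscr{C})$ and the identification of its image with $\mathscr{D}'$ follow from the hyper-$\Ext$ spectral sequence $E_1^{p,q}=\Ext^q_{\mathscr{C}}(P_{-p},N)\Rightarrow\Ext^{p+q}_{\mathscr{C}}(M,N)$, which degenerates for $N\in\mathscr{D}$ by the vanishing above, giving $\Ext^\ast_{\mathscr{C}}(M,N)=\Ext^\ast_{\mathscr{D}}(M,N)$; the criterion "$C\in\mathscr{D}'$ iff $H^n(C)\in\mathscr{D}$ for all $n$" is a dévissage through truncation triangles, relying on the cokernel computation to see $\mathscr{D}'$ is stable under $\tau^{\le n},\tau^{\ge n}$. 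The derived left adjoint is $L$, which agrees with $-\otimes_{\n{A}}(\n{A},\n{M})$ on the generators $\n{A}[S]$ with no higher terms, hence is its left derived functor, and the derived monoidal structure is $(-\otimes^L_{\n{A}}-)\otimes^L_{\n{A}}(\n{A},\n{M})$.

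I expect the main obstacle to be organising the bootstrap so as to avoid the apparent circularity among "$\mathscr{D}$ is abelian", "$\n{M}[S]$ is compact projective in $\mathscr{D}$", and "$\mathscr{D}$ is closed under colimits": these have to be proved in the order above, with $\Ext^{>0}_{\mathscr{C}}(\n{M}[S],M)=0$ for $M\in\mathscr{D}$ as the pivotal step. The second delicate point is extending part~(2) from bounded-above to arbitrary complexes, since the axiom only constrains complexes in non-positive cohomological degrees; this is precisely where the criterion $C\in\mathscr{D}'\iff H^n(C)\in\mathscr{D}$ for all $n$ must be established and used, together with the stability of $\mathscr{D}'$ under truncations.
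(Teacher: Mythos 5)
The paper does not prove this theorem; it is quoted directly from \cite[Proposition 7.5]{ClausenScholzeCondensed2019}, so the only meaningful benchmark is the Clausen--Scholze argument itself. Your reconstruction follows that argument in structure and, importantly, in the correct bootstrap order: first $\n{M}[S]\in\Mod^{\cond}_{(\n{A},\n{M})}$ and the degree-$0$ consequences of the axiom applied to one-term complexes, then resolutions of objects of $\Mod^{\cond}_{(\n{A},\n{M})}$ by direct sums of $\n{M}[T]$'s, then the pivotal vanishing $\Ext^{>0}_{\n{A}}(\n{M}[S],M)=0$, and only then the formal consequences; you also correctly isolate the passage from bounded-above to unbounded complexes in part (2) as the second delicate point and indicate the right d\'evissage through truncations.

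There is one step compressed to the point of asserting something not yet available, and it is the only genuinely non-formal point of part (1): the existence of the resolution of $M\in\Mod^{\cond}_{(\n{A},\n{M})}$ by direct sums of $\n{M}[T]$'s. Writing $\n{M}(P_\bullet)\xrightarrow{\sim}M$ suggests applying $\n{M}(-)$ termwise to a projective resolution $P_\bullet\to M$ and claiming the result is still a resolution of $M$; at this stage of the bootstrap that is exactly the unproven statement $M\otimes^L_{\n{A}}(\n{A},\n{M})=M$, which you only obtain afterwards. Nor can one simply ``resolve the kernel of $\bigoplus\n{M}[T]\twoheadrightarrow M$ inside $\Mod^{\cond}_{(\n{A},\n{M})}$'', since that kernel is a priori just an $\n{A}$-module (the subcategory is not closed under subobjects). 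The inductive construction that works is: having built $C_0\twoheadrightarrow M$ and $C_{j+1}\twoheadrightarrow K_j:=\ker(C_j\to C_{j-1})$ for $j<i$ with each $C_j$ a direct sum of $\n{M}[T]$'s, choose a surjection $\bigoplus_k\n{A}[T_k]\twoheadrightarrow K_i$; its composite into $C_i$ factors through $\bigoplus_k\n{M}[T_k]$ because $C_i$ lies in $\Mod^{\cond}_{(\n{A},\n{M})}$ (the analytic-ring axiom applied to the one-term complex $C_i$), and the factored map $\bigoplus_k\n{M}[T_k]\to C_i$ still lands in $K_i$ because its composite to $C_{i-1}$ (resp.\ to $M$ when $i=0$) vanishes after restriction to $\bigoplus_k\n{A}[T_k]$ and the target is again in $\Mod^{\cond}_{(\n{A},\n{M})}$. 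With this step inserted, your argument goes through as written.
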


\begin{remark}
The functor $-\otimes^{L}_{(\n{A},\n{M})}-$ is the derived functor of $-\otimes_{(\n{A},\n{M})}-$ if and only if $\n{M}[S']\otimes^L_{(\n{A},\n{M})}\n{M}[S'']= \n{A}[S\times S']\otimes^{L}_{\n{A}} (\n{A}, \n{M})$ sits in degree $0$, cf. \cite[Warning 7.6]{ClausenScholzeCondensed2019}. This is the case for  the analytic rings of Example \ref{Exampleanalyticrings}, and in fact for all analytic ring over $\bb{Z}_\sol$. 
\end{remark}

\begin{remark}
The terminology for the functor of measures is justified as follows.  If $(\n{A},\n{M})$ is an analytic ring, $P$ is an $(\n{A},\n{M})$-module, $S$ is an extremally disconnected set, $f \in \Cont(S, P)$ and $\mu \in \mathcal{M}[S]$ then, using the isomorphism $\Hom_{\n{A}}(\n{M}[S], P) = \Cont(S, P)$, one can evaluate $f$ at $\mu$ and define $\int f \cdot \mu = f(\mu) \in P$, which allows to see $\mu$ as a function on the space of functions $f \in \Cont(S, P)$.
\end{remark}

In the following we shall write $D(\n{A},\n{M})$ for the derived category $D(\Mod_{(\n{A},\n{M})}^{\cond})$. The functor  $-\otimes^{L}_{\n{A}} (\n{A},\n{M})$ should be thought as a completion with respect to the measures $\n{M}$.

One of the main theorems of \cite{ClausenScholzeCondensed2019} is the proof that $\Z_{\sol}$ is an analytic ring, it has as input several non trivial computations of $\Ext$-groups of locally compact abelian groups. The category $\Mod_{\Z}^{\solid}:= \Mod^{\cond}_{\Z_{\sol}}$ is called the category of solid abelian groups.

Let $p$ be a prime number, then $\Z_p$ is a solid abelian group (being an inverse limit of discrete abelian groups) and for $S$ extremally disconnected we have (cf. \cite[Proposition 7.9]{ClausenScholzeCondensed2019})
\begin{equation} \label{EqZpsol} \Z_{p,\sol}[S]= \Z_p \otimes^L_{\Z_{\sol}} \Z_{\sol}[S] = \Z_p \otimes_{\Z_{\sol}} \Z_{\sol}[S].
\end{equation}

\begin{definition}
Let $K$ be a finite extension of $\Q_p$ and $\O_K$ its ring of integers, we denote by $\SolidOK$  (resp. $\SolidK$) the category  of solid $\O_K$-modules  (resp. the category  of solid $K$-vector spaces) and $D(\O_{K, \sol})$, $D(K_{\sol})$ their respective derived categories.  
\end{definition}

\begin{remark} 
An object in $\SolidOK$ (resp. $\SolidK$) is the same as an object in $\Mod_{\Z}^{\solid}$ endowed with an action of $\O_K$ (resp. $K$). Indeed, this follows directly from Theorem \ref{TheoLemmaAnalyticrings} and \eqref{EqZpsol}.
\end{remark}

\subsection{Analytic rings attached to Tate power series  algebras }

Let $A$ be a reduced  Tate algebra of finite type over $\Q_p$,  recall that $A_{\sol}=(A,A^{\circ})_{\sol}$ where $A^{\circ}\subset A$ is the subring of power bounded elements.    
\begin{definition}
  We define the ring $A\langle\langle  T^{-1}  \rangle  \rangle$ to be  
\[
A\langle\langle  T^{-1}  \rangle  \rangle  = \bigg(\varprojlim_{s\to \infty} A^{\circ}/\varpi^s ((T^{-1})) \bigg)[\frac{1}{\varpi}],   
\]
where $A^{\circ}/\varpi^s ((T^{-1})) = (A^{\circ}/\varpi^s )[[T^{-1}]][T]$. 
\end{definition}

\begin{lemma} \label{lemmares}
The residue pairing  $\mathrm{res}_0: A\langle \langle  T^{-1} \rangle\rangle \times  A\langle T\rangle \to A $ defined as 
\[
(\sum_{n\in \Z} a_{n} T^{n} ,  \sum_{m\in \N} b_{m} T^{m} ) \mapsto \mathrm{res}_0( (\sum_{n\in \Z} a_{n} T^{n}) (\sum_{m\in \N} b_{m} T^{m} )) = \sum_{n + m = -1} a_n b_m
\]
induces an isomorphism  of $A\langle T \rangle$-modules
\[
\frac{A\langle \langle  T^{-1} \rangle\rangle}{A\langle T \rangle } \xrightarrow{\sim } \iHom_A(A\langle T \rangle , A ).  
\]

\proof
We first observe that the pairing is well defined as a map of condensed sets. Indeed, the ring $A\langle \langle T^{-1}\rangle \rangle$ can be written as $A\langle \langle T^{-1}\rangle \rangle = (T^{-1}A^{\circ}[[T^{-1}]]\oplus A^{\circ}\langle T \rangle )[\frac{1}{\varpi}]$ and,  in particular,  it is the condensed set associated to its underlying topological space.  Since the pairing is well defined at the level of topological spaces,  taking associated condensed sets we are done. 

It is clear from the definition of $\mathrm{res}_0$ that the induced  map $A\langle\langle T^{-1} \rangle\rangle \to \iHom_A(A\langle T\rangle, A) $ is $A\langle T \rangle$ linear,  and that it factors through $\frac{A\langle\langle T^{-1} \rangle\rangle}{A\langle T \rangle}$.   On the other hand, we have
\begin{eqnarray*}
\iHom_A(A\langle T\rangle, A) &=& \iHom_{A^{\circ}}(A^{\circ}\langle T \rangle, A^{\circ})[\frac{1}{\varpi}] \\
&=& \varprojlim_{s\in \N} \iHom_{A^{\circ}/\varpi^s} (A^{\circ}/\varpi^s[T],  A/\varpi^{s}) [\frac{1}{\varpi}] \\
&=& \varprojlim_{s\in \N}   \bigg( \prod_{n\in \N}( A^{\circ}/\varpi^s) T^{n,\vee}  \bigg)[\frac{1}{\varpi}]  \\
& = & \bigg( \prod_{n\in \N} A^{\circ}T^{n,\vee} \bigg) [\frac{1}{\varpi}],
\end{eqnarray*}
where $\{T^{n,\vee}\}_{n\in \N}$ is the dual basis of $\{T^n\}_{n\in \N}$. But the map $\mathrm{res}_0$ sends $ T^{-n-1}$ to $T^{n,\vee}$.  The lemma follows.    
\endproof
\end{lemma}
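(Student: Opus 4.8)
The plan is to prove the residue pairing induces the claimed isomorphism by reducing everything, via inversion of $\varpi$, to a statement over the discrete ring $A^{\circ}/\varpi^{s}$ and then taking an inverse limit over $s$. First I would record that $A\langle\langle T^{-1}\rangle\rangle$ is a genuine condensed set arising from a topological space, so that the pairing $\mathrm{res}_0$, being manifestly continuous on underlying topological spaces, defines a map of condensed sets; this is the content of the first paragraph of the intended argument and it only requires the explicit description $A\langle\langle T^{-1}\rangle\rangle=(T^{-1}A^{\circ}[[T^{-1}]]\oplus A^{\circ}\langle T\rangle)[\tfrac1\varpi]$. Granting this, $A\langle T\rangle$-linearity of the resulting map $A\langle\langle T^{-1}\rangle\rangle\to\iHom_A(A\langle T\rangle,A)$ and the fact that it kills $A\langle T\rangle$ (since $\mathrm{res}_0$ of a power series in $T$ alone vanishes) are formal, so the map factors through the quotient $A\langle\langle T^{-1}\rangle\rangle/A\langle T\rangle$.

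Next I would compute the target $\iHom_A(A\langle T\rangle, A)$ directly. The key steps, in order: (1) observe $\iHom_A(A\langle T\rangle,A)=\iHom_{A^{\circ}}(A^{\circ}\langle T\rangle,A^{\circ})[\tfrac1\varpi]$, using that $A=A^{\circ}[\tfrac1\varpi]$ and $A\langle T\rangle = A^{\circ}\langle T\rangle[\tfrac1\varpi]$ together with the compatibility of internal Hom with localization at $\varpi$ for the relevant solid modules; (2) write $A^{\circ}\langle T\rangle=\varprojlim_s A^{\circ}/\varpi^s[T]$ and pull the limit out of the first argument of $\iHom_{A^{\circ}}$, so that $\iHom_{A^{\circ}}(A^{\circ}\langle T\rangle,A^{\circ})=\varprojlim_s\iHom_{A^{\circ}/\varpi^s}(A^{\circ}/\varpi^s[T],A^{\circ}/\varpi^s)$; (3) at each finite level, $A^{\circ}/\varpi^s$ is discrete, $A^{\circ}/\varpi^s[T]$ is the free module on the monomials $\{T^n\}_{n\in\N}$, so the internal Hom is the product $\prod_{n\in\N}(A^{\circ}/\varpi^s)T^{n,\vee}$ over the dual basis; (4) re-invert $\varpi$ and commute it past the inverse limit/product to get $\bigl(\prod_{n\in\N}A^{\circ}T^{n,\vee}\bigr)[\tfrac1\varpi]$. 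Then I identify the source similarly: $A\langle\langle T^{-1}\rangle\rangle/A\langle T\rangle$ is, by its very definition as $(\varprojlim_s (A^{\circ}/\varpi^s)((T^{-1})))[\tfrac1\varpi]$ modulo $A\langle T\rangle$, equal to $\bigl(\varprojlim_s T^{-1}(A^{\circ}/\varpi^s)[[T^{-1}]]\bigr)[\tfrac1\varpi]=\bigl(\prod_{n\ge 1}A^{\circ}T^{-n}\bigr)[\tfrac1\varpi]$. Finally I match the two: $\mathrm{res}_0$ sends $T^{-n-1}$ to the functional $T^n\mapsto 1$, $T^m\mapsto 0$ for $m\ne n$, i.e. to $T^{n,\vee}$, so the map is precisely the identification of these two products of copies of $A^{\circ}[\tfrac1\varpi]=A$ indexed by $\N$, hence an isomorphism of condensed $A\langle T\rangle$-modules.

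The main obstacle I anticipate is the bookkeeping of how $\iHom$ and localization at $\varpi$ interact with the inverse limit defining $A^{\circ}\langle T\rangle$: steps (1), (2) and (4) above each require a small justification that inverting $\varpi$ commutes with $\varprojlim_s$ (a countable limit of the $\varpi$-power-torsion-free modules $\iHom_{A^{\circ}/\varpi^s}(\cdots)$, whose transition maps are surjective, so $\varprojlim^1$ vanishes and the localization is exact on the resulting Mittag-Leffler system) and that the internal Hom out of a free module of countable rank is the corresponding product — all of which hold in $\Mod_{\Z}^{\solid}$ but should be invoked carefully rather than silently. Everything else is formal once the topological-to-condensed passage is made at the start.
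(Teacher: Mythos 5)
Your proposal is correct and follows essentially the same route as the paper's proof: establish well-definedness of the pairing via the topological description of $A\langle\langle T^{-1}\rangle\rangle$, compute $\iHom_A(A\langle T\rangle,A)$ by reducing to $\iHom_{A^{\circ}/\varpi^s}(A^{\circ}/\varpi^s[T],A^{\circ}/\varpi^s)=\prod_n (A^{\circ}/\varpi^s)T^{n,\vee}$ and passing back through the limit and localization, then match $T^{-n-1}\mapsto T^{n,\vee}$. Your explicit identification of the source as $(\prod_{n\ge1}A^{\circ}T^{-n})[\tfrac{1}{\varpi}]$ and your flagged justifications for commuting $[\tfrac{1}{\varpi}]$ past $\varprojlim_s$ are just slightly more careful versions of steps the paper leaves implicit.
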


\begin{remark}
Let $a_1,\ldots,  a_k\in A^{\circ}$ be a finite set of topological generators over $\zp$,  then $A^{\circ}$ is quasi-finitely generated $\Z[a_1,\ldots, a_k]$-module,  namely,  $A^{\circ}/\varpi^s$ is a finite  $\Z[a_1,\ldots, a_k]$-module for all $s\geq 1$,  cf. \cite[Definition 3.4]{Gregory}. By \cite[Lemma 3.26]{Gregory}, this implies that
\[
 \prod_{I} \Z  \otimes_{\Z_{\sol}}   A_{\sol} =  \bigg( \prod_{I} A^{\circ} \bigg) [\frac{1}{\varpi}].  
\]
In particular,  since $\frac{A\langle \langle T^{-1} \rangle \rangle}{A\langle T \rangle} =  \frac{A^\circ((T^{-1}))}{ A^\circ[T]}[\frac{1}{\varpi}]$ as $A$-module, we get
\[
\frac{A\langle \langle T^{-1} \rangle \rangle}{A\langle T \rangle}=  \frac{\Z((T^{-1}))}{ \Z[T]} \otimes_{\Z_{\sol}} A_{\sol}.  
\]
\end{remark}

Let us recall the following proposition concerning the solidification  of $\Z[T]_{\sol}$-modules.

\begin{prop}[{\cite[Proposition 3.13]{Gregory}}]
\label{PropAndreychevSolidification}
Let $C \in D( \Z_{\sol} )$,  then 
\[
C\otimes^L_{\Z_{\sol}} \Z[T]_{\sol} = R\iHom_{\Z}( \frac{\Z((T^{-1}))}{\Z[T]},  C).  
\]
\end{prop}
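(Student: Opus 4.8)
The plan is to show that the map of Proposition \ref{PropAndreychevSolidification} induced by the residue pairing is an isomorphism by reducing to the compact projective generators of $D(\Z_\sol)$. Both $C \mapsto C \otimes^L_{\Z_\sol} \Z[T]_\sol$ and $C \mapsto R\iHom_{\Z}(\tfrac{\Z((T^{-1}))}{\Z[T]}, C)$ are functors $D(\Z_\sol) \to D(\Z_\sol)$ (the second one landing in addition in modules over $\Z[T]$, through the $\Z[T]$-action on $\tfrac{\Z((T^{-1}))}{\Z[T]}$), and I claim both preserve all colimits: the first because it is a left adjoint by Theorem \ref{TheoLemmaAnalyticrings}(2), the second because $\tfrac{\Z((T^{-1}))}{\Z[T]}$ is a compact object of $D(\Z_\sol)$. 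Indeed, as a solid abelian group $\tfrac{\Z((T^{-1}))}{\Z[T]} \cong T^{-1}\Z[[T^{-1}]] \cong \prod_{\N}\Z$, and $\prod_\N\Z$ is compact in $\Mod_{\Z_\sol}$; this is one of the basic inputs of the solid formalism and is tied to Specker's theorem, which in this language reads $R\iHom_{\Z}(\prod_{\N}\Z, \Z) = \bigoplus_{\N}\Z$ concentrated in degree $0$. It would then suffice to exhibit, functorially in an extremally disconnected set $S = \varprojlim_i S_i$ (written as a cofiltered limit of finite sets), an isomorphism between the two functors evaluated at $C = \Z_\sol[S]$ that is compatible with the $\Z[T]$-module structures; such data extends uniquely to a natural transformation of colimit-preserving functors, which is then an isomorphism on all of $D(\Z_\sol)$.

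On the generator $\Z_\sol[S]$ I would compute both sides explicitly. Since $\Z[T]$ is of finite type over $\Z$, the filtered colimit over finitely generated subalgebras in the definition of the functor of measures of $\Z[T]_\sol$ (Example \ref{Exampleanalyticrings}(2)) has $\Z[T]$ itself as a final object, so Theorem \ref{TheoLemmaAnalyticrings} gives $\Z_\sol[S] \otimes^L_{\Z_\sol} \Z[T]_\sol = \Z[T]_\sol[S] = \varprojlim_i \Z[T][S_i] = \varprojlim_i \big(\bigoplus_{n \geq 0} \Z[S_i]\, T^n\big)$, concentrated in degree $0$. On the other side, $R\iHom$ out of a fixed object commutes with limits, so $R\iHom_{\Z}(\tfrac{\Z((T^{-1}))}{\Z[T]}, \Z_\sol[S]) = R\varprojlim_i R\iHom_{\Z}(\tfrac{\Z((T^{-1}))}{\Z[T]}, \Z[S_i])$; each $S_i$ is finite, so $\Z[S_i]$ is finite free over $\Z$, and the elementary polynomial analogue over $\Z$ of Lemma \ref{lemmares} — namely that $\mathrm{res}_0$ identifies $\tfrac{\Z((T^{-1}))}{\Z[T]} \xrightarrow{\sim} \iHom_{\Z}(\Z[T], \Z)$ via $T^{-n-1} \mapsto T^{n,\vee}$, whence by reflexivity of $\bigoplus_\N\Z$ and the degree-$0$ vanishing above $R\iHom_{\Z}(\tfrac{\Z((T^{-1}))}{\Z[T]}, \Z) = \Z[T]$ in degree $0$ — gives $R\iHom_{\Z}(\tfrac{\Z((T^{-1}))}{\Z[T]}, \Z[S_i]) = \bigoplus_{n \geq 0} \Z[S_i]\, T^{-n-1,\vee}$ in degree $0$. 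The transition maps of $\{\bigoplus_{n\ge 0}\Z[S_i]\,T^{-n-1,\vee}\}_i$ are surjective, so $R\varprojlim_i$ collapses to $\varprojlim_i$, and I obtain $R\iHom_{\Z}(\tfrac{\Z((T^{-1}))}{\Z[T]}, \Z_\sol[S]) = \varprojlim_i \bigoplus_{n \geq 0}\Z[S_i]\, T^{-n-1,\vee}$. The residue pairing then realizes the termwise identification $T^n \leftrightarrow T^{-n-1,\vee}$ between this and the previous expression, compatibly in $i$, hence functorially in $S$; and it is $\Z[T]$-linear because multiplication by $T$ on $\tfrac{\Z((T^{-1}))}{\Z[T]}$ is the shift $T^{-n-1} \mapsto T^{-n}$ (with $T^{-1}\mapsto 0$), which matches multiplication by $T$ on $\Z[T]_\sol[S]$ via the identity $\mathrm{res}_0(Tf, g) = \mathrm{res}_0(f, Tg)$.

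The hard part will be the reduction step itself: proving that $\prod_\N\Z \cong \tfrac{\Z((T^{-1}))}{\Z[T]}$ is compact in $D(\Z_\sol)$ — equivalently, that $R\iHom_\Z(\prod_\N\Z, -)$ commutes with filtered colimits and arbitrary direct sums — and then verifying that the termwise residue identifications on the generators $\Z_\sol[S]$ genuinely assemble into a natural transformation of functors on all of $D(\Z_\sol)$, rather than just a collection of isomorphisms. The compactness of $\prod_\N\Z$ is precisely the Specker/slenderness input underpinning the solid theory; one either invokes it from \cite{ClausenScholzeCondensed2019} or reproves it, and unwinding that argument also re-explains why $\tfrac{\Z((T^{-1}))}{\Z[T]}$ is the object appearing on the right-hand side. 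Everything else is the bookkeeping above.
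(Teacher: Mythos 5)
The paper does not prove this proposition at all: it is imported as a black box from Andreychev (\cite[Proposition 3.13]{Gregory}), and your argument is essentially the proof given there and in \cite[Lecture VIII]{ClausenScholzeCondensed2019} — both functors preserve colimits because $\frac{\Z((T^{-1}))}{\Z[T]}\cong\prod_{\N}\Z$ is compact projective in $\Mod_{\Z}^{\solid}$ (the Specker input), so one reduces to the generators $\Z_{\sol}[S]$ and matches the two sides via the residue pairing, exactly as you do. The one step whose stated justification does not hold up is the collapse of $R\varprojlim_i$ by Mittag--Leffler: the cofiltered index category of $S=\varprojlim_i S_i$ is in general uncountable, and surjectivity of transition maps does not force vanishing of higher derived limits there; instead, the degree-zero concentration follows from the projectivity of $\prod_{\N}\Z$ you have already invoked (since $R\iHom_{\Z}(P,M)(S')=R\Hom_{\Z}(P\otimes^L_{\Z_{\sol}}\Z_{\sol}[S'],M)$ and $\prod_{\N}\Z\otimes_{\Z_{\sol}}\Z_{\sol}[S']$ is again projective), after which $H^0$ of the derived limit is the underived limit. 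For the assembly issue you flag at the end, the cleanest route is to reverse the order of operations: the residue pairing gives a global map $C\otimes_{\Z}\Z[T]\to R\iHom_{\Z}(\frac{\Z((T^{-1}))}{\Z[T]},C)$ whose target is a solid $\Z[T]$-module, so it factors through $C\otimes^L_{\Z_{\sol}}\Z[T]_{\sol}$ by the universal property of solidification, and one then only has to check this single natural transformation is an equivalence on the generators — which is your computation.
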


\begin{cor}
\label{PropsixFunctorTate}
Let  $A$ be a reduced Tate algebra of finite type over $\qp$.  Let   $A\langle \underline{T} \rangle = A \langle T_1, \ldots, T_d\rangle$ be  the Tate power series algebra in $d$ variables,   and  $A\langle \underline{T} \rangle^{\vee}= \iHom_{A}(A\langle \underline{T} \rangle,  A)$.  Then for any $W,C\in D(A_{\sol})$  we have a functorial quasi-isomorphism 
\[
R\iHom_{A}(W,  C\otimes^L_{A_{\sol}} A\langle \underline{T} \rangle_{\sol}) = R\iHom_{A}( W\otimes_{A_{\sol}}^L A\langle\underline{T} \rangle^{\vee}, C). 
\]
\end{cor}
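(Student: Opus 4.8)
The plan is to deduce the corollary, by tensor--Hom adjunction over the analytic ring $A_\sol$, from the $A_\sol$-linear solidification formula
\[
C\otimes^L_{A_\sol} A\langle \underline{T}\rangle_\sol \;=\; R\iHom_A\big(A\langle \underline{T}\rangle^\vee,\, C\big),\qquad C\in D(A_\sol),
\]
which is an $A$-linear, $d$-variable refinement of Proposition \ref{PropAndreychevSolidification}. Granting this, for $W,C\in D(A_\sol)$ one uses that $D(A_\sol)$ is closed symmetric monoidal for $\otimes^L_{A_\sol}$ with internal $R\iHom$ (Theorem \ref{TheoLemmaAnalyticrings}) to compute
\[
R\iHom_A\big(W, C\otimes^L_{A_\sol}A\langle\underline{T}\rangle_\sol\big)
= R\iHom_A\big(W, R\iHom_A(A\langle\underline{T}\rangle^\vee, C)\big)
= R\iHom_A\big(W\otimes^L_{A_\sol}A\langle\underline{T}\rangle^\vee,\, C\big),
\]
all identifications being natural in $W$ and $C$, which gives exactly the claimed functorial quasi-isomorphism. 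So it remains to prove the displayed solidification formula.

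For this I descend to $\Z_\sol$. By \cite{Gregory} (compare Example \ref{Exampleanalyticrings}(5)), the solid analytic structure on a Tate power series algebra is the base change of the one on the discrete polynomial ring, i.e. $A\langle\underline{T}\rangle_\sol = A_\sol\otimes_{\Z_\sol}\Z[\underline{T}]_\sol$ with $\Z[\underline{T}]_\sol=\Z[T_1]_\sol\otimes_{\Z_\sol}\cdots\otimes_{\Z_\sol}\Z[T_d]_\sol$ as analytic rings. Hence, viewing $C$ through the forgetful functor $D(A_\sol)\to D(\Z_\sol)$ and applying Proposition \ref{PropAndreychevSolidification} once in each variable,
\[
C\otimes^L_{A_\sol}A\langle\underline{T}\rangle_\sol = C\otimes^L_{\Z_\sol}\Z[\underline{T}]_\sol = R\iHom_\Z\big(N_d,\, C\big),\qquad N_d:=\textstyle\bigotimes_{i=1}^d\, \Z((T_i^{-1}))/\Z[T_i],
\]
the tensor product being over $\Z_\sol$ and underived (the factors are flat, being products of copies of $\Z$), and the residual $A$-module structure on the right-hand side coming from $C$. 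On the other hand, the evident $d$-variable version of Lemma \ref{lemmares} (proved exactly as the one-variable case, via the residue pairing) together with the Remark following it identifies the $A$-linear dual $A\langle\underline{T}\rangle^\vee=\iHom_A(A\langle\underline{T}\rangle,A)$ with $N_d\otimes_{\Z_\sol}A_\sol$. Since for any $\Z_\sol$-module $M$ and any $X\in D(A_\sol)$ one has $R\iHom_A\big(M\otimes_{\Z_\sol}A_\sol,\,X\big)=R\iHom_{\Z_\sol}(M,X)$ as $A$-modules (the restriction/extension-of-scalars adjunction along $\Z_\sol\to A_\sol$), applying this with $M=N_d$ gives $C\otimes^L_{A_\sol}A\langle\underline{T}\rangle_\sol=R\iHom_A(A\langle\underline{T}\rangle^\vee,C)$, as wanted.

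The only genuinely technical inputs are therefore the base-change compatibility $A\langle\underline{T}\rangle_\sol=A_\sol\otimes_{\Z_\sol}\Z[\underline{T}]_\sol$ of analytic ring structures and the multivariable form of Lemma \ref{lemmares}; everything else is a bookkeeping of the internal-Hom and base-change adjunctions along the tower $\Z_\sol\to A_\sol\to A\langle\underline{T}\rangle_\sol$, and I expect the careful tracking of which module structure is used in each adjunction to be the main place where one must be attentive. Alternatively, the case $d>1$ can be organised as an induction, writing $A\langle T_1,\dots,T_d\rangle=A\langle T_1,\dots,T_{d-1}\rangle\langle T_d\rangle$ and combining transitivity of solidification with the one-variable statement over the reduced Tate algebra $A\langle T_1,\dots,T_{d-1}\rangle$.
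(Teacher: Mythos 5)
Your proposal is correct and follows essentially the same route as the paper: both rest on Andreychev's one‑variable solidification formula (Proposition \ref{PropAndreychevSolidification}), the identification of $A\langle T\rangle^{\vee}$ with $\frac{\Z((T^{-1}))}{\Z[T]}\otimes_{\Z_{\sol}}A_{\sol}$ via the residue pairing of Lemma \ref{lemmares}, and tensor--Hom adjunction along $\Z_{\sol}\to A_{\sol}\to A\langle\underline{T}\rangle_{\sol}$. The only difference is organizational --- you handle all $d$ variables at once over $\Z_{\sol}$, while the paper inducts by adjoining one variable at a time over the intermediate Tate algebra $A\langle T_1,\dots,T_{d-1}\rangle$ (the alternative you yourself mention) --- and this does not change the substance of the argument.
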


\begin{proof}
Let us first proof the case of one variable, which follows essentially from Proposition \ref{PropAndreychevSolidification}. Indeed, by  \cite[Lemma 4.7]{Gregory} we have $A\langle T \rangle_{\sol}= A_{\sol}\otimes_{\Z_{\sol}}^L \Z[T]_{\sol}$,  so that $C\otimes^{L}_{A_{\sol}} A\langle T \rangle_{\sol}= C \otimes^{L}_{\Z_{\sol}} \Z[T]_{\sol}$.  Thus,  we get 
\begin{eqnarray*}
R\iHom_{A}(W,  C\otimes^L_{A_{\sol}} A\langle \underline{T} \rangle_{\sol}) & = & R\iHom_{A}(W, C \otimes^L_{\Z_{\sol}} \Z[T]_{\sol}) \\
                & = & R\iHom_{A}\bigg(W\otimes_{\Z_{\sol}}^L\frac{\Z((T^{-1}))}{\Z[T]},  C \bigg)\\
                & = & R\iHom_{A}( W \otimes^L_{A_{\sol}} \frac{A\langle \langle T^{-1} \rangle\rangle}{A\langle T \rangle},  C   ) \\
                & = & R\iHom_{A}(W \otimes_{A_{\sol}} A\langle T \rangle^{\vee}, C).  
\end{eqnarray*}

Assume that the result holds for $d-1$ variables and let $A' = A\langle T_1,\ldots ,T_{d-1} \rangle$, and set $W'= W\otimes_{A_{\sol}}^{L} A'_{\sol}$ and $C' = C \otimes^{L}_{A_{\sol}} A'_{\sol}$.  Then,  by the case of one variable we have 
\begin{equation}
\label{eqOneVariableCase}
R\iHom_{A'}(W', C' \otimes^{L}_{A'_{\sol}} A'_{\sol}\langle T_{d} \rangle) = R\iHom_{A'}(W' \otimes^{L}_{A'} \iHom_{A'}(A'\langle T_d \rangle , A') , C')
\end{equation}
Observe also that
\[
\iHom_{A'}(A'\langle T_d \rangle,  A') = A \langle T_d \rangle^\vee \otimes_{A_{\sol}}^{L} A'_{\sol} 
\]
and $(A')^{\vee}\otimes_{A_{\sol}}^L A\langle T_d \rangle^{\vee} = A \langle T_1, \ldots,  T_d\rangle^{\vee}$. Then, 
\begin{eqnarray*}
R\iHom_{A}(W, C\otimes^L_{A_{\sol}} A\langle \underline{T} \rangle_\sol) & = & R\iHom_{A'}(W',  C' \otimes^L_{A'_{\sol}} A'\langle T_{d}\rangle_{\sol}) \\&= & R\iHom_{A'} (W\otimes^L_{A_{\sol}} A\langle T_d \rangle^\vee \otimes^L_{A_{\sol}} A'_{\sol} ,  C'  ) \\
& = & R\iHom_{A} (W\otimes^L_{A_{\sol}} A\langle T_d \rangle^\vee  ,  C\otimes^L_{A_{\sol}} A'_{\sol}  ) \\
& = & R\iHom_{A}( W\otimes^L_{A_{\sol}} A\langle T_d \rangle^{\vee} \otimes^L_{A_\sol} (A')^{ \vee},  C  ) \\
& = & R\iHom_{A}(W\otimes^L_{A_{\sol}} A\langle \underline{T} \rangle^{\vee},  C),
\end{eqnarray*}
where the second equality equality follows from Equation \eqref{eqOneVariableCase}, the third equality is a base change from $A$ to $A'$ and the fourth equality uses the induction step for $d-1$ variables.  This finishes the proof.  
\end{proof}

\begin{remark} 
Corollary \ref{PropsixFunctorTate} can be seen as an instance of the six functor formalism of \cite[Theorem 8.2]{ClausenScholzeCondensed2019}. Indeed, let $f:  \Spa(A\langle T \rangle,  A^{\circ}\langle T \rangle) \to \Spa(A,A^{\circ})$. By a slight extension of the results of \cite[Lecture VIII]{ClausenScholzeCondensed2019}, one can define pairs of adjoint functors $f^*:  D(A_{\sol}) \rightleftharpoons  D(A\langle T \rangle_{\sol}):f_*$ and  $f_!: D(A\langle T \rangle_{\sol})  \rightleftharpoons   D(A_{\sol})  :  f^!$,  which are the basic cases of  the six functor formalism.  The functor $f_* $ is the forgetful functor and $f^*$ is given by the solidification functor $- \otimes^L_{A_{\sol}} A\langle T \rangle_\sol$. The other two functors are given by $f_! M = M \otimes_{(A\langle T \rangle, A^{\circ})_{\sol}}^L \frac{A\langle \langle T^{-1} \rangle \rangle}{A\langle T \rangle} [-1] $ and $f^! N = f^*N \otimes_{A\langle T \rangle_{\sol}} f^! A $, where $f^! A \cong A\langle T \rangle[1]$.  Then,  Corollary \ref{PropsixFunctorTate} reflects the adjunction $R \iHom_{A_{\sol}}(f_! M, N) = R \iHom_{A\langle T \rangle_{\sol}}(M, f^! N)$.
\end{remark}

\section{Non-archimedean condensed functional analysis}
\label{SectionNonArch}

The main purpose of this section is to state a duality between two classes of solid vector spaces over a finite extension of $\Q_p$,  namely Fr\'echet spaces and LS spaces (to be defined below), generalising the duality between Banach spaces and Smith spaces (c.f. \cite[Theorem 3.8]{ClauseScholzeanalyticspaces}), and the duality between nuclear Fr\'echet spaces and LB spaces of compact type (see, e.g., \cite[Theorem 1.3]{SchTeitGl2}). For doing so, we will use many results on the theory of condensed non-archimedean functional analysis developed by Clausen and Scholze. Since these  results haven't yet appeared in the literature, we give a detailed account with proofs included. The reader should  be aware that most of the theory in this chapter must be  attributed to Clausen and Scholze \cite{ClausenScholzeCM}, and the only original work is that  concerning duality, principally Theorem \ref{theorem:duality}.

\subsection{Banach and Smith spaces}
\label{SubsecSolidVS}

Let $K$ and $\O_K$ be as in the previous section, and let $\varpi$ be a uniformiser of $\O_K$.  In this paragraph  we focus our attention on the category of solid $K$-vector spaces (or $K_{\sol}$-vector spaces),  i.e. the category $\SolidK$.  We start with some basic concepts 

\begin{definition} \leavevmode
\begin{enumerate}
    \item A condensed set $T$ is said to be discrete if $T=\underline{X}$ where $X$ is a discrete topological space.  

    \item A solid $\O_K$-module $M$ is said to be $\varpi$-adically complete if $M = \varprojlim_{s\in \N} M/\varpi^s M $ as solid $\O_K$-modules. 
    \end{enumerate}
    
\end{definition}

Next we introduce  Banach and Smith spaces  in the category of solid $K$-vector spaces. 

\begin{definition} \leavevmode
\begin{enumerate}

    \item A solid $K$-Banach space is a solid $K$-vector space $V$ admitting a $\varpi$-adically complete  $\O_{K,\sol}$-module $V^0\subset V$ such that: 
    \begin{enumerate}
        \item $V=V^0\otimes_{\O_{K,\sol}} K$.
        \item $V^0/\varpi^s V^0$ is discrete for all $s\in \N$. 
    \end{enumerate}
    We say that $V^0$ is a lattice of $V$.  
    
    \item  A solid $K$-Smith space  is a solid $K$-vector space $W$ admitting a profinite $\O_{K,\sol}$-submodule $W^0$ such that $W=W^0 \otimes_{\O_{K,\sol}} K=W^0[\frac{1}{\varpi}]$. We say that $W^0$ is a lattice of $W$. 
\end{enumerate}
\end{definition}

\begin{remark} Over $\R$, a Smith space (\cite[Definition  3.6]{ClauseScholzeanalyticspaces}) is a complete locally convex topological $\R$-vector space $W$ containing a compact absolutely convex subset $C\subset V$ such that $V=\bigcup_{a>0} aC$. As its $p$-adic analogue, we define a classical $K$-Smith space to be a topological $K$-vector space $W$ containing a profinite $\O_K$-module $W^0\subset W$ such that $W= W^0[\frac{1}{\varpi}]$.
\end{remark}

\begin{example}
One of the first examples of  Banach algebras over $K$ is the Tate algebra $K\langle T \rangle$,  it is the ring of  global sections of an affinoid  disc  $\bb{D}^1_{K}= \Spa(K\langle T \rangle,  \O_{K}\langle T \rangle)$. An orthonormal basis for $K\langle T \rangle$ is provided by the monomials $\{T^n\}_{n\in \N}$.   Let $K\langle S \rangle$ be another Tate algebra,  one has that $K\langle T \rangle \otimes_{K_{\sol}} K \langle S \rangle = K\langle T,S \rangle$ (cf. Lemma \ref{CorotensorBanach}) is the Tate algebra over $K$ in $2$ variables,  i.e.  the global sections of $\bb{D}^2_K= \bb{D}^1_K\times \bb{D}^1_K$. By Corollary \ref{PropsixFunctorTate}, the dual $K\langle T \rangle^{\vee}$ is isomorphic to $\frac{K \langle \langle T^{-1} \rangle \rangle}{K\langle T \rangle} \cong \prod_{n\geq 1} \O_K T^{-n} [\frac{1}{p}]$ which is a Smith space. 
\end{example}

\begin{proposition} \label{RemarkBanachSmith}
The functor $V\mapsto V(*)_{\mathrm{top}}$ induces an exact equivalence of categories between solid  and classical $K$-Banach spaces (resp.  solid  and classical $K$-Smith spaces). 
\end{proposition}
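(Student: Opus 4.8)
The plan is to treat the Banach and Smith cases in parallel, constructing the functor $V \mapsto V(*)_{\mathrm{top}}$ explicitly on lattices and exhibiting a quasi-inverse via the condensation functor $T \mapsto \underline{T}$. First I would recall from Proposition \ref{propFunctorTopCond} that $T \mapsto \underline{T}$ is fully faithful on compactly generated (weak) Hausdorff topological spaces, and that it sends closed immersions of compact Hausdorff spaces to monomorphisms, commuting with the relevant colimits. The key observation is that for a solid $K$-Banach space $V$ with lattice $V^0 = \varprojlim_s V^0/\varpi^s V^0$, each $V^0/\varpi^s V^0$ is discrete, so $V^0$ is an inverse limit of discrete abelian groups, hence the condensed abelian group underlying a \emph{complete} topological $\O_K$-module (profinite-by-discrete type data), and $V = V^0[\frac1\varpi] = \varinjlim_{n} \varpi^{-n} V^0$ corresponds to an increasing union of such, which by Proposition \ref{propFunctorTopCond}(4) is again the condensation of its associated topological space $V(*)_{\mathrm{top}}$. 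Dually, for a Smith space $W$ with profinite lattice $W^0$, the space $W^0$ is literally $\underline{(W^0)_{\mathrm{top}}}$ for a profinite topological group by Proposition \ref{propFunctorTopCond}(3), and $W = W^0[\frac1\varpi] = \varinjlim_n \varpi^{-n} W^0$ is a countable increasing union of profinite sets along closed immersions, so again $W = \underline{W(*)_{\mathrm{top}}}$ by Proposition \ref{propFunctorTopCond}(4), and $W(*)_{\mathrm{top}}$ is a classical $K$-Smith space by definition.

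Next I would check that the functor lands in the correct target and is essentially surjective. For essential surjectivity: given a classical $K$-Banach space $B$ with unit ball $B^0$, the condensed abelian group $\underline{B^0}$ is $\varpi$-adically complete with $\underline{B^0}/\varpi^s = \underline{B^0/\varpi^s B^0}$ discrete (as $B^0/\varpi^s B^0$ is discrete for a classical Banach lattice), and $\underline{B^0}[\frac1\varpi] = \underline{B}$ is then a solid $K$-Banach space (solidity of $\underline{B^0}$ follows since it is an inverse limit of discrete, hence solid, $\O_K$-modules, and inverting $\varpi$ preserves solidity); similarly for the Smith case, $\underline{W^0}$ is profinite hence solid, so $\underline{W^0}[\frac1\varpi]$ is a solid $K$-Smith space. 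Fully faithfulness then follows from Proposition \ref{propFunctorTopCond}(2) together with the fact that a $K$-linear continuous map of Banach (resp. Smith) spaces is the same datum whether described topologically or condensed-ly, once one knows the underlying condensed $K$-vector-space structure is recovered from the topological one; here one uses that the lattices $V^0$ are recovered intrinsically (as the images of maps from profinite sets, resp. as $\varpi$-adic completions) and that $\underline{\mathrm{Hom}}$ between these objects computes the correct topological morphisms because both sides are compactly generated.

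Finally I would verify exactness. Since $\SolidK$ is abelian with the classical Banach and Smith spaces forming additive (not abelian) subcategories, "exact equivalence" here should be read as: the functor is additive and sends short exact sequences of classical Banach (resp. Smith) spaces — i.e. strict short exact sequences in the topological sense — to short exact sequences in $\SolidK$, and conversely. For a strict short exact sequence $0 \to B' \to B \to B'' \to 0$ of $K$-Banach spaces, strictness lets one choose compatible lattices so that $0 \to B'^0/\varpi^s \to B^0/\varpi^s \to B''^0/\varpi^s \to 0$ is exact for each $s$ after possibly passing to a cofinal system (using the open mapping theorem / Banach's theorem that a surjection of Banach spaces is open), and then taking $\varprojlim_s$ — exact here because the transition maps are surjective, i.e. Mittag-Leffler — and inverting $\varpi$ gives exactness in $\SolidK$; the Smith case is dual, using that profinite abelian groups form an abelian category under which the relevant sequences stay exact. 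I expect the main obstacle to be precisely this last point: matching the topological notion of strict exact sequence with exactness in the solid category, in particular handling the $\varprojlim$/Mittag-Leffler argument and the appeal to the open mapping theorem cleanly, and making sure the "exact equivalence" claim is interpreted correctly given that neither subcategory is abelian. The topological-to-condensed comparison itself is then a routine consequence of Proposition \ref{propFunctorTopCond}.
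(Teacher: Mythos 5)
Your construction of the equivalence itself follows the paper's route: condense lattices via Proposition \ref{propFunctorTopCond}, invert $\varpi$, and use full faithfulness on compactly generated spaces. For the forward direction of exactness (strict exact sequence of classical Banach spaces $\Rightarrow$ exact sequence of solid spaces) your lattice-by-lattice argument with reduction mod $\varpi^s$ and a Mittag--Leffler limit is a valid, more explicit alternative to the paper's one-line appeal to ``closed subspace plus topological quotient''; note only that left-exactness mod $\varpi^s$ is automatic from torsion-freeness of the quotient lattice, so no cofinality trick is needed, and that vanishing of $R^1\varprojlim$ must be checked in the condensed category (it holds for countable towers of surjections by evaluating on extremally disconnected sets, which are projective).

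The genuine gap is the converse direction, which you state (``and conversely'') but never argue. Given a short exact sequence $0 \to V' \to V \to V'' \to 0$ of \emph{solid} Banach (or Smith) spaces, you must show the resulting sequence of classical spaces is \emph{strict} exact: algebraic exactness follows from exactness of $T \mapsto T(*)$, but you still need (a) that $V'(*)_{\mathrm{top}}$ is a closed subspace of $V(*)_{\mathrm{top}}$, and (b) that $V''(*)_{\mathrm{top}}$ carries the quotient topology. Point (a) is where the real content lies: the paper observes that $V' \to V$ is a quasicompact injection (being the pullback of $0 \to V''$ along $V \to V''$) and then invokes Proposition \ref{PropQuasiseparatedCondensed}, which identifies quasicompact injections into a quasiseparated condensed set with closed subspaces of the underlying topological space. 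Point (b) follows because $V'' \cong V/V'$ as condensed $K$-vector spaces and the left adjoint $X \mapsto X(*)_{\mathrm{top}}$ sends condensed quotients to topological quotients. Without some such argument, nothing rules out an exact sequence of solid Banach spaces whose underlying topological sequence is exact but not strict, so this step cannot be waved through.
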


\begin{proof}
Let $V$ be a Banach space over $K$ in the classical sense, and let $V^0\subset V$ be the unit ball. Then $V^0$ is endowed with the  $\varpi$-adic topology, i.e. it is the inverse limit $V^0= \varprojlim_s V^0/\varpi^s V^0$, with $V^0/\varpi^s V^0$ discrete for all $s\in \N$. By Proposition \ref{propFunctorTopCond} we know that \[ \underline{V}= \varinjlim_{n\in \N} (\varprojlim_{s\in \N} \underline{V^0/\varpi^s V^0} ) \varpi^{-n}= \underline{V^0}\otimes_{\O_{K,\sol}} K, \] so that $\underline{V}$ is a solid $K$-Banach space. Conversely, if $V$ is a solid Banach space then $V(*)_{\mathrm{top}}$ is clearly a $K$-Banach space in the classical sense.

Let $W$ be a (classical) $K$-Smith space and $W^0\subset W$ a lattice. As $W^0$ is compact,  it is profinite and Proposition \ref{propFunctorTopCond} implies that $\underline{W}= \underline{W^0}\otimes_{\O_{K,\sol}} K$ is a solid Smith space as in our previous definition. The converse is clear.

Let $ 0 \to V' \to V \to V'' \to 0 $ be a strict exact sequence of classical Banach (resp. Smith) spaces. Since $V'$ is identified with a closed subspace of $V$ and $V''$ is identified with the topological quotient $V/V'$,  one easily verifies that 
$0 \to \underline{V}' \to \underline{V} \to \underline{V}'' \to 0$ is an exact sequence of solid $K$-vector spaces. Conversely, let $ 0 \to V' \to V \to V'' \to 0 $ be an exact sequence of solid Banach or Smith spaces.  Since the functor $T\mapsto T(*)$ from condensed abelian groups to abelian groups is exact,  one has a short exact sequence of $K$-vector spaces
\begin{equation}
\label{eqStrictExactClassic}
0 \to V'(*) \to V(*) \to V''(*) \to 0.
\end{equation}
The fact that $V' \to V$ is a quasicompact immersion (being the pullback of $0\to V''$ by $V\to V''$)  implies  that $V'(*)_{\mathrm{top}}\subset V(*)_{\mathrm{top}}$ is a closed subspace,  cf. Proposition \ref{PropQuasiseparatedCondensed}.  Finally,  since $V'' \cong V/ V'$ as condensed $K$-vector spaces,  $V''(*)_{\mathrm{top}}$ is isomorphic to the topological quotient $V(*)_{\mathrm{top}}/V'(*)_{\mathrm{top}}$ proving that \eqref{eqStrictExactClassic} is strict exact. 
\end{proof}

\begin{convention}
From now on we will refer to solid Banach and Smith  spaces   simply as  Banach and Smith spaces respectively.  The Banach and Smith spaces considered as topological spaces will be called classical Banach and Smith spaces.  
\end{convention}

\begin{proposition}
There is a natural functor
\[
\mathcal{LC}_K \to \SolidK \; : \; V \mapsto \underline{V} 
\]
from the category of complete locally convex $K$-vector spaces to solid $K$-vector spaces. Moreover,  the restriction of $\mathcal{LC}_K$ to the subcategory of compactly generated complete locally convex $K$-vector spaces is fully faithful.  
\proof
This follows from the fact that any complete locally convex $K$-vector space can be written as a cofiltered limit of classical  Banach spaces \cite[Chapter  I, \S 4]{SchneiderNFA}, and the adjunction of Proposition \ref{propFunctorTopCond}. For the fully faithfulness,  let $V_1 $ and $V_2$ be two objects in $\mathcal{LC}_K$ which are compactly generated as topological spaces,  then,  by \cite[Proposition 1.2 (2)]{ClauseScholzeanalyticspaces} the natural map $\Hom_{\Top}(V_1,  V_2 ) \to  \Hom_{\mathrm{Cond}}(\underline{V_1},  \underline{V_2})$ is a bijection. Since a continuous map $f: V_1 \to V_2$ (resp. a morphism of condensed sets $f: \underline{V_{1}}\to \underline{V_{2}}$) is a continuous $K$-linear map  (resp.  a morphism of condensed $K$-vector spaces)  if and only if it makes the obvious diagrams commute, the proposition follows.
\endproof
\end{proposition}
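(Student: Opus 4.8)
The plan is to obtain everything formally from the adjunction of Proposition \ref{propFunctorTopCond} together with the standard fact that a complete locally convex $K$-vector space is a cofiltered limit of Banach spaces. First I would build the functor. By \cite[Chapter I, \S 4]{SchneiderNFA}, every complete locally convex $K$-vector space $V$ is, as a topological $K$-vector space, the cofiltered limit $V=\varprojlim_i B_i$ of the Banach spaces obtained by completing $V$ along the directed family of continuous seminorms defining its topology, the transition maps being continuous and $K$-linear. By Proposition \ref{propFunctorTopCond}(1) the functor $T\mapsto\underline{T}\colon\Top\to\Cond$ has a left adjoint, hence preserves all limits, and in particular all finite products; consequently it carries topological $K$-vector spaces to condensed $K$-vector spaces and continuous $K$-linear maps to morphisms of such, and it yields $\underline{V}=\varprojlim_i\underline{B_i}$ in the category of condensed $K$-vector spaces. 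Each $\underline{B_i}$ is a solid Banach space by Proposition \ref{RemarkBanachSmith}, hence lies in $\SolidK$, and $\SolidK$ is stable under limits formed inside condensed $K$-vector spaces by Theorem \ref{TheoLemmaAnalyticrings}(1); therefore $\underline{V}\in\SolidK$, and we get the desired functor $\mathcal{LC}_K\to\SolidK$.

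For full faithfulness on the compactly generated objects, let $V_1,V_2\in\mathcal{LC}_K$ be compactly generated as topological spaces; being locally convex they are Hausdorff, so Proposition \ref{propFunctorTopCond}(2) provides a bijection $\Hom_{\Top}(V_1,V_2)\xrightarrow{\sim}\Hom_{\Cond}(\underline{V_1},\underline{V_2})$, and it suffices to show that it matches $K$-linear maps with morphisms of condensed $K$-vector spaces. The $K$-linearity of a continuous $f\colon V_1\to V_2$, respectively of $\underline{f}$, is encoded by the commutativity of the same diagrams, built from the addition maps $V_i\times V_i\to V_i$, the scalar multiplication maps $K\times V_i\to V_i$ and their images under $\underline{(-)}$. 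Since $\underline{(-)}$ preserves these finite products and is faithful on all topological spaces --- two continuous maps with equal image in $\Cond$ already agree on underlying sets, since $\underline{X}(*)$ is the underlying set of $X$ --- the diagrams commute for $f$ if and only if they commute for $\underline{f}$. As $\underline{V_1}$ and $\underline{V_2}$ are solid, morphisms of condensed $K$-vector spaces between them are exactly the morphisms in $\SolidK$, so the bijection restricts to $\Hom_{\mathcal{LC}_K}(V_1,V_2)\xrightarrow{\sim}\Hom_{\SolidK}(\underline{V_1},\underline{V_2})$, as desired.

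The proof is entirely formal, the genuine inputs being Proposition \ref{propFunctorTopCond} of Clausen--Scholze and the structure theory of complete locally convex spaces, so I expect no real obstacle. The points that deserve attention are merely: that $\underline{(-)}$ is limit- and product-preserving, which holds because it is a right adjoint; that it is faithful on all of $\Top$, so that commutativity of diagrams may be tested after applying it; and that the compact generation and Hausdorff hypotheses required by Proposition \ref{propFunctorTopCond}(2) are indeed satisfied here.
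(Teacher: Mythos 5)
Your proof is correct and follows essentially the same route as the paper: the functor is obtained from the cofiltered-limit presentation of complete locally convex spaces together with the adjunction of Proposition \ref{propFunctorTopCond}, and full faithfulness is deduced from Proposition \ref{propFunctorTopCond}(2) by checking that $K$-linearity on both sides is encoded by the same commuting diagrams. You merely spell out some details (limit-preservation of $\underline{(-)}$ as a right adjoint, stability of $\SolidK$ under limits) that the paper leaves implicit.
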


From now, all the complete locally convex $K$-vector spaces will be considered as solid $K$-vector spaces unless otherwise specified. The following result shows that solid Banach and Smith spaces over $K$ have orthonormal basis. 

\begin{lemma} \leavevmode \label{LemmaBanachSmith}
\begin{enumerate}
    \item A solid $K$-vector space is Banach if and only if  it is of the form \[\widehat{\bigoplus}_{i\in I} K := \big( \varprojlim_{s}(\bigoplus_{i\in I} \O_K/\varpi^s) \big) [\frac{1}{\varpi}], \] for some index set $I$. 
    
    \item  A solid $K$-vector space is Smith if and only if it is of the form \[ (\prod_{i\in I} \O_K)[\frac{1}{\varpi}], \] for some index set $I$. In particular, by \cite[Corollary 5.5]{ClausenScholzeCondensed2019}  and Theorem \ref{TheoLemmaAnalyticrings}, Smith spaces form a family of compact projective generators of the category $\SolidK$ of solid $K$-vector spaces.
\end{enumerate}
\proof
 By definition, an object of the form $\widehat{\bigoplus}_{i\in I}K$ is a solid $K$-Banach space. Conversely, let $V$ be a solid  $K$-Banach space and $V^0\subset V$ a lattice. By Proposition \ref{RemarkBanachSmith} $V(*)_{\mathrm{top}}$ is a classical  Banach space with unit ball $V^0(*)_{\mathrm{top}}\subset V(*)_{\mathrm{top}}$ and $\underline{V(*)_{\mathrm{top}}}=V$. But then $V^0(*)_{\mathrm{top}}$ has an orthonormal $\O_K$-basis by taking any lift of a $\O_K/\varpi\O_K$-basis of $V^0/\varpi$.  This proves (1).  
 
 To prove (2), let $W$ be a solid Smith space and $W^0\subset W$ a lattice. Since $W^0$ is a profinite $\O_{K,\sol}$-module,  $W^0=\underline{W^0(*)_{\mathrm{top}}}$ with  $W^0(*)_{\mathrm{top}}$  a profinite $\O_K$-module in the usual sense.  Moreover,  $W^0(*)_{\mathrm{top}}$ is $\O_K$-flat and the topological Nakayama's lemma implies that $W^0(*)_{\mathrm{top}}$ must be of the form $\prod_{i\in I} \O_K$ (\cite[{Expos\'e {$\mathrm{VII}_B$}, 0.3.8}]{SGA3}).  
\endproof
\end{lemma}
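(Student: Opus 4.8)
The statement is essentially a translation of classical $p$-adic functional analysis into the solid language, so the plan is to dispatch the two ``if'' directions by unwinding the definitions and then to treat each ``only if'' direction by passing to classical spaces via Proposition \ref{RemarkBanachSmith} (resp.\ Proposition \ref{propFunctorTopCond}). First I would record the easy directions. If $V=\widehat{\bigoplus}_{i\in I}K$, then $V^{0}:=\varprojlim_{s}\bigoplus_{i\in I}\O_{K}/\varpi^{s}$ is $\varpi$-adically complete by construction, flatness of $\bigoplus_{i\in I}\O_{K}$ over $\O_{K}$ gives $V^{0}/\varpi^{s}V^{0}=\bigoplus_{i\in I}\O_{K}/\varpi^{s}$, which is discrete, and $V^{0}[\tfrac{1}{\varpi}]=V$; hence $V$ is a solid Banach space with lattice $V^{0}$. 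Likewise, if $W=(\prod_{i\in I}\O_{K})[\tfrac{1}{\varpi}]$ then $\prod_{i\in I}\O_{K}=\varprojlim_{s}\prod_{i\in I}\O_{K}/\varpi^{s}$ is a profinite $\O_{K,\sol}$-module and $W$ is a solid Smith space.

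For the converse in (1), let $V$ be a solid Banach space with lattice $V^{0}$. By Proposition \ref{RemarkBanachSmith} the pair $(V,V^{0})$ is the image under $\underline{(-)}$ of a classical $K$-Banach space $V(*)_{\mathrm{top}}$ together with its unit ball $V^{0}(*)_{\mathrm{top}}$, and $\underline{V(*)_{\mathrm{top}}}=V$. Since $K$ is discretely valued, lifting an $\O_{K}/\varpi$-basis of the $\O_{K}/\varpi$-vector space $V^{0}/\varpi V^{0}$ yields an orthonormal $\O_{K}$-basis $(e_{i})_{i\in I}$ of $V^{0}(*)_{\mathrm{top}}$ (\cite{SchneiderNFA}), which identifies $V^{0}(*)_{\mathrm{top}}$ with the $\varpi$-adic completion of $\bigoplus_{i\in I}\O_{K}$. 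As $\underline{(-)}$ is a right adjoint (Proposition \ref{propFunctorTopCond}) it commutes with this inverse limit, and I would conclude $V\cong\widehat{\bigoplus}_{i\in I}K$. For the converse in (2), let $W$ be a solid Smith space with lattice $W^{0}$; by Proposition \ref{propFunctorTopCond} we may write $W^{0}=\underline{W^{0}(*)_{\mathrm{top}}}$ with $W^{0}(*)_{\mathrm{top}}$ a classical profinite $\O_{K}$-module. Since $W^{0}\subset W$ and $W$ is a $K$-vector space, $W^{0}(*)_{\mathrm{top}}$ is $\varpi$-torsion free, hence $\O_{K}$-flat, and a flat profinite $\O_{K}$-module is of the form $\prod_{i\in I}\O_{K}$ by the topological Nakayama lemma (\cite[Exp.\ $\mathrm{VII}_{B}$, 0.3.8]{SGA3}); inverting $\varpi$ gives the claim.

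It then remains to deduce that Smith spaces generate $\SolidK$. For a set $I$ one has $\O_{K,\sol}[\beta I]=\prod_{i\in I}\O_{K}$, obtained by base change along $\Z_{\sol}\to\O_{K,\sol}$ from $\Z_{\sol}[\beta I]=\prod_{i\in I}\Z$ (\cite[Corollary 5.5]{ClausenScholzeCondensed2019}); so by part (2) every Smith space is of the form $K_{\sol}[\beta I]$, and these are compact projective objects of $\SolidK$ by Theorem \ref{TheoLemmaAnalyticrings}. Since every extremally disconnected $S$ is a retract of some $\beta I$, the standard generator $K_{\sol}[S]$ is a quotient of a Smith space, whence the Smith spaces form a family of compact projective generators.

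The bulk of this is bookkeeping with the adjunction of Proposition \ref{propFunctorTopCond} together with Proposition \ref{RemarkBanachSmith}. The genuine external inputs, which I would cite rather than prove, are the existence of orthonormal bases for classical Banach spaces over the discretely valued field $K$ (this is where the hypothesis on $K$ enters) and the SGA3 structure theorem for flat profinite modules. I expect the only slightly delicate point to be the identification $\O_{K,\sol}[\beta I]\cong\prod_{i\in I}\O_{K}$ used in the last step: it amounts to checking that base change along the finite free extension $\O_{K}/\Z_{p}$ carries the free measure object $\prod_{i\in I}\Z$ to $\prod_{i\in I}\O_{K}$, which follows from the quasi-finite-generation formalism of \cite{Gregory} (for instance \cite[Lemma 3.26]{Gregory}).
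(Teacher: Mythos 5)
Your proof is correct and follows essentially the same route as the paper: the easy directions by unwinding definitions, the converse of (1) by passing to the classical Banach space via Proposition \ref{RemarkBanachSmith} and lifting a basis of $V^0/\varpi$, and the converse of (2) via flatness of the profinite lattice and the topological Nakayama lemma from SGA3. The extra detail you supply for the generator statement (identifying $\O_{K,\sol}[\beta I]$ with $\prod_{i\in I}\O_K$ and using that extremally disconnected sets are retracts of Stone--\v{C}ech compactifications) is a correct unwinding of the citations the paper gives for that ``in particular'' clause.
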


We will need the following useful proposition

\begin{prop} [\cite{ClausenScholzeCM}]
\label{PropGuidoNotes}
Let $V$ be a solid $K$-vector space. The following statements  are equivalent
\begin{enumerate}
    \item $V$ is a Smith space
    
    \item $V$ is  quasiseparated, and there is a quasicompact    $\O_K$-submodule $M\subset V$ such that  $V=M[\frac{1}{\varpi}]$. 
\end{enumerate}
Moreover, the class of Smith spaces is stable under extensions, closed subobjects and quotients by closed subobjects. 
\end{prop}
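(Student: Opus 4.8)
The implication $(1)\Rightarrow(2)$ is essentially a reformulation: if $V=W^0[\tfrac1\varpi]$ for a profinite $\O_{K,\sol}$-module $W^0$, then $M:=W^0$ is quasicompact with $V=M[\tfrac1\varpi]$, while $V=\varinjlim_n\varpi^{-n}W^0$ exhibits $V$ as a countable increasing union of profinite (hence compact Hausdorff) solid modules along closed immersions, so $V$ is quasiseparated by Proposition~\ref{propFunctorTopCond}(4). All the work is in $(2)\Rightarrow(1)$, whose plan is to promote the given quasicompact lattice $M$ to a profinite one. The key point, and the step I expect to be the only real difficulty, is the vanishing $\bigcap_{s\ge 0}\varpi^s M=0$: the non-archimedean avatar of the fact that there are no nonzero compact topological vector spaces. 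Everything else is bookkeeping with the dictionary between quasicompact quasiseparated condensed sets and compact Hausdorff spaces (Propositions~\ref{propFunctorTopCond} and~\ref{PropQuasiseparatedCondensed}).

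For $(2)\Rightarrow(1)$: since $M$ is quasicompact, $V$ is quasiseparated and $M\hookrightarrow V$, the inclusion $M\to V$ is a quasicompact injection (pull back along $S\to V$ with $S$ profinite and use that the diagonal of $V$ is quasicompact), hence a closed subspace of $V$ in the sense of Proposition~\ref{PropQuasiseparatedCondensed}. Thus $M$ is quasicompact quasiseparated (the latter since $M\subseteq V$), so $M=\underline N$ for a compact Hausdorff topological $\O_K$-module $N$ (Proposition~\ref{propFunctorTopCond}(3)); moreover $\varpi$ acts injectively on $M$ since $M\subseteq V$ and $V$ is a $K$-vector space. Consequently $\varpi^s\colon M\to M$ is a closed immersion for each $s$, so $\varpi^s M$ is a closed submodule and $M_\infty:=\bigcap_s\varpi^s M$ is a closed, hence quasicompact quasiseparated, $\O_K$-submodule of $M$. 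Now $M_\infty$ is $\varpi$-divisible: for $x\in M_\infty$ the element $x/\varpi$ makes sense in $V$ and again lies in every $\varpi^s M$, so in $M_\infty$; as $\varpi$ is also injective on $M_\infty$, the latter is a $K$-vector space, and being quasicompact quasiseparated it is $\underline{N_\infty}$ for a compact Hausdorff topological $K$-vector space $N_\infty$. If $N_\infty\neq0$, pick $v\neq0$; by compactness the net $(\varpi^{-n}v)_n$ has a cluster point $x$, and then $v=\varpi^{n_i}\cdot(\varpi^{-n_i}v)\to 0\cdot x=0$ along a suitable subnet by continuity of scalar multiplication at $(0,x)$, a contradiction. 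Hence $M_\infty=0$.

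It follows that $M\hookrightarrow\varprojlim_s M/\varpi^s M$. Each $M/\varpi^s M$ is the quotient of the compact Hausdorff module $M$ by the closed submodule $\varpi^s M$, hence compact Hausdorff, and it is annihilated by a fixed integer (being a module over the finite ring $\O_K/\varpi^s$), so its Pontryagin dual is discrete torsion and $M/\varpi^s M$ is profinite. Since $M\twoheadrightarrow M/\varpi^s M$ for every $s$, the image of $M$ in $\varprojlim_s M/\varpi^s M$ is dense, and it is closed because $M$ is compact; with the injectivity just obtained this forces $M\xrightarrow{\ \sim\ }\varprojlim_s M/\varpi^s M$. As $\underline{(-)}$ is a right adjoint (Proposition~\ref{propFunctorTopCond}(1)) it preserves this limit, so $M$ is a cofiltered limit of profinite $\O_{K,\sol}$-modules, hence profinite, and $V=M[\tfrac1\varpi]$ is a Smith space.

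Finally, the stability claims. For an extension $0\to V'\to V\to V''\to 0$ with $V',V''$ Smith: by Lemma~\ref{LemmaBanachSmith}(2) the Smith space $V''$ is projective, so the sequence splits and $V\cong V'\oplus V''$, which is Smith since a finite direct sum of profinite lattices is a profinite lattice. For a closed subobject $Z\hookrightarrow V$ of a Smith space $V$, fix a profinite lattice $W^0$, so $V=\varinjlim_n\varpi^{-n}W^0$; then $M:=Z\times_V W^0$ is a closed subspace of $W^0$, hence profinite, and using $Z\cap\varpi^{-n}W^0=\varpi^{-n}(Z\cap W^0)$ together with exactness of filtered colimits one gets $Z=\varinjlim_n\varpi^{-n}M=M[\tfrac1\varpi]$, so $Z$ is Smith. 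Likewise $\overline{W^0}:=W^0/(Z\cap W^0)$ is the quotient of a profinite module by a closed submodule, hence profinite, it injects into $V/Z$, and passing to cokernels through the colimit gives $V/Z=\varinjlim_n\varpi^{-n}\overline{W^0}=\overline{W^0}[\tfrac1\varpi]$, so $V/Z$ is Smith.
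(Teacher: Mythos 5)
Your argument is correct, and for the one substantive implication $(2)\Rightarrow(1)$ it takes a genuinely different route from the paper. The paper disposes of this in one line: a quasicompact quasiseparated solid $\O_K$-module $M$ admits an epimorphism $\O_{K,\sol}[S]\to M$ from a profinite module (quasicompactness), whose kernel is a closed --- hence profinite --- submodule (quasiseparatedness), so $M$ is the quotient of a profinite module by a closed submodule and is therefore itself profinite (compact, Hausdorff, totally disconnected); no mention of the ambient $K$-vector space is needed. You instead work from the inside: you identify $M$ with a compact Hausdorff topological module, prove the $\varpi$-adic separatedness $\bigcap_s\varpi^sM=0$ by the ``no nonzero compact Hausdorff topological $K$-vector spaces'' compactness argument, and reassemble $M$ as $\varprojlim_s M/\varpi^sM$ with each quotient profinite via Pontryagin duality. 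Your route is longer but more concrete, and it isolates the genuinely non-archimedean input (separatedness of a quasicompact lattice), which is the analogue of the classical fact underlying Smith duality; on the other hand it uses the embedding $M\subset V$ essentially (both for injectivity of $\varpi$ on $M$ and to kill $M_\infty$), whereas the paper's argument proves the stronger statement that \emph{every} qcqs solid $\O_K$-module is profinite. Note also that your treatment of quotients by closed subobjects still quietly invokes the fact that a quotient of a profinite module by a closed submodule is profinite --- which is exactly the fact powering the paper's one-line proof --- so the two arguments are not as disjoint as they first appear. The splitting argument for extensions and the treatment of closed subobjects agree with the paper's.
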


\begin{proof}
The fact that $(1)$ implies $(2)$ follows immediately from the definition. The other implication follows from the fact that any qsqc $\mathcal{O}_K$-module $M$ is profinite. Indeed, as $M$ is quasicompact there is a profinite $S$ and an epimorphism $f: \O_{K,\sol}[S]\to M$.  Since $M$ is quasiseparated,  the kernel of $f$ is a closed subspace of $\O_{K,\sol}[S]$ which is profinite.  This implies that $M\cong \O_{K,\sol}[S]/\ker f = \underline{\O_{K,\sol}[S](*)_{\mathrm{top}} /\ker f (*)_{\mathrm{top}}} $ is profinite.   

Since Smith spaces are projective (cf. Lemma \ref{LemmaBanachSmith}(2)), every extension splits and in particular they are stable under extensions. The stability under closed subobjects and quotients follows from the description of a Smith space as in $(2)$ of the equivalence.
\end{proof}

The next lemma provides a anti-equivalence between Banach and Smith spaces as solid $K$-vector spaces, c.f. \cite{Smith} (or also \cite[Theorem 3.8]{ClauseScholzeanalyticspaces}) for the analogous statement over the real or complex numbers.

\begin{lemma}[\cite{ClausenScholzeCM}] \leavevmode
\label{LemmaDuals1}
The assignment $V \mapsto V^\vee$ induces an anti-equivalence between $K$-Banach spaces and $K$-Smith spaces. More precisely, the following hold.
\begin{enumerate}
\item $\underline{\Hom}_{\O_K}(\widehat{\bigoplus}_i \mathcal{O}_K, \mathcal{O}_K) = \prod_i \mathcal{O}_K$ and $\underline{\Hom}_{K}(\widehat{\bigoplus}_{i\in I} K,K)= (\prod_{i\in I} \O_K)[\frac{1}{p}]$. 
\item $\underline{\Hom}_{\O_K}(
\prod_{i\in I} \O_K,  \O_K)= \widehat{\bigoplus}_{i\in I} \O_K$ and $\underline{\Hom}_K((\prod_{i\in I} \O_K)[\frac{1}{p}],  K)=\widehat{\bigoplus}_{i\in I} K$. 
\end{enumerate}
\end{lemma}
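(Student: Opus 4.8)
The strategy is to reduce everything to the explicit orthonormal-basis descriptions from Lemma \ref{LemmaBanachSmith}, and to compute the relevant internal homs term by term. The key input is the ``six functor'' computation of Corollary \ref{PropsixFunctorTate} (or more elementarily Lemma \ref{lemmares}), which already identifies $K\langle T\rangle^{\vee}$ with a Smith space, together with the basic behaviour of $\iHom$ under (solid) products and completed sums. So the first step is to record the two formal facts I need: (i) $\iHom_{\O_K}$ turns an arbitrary direct sum in the first variable into a product, i.e. $\iHom_{\O_K}(\bigoplus_{i} \O_K, N) = \prod_i N$, which holds because $\bigoplus_i \O_K[S] = \bigoplus_i \O_{K,\sol}[S]$ is a direct sum of compact projective generators; and (ii) $\iHom$ commutes with the $\varpi$-adic completion $\varprojlim_s(-)/\varpi^s$ appearing in the definition of $\widehat{\bigoplus}$, and with inverting $\varpi$, since $K = \O_K[\tfrac1\varpi]$ and $\iHom_{\O_K}(M[\tfrac1\varpi], N[\tfrac1\varpi]) = \iHom_{\O_K}(M,N)[\tfrac1\varpi]$ for the relevant $M$.

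For part (1): write $\widehat{\bigoplus}_{i\in I}\O_K = \varprojlim_s \big(\bigoplus_{i\in I}\O_K/\varpi^s\big)$. Using that $\iHom_{\O_K}(-,\O_K)$ sends this colimit-then-limit into $\varinjlim_s \prod_{i\in I}\O_K/\varpi^s$... more carefully, one computes $\iHom_{\O_K}(\bigoplus_i \O_K/\varpi^s, \O_K) = \prod_i \iHom_{\O_K}(\O_K/\varpi^s,\O_K)$ and then takes the (derived, but concentrated in degree $0$) limit over $s$; the point is that $R\varprojlim_s$ of the short exact sequences $0\to \prod_i \O_K \xrightarrow{\varpi^s} \prod_i \O_K \to \prod_i \O_K/\varpi^s \to 0$ recovers $\prod_i \O_K$ because $\prod_i\O_K$ is $\varpi$-adically complete with vanishing $R^1\varprojlim$. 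This gives $\iHom_{\O_K}(\widehat{\bigoplus}_i\O_K,\O_K) = \prod_i \O_K$, and inverting $\varpi$ gives the $K$-linear statement $\iHom_K(\widehat{\bigoplus}_{i\in I}K, K) = (\prod_{i\in I}\O_K)[\tfrac1p]$.

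For part (2) one dualises: $\iHom_{\O_K}(\prod_{i\in I}\O_K, \O_K)$. Here the main subtlety — and what I expect to be the principal obstacle — is that there is no naive ``distributivity'' of $\iHom$ over an infinite product in the \emph{first} variable; instead one must use that $\prod_{i\in I}\O_K$ is a \emph{compact projective} object (Lemma \ref{LemmaBanachSmith}(2)), together with the duality already established over $\R$/$\C$ in \cite[Theorem 3.8]{ClauseScholzeanalyticspaces} and its proof, which transports verbatim to $\O_K$: concretely, reduce mod $\varpi^s$ to get $\iHom_{\O_K}(\prod_i\O_K, \O_K/\varpi^s) = \bigoplus_i \O_K/\varpi^s$ (this is the statement that a continuous $\O_K/\varpi^s$-valued functional on a profinite module is supported on finitely many coordinates — equivalently that $\O_K/\varpi^s$ is discrete so $\Cont(\prod_i\O_K, \O_K/\varpi^s)$ factors through a finite quotient), then take $\varprojlim_s$ to obtain $\widehat{\bigoplus}_i\O_K$, and invert $\varpi$. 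Finally, combining (1) and (2) shows the two functors $V\mapsto V^{\vee}$ are mutually inverse up to the canonical biduality map $V\to V^{\vee\vee}$, which on orthonormal bases is visibly an isomorphism, so we obtain the claimed anti-equivalence between $K$-Banach and $K$-Smith spaces.
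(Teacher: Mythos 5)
Your overall plan is reasonable, and your part (2) is a legitimate alternative to the paper's argument: the paper realises $\prod_{i\in I}\O_K$ as a retract of a compact projective generator $\O_{K,\sol}[S]$ and uses $\iHom_{\O_K}(\O_{K,\sol}[S],\O_K)=\underline{\Cont}(S,\O_K)$, whereas your route (a functional from the profinite module $\prod_i\O_K$ to the discrete module $\O_K/\varpi^s$ factors through a finite quotient, giving $\bigoplus_i\O_K/\varpi^s$, then take $\varprojlim_s$) is more explicit and equally valid; inverting $\varpi$ at the end is harmless here precisely because $\prod_i\O_K$ \emph{is} compact, so $\iHom$ out of it commutes with the filtered colimit defining $K$.

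Part (1), however, has two concrete problems. First, the intermediate step $\iHom_{\O_K}(\bigoplus_i\O_K/\varpi^s,\O_K)=\prod_i\iHom_{\O_K}(\O_K/\varpi^s,\O_K)$ is literally zero, since $\O_K$ is $\varpi$-torsion-free; the correct move (the paper's) is to reduce the \emph{target} mod $\varpi^s$ as well, namely $\iHom_{\O_K}(\widehat{\bigoplus}_i\O_K,\O_K)=\varprojlim_s\iHom_{\O_K/\varpi^s}(\bigoplus_i\O_K/\varpi^s,\O_K/\varpi^s)=\varprojlim_s\prod_i\O_K/\varpi^s=\prod_i\O_K$, using that source and target are $\varpi$-adically complete. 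Your subsequent $R\varprojlim$ discussion does not repair the step as written. Second, and more seriously, your fact (ii) — that one may simply "invert $\varpi$" to pass from $\iHom_{\O_K}(\widehat{\bigoplus}_i\O_K,\O_K)$ to $\iHom_K(\widehat{\bigoplus}_{i}K,K)$ — is exactly the non-formal point of the lemma. The source $\widehat{\bigoplus}_i\O_K$ is Banach, not Smith, hence not compact, so there is no a priori reason for $\iHom_{\O_K}(\widehat{\bigoplus}_i\O_K,-)$ to commute with the filtered colimit $\varinjlim_n\varpi^{-n}\O_K$. The paper proves this by evaluating at an extremally disconnected $S$, using adjunction to reduce to maps into the classical Banach space $\underline{\Cont}(S,K)$, and then invoking boundedness: a continuous $K$-linear map of classical Banach spaces carries a lattice into a rescaled lattice. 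You need to supply that argument; as stated, your (ii) asserts the conclusion "for the relevant $M$" without proof.
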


\begin{proof}

To prove (1), notice that 
\begin{eqnarray*}
\underline{\Hom}_{\O_K}(\widehat{\bigoplus}_i \O_K, \O_K) & = & \varprojlim_s \underline{\Hom}_{\O_K/\varpi^s}(\bigoplus_{i} \O_K/\varpi^s  , \O_K/\varpi^s) \\ 
& = & \varprojlim_s \prod_i \O_K/\varpi^s \\
& = & \prod_i \O_K.
\end{eqnarray*}
To prove the second equality it is enough to show that 
\[
\underline{\Hom}_{K}(\widehat{\bigoplus}_i K,K)= \underline{\Hom}_{\O_K}(\widehat{\bigoplus}_i \O_K, \O_K)[\frac{1}{\varpi}].
\]
Let $S$ be an extremally disconnected set, by adjunction  it is enough to show that 
\[
\Hom_{K}(\widehat{\bigoplus}_iK, \underline{\Cont}(S,K))= \Hom_{\O_K}(\widehat{\bigoplus}_i \O_K, \underline{\Cont}(S,\O_K))[\frac{1}{\varpi}].
\]
But all the solid spaces involved arise as the condensed set associated to a compactly generated  Hausdorff topological space, the claim follows from the fact that lattices are mapped to lattices for continuous maps of classical Banach spaces. 

Part (2) follows from the fact that an object of the form $\prod_{i\in I}\O_K$ is a retraction of a compact projective generator $\O_{K,\sol}[S]$ for $S$ extremally disconnected, and the fact that \[ \underline{\Hom}_{\O_K}(\O_{K,\sol}[S], \O_K)= \underline{\Cont}(S,\O_K). \] 
This finishes the proof of the Lemma.
\end{proof}

\begin{remark}
Part (2) of the previous lemma also holds with $R\iHom$ since $\prod_{i \in I} \O_K$ is a projective $\O_{K, \sol}$-module. The same proof of the first assertion of (1) can also be adapted to show that $R\underline{\Hom}_{\O_K}(\widehat{\bigoplus}_i \mathcal{O}_K, \mathcal{O}_K) = \prod_i \mathcal{O}_K$. Indeed,  
\begin{eqnarray*}
R \iHom_{\O_K}(\widehat{\bigoplus}_{i} \O_K,  \O_K) & = & R\varprojlim_{s\in \N} R\iHom_{\O_K/\varpi^s} ( \bigoplus_{i} \O_K/\varpi^s,  \O_K/\varpi^s) \\ 
& = & R\varprojlim_{s\in \N} \prod_{i} \O_K/\varpi^s \\
 & = & \prod_i \O_K.  
\end{eqnarray*}
The authors ignore how to calculate $R \iHom_K(\widehat{\bigoplus}_{i \in I} K, K)$.
\end{remark}

We now study the behaviour of the tensor product.

\begin{proposition} \label{PropTensprodSmith}
Let $V = \prod_{i \in I} \O_K$ and $W = \prod_{j \in J} \O_K$. Then
\[ V \otimes_{\O_{K, \sol}}^L W = \prod_{(i,j) \in I \times J} \O_K. \]
\end{proposition}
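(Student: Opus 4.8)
The plan is to reduce the claim to the already-established computation of tensor products of solid $\Z$-modules (or $\O_K$-modules), since $\prod_{i \in I} \O_K$ is a product of copies of the base ring. First I would recall that, by \cite[Theorem 5.8]{ClausenScholzeCondensed2019} and the construction of $\O_{K,\sol}$ (Example \ref{Exampleanalyticrings}(3)), one has $\O_{K,\sol}[S] = \varprojlim_i \O_K[S_i]$ for an extremally disconnected $S = \varprojlim_i S_i$, and that the key input is the behaviour of $\otimes^L_{\O_{K,\sol}}$ on such objects: namely $\O_{K,\sol}[S] \otimes^L_{\O_{K,\sol}} \O_{K,\sol}[S'] = \O_{K,\sol}[S \times S']$, sitting in degree $0$. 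The object $\prod_{i \in I} \O_K$ is, by \cite[Corollary 5.5]{ClausenScholzeCondensed2019} (cited in Lemma \ref{LemmaBanachSmith}(2)), a retract of some $\O_{K,\sol}[S]$ with $S$ extremally disconnected; more precisely one can take $S = \beta I$, the Stone-\v{C}ech compactification of the discrete set $I$, so that $\O_{K,\sol}[\beta I]$ surjects onto (in fact splits off) $\prod_{i \in I} \O_K$.

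The main step is then to show that the functor $(-)\otimes^L_{\O_{K,\sol}} W$ applied to $V = \prod_{i \in I} \O_K$ commutes with the relevant (co)limits. Concretely, I would argue as follows. Since both $V$ and $W$ are projective objects of $\SolidOK$ (Lemma \ref{LemmaBanachSmith}(2)), the derived tensor product agrees with the underived one, so it suffices to compute $V \otimes_{\O_{K,\sol}} W$ in $\SolidOK$. Write $V$ as a retract of $\O_{K,\sol}[\beta I]$ and $W$ as a retract of $\O_{K,\sol}[\beta J]$; then $V \otimes_{\O_{K,\sol}} W$ is a retract of $\O_{K,\sol}[\beta I] \otimes_{\O_{K,\sol}} \O_{K,\sol}[\beta J] = \O_{K,\sol}[\beta I \times \beta J]$. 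To identify this retract with $\prod_{(i,j) \in I\times J}\O_K$, I would use that the solid tensor product is defined as the solidification of the ordinary condensed tensor product, reduce mod $\varpi^s$ (using $\varpi$-adic completeness and that $\prod \O_K$ is $\varpi$-adically complete with discrete quotients $\prod_I \O_K/\varpi^s = \prod_I (\O_K/\varpi^s)$), and then invoke the discrete case: for a discrete ring $A$ one has $(\prod_I A) \otimes_{A_\sol} (\prod_J A) = \prod_{I\times J} A$, which is \cite[Theorem 8.1]{ClausenScholzeCondensed2019} (the analytic ring structure $A_\sol$ is precisely built so that products of copies of $A$ behave correctly under $\otimes$). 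Passing to the limit over $s$ and inverting $\varpi$ where needed, and finally base-changing along $\Z_\sol \to \O_{K,\sol}$ (which, as noted after Example \ref{Exampleanalyticrings}, is well-behaved on the relevant objects), gives the claimed identity.

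The step I expect to be the main obstacle is the careful bookkeeping of the reduction to the discrete case: one must check that $\varpi$-adic completeness interacts correctly with the solid tensor product, i.e. that $\big(\prod_I \O_K\big) \otimes_{\O_{K,\sol}} \big(\prod_J \O_K\big) = \varprojlim_s \big(\prod_I \O_K/\varpi^s \otimes_{(\O_K/\varpi^s)_\sol} \prod_J \O_K/\varpi^s\big)$, and that no higher $\Tor$-terms or $R\varprojlim^1$-terms appear. This is where projectivity of $\prod \O_K$ (hence the collapse of the derived tensor product) and the exactness properties of $\varprojlim$ on the relevant systems of solid modules must be used; I would isolate this as a short lemma, possibly referring to the corresponding statements in \cite{ClausenScholzeCM} or deducing it from the fact that $\prod_I \O_K$ is a compact projective generator together with the explicit formula $\O_{K,\sol}[S] \otimes^L_{\O_{K,\sol}} \O_{K,\sol}[S'] = \O_{K,\sol}[S\times S']$.
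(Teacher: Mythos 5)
The paper offers no argument here at all: its ``proof'' is a citation to \cite[Proposition 6.3]{ClausenScholzeCondensed2019}, which is precisely the statement $(\prod_I \Z)\otimes_{\Z_\sol}(\prod_J\Z)=\prod_{I\times J}\Z$ (the $\O_K$-version then follows by base change along $\Z_\sol\to\O_{K,\sol}$, cf.\ Example \ref{Exampleanalyticrings}(3)). Your proposal tries to supply an actual proof, but its core step is circular. The ``discrete case'' you reduce to --- $(\prod_I A)\otimes_{A_\sol}(\prod_J A)=\prod_{I\times J}A$ for a discrete ring $A$ --- is not established by \cite[Theorem 8.1]{ClausenScholzeCondensed2019}: that theorem only constructs the analytic ring $A_\sol$; it does not compute tensor products of infinite products. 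For $A=\Z$ the product formula \emph{is} Proposition 6.3, i.e.\ the very thing being proved, and for $A=\O_K/\varpi^s$ it is of exactly the same nature. Relatedly, your assertion that $\prod_I\O_K$ has ``discrete quotients'' $\prod_I(\O_K/\varpi^s)$ is false for infinite $I$: these quotients are profinite, not discrete, so the problem is never actually reduced to a setting where discreteness helps. Finally, the interchange
\[
\Big(\prod_I\O_K\Big)\otimes_{\O_{K,\sol}}\Big(\prod_J\O_K\Big)\;=\;\varprojlim_s\Big(\prod_I\O_K/\varpi^s\otimes\prod_J\O_K/\varpi^s\Big)
\]
is exactly where the content lies --- the solid tensor product does not commute with inverse limits in general, and projectivity of the factors does not give this --- so flagging it as ``the main obstacle'' without an argument leaves the proof open at its only nontrivial point.

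Two smaller remarks. The retract argument only shows that $V\otimes_{\O_{K,\sol}}W$ is a retract of $\O_{K,\sol}[S\times S']$, hence of some product of copies of $\O_K$; it gives no control on the index set, so it cannot by itself produce $I\times J$. If you want an honest proof rather than the paper's citation, the efficient route is: quote \cite[Proposition 6.3]{ClausenScholzeCondensed2019} over $\Z$ as the black box, note $\prod_I\O_K\cong\O_K\otimes_{\Z_\sol}\prod_I\Z\cong\prod_{I\times\{1,\dots,e\}}\Z$ with $e=[\O_K:\Z_p]\cdot 1$ viewed via the finite free $\Z_p$-module structure of $\O_K$, and deduce the $\O_{K,\sol}$-statement by base change of analytic rings; alternatively, reproduce the Clausen--Scholze argument itself, which is what your sketch implicitly presupposes.
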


\begin{proof}
See \cite[Proposition 6.3]{ClausenScholzeCondensed2019}.
\end{proof}

The following useful result shows that the solid tensor product of solid $K$-Banach spaces  coincides with the projective tensor  product of  classical $K$-Banach spaces.

\begin{lemma}[{\cite{ClausenScholzeCM}}]
\label{CorotensorBanach}
Let $V$ and $V'$ be classical  Banach spaces over $K$, and let $V\widehat{\otimes}_K V'$ denote its projective  tensor product.  Then $\underline{V\widehat{\otimes}_K V'}= \underline{V} \otimes_{K_\sol} \underline{V'}$. 
\end{lemma}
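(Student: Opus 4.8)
\textbf{Proof plan for Lemma \ref{CorotensorBanach}.}

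The strategy is to reduce everything to the level of orthonormal bases and the computation of Proposition \ref{PropTensprodSmith}, using the explicit description of Banach spaces from Lemma \ref{LemmaBanachSmith}(1). First I would choose orthonormal bases, so that $V = \widehat{\bigoplus}_{i \in I} K$ and $V' = \widehat{\bigoplus}_{j \in J} K$, and reduce to the integral level by writing $V^0 = \widehat{\bigoplus}_{i \in I} \O_K$, $V'^0 = \widehat{\bigoplus}_{j \in J} \O_K$ and noting $V \otimes_{K_\sol} V' = (V^0 \otimes_{\O_{K,\sol}} V'^0)[\tfrac{1}{\varpi}]$, so it suffices to identify $V^0 \otimes_{\O_{K,\sol}} V'^0$ with the unit ball of $V \widehat{\otimes}_K V'$. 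Modulo $\varpi^s$, the solid tensor product $\O_{K,\sol}[S] \otimes \O_{K,\sol}[S']$ reduces to the honest tensor product of the discrete modules $\bigoplus_i \O_K/\varpi^s$ and $\bigoplus_j \O_K/\varpi^s$, which is $\bigoplus_{(i,j)} \O_K/\varpi^s$; the main point is to commute this tensor product past the $\varprojlim_s$ defining the $\varpi$-adic completions. Here I would invoke that $\widehat{\bigoplus}_{i} \O_K = \varprojlim_s \bigoplus_i \O_K/\varpi^s$ is a countable (indeed $\varpi$-adic) limit of discrete modules and that $- \otimes_{\O_{K,\sol}}^L -$ commutes appropriately — concretely, one can run the argument of Proposition \ref{PropTensprodSmith} dually, or use that $\widehat{\bigoplus}_i \O_K$ is the $\varpi$-completion $\O_{K,\sol}[\,\bigsqcup_i *\,]^{\wedge}$ and that solidification/completion is symmetric monoidal — to conclude $V^0 \otimes_{\O_{K,\sol}} V'^0 = \widehat{\bigoplus}_{(i,j) \in I \times J} \O_K$.

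It then remains to match this with the projective tensor product on the classical side: the unit ball of $V \widehat{\otimes}_K V'$ is exactly the $\varpi$-adic completion of $V^0 \otimes_{\O_K} V'^0 = \bigoplus_{(i,j)} \O_K$, i.e.\ $\widehat{\bigoplus}_{(i,j)} \O_K$, by the standard description of the projective tensor product of Banach spaces with orthonormal bases (the Banach space with orthonormal basis indexed by $I \times J$). Finally I would apply Proposition \ref{RemarkBanachSmith} (the equivalence $V \mapsto V(*)_{\mathrm{top}}$ between solid and classical Banach spaces) to promote this identification of underlying objects to the desired isomorphism $\underline{V \widehat{\otimes}_K V'} = \underline{V} \otimes_{K_\sol} \underline{V'}$, checking that the natural bilinear map $V \times V' \to V \widehat{\otimes}_K V'$ induces the comparison map and that it is the one realizing this isomorphism.

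The main obstacle I anticipate is the commutation of the solid (derived) tensor product with the inverse limit defining the $\varpi$-adic completion: naively $\otimes$ does not commute with limits, so one must use that these are $\varpi$-adic (hence Mittag-Leffler, cofinal with a countable chain) limits and that the relevant $\mathrm{Tor}$-terms vanish — $\bigoplus_i \O_K/\varpi^s$ is flat over $\O_K/\varpi^s$ and the transition maps are surjective — so that $R\varprojlim_s$ agrees with $\varprojlim_s$ and no derived correction appears. Equivalently, this is precisely the content guaranteeing that $\widehat{\bigoplus}$ is the monoidal unit's completion behaving well, and it is the step where one genuinely needs the solid formalism (Theorem \ref{TheoLemmaAnalyticrings}) rather than a formal manipulation. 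Everything else is bookkeeping with orthonormal bases.
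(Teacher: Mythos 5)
Your skeleton (orthonormal bases, reduction to lattices, identifying the answer as $\widehat{\bigoplus}_{(i,j)\in I\times J}\O_K$, then invoking Proposition \ref{RemarkBanachSmith}) matches the paper's, and you correctly isolate the crux: commuting the solid tensor product with the inverse limit $\varprojlim_s$ defining the $\varpi$-adic completions. But the justification you offer for that step does not work. Mittag--Leffler (surjective transition maps, hence $R^1\varprojlim=0$ for each system separately) and flatness of $\bigoplus_i\O_K/\varpi^s$ over $\O_K/\varpi^s$ say nothing about whether $-\otimes_{\O_{K,\sol}}-$ commutes with $\varprojlim_s$: the tensor product preserves colimits, and there is no general reason for it to preserve limits, even underived ones with surjective transition maps (compare $\Z_p\otimes_{\Z}\Z_p\neq\Z_p$ for the discrete tensor product). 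What you actually need is that $\widehat{\bigoplus}_I\O_K\otimes_{\O_{K,\sol}}\widehat{\bigoplus}_J\O_K$ is already $\varpi$-adically complete; the formal statement about completion being monoidal only gives $(M^\wedge\otimes N^\wedge)^\wedge=(M\otimes N)^\wedge$, and removing the outer completion is precisely the content of the lemma, not a formality. Nor is there an evident ``dual'' of Proposition \ref{PropTensprodSmith}: its proof rests on $\prod_I\O_K$ being a retract of a compact projective generator $\O_{K,\sol}[S]$, and $\widehat{\bigoplus}_I\O_K$ admits no such description.

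The paper circumvents this by going through colimits rather than limits. After reducing to countable index sets (a convergent series has only countably many nonzero terms), it writes $\widehat{\bigoplus}_{n\in\N}K=\varinjlim_{f\in\mathscr{S}}\prod_{n\in\N}\O_K\varpi^{f(n)}$ as a filtered colimit of Smith spaces indexed by weight functions $f\colon\N\to\Z$ with $f(n)\to+\infty$. Since $\otimes_{K_\sol}$ commutes with colimits, Proposition \ref{PropTensprodSmith} computes each term as $\prod_{n,m}\O_K\varpi^{f(n)+g(m)}$, and a cofinality argument on the functions $h_{f,g}(n,m)=f(n)+g(m)$ inside the set of all $h$ with $h(n,m)\to+\infty$ identifies the colimit with $\widehat{\bigoplus}_{n,m}K$. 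If you want to salvage your limit-based approach, you would have to prove the completeness of the tensor product directly, which in practice forces you back to a presentation of this kind.
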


\begin{proof}
Fix an isomorphism $V=\widehat{\bigoplus}_{i\in I} K$. As a convergent series $\sum_{i} a_i$ has only countably many terms different from $0$, we can write
\[
\underline{V}= \varinjlim_{I'\subset I} \widehat{\bigoplus}_{i\in I'} K,
\]
where $I'$ runs over all the countable subsets of $I$.  Therefore, we can assume that $V\cong V'\cong \widehat{\bigoplus}_{n\in \N} K$.  Let $\mathscr{S}$ denote the direct set of  functions  $f:\N \to \Z$ such that $f(n)\to +\infty$ as $n\to +\infty$, endowed with the order $f\preceq g$ iff $f(n)\geq g(n)$ for all $n\in \N$.  Thus, we can write 
\begin{equation}
\label{eqBanachColimitofSmiths}
\widehat{\bigoplus}_{n\in \N}K =  \varinjlim_{f\in \mathscr{S}} \prod_{n\in \N} \O_K \varpi^{f(n)}.
\end{equation}
Indeed, by evaluating at an extremally disconnected set $S$, $\underline{V}(S)= \Cont(S,\underline{V})=\widehat{\bigoplus}_{n\in \N} \Cont(S,K)$ has a natural (classical) Banach space structure, for which a function $\phi: S \to \underline{V}$ can be written in a unique way as a sum $\phi=\sum_n \phi_n$ with $\phi_n\in \Cont(S,K)$, such that $|\phi_n|\to 0$ as $n\to \infty$. Then, from (\ref{eqBanachColimitofSmiths}) and Proposition \ref{PropTensprodSmith} we deduce that
\[
\underline{V}\otimes_{K_\sol} \underline{V'} = \varinjlim_{f,g\in \mathscr{S}} \prod_{n\in \N}  \O_K \varpi^{f(n)} \otimes_{\O_{K,\sol}} \prod_{m\in \N} \O_K \varpi^{g(m)} = \varinjlim_{f,g\in \mathscr{S}} \prod_{n, m \in \N \times \N}  \O_K \varpi^{f(n)+g(m)}.
\]
Given $f,g\in \mathscr{S}$ define the function $h_{f,g}: \N \times \N \to \Z$  as $h_{f,g}(n,m) = f(n) + g(m)$.  Let $\mathscr{S}'$ be the direct set of functions $h: \N\times \N \to \Z$ such that $h(n,m)\to +\infty$ as $\max\{n,m\} \to +\infty$. Then the set $\{h_{f,g}\}_{f,g \in \mathscr{S}}$ is a cofinal family in $\mathscr{S}'$. Indeed,  given $h:  \N\times \N \to \Z$, if we define $f(n)=\frac{1}{2}\min_{m} h(n,m)$ and $g(m)=\frac{1}{2} \min_{n} h(n,m)$, then $h \preceq h_{f,g}$.    Therefore, 
\[
 \varinjlim_{f,g\in \mathscr{S}} \prod_{n, m \in \N \times \N}  \O_K \varpi^{h_{f,g}(n, m)} = \varinjlim_{h\in \mathscr{S}'} \prod_{n,m} \O_K \varpi^{h(n,m)}= \widehat{\bigoplus}_{n,m}K. 
\]
\end{proof}

Let us recall the concept of a nuclear solid $K$-vector space.

\begin{definition}[{\cite[Definition 13.10]{ClauseScholzeanalyticspaces}}]
Let $V \in \SolidK$. We say that $V$ is nuclear if, for all extremally disconnected  set $S$, the natural map 
\[
 \underline{\Hom}_K(K_{\sol}[S], K)\otimes_{K_{\sol}} V\to \underline{\Hom}_K(K_{\sol}[S], V)
\]
is an isomorphism.
\end{definition}

\begin{remark}
We warn the reader that this notion of nuclearity differs from the classical one, say in \cite[\S 19]{SchneiderNFA}. Indeed,   if a classical Banach space is nuclear in the classical sense then it is finite dimensional (cf. \textit{loc. cit.} \S 19).  On the other hand,  solid $K$-Banach spaces are always nuclear in the condensed sense.  
\end{remark}

\begin{corollary}[\cite{ClausenScholzeCM}]
\label{CoroBanachisNuclear}
Let $V$ be a Banach space over $K$, then $V$ is a nuclear $K_{\sol}$-vector space.
\end{corollary}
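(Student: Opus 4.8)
The plan is to reduce to the computation of duals and tensor products of Banach and Smith spaces already established in Lemmas \ref{LemmaBanachSmith}, \ref{LemmaDuals1} and \ref{CorotensorBanach}, together with Proposition \ref{PropTensprodSmith}. Write $V = \widehat{\bigoplus}_{i \in I} K$ using Lemma \ref{LemmaBanachSmith}(1). We must show that for every extremally disconnected set $S$ the natural map
\[
\underline{\Hom}_K(K_{\sol}[S], K) \otimes_{K_{\sol}} V \longrightarrow \underline{\Hom}_K(K_{\sol}[S], V)
\]
is an isomorphism. First I would identify the left-hand side: by Lemma \ref{LemmaDuals1} (or rather its proof, noting $\underline{\Hom}_{\O_K}(\O_{K,\sol}[S], \O_K) = \underline{\Cont}(S, \O_K)$ and inverting $\varpi$), the dual $\underline{\Hom}_K(K_{\sol}[S], K)$ is the Smith space $\underline{\Cont}(S,\O_K)[\frac1\varpi] = (\prod_{j \in J}\O_K)[\frac1\varpi]$, where we use that $S$, being extremally disconnected, is a retract of some $\beta J$ and hence $\O_{K,\sol}[S]$ is a retract of $\prod_{j \in J}\O_K$. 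So the left-hand side is a Smith space tensored (over $K_\sol$) with the Banach space $V$.

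The key computation is then to evaluate the solid tensor product of a Smith space with a Banach space and match it with $\underline{\Hom}_K(K_{\sol}[S], V)$. Using the presentation $V = \varinjlim_{f \in \mathscr{S}} \prod_{n} \O_K \varpi^{f(n)}$ from the proof of Lemma \ref{CorotensorBanach} (reducing first to the case $I$ countable, since any map from $S$ factors through a countable subsum), Proposition \ref{PropTensprodSmith} gives
\[
\underline{\Hom}_K(K_{\sol}[S], K) \otimes_{K_{\sol}} V = \varinjlim_{f \in \mathscr{S}} \Big(\prod_{j \in J} \O_K\Big) \otimes_{\O_{K,\sol}} \Big(\prod_{n} \O_K \varpi^{f(n)}\Big)\big[\tfrac1\varpi\big] = \varinjlim_{f \in \mathscr{S}} \Big(\prod_{(j,n)} \O_K \varpi^{f(n)}\Big)\big[\tfrac1\varpi\big].
\]
On the other side, $\underline{\Hom}_K(K_{\sol}[S], V) = \underline{\Hom}_{\O_K}(\O_{K,\sol}[S], V^0)[\frac1\varpi]$ where $V^0 = \widehat{\bigoplus}_n \O_K = \varprojlim_s \bigoplus_n \O_K/\varpi^s$; since $\O_{K,\sol}[S]$ is a retract of $\prod_j \O_K$ it suffices to compute $\underline{\Hom}_{\O_K}(\prod_j \O_K, V^0)$, and commuting the $\Hom$ past the limit defining $V^0$ reduces this to $\varprojlim_s \bigoplus_n \underline{\Hom}_{\O_K/\varpi^s}(\prod_j \O_K/\varpi^s, \O_K/\varpi^s)$. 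The interchange of the colimit presenting $V$ with $\underline{\Hom}_K(K_{\sol}[S],-)$ is legitimate because $K_{\sol}[S]$ is a compact object of $\SolidK$ (Lemma \ref{LemmaBanachSmith}(2) and the remark that Smith spaces are compact projective generators). Carrying out both bookkeeping computations, one sees that both sides agree with $\varinjlim_f \prod_{(j,n)} \O_K \varpi^{f(n)}[\frac1\varpi]$, and the natural map is this identification, proving nuclearity.

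The main obstacle I anticipate is the second computation, namely showing $\underline{\Hom}_K(K_{\sol}[S], V)$ has the expected form: one must carefully justify commuting $\underline{\Hom}_{\O_K}(\O_{K,\sol}[S], -)$ with the inverse limit $\varprojlim_s$ defining $V^0$ (this is automatic for $\Hom$, but one should check the derived/condensed bookkeeping so that no $\varprojlim^1$ obstruction appears — here the Mittag-Leffler condition for $\bigoplus_n \O_K/\varpi^s$ along $s$ holds), and then match the resulting double product $\varprojlim_s \bigoplus_n \prod_j \O_K/\varpi^s$, after inverting $\varpi$, with the colimit $\varinjlim_f \prod_{(j,n)} \O_K\varpi^{f(n)}[\frac1\varpi]$ — exactly the same reindexing argument (via the cofinal family $\{h_{f,g}\}$, here simplified) as in the proof of Lemma \ref{CorotensorBanach}. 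Once both sides are expressed in this common form the identification of the natural map is immediate.
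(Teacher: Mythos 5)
There is a genuine error at the very first step which propagates through the whole computation. You identify $\underline{\Hom}_K(K_{\sol}[S],K)$ with the Smith space $(\prod_{j\in J}\O_K)[\frac{1}{\varpi}]$. But this object is the \emph{dual} of the Smith space $K_{\sol}[S]\cong(\prod_{J}\O_K)[\frac{1}{\varpi}]$, and by Lemma \ref{LemmaDuals1}(2) the dual of a Smith space is the \emph{Banach} space $\widehat{\bigoplus}_{J}K$; concretely $\underline{\Hom}_K(K_{\sol}[S],K)=\underline{\Cont}(S,K)$, the orthonormalizable Banach space of continuous functions, not a Smith space (you appear to have conflated $\O_{K,\sol}[S]$, which is indeed a retract of $\prod_J\O_K$, with its dual). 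Consequently the left-hand side of the nuclearity map is a tensor product of two Banach spaces, which by Lemma \ref{CorotensorBanach} equals $\widehat{\bigoplus}_{J\times I}K$, and not the mixed Smith-times-Banach colimit $\varinjlim_{f}\prod_{(j,n)}\O_K\varpi^{f(n)}[\frac{1}{\varpi}]$ that you arrive at. A compensating error occurs on the right-hand side: $\underline{\Hom}_{\O_K/\varpi^s}(\prod_{j}\O_K/\varpi^s,\O_K/\varpi^s)$ is $\bigoplus_{j}\O_K/\varpi^s$ (the dual of a product of copies is a direct sum), not $\prod_{j}\O_K/\varpi^s$, so the correct value of $\underline{\Hom}_K(K_{\sol}[S],V)$ is $\varprojlim_{s}\bigoplus_{(j,n)}\O_K/\varpi^s[\frac{1}{\varpi}]=\widehat{\bigoplus}_{J\times I}K$ as well. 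Your two mistakes cancel to produce two equal but incorrect answers; the sanity check $V=K$ already shows the problem, since your formula would make $\underline{\Hom}_K(K_{\sol}[S],K)$ a Smith space.

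The underlying strategy (pick orthonormal bases and compute both sides explicitly) is sound and is essentially the paper's, just organized differently. The paper rewrites $\underline{\Hom}_K(K_{\sol}[S],V)$ as $\underline{\Hom}_K(K_{\sol}[S]\otimes_{K_{\sol}}V^{\vee},K)$ using $V=(V^{\vee})^{\vee}$ and tensor-Hom adjunction, computes the Smith--Smith tensor product $K_{\sol}[S]\otimes_{K_{\sol}}V^{\vee}=\prod_{J\times I}\O_K[\frac{1}{\varpi}]$ by Proposition \ref{PropTensprodSmith}, dualizes back to $\widehat{\bigoplus}_{J\times I}K$ by Lemma \ref{LemmaDuals1}, and identifies this with $\underline{\Hom}_K(K_{\sol}[S],K)\otimes_{K_{\sol}}V$ via Lemma \ref{CorotensorBanach}. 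Your argument becomes correct once the identification of the dual is fixed and both of your bookkeeping computations are redone with $\bigoplus$ in place of $\prod$ in the appropriate spots.
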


\begin{proof}
Let $S$ be an extremally disconnected set. By taking basis we can write  $K_{\sol}[S]= \prod_J \O_K[\frac{1}{\varpi}]$ and $V= \widehat{\bigoplus}_{I} K$.  Then
\begin{eqnarray*}
\iHom_{K}(K_{\sol}[S],  V) & = & \iHom_{K}(K_{\sol}[S]\otimes_{K_{\sol}} V^{\vee},  K) \\
            & = & \iHom_{K}\bigg( \prod_{J\times I} \n{O}_K [\frac{1}{\varpi}],  K \bigg) \\ 
            & = & \widehat{\bigoplus}_{J\times I} K \\
            & = & \iHom_{K}(K_{\sol}[S],K) \otimes_{K_{\sol}} V,
\end{eqnarray*}
where the first and third equalities  follow from the duality between Banach and Smith spaces of Lemma \ref{LemmaDuals1} and the $\otimes$-$\Hom$ adjunction, the second equality follows from Proposition \ref{PropTensprodSmith}, and the last equality is a consequence of Lemma \ref{CorotensorBanach}. 
\end{proof}

\begin{corollary} \label{CoroHomBanachSmith}
Let $V$ and $W$ be a Banach and a Smith space respectively.   Then
\[ \iHom_K(V, W) = V^\vee \otimes_{K_\sol} W, \]
\[ \iHom_K(W, V) = W^\vee \otimes_{K_\sol} V. \]
\end{corollary}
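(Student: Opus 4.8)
The plan is to deduce both formulas from the duality between Banach and Smith spaces (Lemma \ref{LemmaDuals1}), the nuclearity of Banach spaces (Corollary \ref{CoroBanachisNuclear}), and the explicit tensor product computations of Proposition \ref{PropTensprodSmith} and Lemma \ref{CorotensorBanach}. The two identities will be proved by slightly different routes: the one with the Smith space in the source of $\iHom$ comes straight out of nuclearity, while the one with the Banach space in the source comes out of the tensor--Hom adjunction together with biduality.

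First I would treat $\iHom_K(W,V) = W^\vee \otimes_{K_\sol} V$, with $W$ a Smith space and $V$ a Banach space. By Lemma \ref{LemmaBanachSmith}(2) (and the argument in the proof of Lemma \ref{LemmaDuals1}(2)) the space $W \cong (\prod_I \O_K)[\frac1p]$ is a direct summand of $K_\sol[S]$ for some extremally disconnected $S$. Since $V$ is nuclear (Corollary \ref{CoroBanachisNuclear}), the canonical evaluation map $\iHom_K(K_\sol[S],K)\otimes_{K_\sol} V \to \iHom_K(K_\sol[S],V)$ is an isomorphism; as $\iHom_K(-,K)$, $-\otimes_{K_\sol} V$ and $\iHom_K(-,V)$ are additive and this map is natural, applying them to the split idempotent cutting out $W$ inside $K_\sol[S]$ and restricting to the corresponding direct summand yields the desired isomorphism $W^\vee \otimes_{K_\sol} V \xrightarrow{\sim} \iHom_K(W,V)$.

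For $\iHom_K(V,W) = V^\vee \otimes_{K_\sol} W$, with $V$ Banach and $W$ Smith, I would first write $W = W_0^\vee$ where $W_0 := W^\vee$ is a Banach space (Lemma \ref{LemmaDuals1}, which also gives $W_0^{\vee\vee}=W$). Then the tensor--Hom adjunction in the closed symmetric monoidal category $\SolidK$ gives
\[
\iHom_K(V,W) = \iHom_K\big(V,\iHom_K(W_0,K)\big) = \iHom_K(V \otimes_{K_\sol} W_0, K) = (V \otimes_{K_\sol} W_0)^\vee .
\]
By Lemma \ref{CorotensorBanach}, $V \otimes_{K_\sol} W_0$ is again a Banach space: fixing orthonormal bases $V = \widehat{\bigoplus}_I K$ and $W_0 = \widehat{\bigoplus}_J K$ one gets $V\otimes_{K_\sol} W_0 = \widehat{\bigoplus}_{I\times J} K$, so its dual is $\prod_{I\times J}\O_K[\frac1p]$ by Lemma \ref{LemmaDuals1}(1). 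On the other side $V^\vee = \prod_I \O_K[\frac1p]$ and $W = W_0^\vee = \prod_J \O_K[\frac1p]$ are Smith spaces, so Proposition \ref{PropTensprodSmith} (applied to the lattices $\prod_I \O_K$ and $\prod_J \O_K$, then inverting $\varpi$) gives $V^\vee \otimes_{K_\sol} W = \prod_{I\times J}\O_K[\frac1p]$. Comparing the two expressions shows that the canonical map $V^\vee \otimes_{K_\sol} W \to \iHom_K(V,W)$ is an isomorphism.

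The only step needing real care is the last identification: one must check that the abstract isomorphism obtained by matching index sets is genuinely the canonical evaluation map $V^\vee \otimes_{K_\sol} W \to \iHom_K(V,W)$, and not merely an abstract isomorphism of solid $K$-vector spaces. This becomes transparent once everything is expressed in terms of the orthonormal bases and their dual bases, using that the isomorphisms of Lemma \ref{CorotensorBanach} and Proposition \ref{PropTensprodSmith} are the expected ones on basis vectors (so that a pair of dual basis vectors $T^{i,\vee}\otimes T^{j,\vee}$ goes to $(T^i\otimes T^j)^\vee$). Everything else is formal, following from the monoidal adjunctions and the additivity of $\iHom$ and $\otimes_{K_\sol}$.
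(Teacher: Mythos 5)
Your proof is correct and follows essentially the same route as the paper: the identity $\iHom_K(W,V)=W^\vee\otimes_{K_\sol}V$ is deduced from nuclearity of the Banach space $V$ (via $W$ being a retract of some $K_\sol[S]$), and the identity $\iHom_K(V,W)=V^\vee\otimes_{K_\sol}W$ is deduced from tensor--Hom adjunction, the computation of the tensor product of two Banach spaces (Lemma \ref{CorotensorBanach}), and the Banach--Smith duality of Lemma \ref{LemmaDuals1}. Your extra remark about checking that the resulting isomorphism is the canonical evaluation map is a sensible precision that the paper leaves implicit.
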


\begin{proof}
The second equality follows from nuclearity of $V$ (Corollary \ref{CoroBanachisNuclear}). For the first equality, tensor-Hom adjunction gives
\[ \iHom_K(V, W) = \iHom_K(V \otimes_{K_\sol} W^\vee, K), \]
and the results follows immediately from the description of the tensor product of two Banach spaces (Lemma \ref{CorotensorBanach}) and duality between Banach and Smith spaces (Lemma \ref{LemmaDuals1}). 
\end{proof}

We finish this section with some  elementary lemmas that will be needed later.

\begin{lemma} \leavevmode
The functor $V \mapsto V^{qs}$ defines a left adjoint to the inclusion of the category of quasiseparated solid $K$-vector spaces into the category of all solid $K$-vector spaces.
\end{lemma}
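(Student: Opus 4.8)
The plan is to bootstrap from Lemma~\ref{LemmaQuasiseparatedQuotient}. Applied to the quasiseparated condensed ring $\underline K$, that lemma produces a left adjoint $V\mapsto V^{qs}$ to the inclusion of quasiseparated condensed $K$-vector spaces into all condensed $K$-vector spaces, with surjective unit $V\to V^{qs}$. I would then argue that this functor carries $\SolidK$ into the full subcategory of quasiseparated solid $K$-vector spaces: once we know $V^{qs}$ is solid whenever $V$ is, the unit $V\to V^{qs}$ together with full faithfulness of $\SolidK$ inside condensed $K$-vector spaces upgrades the adjunction of Lemma~\ref{LemmaQuasiseparatedQuotient} to the desired one, since for $W$ a quasiseparated solid space one gets $\Hom_{\SolidK}(V^{qs},W)=\Hom_{\SolidK}(V,W)$ and the unit is inherited.

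The crux is therefore: \emph{$V^{qs}$ is solid whenever $V$ is}. My plan for this is to reduce to the compact projective generators $K_\sol[S]$ ($S$ extremally disconnected). By Theorem~\ref{TheoLemmaAnalyticrings} and Lemma~\ref{LemmaBanachSmith} every solid $K$-vector space is the cokernel, taken in condensed $K$-vector spaces, of a map $d\colon P_1\to P_0$ between direct sums of copies of $K_\sol[S]$. One first checks each $K_\sol[S]$ is quasiseparated: $\O_{K,\sol}[S]=\varprojlim_i\O_K[S_i]$ is an inverse limit of the qcqs modules $\O_K[S_i]\cong\underline{\O_K}^{\,|S_i|}$, hence quasiseparated since the inclusion of quasiseparated objects is a right adjoint (Lemma~\ref{LemmaQuasiseparatedQuotient}); and $K_\sol[S]=\varinjlim_n\varpi^{-n}\O_{K,\sol}[S]$ is a countable colimit along the closed immersions $\varpi^{-n}\O_{K,\sol}[S]\hookrightarrow\varpi^{-(n+1)}\O_{K,\sol}[S]$, so it is quasiseparated by Proposition~\ref{propFunctorTopCond}(4). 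As a direct sum is a filtered colimit of its finite subsums along split (hence closed) immersions, the same proposition shows $P_0$ and $P_1$ are quasiseparated, so $P_i^{qs}=P_i$; and since $V\mapsto V^{qs}$ is a left adjoint it commutes with the cokernel, whence $V^{qs}=(P_0/\mathrm{im}\,d)^{qs}$.

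It then remains to see that the quasiseparation of the quotient $P_0/\mathrm{im}\,d$ of the solid space $P_0$ is solid. Here I would exploit that $P_0$ is the condensed $K$-vector space attached to an honest (compactly generated, weakly Hausdorff) topological $K$-vector space, that $P_0/\mathrm{im}\,d$ is solid, and that its quasiseparation --- the image of $P_0/\mathrm{im}\,d$ in $\underline{(P_0/\mathrm{im}\,d)(\ast)_{\mathrm{top}}}$ --- is, via Proposition~\ref{PropQuasiseparatedCondensed}, the quotient of $P_0$ by a closed subobject; finally, closed subobjects of $P_0$ are solid (a quasicompact one factors through a finite subsum and is there an inverse limit of profinite modules, hence solid, and a general one is a filtered colimit of such along closed immersions), so $V^{qs}$ is a cokernel of solid spaces and hence solid by Theorem~\ref{TheoLemmaAnalyticrings}. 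The point I expect to be most delicate is exactly this last identification of $V^{qs}$ as a quotient of $P_0$ by a \emph{closed} subobject, since the topology of $(P_0/\mathrm{im}\,d)(\ast)_{\mathrm{top}}$ need not be Hausdorff; an alternative route, which sidesteps it, would be to rerun the construction of \cite[Lemma~4.14]{ClauseScholzeanalyticspaces} directly for modules over the analytic ring $K_\sol$, so that solidity is built into the construction of $(-)^{qs}$ from the outset.
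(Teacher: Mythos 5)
Your opening reduction --- it suffices to show $V^{qs}$ is solid --- is exactly the paper's first step, but from there the two arguments diverge. The paper checks solidity of $V^{qs}$ directly against the definition: for $S$ extremally disconnected, surjectivity of $\Hom_K(K_\sol[S],V^{qs})\to\Hom_K(K[S],V^{qs})$ follows from projectivity of $K[S]$, the surjection $V\to V^{qs}$ and solidity of $V$, while injectivity follows because a map $K_\sol[S]\to V^{qs}$ killing $K[S]$ factors through $(K_\sol[S]/K[S])^{qs}$, which vanishes since $K[S](*)_{\mathrm{top}}$ is dense in $K_\sol[S](*)_{\mathrm{top}}$. Your route --- presenting $V$ by the compact projective generators, showing $V^{qs}$ is $P_0$ modulo a closed subobject, and checking that closed subobjects of $\bigoplus_I K_\sol[S_i]$ are solid --- is viable but longer, and the step you flag as delicate is indeed misphrased: $X^{qs}$ is \emph{not} in general the image of $X$ in $\underline{X(*)_{\mathrm{top}}}$. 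You do not need that description, though: since $P_0\to V\to V^{qs}$ is surjective and $V^{qs}$ is quasiseparated, the kernel of $P_0\to V^{qs}$ is the pullback of the quasicompact injection $\{0\}\to V^{qs}$ along a map of quasiseparated objects, hence a closed subobject $Z\subseteq P_0$, giving $V^{qs}=P_0/Z$ without ever invoking $(*)_{\mathrm{top}}$ or Hausdorffness. With that repair, and your (correct) observation that such a $Z$ is a filtered colimit of profinite $\O_K$-submodules of finite subsums $\bigoplus_{i\in J}\varpi^{-n}\O_{K,\sol}[S_i]$ and hence solid, your argument closes. What the paper's proof buys is brevity and the isolation of the one relevant fact (density of $K[S]$ in $K_\sol[S]$); what yours buys is a reusable structural statement about closed subobjects of projective generators, in the spirit of Proposition \ref{PropGuidoNotes}.
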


\begin{proof}

By Lemma \ref{LemmaQuasiseparatedQuotient} we only need to show that  $V^{qs}$ is a solid $K$-vector space.  Let $S$ be an extremally disconected set,  we have  a commutative diagram with exact arrows 
\begin{equation}
\label{eqDiagramQsSolid}
\begin{tikzcd}
 \Hom_{K}(K[S],  V ) \ar[r] &  \Hom_K(K[S],  V^{qs}) \ar[r] &   0 \\
 \Hom_{K}(K_{\sol}[S],  V ) \ar[r] \ar[u, "\wr"] &  \Hom_K(K_{\sol}[S],  V^{qs})  \ar[u] & 
\end{tikzcd}
\end{equation}
where the left vertical arrow is an isomorphism since $V$ is solid. This implies that $\Hom_{K}(K_{\sol}[S],  V^{qs}) \to \Hom_K(K[S],  V^{qs})$ is surjective.   To prove injectivity,  it is enough to show that $(K_{\sol}[S]/K[S])^{qs}=0$,  for this,  it is enough to show that $(K[S])(*)_{\mathrm{top}}$ is dense in $(K_{\sol}[S])(*)_{\mathrm{top}}$ which is is clear by definition of $K_{\sol}[S]$.   Therefore, the right vertical arrow of  \eqref{eqDiagramQsSolid} is a bijection,  proving that $V^{qs}$ is a solid $K$-vector space. 
\end{proof}

We say that a  map of quasiseparated condensed sets has dense image if the map of underlying topological spaces does.  One has the following lemma 

\begin{lemma} \leavevmode
\label{Lemmanoqs}
\begin{enumerate}  \item Let $V$   be a  solid $K$-vector space such that the maximal quasiseparated quotient  $V^{qs}$ is zero. Then $\underline{\Hom}(V,K)=0$. 

\item  A map of Banach spaces $V\to V'$ is injective (resp. with dense image) if and only if its dual $V'^{\vee}\to V^{\vee}$ has dense image (resp. is injective). 
\end{enumerate}
\proof
Let $S$ be a extremally disconnected set, then 
\[
\underline{\Hom}_K(V,K)(S)=\Hom_K(V\otimes K_{\sol}[S], K)=\Hom_K(V,\underline{\Cont}(S,K)).
\]
But $\underline{\Cont}(S,K)$ is a Banach space. Then,  by adjunction (Proposition \ref{propFunctorTopCond}),  we get \[\underline{\Hom}_K(V,K)(S)=\Hom_K(V(*)_{\mathrm{top}}, \Cont(S,K)).\]
Since $V^{qs}=0$,  the maximal Hausdorff quotient of $V(*)_{\mathrm{top}}$ is zero.  This implies that  \[\Hom_K(V(*)_{\mathrm{top}}, \Cont(S,K))=0\] proving (1). 

To prove (2), let $f: V\to V'$ be a map of Banach spaces. Suppose that $f$ has not dense image and let $\overline{f(V)}\subset V'$ be the closure of its image (i.e.  the solid $K$-Banach space associated to the closure of $f(V(*)_{\mathrm{top}})$ in. $V'(*)_{\mathrm{top}}$). Then $V'/\overline{f(V)}$ is a non zero Banach space and we have a short exact sequence 
\[
0\to \overline{f(V)}\to V'\to V'/\overline{f(V)}\to 0
\]
which splits as any Banach space over a local field is orthonormalizable.  Taking duals we get a short exact sequence 
\[
0 \to (V'/\overline{f(V)})^\vee \to V'^{\vee} \to \overline{f(V)}^\vee \to 0.
\]
Since $f^{\vee}: V'^\vee\to V^{\vee}$ factors through $\overline{f(V)}^{\vee}$, the map $f^{\vee}$ is not injective.  Conversely, suppose that the map $f^{\vee}$ is not injective, then its kernel $\ker f^{\vee}$ is a closed subspace of $V'^{\vee}$  which is a Smith space by Proposition \ref{PropGuidoNotes}. Since the quotient $V'^\vee / \ker(f^\vee)$ is also a Smith space, there is a retraction $r:V'^{\vee}\to \ker(f^\vee)$.  Taking duals one sees that the composition $V\to V' \to \ker(f^{\vee})^{\vee}$ is zero and that the last map is surjective (because of $r$), this implies that $f$ has not dense image.  

Finally, suppose that $f: V\to V'$ is injective. If $f^{\vee}: V'^{\vee}\to V^{\vee}$ does not have dense image, the closure  $\overline{f(V'^{\vee})}\subset V^{\vee}$ is a closed Smith subspace and its quotient $V^{\vee}/\overline{f(V'^{\vee})}$ is a non zero Smith space.  Taking duals we get a short exact   sequence \[
0\to (V^{\vee}/\overline{f(V'^{\vee})})^{\vee}\to V \to \overline{f(V'^{\vee})}^{\vee}\to 0. 
\]
But $f: V\to V'$ factors through $\overline{f(V'^{\vee})}^{\vee}$, this is a contradiction with the injectivity of $f$.  Conversely, Suppose that $f^{\vee}: V'^{\vee} \to V^{\vee}$ has dense image, consider the quotient 
\[
0 \to V'^{\vee} \to V^{\vee}\to Q \to 0.
\]
Taking duals one obtains an exact sequence 
\[
0 \to \underline{\Hom}_K(Q,K)\to V \to V'.
\]
But part (1) implies that $\underline{\Hom}_K(Q,K)=0$, proving that $f$ is injective. 
\endproof
\end{lemma}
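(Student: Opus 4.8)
The plan is to treat the two statements separately: part (1) via the adjunction of Proposition \ref{propFunctorTopCond} between solid and topological $K$-vector spaces, and part (2) via the Banach--Smith anti-equivalence of Lemma \ref{LemmaDuals1} together with orthonormalisability of Banach spaces and projectivity of Smith spaces.

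For part (1), I would evaluate $V^\vee = \iHom_K(V,K)$ on an extremally disconnected set $S$:
\[
\iHom_K(V,K)(S) = \Hom_K\big(V\otimes_{K_\sol} K_\sol[S],\,K\big) = \Hom_K\big(V,\,\underline{\Cont}(S,K)\big).
\]
Now $\underline{\Cont}(S,K)$ is the condensed vector space attached to the classical $K$-Banach space $C(S,K)$, hence to a compactly generated Hausdorff topological $K$-vector space; by Proposition \ref{propFunctorTopCond}(1) a $K$-linear map $V\to\underline{\Cont}(S,K)$ is the same datum as a continuous $K$-linear map $V(*)_{\mathrm{top}}\to C(S,K)$. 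Such a map factors through the maximal Hausdorff quotient of $V(*)_{\mathrm{top}}$, which Proposition \ref{PropQuasiseparatedCondensed} and Lemma \ref{LemmaQuasiseparatedQuotient} identify with $V^{qs}(*)_{\mathrm{top}}$; since $V^{qs}=0$ this quotient vanishes, so $\iHom_K(V,K)(S)=0$ for every $S$, and therefore $\iHom_K(V,K)=0$.

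For part (2) I would use throughout that a $K$-Banach space is orthonormalisable (so closed subspaces split off), that a $K$-Smith space is projective (Lemma \ref{LemmaBanachSmith}(2)) and is stable under closed subobjects and quotients by them (Proposition \ref{PropGuidoNotes}), and that $(-)^\vee$ is an exact anti-equivalence with $V^{\vee\vee}=V$ for $V$ Banach (Lemma \ref{LemmaDuals1}). For the equivalence between dense image of $f$ and injectivity of $f^\vee$: a functional vanishing on a dense subspace is zero, which gives one direction; conversely, if the closure $\overline{f(V)}\subsetneq V'$ is proper, splitting $0\to\overline{f(V)}\to V'\to V'/\overline{f(V)}\to 0$ and dualising gives $0\to (V'/\overline{f(V)})^\vee\to (V')^\vee\to \overline{f(V)}^\vee\to 0$ with nonzero left term, through which $f^\vee$ factors, so $\ker f^\vee\neq 0$. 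For the equivalence between injectivity of $f$ and dense image of $f^\vee$: if $f^\vee$ has dense image, set $Q=\mathrm{coker}(f^\vee)$ in $\SolidK$; since the image of $f^\vee$ is dense, $Q^{qs}=0$, and applying $\iHom_K(-,K)$ to $(V')^\vee\to V^\vee\to Q\to 0$, using part (1) to kill $\iHom_K(Q,K)$ and reflexivity to identify the other terms with $V$ and $V'$, yields $0\to V\xrightarrow{f}V'$. Conversely, if the image of $f^\vee$ has proper closure $W\subsetneq V^\vee$, the split sequence $0\to W\to V^\vee\to V^\vee/W\to 0$ has nonzero (projective) quotient, and dualising exhibits $f$ as $V=V^{\vee\vee}\twoheadrightarrow W^\vee\to V'$ with nonzero kernel $(V^\vee/W)^\vee$, so $f$ is not injective.

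The step I expect to be the main obstacle is the topological bookkeeping in part (1): verifying that $\iHom_K(V,K)$ is genuinely computed by the topological adjunction, and that the condensed condition $V^{qs}=0$ is equivalent to the vanishing of the classical Hausdorffification of $V(*)_{\mathrm{top}}$. Once this is in place, part (2) is a formal manipulation with the Banach--Smith duality; the only care required there is tracking whether each sub or quotient is Banach or Smith, so that the correct splitting property---orthonormalisability, respectively projectivity---applies.
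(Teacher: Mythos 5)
Your proof is correct and follows essentially the same route as the paper's: the topological adjunction of Proposition \ref{propFunctorTopCond} for part (1), and the Banach--Smith duality together with splittings of closed subspaces (orthonormalisability on the Banach side, projectivity on the Smith side) and an application of part (1) to the cokernel of $f^\vee$ for part (2). The only divergence is in the direction ``$f$ has dense image $\Rightarrow f^\vee$ injective'', where you argue directly that a map into the Hausdorff space $\underline{\Cont}(S,K)$ vanishing on a dense subspace is zero, while the paper proves the contrapositive using that $\ker f^\vee$ is a Smith subspace admitting a retraction; both are valid and yours is slightly more economical.
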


\subsection{Quasiseparated solid $K$-vector spaces}

We shall use the following results  throughout  the text without explicit mention. They are due to Clausen and Scholze, and explained to us by Guido Bosco in the study group of La Tourette.

\begin{proposition}[\cite{ClausenScholzeCM}]
\label{Propqs}
 Let $V$ be a solid $K$-vector space. The following are equivalent, 
\begin{enumerate}
    \item $V$ is quasiseparated.
    
    \item $V$ is equal to the filtered colimit of its Smith subspaces. 
\end{enumerate}
\proof
Let $V$ be a quasiseparated $K_{\sol}$-vector space, let $W_1,W_2$ be  Smith subspaces of  $V$.   As $V$ is quasiseparated, $W_1\cap W_2$ is a closed Smith subspace of $W_1$, and the sum $W_1+W_2\subset V$ is isomorphic to $(W_1\oplus W_2)/W_1\cap W_2$.  This shows that the Smith subspaces of $V$ form a direct system, let $V_0$ denote their colimit.  We claim  that $V/V_0=0$, let $W'$ be a Smith space and $f: W' \to V/V_0$ a map of solid $K_\sol$-vector spaces. As $W'$ is projective, there is a lift $f': W'\to V$.  But $\ker f'\subset W'$ is a closed Smith subspace since $V$ is quasiseparated. This implies that $f'$ factors through $V_0$ and that $f=0$.  Since the Smith spaces form a family of compact projective generators of $K_\sol$-vector spaces, one must have $V/V_0=0$ proving $(1)\Rightarrow (2)$.

Conversely, let $V=\varinjlim_{i\in I} V_i$ be a vector space written as a filtered colimit of Smith spaces by injective transition maps. Let $S_1$, $S_2$ be two profinite sets and $f_j:S_j\to V$ be two maps for $j=1,2$.  As the $S_i$ are profinite, there exists $i\in I$  such that $f_j$ factors through $V_i$ for $j=1,2$. Then, as the map  $V_i\to V$ is injective, one has 
\[
S_1\times_{V} S_2= S_1\times_{V_i} S_2.
\]
The implication $(2)\Rightarrow (1)$ follows as a Smith space is quasiseparated. 
\endproof 
\end{proposition}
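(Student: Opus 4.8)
The plan is to prove the two implications separately. Throughout I would use that the Smith spaces are precisely the compact projective generators of $\SolidK$ (Lemma \ref{LemmaBanachSmith}(2)), that the class of Smith spaces is stable under closed subobjects, quotients by closed subobjects and extensions (Proposition \ref{PropGuidoNotes}), and the general principle that a quasicompact subobject of a quasiseparated condensed set is automatically a closed subspace, so that its inclusion is a quasicompact injection.

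For $(1)\Rightarrow(2)$, assume $V$ is quasiseparated and let $\mathcal{W}$ be the partially ordered set of Smith subspaces of $V$. The first step is to check that $\mathcal{W}$ is filtered: given $W_1,W_2\in\mathcal{W}$, the inclusions $W_i\hookrightarrow V$ are quasicompact injections (each $W_i$ being quasicompact and $V$ quasiseparated), so the intersection $W_1\cap W_2=W_1\times_V W_2$ is a closed subspace of $W_1$, hence again a Smith space; then the sum $W_1+W_2$, being the image of $W_1\oplus W_2\to V$, is isomorphic to $(W_1\oplus W_2)/(W_1\cap W_2)$, a quotient of a Smith space by a closed Smith subspace, hence Smith, and it is a quasicompact subobject of $V$, therefore a Smith subspace containing both $W_1$ and $W_2$. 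Set $V_0:=\varinjlim_{W\in\mathcal{W}}W$; the natural map $V_0\to V$ is injective since filtered colimits are exact (Theorem \ref{TheoScholzeAB}). To obtain surjectivity I would show $V/V_0=0$ by testing against generators: for a Smith space $W'$ and a map $f\colon W'\to V/V_0$, projectivity of $W'$ provides a lift $f'\colon W'\to V$ along the surjection $V\to V/V_0$; since $V$ is quasiseparated, $\ker f'$ is a closed subspace of $W'$, hence Smith, so the image of $f'$, being isomorphic to $W'/\ker f'$, is a Smith subspace of $V$ and thus contained in $V_0$; therefore $f$ factors as $W'\to V_0\hookrightarrow V\to V/V_0$ and so vanishes. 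As the Smith spaces generate $\SolidK$, this gives $V/V_0=0$, i.e. $V=V_0$.

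For $(2)\Rightarrow(1)$, write $V=\varinjlim_{i\in I}V_i$ as a filtered colimit of Smith spaces with injective transition maps, and let $S_1,S_2$ be profinite sets with maps $f_j\colon S_j\to V$; we must show $S_1\times_V S_2$ is quasicompact. Each $S_j$ is quasicompact and the transition maps are injective (indeed closed immersions, being injections of Smith spaces), so $f_j$ factors through some $V_{i_j}$ by Proposition \ref{propFunctorTopCond}; by filteredness choose $i\geq i_1,i_2$ so that both $f_1,f_2$ factor through $V_i$. Since $V_i\to V$ is a monomorphism, $S_1\times_V S_2=S_1\times_{V_i}S_2$, and the latter is quasicompact because $V_i$ is Smith, hence quasiseparated (Proposition \ref{PropGuidoNotes}). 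Together with the fact that a Smith space is quasiseparated, this shows $V$ is quasiseparated.

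The heart of the argument, and the only place where quasiseparatedness of $V$ is genuinely used, is the filteredness step in $(1)\Rightarrow(2)$: one needs that the intersection of two Smith subspaces of $V$ is again Smith, which reduces to the fact that quasicompact subobjects of a quasiseparated condensed set are closed subspaces, combined with the stability of Smith spaces under quotients by closed subobjects. A secondary point that deserves care is the factorisation, in $(2)\Rightarrow(1)$, of a map out of a profinite set through a single stage of a filtered colimit along monomorphisms; this ultimately rests on the quasicompactness of profinite sets, via the description of quasiseparated condensed sets as ind-(compact Hausdorff) objects in Proposition \ref{propFunctorTopCond}.
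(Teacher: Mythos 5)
Your proof is correct and follows essentially the same route as the paper's: for $(1)\Rightarrow(2)$ you show the Smith subspaces form a filtered system via quasiseparatedness and stability of Smith spaces under closed subobjects and quotients, then kill $V/V_0$ by testing against the compact projective generators; for $(2)\Rightarrow(1)$ you factor maps from profinite sets through a single stage of the colimit and use quasiseparatedness of Smith spaces. The extra details you supply (injectivity of $V_0\to V$ via exactness of filtered colimits, the explicit justification of the factorisation step) are accurate elaborations of steps the paper leaves implicit.
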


\begin{lemma}[\cite{ClausenScholzeCM}]
\label{LemmaqsimpliesFlat}
A quasiseparated $K$-solid space is flat. In other words, if $V$ is a quasiseparated $K_{\sol}$-vector space, then $- \otimes_{K_\sol} V = - \otimes_{K_\sol}^L V$. 
\proof
Let $V$ be a quasiseparated $K_{\sol}$-vector space. Since filtered colimits are exact in the category of condensed abelian groups, and the solid tensor product commutes with colimits, by Proposition \ref{Propqs} it is enough to prove the lemma for $V=\prod_{I}\O_K[\frac{1}{p}]$ a Smith space. Let $W \in \SolidK$. We want to show that $V \otimes_{K_\sol}^L W$ is concentrated in degree zero. As the Smith spaces are compact projective generators, $W$ can be written as a quotient  $0\to W'' \to W' \to W\to 0$ where $W'$ is a direct sum of Smith spaces. Then we are reduced to showing that $0\to W''\otimes_{K_\sol}V\to W' \otimes_{K_\sol} V$ is injective.  Since $W''$ is quasiseparated, by Proposition \ref{Propqs}, it can be written as filtered colimit of its Smith subspaces. Therefore, by compacity of Smith spaces,  the arrow $W''\to W'$ is a filtered colimit of injections of Smith spaces.  It is hence enough to show that if $\prod_{J_1}\O_K[\frac{1}{p}] \to \prod_{J_2}\O_K[\frac{1}{p}]$ is an injective map, then 
\[
\prod_{J_1}\O_K[\frac{1}{p}] \otimes_{K_\sol} V \to \prod_{J_2}\O_K[\frac{1}{p}] \otimes_{K_\sol}V
\]
is injective. This follows from the tensor product of two Smith Spaces (Proposition \ref{PropTensprodSmith}) 
\[
(\prod_{I} \O_K [\frac{1}{p}])\otimes_{K_\sol} (\prod_{J}\O_K[\frac{1}{p}])= \prod_{I\times J} \O_K [\frac{1}{p}]. \]
This finishes the proof.
\endproof
\end{lemma}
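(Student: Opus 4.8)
The plan is to reduce the statement to the case of a Smith space, where the explicit description of solid tensor products is available. First I would record the purely formal observation that in $\SolidK$ --- which is the category of modules over the analytic ring $K_{\sol}$ and hence satisfies $(AB5)$ --- a filtered colimit of flat objects is again flat: the solid tensor product commutes with all colimits and filtered colimits are exact, so for $V=\varinjlim_i V_i$ one has $-\otimes^L_{K_\sol}V=\varinjlim_i(-\otimes^L_{K_\sol}V_i)$, which is concentrated in degree $0$ as soon as every $V_i$ is flat. By Proposition \ref{Propqs} a quasiseparated $V$ is the filtered colimit of its Smith subspaces, so it suffices to prove that every Smith space $V=\prod_{i\in I}\O_K[\frac{1}{p}]$ is flat over $K_{\sol}$.

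Fix such a $V$ and an arbitrary $W\in\SolidK$. Using that Smith spaces form a family of compact projective generators (Lemma \ref{LemmaBanachSmith}(2)), present $W$ as the cokernel of an injection $\iota\colon W''\hookrightarrow W'$ in $\SolidK$ with $W'$ a direct sum of Smith spaces. Since $-\otimes_{K_\sol}V$ is right exact, to see that $V\otimes^L_{K_\sol}W$ is concentrated in degree $0$ it is enough to check that $W''\otimes_{K_\sol}V\to W'\otimes_{K_\sol}V$ remains injective. Now $W''$, being a subobject of $W'$, is quasiseparated, so by Proposition \ref{Propqs} it is the filtered colimit of its Smith subspaces; invoking compactness of Smith spaces, the inclusion $\iota$ is a filtered colimit of injections of Smith spaces, each of which factors through a finite partial sum of $W'$ --- again a Smith space. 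As $-\otimes_{K_\sol}V$ commutes with filtered colimits and filtered colimits are exact, the problem reduces to: if $u\colon\prod_{J_1}\O_K[\frac{1}{p}]\to\prod_{J_2}\O_K[\frac{1}{p}]$ is injective then so is $u\otimes_{K_\sol}\mathrm{id}_V$. By Proposition \ref{PropTensprodSmith} both tensor products are identified with $\prod_{J_k\times I}\O_K[\frac{1}{p}]$, and the asserted injectivity of the induced map is where the real work lies.

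The main obstacle is precisely this last point, together with the bookkeeping that precedes it (ensuring $W''$ is quasiseparated and handling the reductions via compactness of Smith spaces). I expect the cleanest way to settle it is to observe that an injection of Smith spaces is automatically a closed immersion --- after rescaling lattices it restricts to an injection of profinite $\O_K$-modules, whose image is compact, hence closed --- with Smith cokernel by Proposition \ref{PropGuidoNotes}, so the resulting short exact sequence of Smith spaces splits by projectivity and therefore stays exact after applying $-\otimes_{K_\sol}V$. Alternatively, and more efficiently, one can note that $\prod_I\O_K$ is a retract of the compact projective generator $\O_{K,\sol}[S]$ for a suitable extremally disconnected $S$ (cf. the proof of Lemma \ref{LemmaDuals1}(2)), hence flat over $\O_{K,\sol}$, so that $V=\prod_I\O_K\otimes_{\O_{K,\sol}}K_{\sol}$ is flat over $K_{\sol}$ by base change; combined with the first paragraph this already proves the lemma, bypassing the reduction of the second paragraph entirely.
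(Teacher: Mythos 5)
Your main line of argument (the reduction via Proposition \ref{Propqs} to Smith spaces, the presentation of $W$ as a quotient of a direct sum of Smith spaces, and the further reduction to injections of Smith spaces) is exactly the paper's proof. Where you differ is in the final step: the paper simply cites the formula $(\prod_{I}\O_K[\frac{1}{p}])\otimes_{K_\sol}(\prod_{J}\O_K[\frac{1}{p}])=\prod_{I\times J}\O_K[\frac{1}{p}]$ of Proposition \ref{PropTensprodSmith}, whereas you complete the argument by observing that an injection of Smith spaces is a quasicompact injection with Smith cokernel (Proposition \ref{PropGuidoNotes}), hence splits by projectivity and stays injective after tensoring. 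That completion is correct and arguably more transparent than the paper's one-line appeal to the tensor product formula; it is the same splitting device the paper uses later, in the proof of Theorem \ref{theorem:duality}.

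However, your proposed shortcut in the last sentence is circular and should be discarded. The deduction ``$\prod_I\O_K$ is a retract of the compact projective generator $\O_{K,\sol}[S]$, hence flat'' presupposes that the generators $\O_{K,\sol}[S]=\prod_J\O_K$ are themselves flat over $\O_{K,\sol}$ --- but that is precisely (a special case of) the statement being proved, since these generators are quasiseparated. Unlike in ordinary ring theory, the compact projective generators of $\SolidOK$ are not direct sums of copies of the tensor unit $\O_K$, so projectivity gives no flatness for free; Proposition \ref{PropTensprodSmith} only controls the derived tensor product of two such generators against each other, not against an arbitrary solid module, and upgrading that to genuine flatness is exactly what the reduction in your second paragraph accomplishes. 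So the ``bypass'' does not bypass anything.
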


\subsection{Fr\'echet and $LS$ spaces}

Our next goal is to extend  the duality between Banach and Smith spaces to a larger class of solid $K$-vector spaces.  We need a definition 

\begin{definition} \leavevmode
\begin{enumerate} 
    \item  A solid Fr\'echet space   is a solid $K$-vector space which can be written as a countable cofiltered limit of Banach spaces. 
    
    \item A solid $LS$ (resp. $LB$, resp. $LF$) space  is a solid $K$-vector space which can be written as a countable filtered colimit of Smith (resp. Banach, resp. solid Fr\'echet) spaces with injective transition maps. 
\end{enumerate}
\end{definition}

\begin{example}
Let $\mathring{\mathbb{D}}^1_K = \bigcup_{r < 1} \mathbb{D}^1_K(r)$ be the rigid analytic open unit disc written as the increasing union of the closed affinoid discs of radius $r < 1$. The global sections $\mathcal{O}(\mathring{\mathbb{D}}^1_K) = \varprojlim_{r < 1} \mathcal{O}(\mathbb{D}^1_K(r))$ is a Fr\'echet space and we will see that its dual $\mathcal{O}(\mathring{\mathbb{D}}^1_K)^\vee = \varinjlim_{r < 1} \mathcal{O}(\mathbb{D}^1_K(r))^\vee$ is a solid $LS$ space (cf. Theorem \ref{theorem:duality}). The tensor product $\mathcal{O}(\mathring{\mathbb{D}}^1_K) \otimes_{K_\sol} \mathcal{O}(\mathring{\mathbb{D}}^1_K)$ is isomorphic to $\mathcal{O}(\mathring{\mathbb{D}}^2_K)$ (cf. Lemma \ref{LemmaTensorProduct}).
\end{example}

\begin{lemma} \leavevmode
\begin{enumerate}
    \item  The functor $V \mapsto V(*)_{\mathrm{top}}$ induces an equivalence of categories between solid and classical Fr\'echet spaces such that  $V = \underline{V(*)_{\mathrm{top}}}$.
    
    \item An $LS$ space is quasiseparated. Conversely, a quasiseparated $K_{\sol}$-vector space $W$ is an $LS$ space if and only if it is countably compactly generated, i.e.  for every surjection $\bigoplus_{i\in I} P_i \to W$  by direct sums of Smith spaces, there is a countable index subset $I_{0}\subset I$ such that $\bigoplus_{i\in I_0} P_{i}\to W$ is surjective. 
\end{enumerate}
\proof
Part (1) follows from the fact that a classical  Fr\'echet space is complete for a countable family of seminorms (i.e. it can be written as a countable cofiltered limit of  classical Banach spaces), Proposition \ref{propFunctorTopCond} (1), and Proposition \ref{RemarkBanachSmith}.  For part (2), the fact that an $LS$ space is quasiseparated follows from Proposition \ref{Propqs}.  Let $W$ be quasiseparated $K_{\sol}$-vector space. Assume it is countably compactly generated. Write $W= \varinjlim_{W'\subset W} W'$ as a the colimit of its Smith subspaces. As $W$ is quasiseparated,  the sum of two Smith subspaces is Smith, so the colimit is filtered. By hypothesis, there are countably many $W'$ such that $W= \varinjlim_{s \in \N} W_s'$. Moreover, we can assume that $W_0\subset W_1\subset \cdots$.  This proves  that $W$ is an $LS$ space. Conversely,  let $W$ be an $LS$ space and let $\bigoplus_{i \in I} P_i \to W$ be a surjective map with $P_i$ Smith. The image $P_i'$ of $P_i$ in $W$ is a Smith space since $W$ is quasiseparated, hence $W= \sum_{i} P_i'$. Thus, without  loss of generality we can assume that $\varinjlim_{i\in I} P_i'$ is filtered with injective transition maps  and equal to $W$.    Let $W= \varinjlim_{s\in \N} W_s$ be a presentation as a countable colimit of Smith spaces by injective transition maps.  By compactness of the Smith spaces, for all $s$ there exists  $i_s$ such that $W_n\subset P'_{i_s}\subset W$.  We can assume that $P'_{i_s}\subset P'_{i_{s+1}}$ for all $s\in \N$.  Thus, $\bigoplus_{s\in \N} P_{i_s}\to W$ is surjective, this finishes the lemma. 
\endproof
\end{lemma}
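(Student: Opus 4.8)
The plan is to treat the two assertions separately, each time reducing to the already-established dictionary between solid and classical Banach, resp. Smith, spaces.

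\textbf{Part (1).} I would start from the fact, recorded in Proposition \ref{propFunctorTopCond}(1), that $T \mapsto \underline{T}$ is a right adjoint and hence commutes with all limits, in particular with countable cofiltered limits. A classical Fr\'echet space is by definition complete for a countable family of seminorms, i.e. a countable cofiltered limit of classical Banach spaces inside $\mathcal{LC}_K$, while a solid Fr\'echet space is by definition a countable cofiltered limit of solid Banach spaces. Combining this with the equivalence ``solid Banach $\simeq$ classical Banach'' of Proposition \ref{RemarkBanachSmith} and the commutation of $\underline{(-)}$ with limits, one gets that $\underline{(-)}$ carries classical Fr\'echet spaces to solid Fr\'echet spaces, that $V \mapsto V(*)_{\mathrm{top}}$ carries solid Fr\'echet spaces to classical ones, and that the unit $V \to \underline{V(*)_{\mathrm{top}}}$ is an isomorphism on solid Fr\'echet spaces. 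Full faithfulness is then immediate: Fr\'echet spaces are metrizable, hence compactly generated, so $\underline{(-)}$ is fully faithful on them by Proposition \ref{propFunctorTopCond}(2).

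\textbf{Part (2).} That an $LS$ space $W = \varinjlim_s W_s$ (countably many $W_s$, injective transition maps, each $W_s$ Smith) is quasiseparated follows from Proposition \ref{Propqs} and its proof, since Smith spaces are quasiseparated and a filtered colimit of Smith spaces along injections is quasiseparated. For the converse, let $W$ be quasiseparated; by Proposition \ref{Propqs} it is the filtered colimit of its Smith subspaces, the filteredness resting on the stability of Smith spaces under extensions and closed subobjects (Proposition \ref{PropGuidoNotes}), which makes the sum of two Smith subspaces of $W$ again a Smith subspace. If $W$ is countably compactly generated, I would apply the defining property to the tautological surjection $\bigoplus_{W' \subseteq W} W' \to W$ to extract countably many Smith subspaces whose partial sums form an increasing chain $W_0 \subseteq W_1 \subseteq \cdots$ with colimit $W$, exhibiting $W$ as an $LS$ space. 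Conversely, given an $LS$ presentation $W = \varinjlim_s W_s$ and any surjection $\bigoplus_{i \in I} P_i \to W$ by Smith spaces, I would replace each $P_i$ by its image $P_i' \subseteq W$ (again Smith, by quasiseparatedness), observe $W = \varinjlim_i P_i'$ filtered along injections, and use compactness of each $W_s$ (Lemma \ref{LemmaBanachSmith}(2)) to factor $W_s \hookrightarrow W$ through some $P_{i_s}'$; after arranging $P_{i_s}' \subseteq P_{i_{s+1}}'$, the countable subfamily $(P_{i_s})_s$ already surjects onto $W$.

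\textbf{Main obstacle.} Part (1) is essentially formal once one uses that $\underline{(-)}$ preserves limits. The delicate part is the bookkeeping in part (2): checking that the Smith subspaces of a quasiseparated space genuinely form a filtered system, and extracting a countable cofinal chain from a countable generating family. This is exactly where the non-formal inputs are needed --- that closed subobjects and quotients by closed subobjects of Smith spaces are Smith (Proposition \ref{PropGuidoNotes}), and that Smith spaces are compact projective generators of $\SolidK$ (Lemma \ref{LemmaBanachSmith}(2)) --- so the proof should make clear where each of these is invoked.
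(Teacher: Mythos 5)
Your proposal is correct and follows essentially the same route as the paper: part (1) via the adjunction of Proposition \ref{propFunctorTopCond} and the Banach-space dictionary of Proposition \ref{RemarkBanachSmith}, and part (2) via Proposition \ref{Propqs}, the stability properties of Smith subspaces in a quasiseparated space, and compactness of Smith spaces to extract or compare countable generating chains. Your explicit invocation of the tautological surjection $\bigoplus_{W' \subseteq W} W' \to W$ is just a slightly more spelled-out version of the step the paper states tersely.
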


The following lemma says that we can  always choose a presentation of a solid Fr\'echet space as an inverse limit of Banach spaces with dense transition maps. 

\begin{lemma} \label{lemmaFrechetdense}
Let $V$ be a solid Fr\'echet space, then we can write $V=\varprojlim_{n\in \N} V_n$ with $V_n$  Banach spaces such that $V(*)_{\mathrm{top}} \to V_n(*)_{\mathrm{top}}$ has dense image for all $n\in \N$.  Conversely, let $\{V_n\}_{n\in \N}$ be a cofiltered limit of Banach spaces such that $V_{n+1}(*)_{\mathrm{top}} \to V_n(*)_{\mathrm{top}}$ has dense image,  and let $V=\varprojlim V_n$ be its inverse limit.  Then $V(*)_{\mathrm{top}} \to V_n(*)_{\mathrm{top}}$ has dense image for all $n\in \N$.
\proof
Let $V=\varprojlim_{n\in \N} V_n$ be a presentation of the solid Fr\'echet space as an inverse limit of Banach spaces.  Changing $V_n$ by the closure of the image of $V$ (i.e. the solid Banach space corresponding to the closure of the image of $V(*)_{\mathrm{top}}$ in $V_n(*)_{\mathrm{top}}$), we obtain a desired presentation with $V(*)_{\mathrm{top}} \to V_n(*)_{\mathrm{top}}$ of dense image.  Conversely, let $\{V_n\}_{n\in \N}$ be an inverse system of Banach spaces with maps $V_{n+1}(*)_{\mathrm{top}}\to V_n(*)_{\mathrm{top}}$ of dense image,  and let $V=\varprojlim_n V_n$ be a solid Fr\'echet space, we want to show that the image of $V(*)_{\mathrm{top}} \to V_n(*)_{\mathrm{top}}$ is dense for all $n\in \N$. Fix $n_0\in \N$, let $w\in V_{n_0}(*)_{\mathrm{top}}$ and $0 < \epsilon < 1$.  Let  $|\cdot |_n$ denote the norm of $V_n(*)_{\mathrm{top}}$. Without loss of generality we assume that $|\cdot |_n \leq |\cdot|_{n+1}$ on $V_{n+1}(*)_{\mathrm{top}}$, where here we identify $|\cdot|_n$ with the seminorm on $V_{n+1}(*)_{\mathrm{top}}$ defined by composing with the map $V_{n+1}(*)_{\mathrm{top}} \to V_{n}(*)_{\mathrm{top}}$.  By density of the transition maps $\phi^{n+1}_n: V_{n+1}(*)_{\mathrm{top}}\to V_{n}(*)_{\mathrm{top}}$, there exists $v_{n_0+1}\in V_{n+1}(*)_{\mathrm{top}}$ such that $|\phi^{n_0+1}_{n_0}(v_{n_0+1})-w|\leq \epsilon$.  By induction, for all $n\geq n_0+1$ we can find $v_n\in V_{n}(*)_{\mathrm{top}}$ such that $|\phi^{n+1}_n(v_{n+1}-v_n)|\leq \epsilon^{n}$.  Let $n\geq n_0+1$ be fixed and let $k\geq 0$,  then  by construction the sequence $\{\phi^{n+k}_{n}(v_{n+k})\}$ converges in $V_{n}(*)_{\mathrm{top}}$ to an element $v'_n$.  Moreover, it is immediate to check that $\phi_{n}^{n+1}(v_{n+1}')=v_n'$ so that $v'=(v_n')\in V(*)_{\mathrm{top}}$,  and $|\phi^{n_0+1}_{n_0}(v_{n_0+1}'-w)|\leq \epsilon$. This proves the lemma. 
\endproof
\end{lemma}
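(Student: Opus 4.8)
The plan is to prove the two implications separately; both reduce to classical non-archimedean functional analysis via the equivalence of categories $V\mapsto V(*)_{\mathrm{top}}$ between solid and classical Fr\'echet (resp. Banach) spaces, under which a solid Fr\'echet space $V$ satisfies $V=\underline{V(*)_{\mathrm{top}}}$.

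\textbf{First assertion.} I would start from any presentation $V=\varprojlim_{n\in\N}V_n$ with $V_n$ Banach spaces, which exists by the very definition of a solid Fr\'echet space, and then replace each $V_n$ by the solid Banach space $\overline{V}_n$ attached to the closure of the image of $V(*)_{\mathrm{top}}$ inside $V_n(*)_{\mathrm{top}}$ (a closed subspace of a classical Banach space is again Banach, so this closure does define a solid Banach space). Since every transition map $V_{n+1}(*)_{\mathrm{top}}\to V_n(*)_{\mathrm{top}}$ carries the image of $V(*)_{\mathrm{top}}$, hence by continuity its closure, into $\overline{V}_n(*)_{\mathrm{top}}$, the $\overline{V}_n$ form an inverse system that still receives a compatible family of maps from $V$, i.e. a morphism $V\to\varprojlim_n\overline{V}_n$. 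Its composite with the canonical monomorphism $\varprojlim_n\overline{V}_n\hookrightarrow\varprojlim_n V_n=V$ (an inverse limit of closed immersions) is $\mathrm{id}_V$; a one-line diagram chase with this monomorphism then gives $V\cong\varprojlim_n\overline{V}_n$. By construction $V(*)_{\mathrm{top}}$ has dense image in every $\overline{V}_n(*)_{\mathrm{top}}$, which is the presentation sought.

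\textbf{Converse.} Here the plan is a successive-approximation (Mittag-Leffler type) argument. First I would replace the norm of each $V_n$ by the maximum of its own norm and the pullbacks of the norms of $V_0,\dots,V_{n-1}$; this is an equivalent norm, so it changes neither the solid spaces $V_n$, nor the limit $V$, nor the density of the transition maps $\phi^{n+1}_n$, and it arranges that for $m\le n$ the seminorm $|\cdot|_m$ on $V_n(*)_{\mathrm{top}}$ (obtained by composing with the projection to $V_m(*)_{\mathrm{top}}$) is increasing in $m$. Now fix $n_0\in\N$, $w\in V_{n_0}(*)_{\mathrm{top}}$ and $0<\epsilon<1$. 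Using density of $\phi^{n_0+1}_{n_0}$ I would pick $v_{n_0+1}$ with $|\phi^{n_0+1}_{n_0}(v_{n_0+1})-w|_{n_0}\le\epsilon$, and then inductively, using density of $\phi^{n+1}_n$, pick $v_{n+1}\in V_{n+1}(*)_{\mathrm{top}}$ with $|\phi^{n+1}_n(v_{n+1})-v_n|_n\le\epsilon^n$. Monotonicity of the seminorms makes $k\mapsto\phi^{n+k}_n(v_{n+k})$ a Cauchy sequence in the complete space $V_n(*)_{\mathrm{top}}$, hence convergent to some $v'_n$, and these limits are compatible under the $\phi^{n+1}_n$, so $v'=(v'_n)$ lies in $V(*)_{\mathrm{top}}$. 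A final non-archimedean estimate, bounding $|v'_{n_0}-w|_{n_0}$ by the maximum of $|\phi^{n_0+1}_{n_0}(v_{n_0+1})-w|_{n_0}$ and $\sup_{m\ge n_0+1}|\phi^{m+1}_{n_0}(v_{m+1})-\phi^{m}_{n_0}(v_m)|_{n_0}$, both of which are $\le\epsilon$, gives $|v'_{n_0}-w|_{n_0}\le\epsilon$, proving density of $V(*)_{\mathrm{top}}\to V_{n_0}(*)_{\mathrm{top}}$.

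\textbf{Main obstacle.} The first implication is essentially formal once the ``pass to the closure of the image'' trick is in place. The delicate part is the converse: one has to set up the seminorms to be coherently increasing along the whole system so that the telescoping estimate closes, while checking that this renormalisation is harmless for the solid objects and their limit, and then verify carefully that the limiting tuple $v'$ genuinely lies in $\varprojlim_n V_n$ and $\epsilon$-approximates $w$ — the usual Mittag-Leffler bookkeeping, here in its non-archimedean flavour where sums are replaced by maxima.
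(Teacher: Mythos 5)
Your proposal is correct and follows essentially the same route as the paper: the first assertion by replacing each $V_n$ with the closure of the image of $V(*)_{\mathrm{top}}$, and the converse by the same successive-approximation argument with monotone seminorms, a telescoping non-archimedean estimate, and passage to the limit tuple $v'$. The only differences are cosmetic (you spell out the renormalisation making the seminorms increasing and the diagram chase identifying $V$ with $\varprojlim_n\overline{V}_n$, which the paper leaves implicit).
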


\begin{convention}
From now on we refer  to solid Fr\'echet spaces simply   as Fr\'echet spaces,  we call the underlying topological space a classical Fr\'echet space.  
\end{convention}

\subsection{Properties of Fr\'echet spaces} We now  present some basic properties of Fr\'echet spaces, most of the results in the context of condensed mathematics  are due to Clausen and Scholze \cite{ClausenScholzeCM}. 

\begin{lemma}[Topological Mittag-Leffler \cite{ClausenScholzeCM}]
\label{LemmaML}
Let $V=\varprojlim_n V_n$ be a  Fr\'echet space written as an inverse limit of Banach spaces with dense transition maps. Then
\[ R^j \varprojlim_n V_n = 0 \]
for all $j > 0$. In particular, $V= R\underline{\Hom}_K(V^\vee, K)$. 
\end{lemma}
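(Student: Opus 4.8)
The plan is to prove the vanishing of the higher derived inverse limits by exploiting the density of the transition maps, following the classical topological Mittag-Leffler argument transported into the solid world. First I would reduce to computing $R^j\varprojlim_n V_n$ as a solid $K$-vector space; since $R\varprojlim$ over a countable index set can be computed by the two-term complex $\prod_n V_n \xrightarrow{1 - \mathrm{shift}} \prod_n V_n$, it suffices to prove that the map $\mathrm{id} - \sigma$ on $\prod_n V_n$ (where $\sigma$ is built from the transition maps $\phi^{n+1}_n$) is surjective; then $R^j\varprojlim_n V_n = 0$ for $j \geq 2$ automatically, and $j = 1$ vanishes precisely by this surjectivity. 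Because surjectivity of a map of solid $K$-vector spaces can be tested after applying $(*)$ (the global sections functor is exact and conservative on the relevant classes by Proposition \ref{RemarkBanachSmith} and Proposition \ref{propFunctorTopCond}), and $\prod_n V_n$ is the condensed set attached to the Fr\'echet space $\prod_n V_n(*)_{\mathrm{top}}$, I would check surjectivity at the level of underlying topological vector spaces.

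The key step is then the classical one: given $(w_n)_n \in \prod_n V_n(*)_{\mathrm{top}}$, one must solve $v_n - \phi^{n+1}_n(v_{n+1}) = w_n$ with $v_n \in V_n(*)_{\mathrm{top}}$. Here the density of each $\phi^{n+1}_n : V_{n+1}(*)_{\mathrm{top}} \to V_n(*)_{\mathrm{top}}$ enters, exactly as in Lemma \ref{lemmaFrechetdense}: one builds $v_n$ as a convergent series $v_n = \sum_{k \geq 0} \phi^{n+k}_n(u_{n+k})$ where the $u_m$ are chosen inductively so that partial sums of the required telescoping identity are approximated to within $\epsilon^m$ in the $n$-th norm, using the compatibility $|\cdot|_n \leq |\cdot|_{n+1}$ of the seminorms. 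Completeness of each Banach space $V_n(*)_{\mathrm{top}}$ guarantees convergence, and one checks the resulting $(v_n)$ is compatible and solves the equation. This is a standard non-archimedean Mittag-Leffler computation, so I would not spell out every estimate; the point to be careful about is organising the double-indexed approximation so the series defining $v_n$ converges simultaneously for all $n$, which is handled by fixing the target index and letting $k \to \infty$.

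The last sentence, $V = R\underline{\Hom}_K(V^\vee, K)$, follows by combining the vanishing just proved with the duality results of the previous subsections. Writing $V = \varprojlim_n V_n$ with dense transition maps, Lemma \ref{LemmaDuals1} (and the remark that part (2) holds with $R\iHom$) gives $V_n = R\underline{\Hom}_K(V_n^\vee, K)$ with $V_n^\vee$ a Smith space, and the dense injective transition maps dualise to injections $V_n^\vee \hookrightarrow V_{n+1}^\vee$ by Lemma \ref{Lemmanoqs}(2), so $V^\vee = \varinjlim_n V_n^\vee$. Then $R\underline{\Hom}_K(V^\vee, K) = R\underline{\Hom}_K(\varinjlim_n V_n^\vee, K) = R\varprojlim_n R\underline{\Hom}_K(V_n^\vee, K) = R\varprojlim_n V_n$, and by the vanishing of $R^j\varprojlim$ this equals $\varprojlim_n V_n = V$, as desired. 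The main obstacle I anticipate is purely bookkeeping: making the reduction to underlying topological spaces rigorous (i.e. justifying that surjectivity of the relevant map of solid spaces is detected on $(*)$, given that $\prod_n V_n$ sits in the fully faithful image of compactly generated spaces and that $1 - \sigma$ is a map between such), after which the analytic core is the routine density argument already rehearsed in Lemma \ref{lemmaFrechetdense}.
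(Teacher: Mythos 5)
Your overall strategy is the standard one (the paper itself does not spell out a proof but simply cites \cite[Lemma A.18]{GuidoDrinfeld}, which runs along the same lines): compute $R\varprojlim$ by the telescope complex $\prod_n V_n \xrightarrow{1-\sigma} \prod_n V_n$, so that $R^j\varprojlim=0$ for $j\geq 2$ is automatic and everything reduces to surjectivity of $1-\sigma$, which is the classical non-archimedean Mittag--Leffler approximation using density and completeness. The deduction of $V=R\underline{\Hom}_K(V^\vee,K)$ from $V^\vee=\varinjlim_n V_n^\vee$, projectivity of Smith spaces and the vanishing of $R^j\varprojlim$ is also correct.

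There is, however, one step that would fail as stated: you claim that surjectivity of $1-\sigma$, a map of solid $K$-vector spaces, can be tested after applying $(\ast)$, because $(\ast)$ is ``exact and conservative''. Evaluation at $\ast$ is exact but it is \emph{not} conservative on condensed (or solid) modules, and surjectivity of a map of condensed abelian groups is not detected on underlying sets. A counterexample inside this very paper: for a Smith space $W$, the map $W^B\to W$ of Definition \ref{defiBanachificationSmith} is a bijection on $(\ast)$ but is not surjective as a map of condensed modules (its cokernel is a nonzero object all of whose points vanish). Surjectivity must instead be checked on every extremally disconnected set $S$. The repair is short but has to be said: by nuclearity of Banach spaces (Corollary \ref{CoroBanachisNuclear} and Remark \ref{remarkFrechet}), $V_n(S)_{\mathrm{top}}=(\underline{\Cont}(S,K)\otimes_{K_\sol}V_n)(\ast)_{\mathrm{top}}$ is again a classical Banach space, and tensoring a dense-image map of Banach spaces with $\underline{\Cont}(S,K)$ preserves density of the image (the algebraic tensor product of $\Cont(S,K)$ with the image is dense in the completed tensor product). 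Hence for each fixed $S$ one has an inverse system of classical Banach spaces with dense transition maps, and your approximation argument applies verbatim at that level, uniformly in $S$ by functoriality. With this substitution for the reduction step, the proof is complete.
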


\begin{proof}
See \cite[Lemma A.18]{GuidoDrinfeld}.
\end{proof}

\begin{lemma}[\cite{ClausenScholzeCM}]  \label{LemmaTensorProduct}
Let $(V_n)_{n\in \N}$ and $(W_{m})_{m\in \N}$ be countable families of Banach spaces.

\begin{enumerate} 

\item We have 
\[
\big(\prod_{n} V_n\big)\otimes_{K_\sol}^L \big(\prod_{m} W_m \big) = \prod_{n,m} V_n\otimes_{K_\sol} W_m. 
\]

\item More generally, if $V =\varprojlim_n V_n$ and $W= \varprojlim_m W_m$ are Fr\'echet spaces written as inverse limits of Banach spaces by dense transition maps (which is always possible thanks to Lemma \ref{lemmaFrechetdense}), one has 
\[
V\otimes_{K_\sol}^L W = \varprojlim_{n, m} V_n \otimes_{K_\sol} W_m. 
\]
\end{enumerate}
\end{lemma}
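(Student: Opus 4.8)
The plan is to prove \textup{(1)} directly by reducing to products of Smith spaces and invoking Proposition~\ref{PropTensprodSmith}, and then to deduce \textup{(2)} from \textup{(1)} together with the topological Mittag--Leffler lemma.

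\emph{Proof of \textup{(1)}.} First observe that a countable product of Banach spaces is a Fr\'echet space (it is the cofiltered limit, along surjective transition maps, of the partial products, which are Banach), hence quasiseparated, hence flat by Lemma~\ref{LemmaqsimpliesFlat}; likewise $\prod_{n,m}V_n\otimes_{K_\sol}W_m$ is a product of Banach spaces (Lemma~\ref{CorotensorBanach}) and sits in degree $0$. So $\otimes^L$ may be replaced by $\otimes$ and it suffices to prove the underived identity. Fixing orthonormal bases, write $V_n=\widehat{\bigoplus}_{I_n}K$; as in the proof of Lemma~\ref{CorotensorBanach} one has $V_n=\varinjlim_{f_n}\prod_{i\in I_n}\varpi^{f_n(i)}\O_K$, a filtered colimit over the proper functions $f_n\colon I_n\to\Z$ of profinite (hence Smith) $\O_K$-submodules. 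Since an arbitrary product of profinite $\O_K$-modules is again profinite, axiom $(AB6)$ of Theorem~\ref{TheoScholzeAB} gives
\[
\textstyle \prod_n V_n \;=\; \varinjlim_{(f_n)_n}\ \prod_{n,\,i\in I_n}\varpi^{f_n(i)}\O_K ,
\]
a filtered colimit of Smith $\O_K$-modules, and similarly for $\prod_m W_m$. As $\otimes$ commutes with filtered colimits, we are reduced — via Proposition~\ref{PropTensprodSmith}, applied compatibly with the $\varpi$-adic gradings — to identifying $\varinjlim_{(f_n),(g_m)}\prod_{n,m,i,j}\varpi^{f_n(i)+g_m(j)}\O_K$ with the colimit presentation of $\prod_{n,m}V_n\otimes W_m=\prod_{n,m}\widehat{\bigoplus}_{I_n\times J_m}K$ obtained from Lemma~\ref{CorotensorBanach} and a further application of $(AB6)$.

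The main obstacle is precisely this last identification. It amounts to a cofinality statement: every family $(h_{n,m})$ of proper functions on the sets $I_n\times J_m$ is dominated by one of the form $h_{n,m}(i,j)=f_n(i)+g_m(j)$ with $f_n,g_m$ proper — the point being that the $n$-dependence of the $h_{n,m}$ can be absorbed into the $f_n$ and the $m$-dependence into the $g_m$, both of which are unconstrained near the "origin" of their domains. This is the one place where the countability of the two families is genuinely used. Granting it, \textup{(1)} follows.

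\emph{Proof of \textup{(2)} from \textup{(1)}.} By Lemma~\ref{lemmaFrechetdense} we may present $V=\varprojlim_n V_n$ and $W=\varprojlim_m W_m$ with $V_n,W_m$ Banach and dense transition maps. By the topological Mittag--Leffler lemma~\ref{LemmaML} the two-term complex $[\,\prod_n V_n\xrightarrow{1-s}\prod_n V_n\,]$ in degrees $0,1$ (with $s$ the shift-and-transition map) is quasi-isomorphic to $V$, and similarly for $W$; their terms are Fr\'echet, hence flat. Tensoring these two flat resolutions, $V\otimes^L_{K_\sol}W$ is computed by the total complex of the bicomplex all of whose entries are $(\prod_n V_n)\otimes_{K_\sol}(\prod_m W_m)=\prod_{n,m}V_n\otimes_{K_\sol}W_m$ by part~\textup{(1)}. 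This total complex is the standard one computing the iterated $R\varprojlim_n R\varprojlim_m$ of the system $(V_n\otimes_{K_\sol}W_m)$; since density of $V_{n+1}\to V_n$ and $W_{m+1}\to W_m$ implies density of $V_{n+1}\otimes_{K_\sol}W_{m+1}\to V_n\otimes_{K_\sol}W_m$ (the algebraic tensors being dense), Lemmas~\ref{LemmaML} and~\ref{lemmaFrechetdense} show that each of these $R\varprojlim$'s collapses to the ordinary limit. Hence $V\otimes^L_{K_\sol}W$ is concentrated in degree $0$ and equals $\varprojlim_{n,m}V_n\otimes_{K_\sol}W_m$. The only genuinely new input is part~\textup{(1)}; everything in \textup{(2)} is the homological bookkeeping of iterated Mittag--Leffler.
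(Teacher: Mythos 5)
Your overall strategy for both parts coincides with the paper's (reduce to products of lattices, apply Proposition \ref{PropTensprodSmith} and $(AB6)$, then deduce (2) by tensoring the Mittag--Leffler presentations), and your part (2) is correct and essentially identical to the paper's argument. But part (1) has two genuine gaps. First, the presentation $V_n=\varinjlim_{f_n}\prod_{i\in I_n}\varpi^{f_n(i)}\O_K$ over \emph{proper} functions $f_n\colon I_n\to\Z$ is vacuous when $I_n$ is uncountable: if $f^{-1}\big((-\infty,N]\big)$ is finite for every $N$, then $I_n=\bigcup_N f^{-1}\big((-\infty,N]\big)$ is countable, so for uncountable $I_n$ the indexing poset is empty and the claimed colimit is $0$, not $V_n$. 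The paper avoids this by first writing each $V_n=\varinjlim_{I_n'\subset I_n}\widehat{\bigoplus}_{I_n'}K$ over \emph{countable} subsets $I_n'$, commuting this colimit past the countable product via $(AB6)$, and only then introducing proper functions on countable index sets. You need this intermediate reduction.

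Second, you explicitly decline to prove the cofinality statement that you yourself identify as ``the main obstacle,'' and it does require an argument. Even for a single pair the paper gives a construction ($f(k)=\tfrac12\min_s h(k,s)$, $g(s)=\tfrac12\min_k h(k,s)$, so that $f(k)+g(s)\le h(k,s)$ with $f,g$ proper), and for the doubly indexed situation the statement is strictly harder than applying this pairwise: given $(h_{n,m})_{n,m}$ you must produce $f_n$ \emph{independent of} $m$ and $g_m$ \emph{independent of} $n$ with $f_n(k)+g_m(s)\le h_{n,m}(k,s)$ for all $n,m,k,s$. For instance with $h_{n,m}=h-nm$ the pairwise construction gives functions depending on both indices, and one must additionally subtract divergent sequences (e.g.\ shift $f_n$ by $-n^2$ and $g_m$ by $-m^2$, using $n^2+m^2\ge 2nm$, together with the lower bounds $C_{n,m}$ of the $h_{n,m}$) to decouple the indices. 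Your remark that the dependence ``can be absorbed'' is the right intuition, but as written the crux of (1) is asserted rather than proved.
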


\begin{proof}
Property (AB6) of Theorem \ref{TheoScholzeAB} and  Proposition \ref{Propqs} imply that products of quasiseparated solid $K$-vector spaces are quasiseparated. Then, by Lemma \ref{LemmaqsimpliesFlat},  all the derived tensor products in the statements are already concentrated in degree $0$. By Lemma \ref{LemmaML} we have a short exact sequences 
\begin{gather*}
0\to \varprojlim_n V_n \to \prod_{n} V_n \to \prod_n V_n \to 0 \\
0\to \varprojlim_m W_m \to \prod_{m} W_m \to \prod_m W_m \to 0 .
\end{gather*}
Then (2) follows from (1) by taking the tensor product of the above sequences.  

For (1), suppose that the statement is true for all $V_n$ and $W_n$ possessing a countable orthonormal basis.  Let $V_n \cong \widehat{\bigoplus}_{I_n} K $ and $W_m \cong \widehat{\bigoplus}_{J_m} K$ for all $n,m\in N$.  We write $V_n=\varinjlim_{I'_n\subset I_n} \widehat{\bigoplus}_{I'_n} K$ and $W_m=\varinjlim_{J'_m\subset J_m} \widehat{\bigoplus}_{J'_m} K$ with $I'_n$  and $J'_m$ running among all the countable subsets of $I_n$ and $J_m$ respectively. Then 
\begin{eqnarray*}
(\prod_n V_n)  \otimes_{K_\sol} (\prod_m W_m) & = & \bigg( \prod_n(\varinjlim_{I'_n\subset I_n} \widehat{\bigoplus}_{I'_n} K) \bigg) \otimes_{K_\sol} \bigg( \prod_m(\varinjlim_{J'_m\subset J_m} \widehat{\bigoplus}_{J'_m} K)\bigg) \\
         & = & \varinjlim_{\substack{\forall (n,m)\in \N\times \N \\ I'_n\times J'_m \subset I_n\times J_m}} \bigg( \prod_n (\widehat{\bigoplus}_{I'_n} K)\bigg) \otimes_{K_\sol}\bigg(\prod_m(\widehat{\bigoplus}_{J'_m} K) \bigg)  \\
         & = & \varinjlim_{\substack{\forall (n,m)\in \N\times \N \\ I'_n\times J'_m \subset I_n\times J_m}} \prod_{n,m}( \widehat{\bigoplus}_{I_n'\times J_m'} K ) \\ 
         & = & \prod_{n,m}(\varinjlim_{I'_n\times J'_m\subset I_n\times J_m} \widehat{\bigoplus}_{I_n'\times J_m'} K   ) \\
         & = & \prod_{n,m}(V_n\otimes_{K_\sol} W_m).
\end{eqnarray*}
Hence, we are left to prove (1) for $W_m=V_n= \widehat{\bigoplus}_{\N} K$ for all $n,m\in \N$. Let $\mathscr{S}$ be the filtered set of functions $f: \N \to \Z$ such that $f(k)\to +\infty$ as $k\to +\infty$. For all $n,m\in \N$ we can write 
\begin{gather*}
V_n= \widehat{\bigoplus}_{\N}K = \varinjlim_{f_n\in  \mathscr{S}} \prod_{k\in \N} \O_K \varpi^{f_n(k)}\\
W_m= \widehat{\bigoplus}_{\N}K = \varinjlim_{g_m\in  \mathscr{S}} \prod_{s\in \N} \O_K \varpi^{g_m(s)}. 
\end{gather*}
Therefore we get 
\begin{eqnarray*}
(\prod_{n}V_n) \otimes_{K_\sol}( \prod_m W_m) & = & \bigg( \varinjlim_{\forall n,\; f_n\in \mathscr{S}} \prod_n  \prod_k \O_K \varpi^{f_n(k)} \bigg) \otimes_{K_\sol}  \bigg( \varinjlim_{\forall m, \; g_m\in \mathscr{S}} \prod_m  \prod_s \O_K \varpi^{g_m(s)} \bigg) \\
            & = &  \varinjlim_{\substack{\forall (n,m)  \\f_n,g_m\in \mathscr{S}}} \bigg( \prod_{n,m}  (\prod_{k}\O_K \varpi^{f_n(k)})\otimes_{K_{\sol}} (\prod_s \O_K \varpi^{g_m(s)}) \bigg).
\end{eqnarray*}
Given $f,g\in \mathscr{S}$ we  define $h_{f,g}: \N \times \N \to \Z$ as $h_{f,g}(k,s)=  f(k)+ g(s)$. Let $\mathscr{S}'$ be the set of functions $h: \N\times \N \to \Z$ such that $h(n,m)\to \infty$ as $\min\{n,m\}\to \infty$.   Then the family of functions $\{h_{f,g}\}_{f,g\in \mathscr{S}}$  is cofinal in  $\mathscr{S}'$ (see the proof of Lemma \ref{CorotensorBanach}).  One obtains  
\begin{eqnarray*}
(\prod_{n}V_n) \otimes_{K_\sol}( \prod_m W_m) & = & \varinjlim_{\substack{\forall (n,m)  \\h_{n,m}\in \mathscr{S}' }} \prod_{n,m} \prod_{k,s} \O_K \varpi^{h_{n,m}(k,s)} \\ 
& =& \prod_{n,m} \bigg( \varinjlim_{h_{n,m}\in \mathscr{S}'}   \prod_{k,s} \O_K \varpi^{h_{n,m}(k,s)} \bigg) \\
& = & \prod_{n,m} \widehat{\bigoplus}_{k,s} K \\ & = & \prod_{n,m}( V_n\otimes_{K_\sol} W_m),
\end{eqnarray*}
this finishes the proof. 
\end{proof}

\begin{prop}[\cite{ClausenScholzeCM}] \label{PropFrechetNuclear}
  A Fr\'echet space is a nuclear $K_\sol$-vector space. 
\proof
Let $V=\varprojlim_n V_n$ be a Fr\'echet space written as an inverse limit of Banach spaces with dense transition maps. Let $S$ be an extremally disconnected set, then by Corollary \ref{CoroBanachisNuclear}
\begin{eqnarray*}
\underline{\Hom}_{K_\sol}(K_\sol[S], V) & = & \varprojlim_n \underline{\Hom}_{K_\sol}(K_\sol[S],  V_n) \\
            & = &  \varprojlim_n ( \underline{\Hom}_{K_\sol}(K_{\sol}[S],K )\otimes_{K_\sol} V_n) \\
            & = & \underline{\Hom}_{K_\sol}(K_{\sol}[S],K) \otimes_{K_\sol} (\varprojlim_n V_n )  \\ 
            & = & \underline{\Hom}_{K_\sol}(K_{\sol}[S],K) \otimes_{K_\sol} V,
\end{eqnarray*}
this finishes the proof. 
\endproof
\end{prop}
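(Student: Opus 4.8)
The plan is to bootstrap from the fact that Banach spaces are already nuclear (Corollary~\ref{CoroBanachisNuclear}), commuting the functors $\iHom_K(K_\sol[S],-)$ and $\iHom_K(K_\sol[S],K)\otimes_{K_\sol}-$ past a countable inverse limit. So the whole argument reduces to a diagram chase once the right presentation and the right tensor-product identity are in place.

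First I would fix an extremally disconnected set $S$ and, using Lemma~\ref{lemmaFrechetdense}, write the Fr\'echet space as $V=\varprojlim_{n\in\N}V_n$ with each $V_n$ a Banach space and all transition maps of dense image; this is exactly the presentation for which the tensor-product computation of Lemma~\ref{LemmaTensorProduct} is available. I would also record that $K_\sol[S]$ is a Smith space, of the form $(\prod_J\O_K)[\tfrac{1}{\varpi}]$ (as in the proof of Corollary~\ref{CoroBanachisNuclear}), so that by the duality of Lemma~\ref{LemmaDuals1} its dual $\iHom_K(K_\sol[S],K)\cong\widehat{\bigoplus}_J K$ is itself a Banach space; this is what lets me feed it into Lemma~\ref{LemmaTensorProduct}.

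Next I would run the chain of identifications
\[
\iHom_K(K_\sol[S],V)\;=\;\varprojlim_n\iHom_K(K_\sol[S],V_n)\;=\;\varprojlim_n\big(\iHom_K(K_\sol[S],K)\otimes_{K_\sol}V_n\big)\;=\;\iHom_K(K_\sol[S],K)\otimes_{K_\sol}V,
\]
where the first equality holds because $\iHom_K(K_\sol[S],-)$, being a right adjoint in its second variable, commutes with the cofiltered limit; the second is the nuclearity of each Banach space $V_n$ (Corollary~\ref{CoroBanachisNuclear}); and the third is Lemma~\ref{LemmaTensorProduct}(2) applied to the two Fr\'echet spaces $\iHom_K(K_\sol[S],K)$ (viewed as a one-term inverse limit of Banach spaces) and $V=\varprojlim_n V_n$. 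Unwinding the construction, this composite is precisely the natural comparison map, so it is an isomorphism, which is the definition of nuclearity.

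The hard part is the last step, the commutation of the solid tensor product with the countable inverse limit, i.e.\ Lemma~\ref{LemmaTensorProduct}(2). Everything else is formal bookkeeping, but that lemma is where the genuine content sits --- the topological Mittag--Leffler vanishing of Lemma~\ref{LemmaML}, the flatness of quasiseparated solid spaces, and the explicit description of tensor products of products of copies of $\O_K$. Since Lemma~\ref{LemmaTensorProduct} is already established above, the proposition follows from the chain of equalities displayed.
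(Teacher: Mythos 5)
Your proof is correct and follows essentially the same route as the paper's: present $V=\varprojlim_n V_n$ with dense transition maps, commute $\iHom_K(K_\sol[S],-)$ past the limit, apply nuclearity of each Banach space $V_n$ (Corollary \ref{CoroBanachisNuclear}), and then commute the solid tensor product with the countable inverse limit. Your version is slightly more explicit in two useful places — noting that $\iHom_K(K_\sol[S],K)$ is itself a Banach space via Lemma \ref{LemmaDuals1}, and citing Lemma \ref{LemmaTensorProduct}(2) as the justification for the final exchange of tensor product and limit, which the paper leaves implicit.
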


\begin{remark} \label{remarkFrechet}
Let $S$ be an extremally disconnected set and $T$ a condensed set,  by definition   $T(S)$ can be written as $\underline{\Cont}(S, T)(*)$,  in particular,  it carries a natural compactly generated  topology by Proposition \ref{propFunctorTopCond}. If $V = \varprojlim V_n$ is a solid Fr\'echet space written as an inverse limit of Banach spaces $V_n$, then $ V(S) = \varprojlim_n V_n(S)$.  By Proposition \ref{PropFrechetNuclear} we have that
\[ V_n(S) = \iHom_{K}(K_{\sol}[S],V_n)(*)= (\iHom_{K}(K_{\sol}[S],K)\otimes_{K_{\sol}} V_n) (*), \]
which shows that the topological spaces $V_n(S)_{\mathrm{top}}$ are naturally classical Banach spaces and hence that $V(S)_{\mathrm{top}}$ is a classical Fr\'echet space.

\end{remark}


The following two lemmas describe the maps between $LF$, Fr\'echet and Banach spaces

\begin{lemma}
\label{lemmaFrechetToNormed}
Let $V=\varprojlim_n V_n$ be a Fr\'echet space written as a countable cofiltered limit of Banach spaces with   projection  maps $V\rightarrow V_n$ of dense image. Let $W$ be a Banach space. Then any continuous linear map $f: V\rightarrow W$ factors through some $V_n$.  More generally, we have $\underline{\Hom}_K(V,W)= \varinjlim_n \underline{\Hom}_K(V_n,W)$.
\proof
First, evaluating $\underline{\Hom}_K(V,W)$ at an extremally disconnected set, using adjunction and the nuclearity of $W$, one reduces to showing that $\Hom_K(V,W)=\varinjlim_n \Hom_K(V_n,W)$. Since all the spaces involved come from compactly generated  topological $K$-vector spaces, we might assume that $V$ and $W$ are classical Fr\'echet and Banach spaces respectively. Let $|\cdot|_n$ be the seminorm of $V$ given by $V_n$, without loss of generality we may assume that $|\cdot|_{n}\leq |\cdot |_{n+1}$. We denote the norm of $W$ by $||\cdot ||$. The map $f$ factors through $V_n$ if and only if it is continuous with respect to the seminorm $|\cdot|_n$. Suppose that $f$ does not factor through any $n$, then there exist  sequences of vectors $(v_{n,m})_m$ in $V$ for all $n$ such that 
\[
|v_{n,m}|_n\xrightarrow{m\to \infty} 0 \mbox{ and } ||f(v_{n,m})||\geq 1 \;\; \forall m. 
\]
Moreover, we may assume that $|v_{n,n}|_n<\frac{1}{n}$. Then the sequence $(v_{n,n})_n$ converges to $0$ in $V$ but $||f(v_{n,n})||\geq 1$ for all $n$, which is a contradiction with the continuity of $f$. 
\endproof
\end{lemma}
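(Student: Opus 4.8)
The plan is to first reduce the internal-Hom identity $\iHom_K(V,W)=\varinjlim_n\iHom_K(V_n,W)$ to the underlying statement of sets, namely that every continuous $K$-linear map $f\colon V\to W$ factors through some $V_n$. Both sides of the claimed identity are solid $K$-vector spaces, so by Proposition \ref{PropExtDisc} it suffices to check that the natural map is an isomorphism after evaluation at every extremally disconnected $S$; and since $K_{\sol}[S]$ is a compact projective generator (Theorem \ref{TheoLemmaAnalyticrings}), evaluation at $S$ commutes with filtered colimits, so the left side evaluates to $\varinjlim_n\iHom_K(V_n,W)(S)$. Using tensor--Hom adjunction, $\iHom_K(X,W)(S)=\Hom_K\!\big(X,\iHom_K(K_{\sol}[S],W)\big)$ for $X\in\{V,V_n\}$; as $W$ is a Banach space it is nuclear (Corollary \ref{CoroBanachisNuclear}), so $\iHom_K(K_{\sol}[S],W)=\iHom_K(K_{\sol}[S],K)\otimes_{K_{\sol}}W$, and since $\iHom_K(K_{\sol}[S],K)=\underline{\Cont}(S,K)$ is a Banach space, Lemma \ref{CorotensorBanach} identifies this with a Banach space $W_S$. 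Thus the whole statement reduces to proving, for every Banach space $B$, that $\varinjlim_n\Hom_K(V_n,B)\to\Hom_K(V,B)$ is a bijection. Injectivity is easy: the transition maps $V\to V_n$ have dense image and $B$ is Hausdorff, so each $\Hom_K(V_n,B)\to\Hom_K(V,B)$ is injective and compatible with the direct system, whence the colimit injects; surjectivity is exactly the factoring statement (with target $B$).

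Next I would prove the factoring statement. Since a metrizable locally convex $K$-vector space is compactly generated, both the Fr\'echet space $V$ and the Banach space $B$ lie in the subcategory of $\mathcal{LC}_K$ on which $\mathcal{LC}_K\to\SolidK$ is fully faithful, so $\Hom_K(V,B)$ is the set of continuous $K$-linear maps of the underlying classical spaces; hence one may argue topologically. Write $V(*)_{\mathrm{top}}=\varprojlim_nV_n(*)_{\mathrm{top}}$, let $|\cdot|_n$ be the seminorm pulled back from $V_n(*)_{\mathrm{top}}$, and, replacing $|\cdot|_n$ by $\max(|\cdot|_1,\dots,|\cdot|_n)$, assume the $|\cdot|_n$ are increasing; by Lemma \ref{lemmaFrechetdense} they form a defining family of seminorms and $V_n$ is the $|\cdot|_n$-completion of $V$. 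Continuity of a classical linear $f$ at $0$ furnishes an index $n$ and $\delta>0$ with $|v|_n<\delta\Rightarrow\|f(v)\|\le 1$; scaling by elements of $K$ upgrades this to a bound $\|f(v)\|\le C|v|_n$ valid for all $v$ (in particular $f$ kills $\ker|\cdot|_n$). Hence $f$ factors continuously through $V/\ker|\cdot|_n$ with its $|\cdot|_n$-norm, and, as $B$ is complete and $V\to V_n$ has dense image, this extends uniquely to $\bar f\colon V_n(*)_{\mathrm{top}}\to B(*)_{\mathrm{top}}$ with $f=\bar f\circ(\text{projection})$; re-solidifying gives the factoring through $V_n$.

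Alternatively, the factoring can be obtained by a diagonal-sequence contradiction, which is perhaps slicker: if $f$ were discontinuous for every $|\cdot|_n$, pick for each $n$ a vector $v_n$ with $|v_n|_n<1/n$ but $\|f(v_n)\|\ge 1$; then $v_n\to 0$ in $V$ (for fixed $k$ and $n\ge k$, $|v_n|_k\le|v_n|_n<1/n$) while $\|f(v_n)\|$ stays $\ge 1$, contradicting sequential continuity of $f$ on the metrizable space $V$.

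Finally, combining surjectivity with the easy injectivity yields the bijection $\varinjlim_n\Hom_K(V_n,B)\xrightarrow{\ \sim\ }\Hom_K(V,B)$ for every Banach $B$; specialising $B=W_S$ and letting $S$ run over extremally disconnected sets upgrades this to the isomorphism of solid $K$-vector spaces $\varinjlim_n\iHom_K(V_n,W)\xrightarrow{\ \sim\ }\iHom_K(V,W)$, and the case $S=*$ records the factoring of any $f\colon V\to W$. I expect the only genuinely delicate point to be the bookkeeping of the first paragraph — verifying that $\iHom_K(K_{\sol}[S],W)$ is again a Banach space, so that the statement truly reduces to the classical norm-theoretic factoring lemma — together with the harmless but easily-fumbled scaling step over a field with discrete value group.
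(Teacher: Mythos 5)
Your proposal is correct and follows essentially the same route as the paper: the same reduction via evaluation at extremally disconnected sets, adjunction, and nuclearity of $W$ to the classical statement $\Hom_K(V,B)=\varinjlim_n\Hom_K(V_n,B)$, and your "alternative" diagonal-sequence contradiction is precisely the paper's argument for the factoring step. Your first, more direct argument (bounding $\|f(v)\|\le C|v|_n$ and extending by density and completeness) is a harmless variant, and your explicit treatment of injectivity of the colimit map is a small point the paper leaves implicit.
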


\begin{lemma}
\label{LemmaLFmaps}
Let $W= \varinjlim_n W_n $ and $W'= \varinjlim_m W'_m$ be $LF$  spaces presented as a filtered colimit of Fr\'echet spaces by injective transition maps.  Then 
\[
\underline{\Hom}_K(W, W')= \varprojlim_n \varinjlim_m \underline{\Hom}_K(W_n, W'_m). 
\]
\proof
First observe that formally
\[ \underline{\Hom}_K(W, W')= \varprojlim_n \underline{\Hom}_K(W_n, W'), \]
 so we can assume that $W= W_0$ is a Fr\'echet space. Let $S$ be an extremally disconnected set, then by nuclearity of $W'$
\[ \underline{\Hom}_K(W, \varinjlim_m W'_m) (S) = \Hom_K(W \otimes_{K} K[S], \varinjlim_m W'_m) = \Hom_K(W, \varinjlim_m W'_m \otimes_{K_\sol} \underline{\Cont}(S, K)), \]
which shows that one can reduce to proving
\[ \Hom(W , \varinjlim_m W'_m) = \varinjlim_m \Hom(W, W'_m). \]
So let
\[ W \to \varinjlim_m W'_m \]
be a map of solid $K$-vector spaces.     Evaluating at an extremally disconnected set $S$, by Remark \ref{remarkFrechet} we get a map
\[ f : W(S)_{\mathrm{top}} \to \varinjlim_m W'_m(S)_{\mathrm{top}}  \]
between a (classical) Fr\'echet space and a (classical) $LF$ space. We claim that this map factors through some $m$. Indeed, this follows from \cite[Corollary 8.9]{SchneiderNFA}, but we also give a direct argument. Assume not. Then there exists a sequence $(x_m)_{m \geq 1}$ in $W(S)_{\mathrm{top}} $ such that $f(x_m) \notin W'_m(S)_{\mathrm{top}} $. Multiplying $x_m$ by big powers of $p$, we can assume $x_m \to 0$ as $m \to +\infty$. This translates into the existence of a map
\[ \N \cup \{ \infty \} \to \varinjlim_m W'_m(S)_{\mathrm{top}} , \;\;\; m \mapsto f(m), \infty \mapsto 0\]
from the profinite set $\N \cup \{ \infty\}$ into an $LF$-space. Since $\N\cup\{\infty\}$ is profinite,  this maps must factorise through some $m$, which is a contradiction. This shows that, for each extremally disconnected set $S$, there exists a smallest $n(S) \in \N$ such that the map
\[ W(S)_{\mathrm{top}}  \to \varinjlim_m W'_m(S)_{\mathrm{top}}  \]
factors through $W(S)_{\mathrm{top}}  \to W'_{n(S)}(S)_{\mathrm{top}}  \to \varinjlim_m W'_m(S)_{\mathrm{top}} $. We conclude the proof by showing that the $n(S)'s$ are uniformly bounded. We argue again by contradiction. Assume that there are extremally disconnected sets  $S_1, S_2, \hdots$ such that $n(S_i) \to +\infty$ as $i \to +\infty$. Let $S = \prod_i S_i$, which is a profinite set. Let $\widetilde{S}$ be an extremally disconnected set surjecting to $S$. Let $i \in \N$ be such that $n(S_i) > n(\widetilde{S})$ and let $S_i \to \widetilde{S}$ be a section of the surjection $\widetilde{S} \to S \to S_i$. Then the map $W(S_i)_{\mathrm{top}}  \to \varinjlim_m W'_m(S_i)_{\mathrm{top}} $ factors through
\[ W(S_i)_{\mathrm{top}}  \to W(\widetilde{S})_{\mathrm{top}}  \to W'_{n(\widetilde{S})}(\widetilde{S})_{\mathrm{top}}  \to W'_{n(\widetilde{S})}(S_i)_{\mathrm{top}} , \]
which is a contradiction. This finishes the proof.
\endproof
\end{lemma}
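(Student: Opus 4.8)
The plan is to peel off the two colimits one at a time and reduce everything to the statement that a morphism out of a Fr\'echet space into an $LF$ space factors through one of the Fr\'echet steps; the real work is then to make that factorisation uniform over all test objects.

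First, since $\underline{\Hom}_K(-,W')$ sends the colimit $W=\varinjlim_n W_n$ to the limit $\varprojlim_n\underline{\Hom}_K(W_n,W')$ (for any profinite $S$ one has $\Hom_K\bigl(\varinjlim_n W_n\otimes_{K_\sol}K_\sol[S],W'\bigr)=\varprojlim_n\Hom_K\bigl(W_n\otimes_{K_\sol}K_\sol[S],W'\bigr)$), it suffices to treat the case in which $W$ is a single Fr\'echet space and prove $\underline{\Hom}_K(W,\varinjlim_m W'_m)=\varinjlim_m\underline{\Hom}_K(W,W'_m)$. To pass from the internal $\Hom$ to the plain one I would evaluate both sides at an extremally disconnected set $S$. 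By tensor--Hom adjunction $\underline{\Hom}_K(W,X)(S)=\Hom_K\bigl(W,\underline{\Cont}(S,X)\bigr)$, and since the site of profinite sets is coherent, filtered colimits of condensed modules are computed sectionwise, so $\underline{\Cont}(S,\varinjlim_m W'_m)=\varinjlim_m\underline{\Cont}(S,W'_m)$ and $\bigl(\varinjlim_m\underline{\Hom}_K(W,W'_m)\bigr)(S)=\varinjlim_m\Hom_K\bigl(W,\underline{\Cont}(S,W'_m)\bigr)$. By Remark \ref{remarkFrechet} each $\underline{\Cont}(S,W'_m)$ is again a Fr\'echet space, the transition maps stay injective, and $\varinjlim_m\underline{\Cont}(S,W'_m)$ is an $LF$ space. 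Thus everything reduces to the claim: for a Fr\'echet space $W$ and an $LF$ space $V=\varinjlim_m V_m$, the natural map $\varinjlim_m\Hom_K(W,V_m)\to\Hom_K(W,V)$ is bijective. Injectivity is immediate from the injectivity of the transition maps; surjectivity is the statement that every $\varphi\in\Hom_K(W,V)$ factors through some $V_m$.

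To see that $\varphi$ factors through a step, I would evaluate at an extremally disconnected set $S$: by Remark \ref{remarkFrechet}, $\varphi(S)$ is a continuous $K$-linear map $W(S)_{\mathrm{top}}\to V(S)_{\mathrm{top}}$ from a classical Fr\'echet space to a classical $LF$ space, which factors through some $V_m(S)_{\mathrm{top}}$ by \cite[Corollary 8.9]{SchneiderNFA}. (Alternatively: were this false one could extract vectors $x_m$ with $\varphi(S)(x_m)\notin V_m(S)_{\mathrm{top}}$, rescale by large powers of $p$ so that $x_m\to 0$, and obtain a map from the profinite set $\N\cup\{\infty\}$ into $V(S)_{\mathrm{top}}$ factoring through no step --- impossible, since a morphism out of a profinite set into a filtered colimit of condensed modules factors through one of the terms.) As the transition maps are injective there is a smallest such index $n(S)$.

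The main obstacle is to bound the integers $n(S)$ uniformly in $S$; once $n(S)\leq N$ for all extremally disconnected $S$, the map $\varphi(S)$ lands in $V_N(S)_{\mathrm{top}}$ for every $S$, whence $\varphi$ factors through $V_N\hookrightarrow V$ (using that the inclusion is injective and the colimit sectionwise), which proves the claim. Suppose for contradiction that there are extremally disconnected sets $S_1,S_2,\dots$ with $n(S_i)\to+\infty$. Set $S=\prod_i S_i$, a profinite set, choose an extremally disconnected $\widetilde S$ surjecting onto $S$, and pick an index $i$ with $n(S_i)>n(\widetilde S)$. The composite $\pi\colon\widetilde S\to S\to S_i$ is surjective, and as $S_i$ is extremally disconnected, hence projective, it admits a section $\sigma\colon S_i\to\widetilde S$ with $\pi\circ\sigma=\mathrm{id}$. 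By functoriality of $X\mapsto X(-)$, the map $\varphi(S_i)$ equals the composite of restriction along $\pi$, then $\varphi(\widetilde S)$, then restriction along $\sigma$; since $\varphi(\widetilde S)$ factors through $V_{n(\widetilde S)}(\widetilde S)_{\mathrm{top}}$, this composite --- hence $\varphi(S_i)$ --- factors through $V_{n(\widetilde S)}(S_i)_{\mathrm{top}}$, contradicting the minimality of $n(S_i)>n(\widetilde S)$. This completes the argument.
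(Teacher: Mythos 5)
Your proposal is correct and follows essentially the same route as the paper's proof: the formal reduction to $W$ Fr\'echet, evaluation at extremally disconnected sets to reduce to the plain $\Hom$, the factorisation of a map from a classical Fr\'echet space to a classical $LF$ space through a step (via \cite[Corollary 8.9]{SchneiderNFA} or the $\N\cup\{\infty\}$ argument), and the uniform bound on $n(S)$ using $S=\prod_i S_i$, an extremally disconnected cover $\widetilde S$, and projectivity of the $S_i$. The only cosmetic difference is that you justify the sectionwise identification of the colimit directly rather than invoking nuclearity of $W'$ as the paper does, but the content is the same.
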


\subsection{Spaces of compact type}

Before proving the duality between Fr\'echet and $LS$ spaces let us recall the definition of (classical) nuclear Fr\'echet space and a $LB$  space of compact type.  Due to the fact that Fr\'echet spaces are always nuclear in the world of solid $K$-vector spaces (Proposition \ref{PropFrechetNuclear}), we will say that a classical nuclear Fr\'echet space is a  Fr\'echet space of compact type.  We recall the definition of trace class maps and compact maps for $K_{\sol}$-vector spaces.

\begin{definition}[{\cite[Definition 13.11]{ClauseScholzeanalyticspaces}}] \leavevmode
\begin{enumerate}
    \item  A trace class map of Smith spaces is a $K$-linear map $f:Q_1\to Q_2$ such that there is a map $g: K\to Q_1^{\vee}\otimes_K Q_2$ such that $f$ is the composition 
    $Q_1\xrightarrow{1\otimes g} Q_1\otimes Q_1^{\vee}\otimes Q_2 \to Q_2$. 
    
    \item  A map of Banach spaces is compact if its dual is a trace class map. 
\end{enumerate}
\end{definition}

\begin{definition} \leavevmode
\label{defiBanachificationSmith}
\begin{enumerate}
    \item A  Fr\'echet space $V$ is of compact type if it has a presentation $V=\varprojlim_n V_n$ as an inverse limit of Banach spaces where the maps $V_{n+1}\to V_n$ are compact. 
    
    \item A $LS$ space is of compact type if it admits a presentation $W=\varinjlim_{n\in \N} W_n$ by injective trace class maps of Smith spaces. 
    
    \item Let $W$ be a Smith space with lattice $W^{0}\subset W$. We denote by $W^{B}$ the Banach space whose underlying space is
    \[
    W^{B}=   \varprojlim_{s\in\N} (\underline{W^0 (*)_{\mathrm{dis}}}/\varpi^{s} )[\frac{1}{\varpi}]. 
    \]
    In other words, $W^{B}$ is the Banach space  attached to the underlying set $W(*)$ with unit ball $W^0(*)$.  Note that there is a natural injective map with dense image $W^{B}\to W$. 
    
    \item Let $V$ be a Banach space, we denote $V^{S}=( V^{\vee, B})^{\vee}$ and call this space the ``Smith completion of $V$''. Note that there is an injective map  with dense image $V\to V^{S}$. 
\end{enumerate}
\end{definition}

\begin{remark}
Let $W$ be a Smith space,  the formation of $W^{B}$ is independent of the lattice $W^{0}\subset W$,  namely,  if $W^{'0}$ is a second lattice  we can find $s\in \N$ big enough such that $\varpi^s W^0\subset W^{'0} \subset \varpi^{-s} W^0$.   On the other hand, note that $W^B(*)=W(*)$ as sets but not as topological spaces,  this follows from the fact that $W^0(*)_{\mathrm{top}}$ is $\varpi$-adically complete.  
\end{remark}

The following two results will be very useful later when studying spaces of compact type. The reader can compare them to the ideas appearing in \cite[\S 16]{SchneiderNFA} (see, e.g., the discussion after \cite[Proposition 16.5]{SchneiderNFA}). 

\begin{lemma} \leavevmode
\label{LemmaBanafication}
\begin{enumerate} \item A map of Smith spaces $W\to W'$ is  trace class if and only if it factors as $W\to W'^{B}\to W'$. Dually, a map of Banach spaces $V\to V'$ is of compact type if and only of it can be extended to  $V\to V^{S}\to V'$. 

\item Let $f: V\to W$ be a map from a Banach space to a Smith space, then $f$ extends uniquely  to a commutative diagram 
\[
\begin{tikzcd}
V \ar[r] \ar[d] & V^{S}\ar[d] \\ W^{B}\ar[r] & W.
\end{tikzcd}
\]

\end{enumerate}
\proof
(1) The  factorization for a morphism of Banach spaces follows from the one of Smith spaces. Let $f: W\to W'$ be a map of Smith spaces and suppose that it factors through $W'^{B}$. Then $f$ belongs to $\Hom(W, W'^{B})= W^{\vee}\otimes_{K_{\sol}} W'^{B}(*)$ as $W'^{B}$ is Banach. This shows that $f$ if a trace class map.  Conversely, let $f: W\to W'$ be trace class, then there is $g: K \to W^{\vee} \otimes_{K_{\sol}} W'$ such that $f$ factors as a composition $W\xrightarrow{1\otimes g} W \otimes_{K_{\sol}} W^{\vee} \otimes_{K_{\sol}} W' \rightarrow W'$.  Since $K$ is $\varpi$-adically complete, the map $g$ factors through the $\varpi$-completion of $(W^{\vee } \otimes_{K_{\sol}} W')_{dis}$ (i.e. the condensed object associated to the the $\varpi$-adic completion of the underlying discrete objects),  which we denote by  $(W^{\vee } \otimes_{K_{\sol}} W')^{B}$.  We claim that  $(W^{\vee } \otimes_{K_{\sol}} W')^{B}= W^{\vee}\otimes_{K_{\sol}} W^{'B}$,  this shows part $(1)$ since $f$ factors as 
\[
W \xrightarrow{1\otimes g} W \otimes_{K_{\sol}}  W^{\vee}\otimes_{K_{\sol}}  W^{'B} \to W^{'B} \to W'.
\]

Let us prove the claim,  by piking  basis we can write $W^{\vee}= \widehat{\bigoplus}_{I} K$ and $ W^{'}=\prod_{J} \O_{K} [\frac{1}{\varpi}]$.  One computes that $ W^{\vee}\otimes_{K_{\sol}} W^{'} = \widehat{\bigoplus}_{I} \prod_{J}\O_K [\frac{1}{\varpi}]$,  so that $(W^{\vee}\otimes W^{'})^{B} = \widehat{\bigoplus}_{I}(\prod_{J}\O_K)^{B} [\frac{1}{\varpi}]$,  where $(\prod_{J}\O_K)^{B}$ is the $\varpi$-adic completion of the discrete space $(\prod_{J}\O_K)_{dis}$. On the other hand,  one has 
\begin{eqnarray*}
W^{\vee}\otimes_{K_{\sol}} W^{'B}  & = & ( \widehat{\bigoplus}_{I} \O_K \otimes_{\O_{K, \sol}} (\prod_{J}\O_K)^{B} )[\frac{1}{\varpi} ] \\ 
                                    & = & \varprojlim_{s} ({\bigoplus}_I  \O_K/\varpi_{s} \otimes_{\O_{K}} (\prod_{J}\O_K/\varpi^s)_{dis})[\frac{1}{\varpi}] \\
                                    & = &  \widehat{\bigoplus}_{I}(\prod_{J}\O_K)^{B} [\frac{1}{\varpi}].  
\end{eqnarray*}

(2) Let $W^0$ be a lattice of $W$. Since $W^0(*)_{\mathrm{top}}$ is open in $W(*)_{\mathrm{top}}$,  there is a lattice $V^0\subset V$ such that $f(V^0) \subset W^0$.  Moreover,  $W^0(*)_{\mathrm{top}}$ is $\varpi$-adically complete,  so that the restriction of $f$ to $V^0$ factors through $V^0\to  \varprojlim_{s} (\underline{W^0(*)_{dis}}/\varpi^s) \to W^0$.  One gets the factorization  $V\to W^{B}\to W$ by inverting $\varpi$.  Taking duals we see that $f^{\vee}: W^{\vee}\to V^{\vee}$ factors as $W^{\vee}\to V^{\vee,B} \to V$, taking duals again one gets the factorization $f: V\to V^{S}\to W$. The uniqueness   follows from the density of $V_{\mathrm{top}}$ in $V^S(*)_{\mathrm{top}}$ (resp.  $W^{B}(*)_{\mathrm{top}}$ in $W(*)_{\mathrm{top}}$) and the fact that all the condensed sets involved are quasi-separated.   
\endproof
\end{lemma}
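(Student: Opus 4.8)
The plan is to reduce part~(1) to the case of maps of Smith spaces via the Banach--Smith duality of Lemma~\ref{LemmaDuals1}, and to handle part~(2) by a lattice argument followed by dualising; throughout, the engine is the $\varpi$-adic completeness of $\O_K$.

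For part~(1), I would first treat a map $f\colon W\to W'$ of Smith spaces. Given a factorisation $W\xrightarrow{\alpha} W'^{B}\to W'$, the arrow $\alpha$ goes from a Smith space to a Banach space, so by Corollary~\ref{CoroHomBanachSmith} it corresponds to a map $g\colon K\to W^{\vee}\otimes_{K_\sol} W'^{B}$; post-composing $g$ with the inclusion $W'^{B}\to W'$ exhibits $f$ as trace class. Conversely, if $f$ is trace class, witnessed by $g\colon K\to W^{\vee}\otimes_{K_\sol} W'$, then since $K=\O_K[\tfrac{1}{\varpi}]$ with $\O_K$ being $\varpi$-adically complete, $g$ must factor through the Banachification $(W^{\vee}\otimes_{K_\sol} W')^{B}$, i.e.\ the condensed object attached to the $\varpi$-adic completion of the underlying discrete module. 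The key point is then the identification $(W^{\vee}\otimes_{K_\sol} W')^{B}=W^{\vee}\otimes_{K_\sol} W'^{B}$, which I would verify by choosing bases $W^{\vee}\cong\widehat{\bigoplus}_{I}K$ and $W'\cong(\prod_{J}\O_K)[\tfrac{1}{\varpi}]$, computing $W^{\vee}\otimes_{K_\sol}W'\cong\widehat{\bigoplus}_{I}(\prod_{J}\O_K)[\tfrac{1}{\varpi}]$, and checking that the operation ``underlying discrete module, then $\varpi$-complete, then invert $\varpi$'' commutes past $\widehat{\bigoplus}_{I}$ and replaces $\prod_{J}\O_K$ by its $\varpi$-adic completion. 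Granting this, $g$ --- and hence $f$ --- factors through $W'^{B}$. The statement for compact maps of Banach spaces then follows by dualising, using that $(-)^{\vee}$ swaps Banach and Smith spaces (Lemma~\ref{LemmaDuals1}), together with $V^{S}=(V^{\vee,B})^{\vee}$ and the identity $(W^{B})^{\vee}=(W^{\vee})^{S}$ coming from $W^{\vee\vee}=W$.

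For part~(2), given $f\colon V\to W$ with $V$ Banach and $W$ Smith, I would fix a lattice $W^{0}\subseteq W$. Since $W^{0}(*)_{\mathrm{top}}$ is open in $W(*)_{\mathrm{top}}$, there is a lattice $V^{0}\subseteq V$ with $f(V^{0})\subseteq W^{0}$. Reducing modulo $\varpi^{s}$, using that $V^{0}/\varpi^{s}$ is discrete and that taking global sections is exact, the restriction $f|_{V^{0}}$ factors through $\varprojlim_{s}\underline{W^{0}(*)_{\mathrm{dis}}/\varpi^{s}}$; inverting $\varpi$ yields a factorisation $V\to W^{B}\to W$, i.e.\ the left and bottom edges of the square. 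Dualising, $f^{\vee}\colon W^{\vee}\to V^{\vee}$ factors as $W^{\vee}\to V^{\vee,B}\to V^{\vee}$; dualising again (using $V^{\vee\vee}=V$ and $W^{\vee\vee}=W$) gives a factorisation $V\to V^{S}\to W$, providing the top and right edges. Both composites out of $V$ equal $f$, so the square commutes. Uniqueness is then automatic: $V$ has dense image in $V^{S}$ and $W^{B}$ has dense image in $W$, and all four corners are quasiseparated, so any two extensions restricting to $f$ on $V$ must agree.

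The step I expect to be the main obstacle is the identity $(W^{\vee}\otimes_{K_\sol}W')^{B}=W^{\vee}\otimes_{K_\sol}W'^{B}$, together with the preliminary claim that a map $g\colon K\to M$ factors through $M^{B}$: one must be careful that ``underlying discrete module, $\varpi$-complete, invert $\varpi$'' interacts cleanly with the completed sum $\widehat{\bigoplus}_{I}$ but is genuinely non-trivial on the product $\prod_{J}$ --- the $\varpi$-adic completion of $(\prod_{J}\O_K)_{\mathrm{dis}}$ is strictly larger than $\prod_{J}\O_K$ --- and one needs the explicit $LS$-presentation of $M=W^{\vee}\otimes_{K_\sol}W'$ together with completeness of $\O_K$ to see that a map out of $K$ lands in $M^{B}$. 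Everything else is either formal (dualising, density arguments) or the routine bookkeeping of orthonormal and profinite bases.
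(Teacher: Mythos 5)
Your proposal is correct and follows essentially the same route as the paper's proof: the same reduction of the Banach case to the Smith case by duality, the same key identification $(W^{\vee}\otimes_{K_\sol}W')^{B}=W^{\vee}\otimes_{K_\sol}W'^{B}$ verified by choosing orthonormal and profinite bases, and the same lattice-plus-dualising argument with the density/quasiseparatedness uniqueness step for part (2). The point you flag as the main obstacle is exactly the claim the paper isolates and proves by the same explicit computation.
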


\begin{remark}
Part (2) of the previous lemma shows that $W^B$ is the final object in the category of all Banach spaces $V$ equipped with an arrow $V \to W$  and  morphisms given by maps   of $K_{\sol}$-vector spaces making the obvious diagram commutes. Dually, $V^S$ is the initial object in the category of all Smith spaces $W$ equipped with an arrow  $V \to W$ and  morphisms given by maps commuting with the obvious diagram.
\end{remark}

\begin{corollary} \leavevmode
\label{coroFLScompact}
\begin{enumerate}
    \item Let $V=\varprojlim_n V_n$ be a Fr\'echet space of compact type.  Then we can write $V=\varprojlim_{n\in N} V_n^{S}$ as an inverse limit of Smith spaces with trace class transition maps. Conversely, any such vector space is a Fr\'echet space of compact type. 
    
    \item Let $W=\varinjlim_n W_n$ be an $LS$ space of compact type, then $W= \varinjlim_n W_n^{B}$ can be written  as  a filtered colimit of Banach spaces with injective compact maps. Conversely, a colimit of Banach spaces by injective compact maps is an $LS$ space of compact type.  In particular,  being  a $LB$ or a  $LS$ space of compact type is equivalent. 
\end{enumerate}

\end{corollary}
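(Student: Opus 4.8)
The plan is to deduce both statements from the factorization results of Lemma \ref{LemmaBanafication}, exploiting that the operations $V \mapsto V^S$ and $W \mapsto W^B$ are functorial (as made precise by the remark following the lemma) and that the natural maps $V \to V^S$ and $W^B \to W$ have dense image between quasiseparated objects, so that compositions are determined by their effect on underlying topological spaces.

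For part (1), start with a presentation $V = \varprojlim_n V_n$ with compact transition maps $V_{n+1} \to V_n$. By Lemma \ref{LemmaBanafication}(1), each compact map $V_{n+2} \to V_{n+1}$ extends to $V_{n+2} \to V_{n+1}^S \to V_{n+1}$, and more usefully the composite $V_{n+2} \to V_{n+1} \to V_n$ (still compact, being a composition with a compact map) extends through $V_{n+1}^S$, giving maps $V_{n+1}^S \to V_n^S$ for all $n$ (use functoriality of $(-)^S$ applied to $V_{n+1} \to V_n$, or directly factor $V_{n+1}^S \to V_n$ through $V_n^S$ using Lemma \ref{LemmaBanafication}(2) and the map $V_{n+1}^S \to V_n$). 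These transition maps are trace class because a map of Smith spaces factoring as $Q \to V^B \to V$ — equivalently, dualizing, arising from the compact map between the relevant Banach spaces — is trace class by Lemma \ref{LemmaBanafication}(1). One then checks $\varprojlim_n V_n = \varprojlim_n V_n^S$: the compatible system of dense injections $V_n \to V_n^S$ induces a map $\varprojlim_n V_n \to \varprojlim_n V_n^S$, and the extended maps $V_{n+1}^S \to V_n$ give a map backwards $\varprojlim_n V_n^S \to \varprojlim_n V_n$ (reindexing), and these are mutually inverse since on underlying topological spaces they are the identity (density plus quasiseparatedness, using Lemma \ref{lemmaFrechetdense} to know the transition maps of $V = \varprojlim V_n$ may be taken dense). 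For the converse, a limit of Smith spaces along trace class maps factors each $V_{n+1}^S \to V_n^S$ as $V_{n+1}^S \to (V_n^S)^B \to V_n^S$, and $(V_n^S)^B$ is a Banach space with compact transition maps, exhibiting the limit as Fr\'echet of compact type.

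For part (2), start with $W = \varinjlim_n W_n$ with injective trace class transition maps $W_n \to W_{n+1}$. By Lemma \ref{LemmaBanafication}(1) each trace class map factors as $W_n \to W_{n+1}^B \to W_{n+1}$, hence (composing with $W_{n+1} \to W_{n+2}^B$) one gets maps $W_n^B \to W_{n+1}^B$; these are compact because dualizing gives the trace class maps between the dual Smith spaces (again Lemma \ref{LemmaBanafication}(1)). Injectivity of $W_n^B \to W_{n+1}^B$ follows from injectivity of $W_n \to W_{n+1}$ together with density of $W_n \to W_n^B$ — or more cleanly by passing to duals and using Lemma \ref{Lemmanoqs}(2), which converts injectivity with dense image on one side into the same on the other. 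The colimit is unchanged: $\varinjlim_n W_n^B = \varinjlim_n W_n$ because the interleaved maps $W_n^B \to W_n \to W_{n+1}^B$ and $W_n \to W_{n+1}^B \to W_{n+1}$ show the two ind-systems are cofinal in each other. The converse — a filtered colimit of Banach spaces along injective compact maps is an $LS$ space of compact type — follows by replacing each $W_n^B$ by $(W_n^B)^S$ (the Smith completion), which is Smith, with trace class transition maps by Lemma \ref{LemmaBanafication}(1), and observing the colimit is unchanged by the same interleaving argument; one must also check injectivity of the new transition maps, which again comes from Lemma \ref{Lemmanoqs}(2). The final sentence, that $LB$ of compact type $=$ $LS$ of compact type, is then immediate: we have just shown any $LS$ space of compact type is such an $LB$ space, and conversely any colimit of Banach spaces along injective compact maps is an $LS$ space of compact type by the converse just proved.

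The main obstacle I expect is bookkeeping the interleaving/cofinality arguments carefully enough to be sure the colimits and limits genuinely agree as solid $K$-vector spaces (not merely as underlying objects), and in particular checking that the newly constructed transition maps retain injectivity; the cleanest route to the latter is systematically dualizing via Lemma \ref{Lemmanoqs}(2) and the Banach--Smith duality of Lemma \ref{LemmaDuals1}, rather than arguing with underlying topological spaces directly. Everything else is a formal consequence of the universal properties recorded after Lemma \ref{LemmaBanafication}.
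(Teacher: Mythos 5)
Your overall strategy --- interleaving the tower $\{V_n\}$ with $\{V_n^S\}$ (resp. $\{W_n\}$ with $\{W_n^B\}$) via the factorizations of Lemma \ref{LemmaBanafication} --- is exactly what the paper has in mind (it states the corollary without proof), and your verifications that the new transition maps are trace class, resp. compact, and that the limits and colimits are unchanged by cofinality, are correct; so are both directions of part (1) and the forward direction of part (2). There is, however, a genuine gap in the converse of part (2): you assert that for an injective compact system $V_n \to V_{n+1}$ of Banach spaces the induced maps $V_n^S \to V_{n+1}^S$ are injective, ``which again comes from Lemma \ref{Lemmanoqs}(2)''. This is false. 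Dualizing, injectivity of $V_n^S\to V_{n+1}^S$ is equivalent (by Lemma \ref{Lemmanoqs}(2) applied to the Banach-space map $(V_{n+1}^S)^\vee = V_{n+1}^{\vee,B}\to V_n^{\vee,B}$) to that map having dense image; but injectivity of $V_n\to V_{n+1}$ only gives density of the image of $V_{n+1}^{\vee}\to V_n^{\vee}$ for the Smith topology, which is strictly coarser than that of $V_n^{\vee,B}$. Concretely, take $V=V'=\widehat{\bigoplus}_{n\geq 1}Ke_n$ and $f(v)=\sum_{j\geq 1}p^j(v_j-v_{j+1})e_j$: this is the map attached to the element $(p^j(e_j^\vee-e_{j+1}^\vee))_j$ of $V'\otimes_{K_\sol}V^\vee$, hence compact; it is injective on $V$ because a constant sequence in $\widehat{\bigoplus}_nK$ is zero; but its unique extension $V^S=(\prod_n\O_Ke_n)[\frac{1}{p}]\to V'$ kills $(1,1,1,\dots)$, so $V^S\to V'^S$ is not injective.

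The statement itself survives, but the repair needs one more step: pass to images. Set $W=\varinjlim_nV_n$; it is quasiseparated (as in the proof of Proposition \ref{Propqs}, two profinite sets over $W$ factor through a common $V_n$, which is quasiseparated), so the image $Q_n$ of $V_n^S\to W$ is the quotient of $V_n^S$ by a closed subobject and hence a Smith space by Proposition \ref{PropGuidoNotes}. The maps $Q_n\to Q_{n+1}$ are injective by construction (both are subobjects of $W$), and they are trace class: composing the factorization $V_n^S\to(V_{n+1}^S)^B\to V_{n+1}^S$ with $V_{n+1}^S\twoheadrightarrow Q_{n+1}$ gives, by Lemma \ref{LemmaBanafication}(2), a map $V_n^S\to Q_{n+1}^B$, which kills $\ker(V_n^S\to Q_n)$ because $Q_{n+1}^B\to W$ is injective, and hence descends to $Q_n\to Q_{n+1}^B$. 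Finally $W=\varinjlim_nQ_n$, since each $V_n^S\to W$ factors surjectively through $Q_n$. With this modification the converse of (2), and with it the identification of $LB$ and $LS$ spaces of compact type, goes through; the rest of your proposal stands as written.
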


\begin{lemma} \label{LemmaSES}
Let
\begin{equation} \label{Eqses}
0 \to V' \to V \to V'' \to 0
\end{equation}
be a short exact sequence of Fr\'echet (resp. $LS$) spaces. Then \eqref{Eqses} can be written as an inverse limit (resp. direct limit) of short exact sequences of Banach (resp. Smith) spaces.
\end{lemma}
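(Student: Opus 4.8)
The plan is to prove the Fr\'echet case first and then obtain the $LS$ case by dualising via Theorem \ref{theorem:duality} (the anti-equivalence between Fr\'echet and $LS$ spaces, which converts short exact sequences into short exact sequences because the duality functor is exact on these categories). So assume \eqref{Eqses} is a short exact sequence of Fr\'echet spaces. By Lemma \ref{lemmaFrechetdense} I would first choose a presentation $V = \varprojlim_n V_n$ as a countable inverse limit of Banach spaces with dense transition maps. For each $n$, let $V_n'$ be the closure in $V_n$ of the image of $V'$ under $V \to V_n$ (that is, the solid Banach space attached to the topological closure of the image of $V'(*)_{\mathrm{top}}$ in $V_n(*)_{\mathrm{top}}$), and let $V_n'' := V_n / V_n'$, a Banach space. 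Then $V' = \varprojlim_n V_n'$ (the transition maps are dense by construction, and this is a presentation of $V'$ as a Fr\'echet space, using that $V' \to V$ is a closed immersion so $V'(*)_{\mathrm{top}}$ is a closed subspace of $V(*)_{\mathrm{top}}$ by Proposition \ref{PropQuasiseparatedCondensed}), and for each $n$ one has a short exact sequence of Banach spaces
\[
0 \to V_n' \to V_n \to V_n'' \to 0,
\]
which splits since any $K$-Banach space is orthonormalisable. The transition maps $V_{n+1}'' \to V_n''$ are dense because $V_{n+1} \to V_n$ is dense. Taking the inverse limit over $n$ gives a complex $0 \to V' \to V \to \varprojlim_n V_n'' \to 0$, and the point is that $\varprojlim_n V_n'' \cong V''$ compatibly with the maps.

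The key step, then, is the identification $V'' = \varprojlim_n V_n''$ together with exactness of the limit sequence. For exactness I would use Lemma \ref{LemmaML} (topological Mittag-Leffler): since the $V_n'$ have dense transition maps, $R^1\varprojlim_n V_n' = 0$, so applying $R\varprojlim_n$ to the (degreewise split, hence degreewise short exact) system $0 \to V_n' \to V_n \to V_n'' \to 0$ yields a short exact sequence $0 \to \varprojlim_n V_n' \to \varprojlim_n V_n \to \varprojlim_n V_n'' \to 0$, i.e. $0 \to V' \to V \to \varprojlim_n V_n'' \to 0$. Comparing with the original $0 \to V' \to V \to V'' \to 0$ and using the five lemma (both are exact, with the same first two terms and the same map $V' \to V$), the induced map $V'' \to \varprojlim_n V_n''$ is an isomorphism. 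One should check that $\varprojlim_n V_n''$ is genuinely a Fr\'echet space with the expected dense transition maps, which is immediate from the construction and the second half of Lemma \ref{lemmaFrechetdense}. This completes the Fr\'echet case: \eqref{Eqses} is the inverse limit of the short exact sequences $0 \to V_n' \to V_n \to V_n'' \to 0$ of Banach spaces.

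For the $LS$ case, apply the duality functor $(-)^\vee$ of Theorem \ref{theorem:duality}: a short exact sequence $0 \to V' \to V \to V'' \to 0$ of $LS$ spaces dualises to a short exact sequence $0 \to V''^\vee \to V^\vee \to V'^\vee \to 0$ of Fr\'echet spaces (exactness of the dual sequence follows from the anti-equivalence; alternatively one checks directly that $(-)^\vee$ sends the relevant strict exact sequences of $LS$ spaces to strict exact sequences of Fr\'echet spaces, using Lemma \ref{Lemmanoqs} and the description of $LS$ spaces as filtered colimits of Smith spaces). By the Fr\'echet case just proved, write this as an inverse limit $\varprojlim_n (0 \to W_n'' \to W_n \to W_n' \to 0)$ of short exact sequences of Banach spaces with dense transition maps. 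Dualising termwise — Banach spaces dualise to Smith spaces, inverse limits of Banach with dense transition maps dualise to filtered colimits of Smith with injective transition maps, and the dual of a short exact sequence of Banach spaces is a short exact sequence of Smith spaces (both by splitting and by Lemma \ref{LemmaDuals1}) — recovers \eqref{Eqses} as a filtered colimit $\varinjlim_n (0 \to V_n' \to V_n \to V_n'' \to 0)$ of short exact sequences of Smith spaces, using that $(-)^\vee$ is an anti-equivalence so $(V^\vee)^\vee = V$ etc.

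The main obstacle I anticipate is the careful bookkeeping in identifying $V''$ (resp. the whole original sequence) with the limit/colimit built from the $V_n$, i.e. making sure the comparison maps are the right ones and invoking Mittag-Leffler in the correct spot — there is no deep new input, but the argument needs Lemma \ref{LemmaML} precisely to kill $R^1\varprojlim$ of the subspace system, and one must be a little careful that the closures $V_n'$ really do have dense transition maps and really do present $V'$. The $LS$ half is then purely formal given Theorem \ref{theorem:duality}, modulo checking that duality is exact on these classes, which is part of the content of that theorem together with Lemma \ref{Lemmanoqs}.
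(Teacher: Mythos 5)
Your Fr\'echet half is correct, and it is a genuinely cleaner route than the paper's: instead of taking the closure of the graph of $V$ inside $\widetilde{V}_n \oplus V''_n$ and proving surjectivity onto $V''_n$ by a density-plus-closedness argument (which is what the paper does), you take $V'_n$ to be the closure of the image of $V'$ in $V_n$ and define $V''_n := V_n/V'_n$; the identification $V' = \varprojlim_n V'_n$ holds because $V'$ is closed in $V$ and the seminorms may be taken increasing, and the rest is Mittag--Leffler plus a cokernel comparison, exactly as you say. No objection there.

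The $LS$ half, however, is circular as written. Your first step is to dualise the short exact sequence of $LS$ spaces to a short exact sequence of Fr\'echet spaces, citing the exactness of $(-)^\vee$ from Theorem \ref{theorem:duality}. But in the paper the exactness statement of Theorem \ref{theorem:duality}(1) is itself proved \emph{by invoking Lemma \ref{LemmaSES}}: the reduction of exactness to the Banach/Smith case goes precisely through writing the exact sequence as a (co)limit of exact sequences of Banach/Smith spaces. Left-exactness of $\iHom_K(-,K)$ is formal, but the surjectivity of $V^\vee \to V'^\vee$ is a Hahn--Banach-type statement that is not available before this lemma; and your fallback (``alternatively one checks directly\dots'') is not carried out --- fleshing it out would amount to decomposing the $LS$ sequence as a colimit of Smith sequences, i.e.\ to proving the very statement at hand. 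The fix is to argue the $LS$ case directly, dually to your Fr\'echet argument: write $V = \varinjlim_n V_n$ with $V_n$ Smith and injective transition maps, set $V'_n := V' \times_V V_n$, which is a closed Smith subspace of $V_n$ by Proposition \ref{PropGuidoNotes} (using that $V' \to V$ is a quasicompact injection), and $V''_n := V_n/V'_n$, again Smith by Proposition \ref{PropGuidoNotes}; the transition maps on the $V''_n$ are injective since $V_n \cap V' = V'_n$, and exactness of filtered colimits recovers the original sequence. With that replacement your proof is complete and avoids any appeal to Theorem \ref{theorem:duality}.
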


\begin{proof}
Let suppose that \eqref{Eqses} is a short exact sequence of $LS$ spaces, let $n\in \N$ and write $V''= \varinjlim_{n} V''_{n}$ as a colimit of Smith spaces by injective transition maps.   The pullback of \eqref{Eqses} by $V''_{n} \to V''$ is a short exact sequence 
\[
0\to V' \to V\times_{V''} V''_{n} \to V''_{n} \to 0
\]
where $V\times_{V''} V''_{n}$ is still an $LS$ space. In fact,  by writting $V= \varinjlim_{n} \widetilde{V}_n$ as a colimit of Smith spaces one has $V\times_{V''} V''_n = \varinjlim_{n} \widetilde{V}_{n} \times_{V''} V''_{n}$ and each $\widetilde{V}_{n}\times_{V''} V''_{n}$ is a Smith space.  By compacity of $V''_{n}$,  there exits a Smith subspace $V_{n}\subset V\times_{V''} V''_{n}$ such that the induced arrow $V_{n} \to V''_{n}$ is surjective. By Proposition \ref{PropGuidoNotes} the kernel $V'_{n}:= \ker (V_{n}\to V''_{n})$ is a Smith subspace of $V'$.  Moreover,  we can take $V_n$ such that $V=\varinjlim_{n} V_n$. Then \eqref{Eqses} can be written as the colimit of the short exact sequences 
\[
\varinjlim_{n} [0 \to V'_{n} \to V_{n} \to V_{n}'' \to 0]. 
\]

For the Fr\'echet case,  it is enough to show that  the map $V \to V'' \to 0$ can be written as an inverse limit of surjective maps of Banach spaces $V_{n}\to V''_{n}\to 0$.  In fact, by setting $V'_n:= \ker (V_{n} \to V''_{n})$ one has $V'= \varprojlim_n V_{n}'$.   Let us write $V=\varprojlim_{n} \widetilde{V}_n$ and  $V'' = \varprojlim_{n} V_{n}''$  as inverse limits  of Banach spaces with dense transition maps.  By Lemma \ref{lemmaFrechetToNormed} the composition $V\to V'\to V'_n$ factors through a map $V\to \widetilde{V}_{n}\to V_n'$ for some $n \gg 0$,  in particular, the image of $\widetilde{V}_n(*)_{\mathrm{top}}$ in $V'_n(*)_{\mathrm{top}}$ is dense.  Let $V_{n}\subset \widetilde{V}_n\oplus V'_{n}$ be the solid $K$-Banach space   associated to the closure of the image of $V(*)$  in  $\widetilde{V}_n(*)_{\mathrm{top}}\oplus V_{n}'(*)_{\mathrm{top}}$.  Then, $V \to V_n$ has dense image by construction and $V= \varprojlim_{n} V_n$.  Now,  notice that $(\widetilde{V}_n \oplus 0) + V_n = \widetilde{V}_n \oplus V'_n$,  namely,  the first sum is a  dense   subspace of the direct sum by construction of $V_n$, but it is also a closed subspace (the sum of two closed subspaces of a $K$-Banach space is closed, as  it is checked by using well chosen orthonormal basis). This shows that the map $V_n\to V_n'$ is surjective as wanted.  
\end{proof}

\subsection{Duality}

We conclude with the main result of this chapter.

\begin{theorem} \leavevmode \label{theorem:duality}
\begin{enumerate}

\item  The functor $V \mapsto \underline{\Hom}_K(V,K)$ induces an exact antiequivalence between Fr\'echet and LS spaces such that $\underline{\Hom}_K(V,V')=\underline{\Hom}_K(V'^{\vee},V^{\vee})$, extending the one between Banach and Smith spaces. Moreover, $V$ is Fr\'echet of compact type if and only if $V^{\vee}$ is an $LS$ space of compact type.  
    
\item Let $V=\varprojlim_{n\in N} V_n$ be a Fr\'echet space and $W= \varinjlim_{n\in \N} W_n$ an LS space. Then
\[
\underline{\Hom}_K(W,V)= W^{\vee}\otimes_{K_{\sol}} V \mbox{ and } \underline{\Hom}_{K}(V,W)= V^{\vee} \otimes_{K_{\sol}} W.
\]
    In particular, if $V$ and $V'$ are Fr\'echet spaces (resp. $LS$ spaces) then \[ (V\otimes_{K_{\sol}} V')^{\vee}= V^{\vee}\otimes_{K_{\sol}} V'^{\vee}. \] 
    
\end{enumerate}
\proof
(1) Let $V$ be a Fr\'echet space and let $V=\varprojlim_n V_n$ be a presentation as an inverse limit of Banach spaces  with  transition maps of dense image. Let $S$ by an extremally disconnected set, we want to compute 
\[
\underline{\Hom}_K(V,K)(S)=\Hom_K(V \otimes_{\O_{K,\sol}} \O_{K,\sol}[S], K  )= \Hom_K(\varprojlim_n V_n, \underline{\Cont}(S, K)).
\]
By Lemma \ref{lemmaFrechetToNormed} we have 
\[
\Hom_K(\varprojlim_n V_n, \underline{\Cont}(S,K))= \varinjlim_n \Hom_K(V_n, \underline{\Cont}(S,K))= \varinjlim_n \Hom_K(V_n\otimes_{\O_{K,\sol}} \O_{K,\sol}[S],K).
\]
In other words, we have a natural isomorphism 
\[
\underline{\Hom}_K(V, K) = \varinjlim_n V_n^{\vee}.
\]
By Lemma \ref{Lemmanoqs} (2),  the transition maps $V_{n}^{\vee}\to V_{n+1}^{\vee}$ are injective, proving that $V^{\vee}$ is an $LS$ space.

Conversely, let $W$ be an $LS$ space and $W=\varinjlim_n W_n$ a presentation as a colimit of Smith spaces with injective transition maps. Then if follows formally that 
\[
\underline{\Hom}_K(W,K) = \varprojlim_n  W_n^{\vee}.
\]
By Lemma \ref{Lemmanoqs} (2) again, the transition maps $W_{n+1}^{\vee}\to W_{n}^{\vee}$ have dense image.  It is clear from the construction that $(V^{\vee})^{\vee}=V$ and $(W^{\vee})^{\vee}=W$ for $V$ Fr\'echet and $W$ an $LS$ space.   This implies  formally that $\Hom_K(V,V')= \Hom_K(V^{'\vee}, V^\vee)$, which gives  the antiequivalence between Fr\'echet and $LS$ spaces.  Finally, by Corollary \ref{coroFLScompact} and the previous computation, the duality restricts to Fr\'echet and $LS$ spaces of compact type. 

We now extend the equality of homomorphisms to the internal $\iHom$.  Let $V$ and $V'$ be Fr\'echet spaces  and  $S$  an extremally disconnected set.  We have   
\begin{eqnarray*}
\underline{\Hom}_K(V,V')(S) & = & \Hom_K(V\otimes_{K_\sol} K_\sol[S],  V' )\\ 
& =&  \Hom_K(V, \underline{\Cont}(S,K)\otimes_{K_\sol} V') \\ 
& =  &  \Hom_K(( \underline{\Cont}(S,K)\otimes_{K_\sol} V')^{\vee}, V^{\vee}) \\
& = & \Hom_K(V^{'\vee}\otimes_{K_\sol} K_\sol[S], V^\vee) \\
& =& \underline{\Hom}_K(V^{'\vee}, V^{\vee})(S).
\end{eqnarray*}
The second equality follows from adjunction and nuclearity of $V'$.
The third equality is the duality between Fr\'echet and LS spaces.   The fourth equality  follows from the compatibility of the tensor product and duality between Fr\'echet and LS spaces of part (2).

To prove exactness of the functor $V\mapsto V^{\vee}$, it is enough to see that it sends short exact sequences
\begin{equation}
\label{eqShortseqFreLS}
0 \to V' \xrightarrow{f} V \xrightarrow{f} V''\to 0
\end{equation} 
of Fr\'echet spaces (resp.  $LS$ spaces) to short exact sequences of $LS$ spaces (resp. Fr\'echet spaces).  Note that we are taking exact sequences in the category of solid $K$-vector spaces, in particular,  $V''(*)_{\mathrm{top}}$ is the topological quotient  $V(*)_{\mathrm{top}}/ V'(*)_{\mathrm{top}}$ and the sequence 
\[
0 \to V'(*)_{\mathrm{top}} \xrightarrow{f(*)} V(*)_{\mathrm{top}} \xrightarrow{g(*)} V''(*)_{\mathrm{top}}\to 0
\]
is strict exact in  classical  terms,  i.e, it is exact as a sequence of $K$-vector spaces and the map $V'(*)_{\mathrm{top}} \to \ker(g(*))$ is an isomorphism of topological $K$-vector spaces. We first proof the result for Smith and Banach spaces. Given a short exact sequence  \eqref{eqShortseqFreLS} of Smith spaces, by projectivity of $V''$ one has a section $s:  V''\to V$ splitting the short exact sequence, and hence the claim follows. Dually,  if \eqref{eqShortseqFreLS} is a short exact sequence of Banach spaces, one can construct a section $s: V''\to V$ by taking orthonormalizable  basis. We finally reduce the general case to Smith and Banach spaces to conclude the proof of the statement. Indeed, if \[ 0 \to V' \to V \to V'' \to 0 \]
is an exact sequence of $LS$ spaces, the
by Lemma \ref{LemmaSES} it can be written as a direct limit
\[ \varinjlim_{n \in \N} [ 0 \to V_n' \to V_n \to V_n'' \to 0 ] \]
of short exact sequences of Smith spaces. Taking duals and using exactness for the case of Smith spaces, we get an inverse limit of short exact sequences
\[ 0 \to (V_n'')^\vee \to V_n^\vee \to (V_n'')^\vee \to 0. \]
Since $R^1\varprojlim_{n} (W'_{n})^{\vee}=0$ by topological Mittag-Leffler,  one has a short exact sequence 
\[
0\to (V')^{\vee} \to V^{\vee} \to (V'')^{\vee} \to 0
\]
of Fr\'echet spaces as desired. The reduction for exact sequences of Fr\'echet spaces to the case of Banach spaces is deduced similarly.

(2) Let $V=\varprojlim_n V_n$ and $W=\varinjlim_m W_m$ be a Fr\'echet and an $LS$ space respectively.   We can write 
\begin{eqnarray*}
\underline{\Hom}_K(W,V) & = & \varprojlim_m \underline{\Hom}_K(W_m, V) \\ 
& = & \varprojlim_m W_m^{\vee}\otimes_{K_{\sol}} V \\
& = & W^{\vee}\otimes_{K_{\sol}} V
\end{eqnarray*}
where the first equality is formal, the second equality follows from nuclearity of Fr\'echet spaces, and the third equality from the tensor product of Fr\'echet spaces (Proposition \ref{LemmaTensorProduct}).  Dually, we have
\begin{eqnarray*}
\underline{\Hom}_K(V,W) & = &  \underline{\Hom}_K(V\otimes_{K_{\sol}}W^{\vee}, K  ) \\
 & = & \underline{\Hom}_K(\varprojlim_{n, m} V_n\otimes_{K_{\sol} } W_m^{\vee}, K) \\
 & = &  \varinjlim_{n, m} \underline{\Hom}_K( V_n\otimes_{K_{\sol}} W_m^{\vee}, K) \\
 & = & \varinjlim_{n, m} V_n^{\vee} \otimes_{K_{\sol}} W_m  \\
 & = & V^{\vee} \otimes_{K_{\sol}} W, 
\end{eqnarray*}
where the first equality follows from self duality of $LS$ spaces and the tensor-Hom adjunction, the second equality from the tensor product of Fr\'echet spaces, the third equality from Lemma \ref{lemmaFrechetToNormed}, and the last two equalities from Corollary \ref{CoroHomBanachSmith} and the commutativity between tensor product and colimits. 
\endproof
\end{theorem}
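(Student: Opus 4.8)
The plan is to reduce every assertion to the already-established duality between Banach and Smith spaces (Lemma \ref{LemmaDuals1}) by passing to the countable (co)limit presentations and keeping careful track of the transition maps. First I would fix a Fr\'echet space $V$ and, using Lemma \ref{lemmaFrechetdense}, write $V=\varprojlim_n V_n$ with all transition maps $V_{n+1}\to V_n$ (hence all maps $V\to V_n$) of dense image. Evaluating at an extremally disconnected set $S$ and using the adjunction $\underline{\Hom}_K(-,K)(S)=\Hom_K(-\otimes_{K_\sol}K_\sol[S],K)=\Hom_K(-,\underline{\Cont}(S,K))$ together with Lemma \ref{lemmaFrechetToNormed} (a map from a Fr\'echet space to a Banach space factors through a level), one gets $\underline{\Hom}_K(V,K)=\varinjlim_n V_n^\vee$. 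Since the maps $V_{n+1}\to V_n$ have dense image, Lemma \ref{Lemmanoqs}(2) shows the dual maps $V_n^\vee\to V_{n+1}^\vee$ are injective, so $V^\vee$ is an $LS$ space. Dually, for an $LS$ space $W=\varinjlim_n W_n$ with injective transition maps the identity $\underline{\Hom}_K(W,K)=\varprojlim_n W_n^\vee$ is formal, and Lemma \ref{Lemmanoqs}(2) again gives dense transition maps $W_{n+1}^\vee\to W_n^\vee$, so $W^\vee$ is Fr\'echet. Computing $(V^\vee)^\vee=\varprojlim_n(V_n^\vee)^\vee=\varprojlim_n V_n=V$ via Lemma \ref{LemmaDuals1}, and likewise $(W^\vee)^\vee=W$, yields the antiequivalence, and the identity $\Hom_K(V,V')=\Hom_K(V'^\vee,V^\vee)$ is then formal. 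The internal version $\underline{\Hom}_K(V,V')=\underline{\Hom}_K(V'^\vee,V^\vee)$ follows by evaluating at $S$, using nuclearity of $V'$ (Proposition \ref{PropFrechetNuclear}) to bring $K_\sol[S]$ inside, then the Hom-duality and the tensor--dual compatibility of part (2).

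Next I would establish exactness of $(-)^\vee$. By Lemma \ref{LemmaSES} any short exact sequence of $LS$ (resp. Fr\'echet) spaces is a filtered colimit (resp. cofiltered limit) of short exact sequences of Smith (resp. Banach) spaces; these split, by projectivity of Smith spaces (resp. orthonormalisability of Banach spaces over a local field). Dualising term by term and invoking topological Mittag--Leffler (Lemma \ref{LemmaML}) to kill the relevant $R^1\varprojlim$, one obtains a short exact sequence of the dual class. The statement on compact type is then immediate: by Corollary \ref{coroFLScompact} a Fr\'echet space of compact type is $\varprojlim_n V_n^S$ with trace-class transition maps, and the computation $V^\vee=\varinjlim_n (V_n^S)^\vee$ exhibits $V^\vee$ as a colimit of Banach spaces along compact transition maps, i.e.\ an $LS$ space of compact type, and conversely.

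For part (2), write $V=\varprojlim_n V_n$ (Fr\'echet) and $W=\varinjlim_m W_m$ ($LS$). Then $\underline{\Hom}_K(W,V)=\varprojlim_m\underline{\Hom}_K(W_m,V)$ formally, and each $\underline{\Hom}_K(W_m,V)=\varprojlim_n\underline{\Hom}_K(W_m,V_n)=\varprojlim_n W_m^\vee\otimes_{K_\sol}V_n=W_m^\vee\otimes_{K_\sol}V$ by Corollary \ref{CoroHomBanachSmith} and Lemma \ref{LemmaTensorProduct}; taking $\varprojlim_m$ and using Lemma \ref{LemmaTensorProduct} once more gives $W^\vee\otimes_{K_\sol}V$. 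Dually, $\underline{\Hom}_K(V,W)=\underline{\Hom}_K(V\otimes_{K_\sol}W^\vee,K)$ by tensor--Hom adjunction and $W=(W^\vee)^\vee$; by Lemma \ref{LemmaTensorProduct} the object $V\otimes_{K_\sol}W^\vee=\varprojlim_{n,m}V_n\otimes_{K_\sol}W_m^\vee$ is Fr\'echet, so by part (1) its dual is $\varinjlim_{n,m}(V_n\otimes_{K_\sol}W_m^\vee)^\vee=\varinjlim_{n,m}V_n^\vee\otimes_{K_\sol}W_m=V^\vee\otimes_{K_\sol}W$, using Corollary \ref{CoroHomBanachSmith} at finite level. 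Finally $(V\otimes_{K_\sol}V')^\vee=\underline{\Hom}_K(V',V^\vee)=V'^\vee\otimes_{K_\sol}V^\vee$ by tensor--Hom adjunction and the formula just proved with $W=V^\vee$, and the same argument with two $LS$ spaces (whose tensor product is again $LS$, being a colimit of Smith spaces) handles the remaining case.

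The main obstacle. With the auxiliary results of the chapter available, the argument is largely formal bookkeeping, and the one place that demands genuine care is the exactness of $(-)^\vee$: one must actually produce the (co)limit presentations of short exact sequences furnished by Lemma \ref{LemmaSES} and then verify that no $R^1\varprojlim$-obstruction survives on dualising, which is exactly where topological Mittag--Leffler and the density hypotheses it requires are indispensable. A secondary subtlety, pervasive in part (2) and in the internal-$\Hom$ identity, is to make sure that each intermediate object ($V\otimes_{K_\sol}W^\vee$, the spaces $W_m^\vee\otimes_{K_\sol}V$, etc.) is genuinely Fr\'echet (resp.\ $LS$) with dense (resp.\ injective) transition maps, so that the ``dual of a countable limit equals the countable colimit of duals'' principle of part (1) may legitimately be applied; this rests on keeping track, via Lemma \ref{LemmaTensorProduct}, of which tensor products of Banach spaces remain Banach.
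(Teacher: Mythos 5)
Your proposal is correct and follows essentially the same route as the paper's proof: the same reduction to the Banach--Smith duality via dense-image presentations, Lemma \ref{lemmaFrechetToNormed} and Lemma \ref{Lemmanoqs}(2) for part (1), the same use of Lemma \ref{LemmaSES}, splitting, and topological Mittag--Leffler for exactness, and the same tensor--Hom manipulations via Corollary \ref{CoroHomBanachSmith} and Lemma \ref{LemmaTensorProduct} for part (2). The only deviations are cosmetic (e.g.\ you expand $\underline{\Hom}_K(W_m,V)$ level by level where the paper invokes nuclearity of the Fr\'echet space directly), so there is nothing substantive to add.
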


We conclude this chapter with a conjecture which is a derived enhancement of Lemma \ref{lemmaFrechetToNormed}. This will only be used in Proposition \ref{PropUseConjecture} and the reader is invited to skip it in a first lecture. 

\begin{conj}
\label{ConjectureLocallyanalytic}
Let $V=\varprojlim_n V_n$ be a Fr\'echet space of compact type written as a limit of Banach spaces with compact and dense transition maps, then
\[
R\underline{\Hom}_K(V, K)= \underset{n}{\hocolim}\; R\underline{\Hom}_{K}(V_n,K).
\]

\end{conj}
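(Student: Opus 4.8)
The plan is to reduce the statement to the assertion that $R\underline{\Hom}_K(V,K)$ is concentrated in degree $0$, and then to attack that assertion through the presentation of $V$ as a limit of Smith spaces. For the reduction, use that $V$ is of compact type: by Lemma \ref{LemmaBanafication} each compact map $V_{n+1}\to V_n$ factors as $V_{n+1}\to V_{n+1}^{S}\to V_n$ through the Smith completion, so there is a single tower
\[
\cdots \longrightarrow V_{n+1}\longrightarrow V_{n+1}^{S}\longrightarrow V_{n}\longrightarrow V_{n}^{S}\longrightarrow V_{n-1}\longrightarrow \cdots
\]
containing both $(V_n)_n$ and $(V_n^{S})_n$ as cofinal subtowers, each with inverse limit $V$ (the composites recover the original transition maps). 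Applying the contravariant functor $R\underline{\Hom}_K(-,K)$ and using that a sequential homotopy colimit depends only on a cofinal subsystem gives $\hocolim_n R\underline{\Hom}_K(V_n,K)=\hocolim_n R\underline{\Hom}_K(V_n^{S},K)$. Since Smith spaces are compact projective in $\SolidK$ (Lemma \ref{LemmaBanachSmith}), each $R\underline{\Hom}_K(V_n^{S},K)=(V_n^{S})^{\vee}$ is concentrated in degree $0$; the transition maps $(V_n^{S})^{\vee}\to (V_{n+1}^{S})^{\vee}$ are injective, being duals of maps of Smith spaces with dense (hence surjective) image (Lemma \ref{Lemmanoqs} applied to the dual Banach spaces), so the homotopy colimit is the ordinary colimit $\varinjlim_n (V_n^{S})^{\vee}$, again in degree $0$. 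By Lemma \ref{lemmaFrechetToNormed} this colimit is $\underline{\Hom}_K(V,K)$, and the canonical comparison map of the conjecture is the truncation map $\underline{\Hom}_K(V,K)\to R\underline{\Hom}_K(V,K)$. Hence the conjecture holds if and only if $\underline{\Ext}^{i}_K(V,K)=0$ for all $i>0$.

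To prove this vanishing, I would write $V=\varprojlim_n W_n$ as an inverse limit of Smith spaces with surjective trace class transition maps (Corollary \ref{coroFLScompact}, together with the fact that a map of Smith spaces with dense image is surjective). By topological Mittag-Leffler (cf. Lemma \ref{LemmaML}) one has $V=R\varprojlim_n W_n$, so $V$ sits in the exact triangle $V\to \prod_n W_n\xrightarrow{1-\sigma}\prod_n W_n\to V[1]$, where $\sigma$ is the shift built from the transition maps. Applying $R\underline{\Hom}_K(-,K)$ yields
\[
R\underline{\Hom}_K(V,K)=\mathrm{cone}\!\Big((1-\sigma)^{\vee}\colon R\underline{\Hom}_K(\textstyle\prod_n W_n,K)\to R\underline{\Hom}_K(\textstyle\prod_n W_n,K)\Big).
\]
The dual endomorphism $\sigma^{\vee}$ shifts the index up (it sends the $n$-th factor to the $(n+1)$-st via the dual transition map and annihilates the $0$-th factor), so $1-\sigma^{\vee}$ is injective on $(\prod_n W_n)^{\vee}$: from $f=\sigma^{\vee}f$ one gets $f_0=0$, hence inductively $f_n=0$. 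Therefore, if $R\underline{\Hom}_K(\prod_n W_n,K)$ is concentrated in degree $0$ the cone is too, and the conjecture follows.

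The hard part is exactly this last point: showing that a countable product of Smith spaces has no higher $\underline{\Ext}$ into $K$. This is delicate because such a product is in general neither projective nor an $LS$ space — already $\prod_n\qp$ fails to be countably compactly generated, since $\Z^{\N}$ has no countable cofinal subset — so neither the projectivity argument used for a single Smith space nor the topological Mittag-Leffler argument available for $LS$ spaces applies directly; this is essentially the same obstruction recorded in the remark after Lemma \ref{LemmaDuals1} about $R\underline{\Hom}_K(\widehat{\bigoplus}_I K,K)$. What must be exploited is that the transition maps defining $V$ are trace class, so that $V$ is not merely a Fr\'echet space but a nuclear object of $D(K_\sol)$ (cf.\ Proposition \ref{PropFrechetNuclear}) which one expects to be reflexive in a derived sense. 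Concretely, I would either work inside Clausen--Scholze's category of nuclear solid $K$-vector spaces, where the trace class tower presenting $V$ should force $R\underline{\Hom}_K(V,K)=\varinjlim_n W_n^{\vee}$ on the nose, or replace the naive product $\prod_n W_n$ by a model assembled from the trace class factorizations $W_{n+1}\to W_n^{B}\to W_n$ — alternating Smith and Banach pieces — for which the computation of $R\underline{\Hom}_K(-,K)$ reduces to the $LS$ case handled above.
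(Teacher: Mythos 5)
The statement you are addressing is stated in the paper only as a conjecture: the authors give no proof of it, and in the remark following Lemma \ref{LemmaDuals1} they explicitly record that they do not know how to compute $R\underline{\Hom}_K(\widehat{\bigoplus}_{i\in I}K,K)$, which is the prototype of the computation your argument ultimately requires. So there is no proof in the paper to compare against, and the only question is whether your proposal closes the gap. It does not. You yourself isolate the missing step --- that $R\underline{\Hom}_K(\prod_n W_n,K)$ is concentrated in degree $0$ for a countable product of Smith spaces --- and the two strategies you offer for it (working in a category of nuclear solid modules, or replacing $\prod_n W_n$ by a model built from the trace class factorizations) are not carried out and are not obviously workable: any Milnor-type model for $R\varprojlim$ will again have countable products of Smith spaces as its terms, so the second suggestion does not escape the problem. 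Since $\prod_n W_n$ is neither projective nor an $LS$ space (as you correctly observe with $\prod_n\Q_p$), none of the paper's tools (Lemma \ref{LemmaBanachSmith}(2), Lemma \ref{LemmaML}, Proposition \ref{Propqs}) applies, and this is exactly where the conjecture lives.

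That said, the reductions you do carry out are sound and clarifying: the interleaving of the towers $(V_n)$ and $(V_n^{S})$ via Lemma \ref{LemmaBanafication}, the identification of the homotopy colimit with $\varinjlim_n V_n^{\vee}=V^{\vee}$ in degree $0$, and the resulting equivalence of the conjecture with the vanishing of $\underline{\Ext}^i_K(V,K)$ for $i>0$. Two secondary points would still need care. First, to pass from $V=\varprojlim_n W_n$ to the two-term complex $\prod_n W_n\xrightarrow{1-\sigma}\prod_n W_n$ you should justify $R\varprojlim_n W_n=\varprojlim_n W_n$; this does hold, e.g.\ because the transition maps can be arranged to be surjective (a dense Smith subspace of a Smith space is quasicompact, hence closed, hence everything, cf.\ Corollary \ref{coroFLScompact} and Proposition \ref{PropGuidoNotes}) and surjections of Smith spaces split. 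Second, your injectivity argument for $1-\sigma^{\vee}$ on $H^0$ only shows that a fixed functional vanishes on $\bigoplus_n W_n\subset\prod_n W_n$; one still needs that an element of $\underline{\Hom}_K(\prod_n W_n,K)$ is determined by its restrictions to the factors, which is not automatic since $\prod_n W_n$ strictly contains $(\prod_n W_n^0)[\frac{1}{\varpi}]$. In short: your proposal is a correct and useful reduction of the conjecture to a concrete open $R\underline{\Hom}$ computation, but it is not a proof.
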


\begin{cor}
\label{CoroDualityDerived}
Suppose that Conjecture \ref{ConjectureLocallyanalytic} is true. Let $V=\varprojlim_n V_n$ be a  Fr\'echet space of compact type and $B$ a Banach space,  then
\begin{enumerate}
    \item $R\underline{\Hom}_K(V,B)= \hocolim_n R\underline{\Hom}_K(V_n,B) $. 
    
    \item We have 
    \[ R\underline{\Hom}_K(V,B)= V^{\vee}\otimes_{K_{\sol}} B. \] In particular $V^{\vee}$ is the derived dual of $V$. 
    \item The duality Theorem \ref{theorem:duality} gives a derived duality between bounded complexes of Fr\'echet spaces of compact type and bounded complexes of $LS$ spaces of compact type.  
    \item Let $C$ be a bounded complex of Fr\'echet spaces of compact type and $D$ a bounded complex of $LS$ spaces of compact type. Then \[ R\underline{\Hom}_K(C,D)= R\underline{\Hom}_K(C,K)\otimes_{K_{\sol}}^{L} D \]
    and
    \[ R\underline{\Hom}_K(D,C)= R\underline{\Hom}_K(D,K)\otimes_{K_{\sol}}^{L} C. \]  
    
    \item  Let $C$ and $C'$ be bounded complexes of Fr\'echet spaces of compact type. Then \[ R\underline{\Hom}_K(C,C')= R\underline{\Hom}_K( C^{'\vee}, C^{\vee}). \]
    
\end{enumerate}
\proof
Part (1) follows from Conjecture \ref{ConjectureLocallyanalytic} using the fact that $K_{\sol}[S]\otimes_{K_{\sol}}^L K_{\sol}[S']= K_{\sol}[S\times S']$ for $S$ and $S'$ profinite sets, that 
\[
R\underline{\Hom}_{K}(V, K)(S)= R\Hom(V\otimes_{K_{\sol}} K_{\sol}[S], K)= R\Hom(V, \underline{\Cont}(S,K))
\]
and that  any Banach space over $K$ is a direct sumand of a space of the form $\underline{\Cont}(S,K)$ for $S$ a profinite set.  

If $V$ is Fr\'echet of compact type, we can write $V=\varprojlim_n V_n^{S}$ where $V_n^{S}$ is the Smith completion of $V_n$ (cf. Corollary \ref{coroFLScompact}).  Then 
\begin{eqnarray*}
R\underline{\Hom}_K(V,B) & = &  \underset{n}{\hocolim}\;R\underline{\Hom}_K(V_n, B) \\ 
            & = & \underset{n}{\hocolim}\; R\underline{\Hom}_K(V_n^{S}, B) \\
            & = & \underset{n}{\hocolim}\; (V_n^{S})^{\vee} {\otimes_{K_{\sol}}} B \\ 
            & = & \varinjlim_n V_n^{\vee} {\otimes_{K_{\sol}}} B = V^{\vee}\otimes_{K_{\sol}} B. 
\end{eqnarray*}

Part (3) follows immediately from part (2) and an easy induction via the stupid truncation. For part (4), notice that if $C$ is a bounded Fr\'echet complex of compact type, and $D$ is a bounded  $LS$ complex of compact  type, then 
\begin{eqnarray*}
R\underline{\Hom}_{K}(C,D) & = &  R\underline{\Hom}_K(C\otimes_{K_{\sol}}^L D^{\vee}, K) \\ 
  & = &   C^{\vee}\otimes_{K_{\sol}}^L D
\end{eqnarray*}
since $C\otimes_{K_{\sol}}^{L} D^{\vee}$ is a bounded  Fr\'echet  complex of compact type.  Similarly one shows $R\underline{\Hom}_K(D,C)= D^{\vee}\otimes_{K}^L C$.

Finally, by a devisage using the stupid filtration, (5) is reduced to showing that if $V$ and $V'$ are Fr\'echet spaces of compact type then 
\[
R\underline{\Hom}_K(V,V')= R\underline{\Hom}_{K}(V^{'\vee}, V^{\vee}).
\]
Writing $V'= \varprojlim_n V_n'$ one gets, by (2),
\[
R\underline{\Hom}_K(V, V')= R\varprojlim_n R\underline{\Hom}_K(V, V_n')= R\varprojlim_n (V^{\vee}\otimes_{K_{\sol}} V_n').  
\]
Dually, we have that 
\begin{eqnarray*}
R\underline{\Hom}_{K}(V^{'\vee}, V^{\vee} ) & = & R\varprojlim_n R\underline{\Hom}_K(V_n^{',\vee},  V^{\vee}) \\
            & = & R\varprojlim_n (V_{n}' \otimes_{K_{\sol}} V^{\vee})
\end{eqnarray*}
which gives the desired equality. 
\endproof
\end{cor}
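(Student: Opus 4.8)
The plan is to derive all five assertions from Conjecture \ref{ConjectureLocallyanalytic}, the duality of Theorem \ref{theorem:duality}, and the explicit $\Hom$/tensor computations for Banach and Smith spaces; once part (1) is in place, the rest is bookkeeping. For (1), I would first record that for a Banach space $B$ and a profinite set $S$ one has $R\underline{\Hom}_K(V,B)(S)=R\Hom_K(V\otimes^L_{K_\sol}K_\sol[S],B)$ and $K_\sol[S]\otimes^L_{K_\sol}K_\sol[S']=K_\sol[S\times S']$, so it suffices to check the desired identity of $R\Hom$-complexes after evaluating at extremally disconnected sets. Taking $B=\underline{\Cont}(S',K)=R\underline{\Hom}_K(K_\sol[S'],K)$ (projectivity of $K_\sol[S']$) and using the adjunction $R\Hom_K(V\otimes^L K_\sol[S],\underline{\Cont}(S',K))=R\Hom_K(V,\underline{\Cont}(S\times S',K))$, this reduces exactly to Conjecture \ref{ConjectureLocallyanalytic} evaluated at $S\times S'$; here one uses that taking sections at an extremally disconnected set is exact and commutes with all colimits, hence with the homotopy colimit. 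A general Banach $B$ is a direct summand of some $\underline{\Cont}(S',K)$, and $R\underline{\Hom}_K(-,B)$ respects direct summands compatibly in the $V_n$, which finishes (1).

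For (2), I would present $V=\varprojlim_n V_n^S$ as an inverse limit of Smith spaces with trace-class transition maps (Corollary \ref{coroFLScompact}), apply (1) to a Banach presentation $V=\varprojlim_n V_n$ (with dense, compact transitions, available by Lemma \ref{lemmaFrechetdense}), and then replace $V_n$ by $V_n^S$ using that a compact map factors through the Smith completion (Lemma \ref{LemmaBanafication}); each $R\underline{\Hom}_K(V_n^S,B)$ equals $\underline{\Hom}_K(V_n^S,B)=(V_n^S)^\vee\otimes_{K_\sol}B$ in degree $0$, by projectivity of Smith spaces and Corollary \ref{CoroHomBanachSmith}. Since $\SolidK$ satisfies (AB5), the homotopy colimit along a sequence of degree-$0$ objects is the ordinary colimit, which is $\bigl(\varinjlim_n(V_n^S)^\vee\bigr)\otimes_{K_\sol}B=V^\vee\otimes_{K_\sol}B$ by Theorem \ref{theorem:duality}(1) and compatibility of $\otimes_{K_\sol}$ with colimits; setting $B=K$ gives the ``derived dual'' statement. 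Part (3) is then a dévissage: Theorem \ref{theorem:duality}(1) already supplies an exact antiequivalence of abelian categories, and (2) shows $R\underline{\Hom}_K(-,K)$ is computed termwise on bounded complexes, so an induction along the stupid filtration upgrades this to an antiequivalence of bounded complexes, with duals computed termwise.

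For (4) I would use tensor--Hom adjunction and self-duality: $R\underline{\Hom}_K(C,D)=R\underline{\Hom}_K(C\otimes^L_{K_\sol}D^\vee,K)$, where $D^\vee$ is a bounded complex of Fréchet spaces of compact type by (3), hence so is $C\otimes^L_{K_\sol}D^\vee$ (Lemma \ref{LemmaTensorProduct}, with compactness of transition maps preserved under $\otimes_{K_\sol}$, and flatness of quasiseparated spaces keeping the tensor product in degree $0$, by Lemma \ref{LemmaqsimpliesFlat}); now apply (2) termwise together with the compatibility of duality and $\otimes_{K_\sol}$ from Theorem \ref{theorem:duality}(2) to rewrite $(C\otimes^L_{K_\sol}D^\vee)^\vee=C^\vee\otimes^L_{K_\sol}D$. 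The mirror identity $R\underline{\Hom}_K(D,C)=D^\vee\otimes^L_{K_\sol}C$ follows the same way, now using that $R\underline{\Hom}_K(-,K)=(-)^\vee$ on bounded $LS$-complexes (topological Mittag-Leffler, Lemma \ref{LemmaML}) and that $\otimes_{K_\sol}$ of $LS$-complexes of compact type stays of compact type (Proposition \ref{PropTensprodSmith}). Finally (5) reduces by the stupid filtration to single Fréchet spaces $V,V'$ of compact type; writing $V'=\varprojlim_n V_n'$ one gets $R\underline{\Hom}_K(V,V')=R\varprojlim_n\bigl(V^\vee\otimes_{K_\sol}V_n'\bigr)$ from (2), while $R\underline{\Hom}_K(V'^\vee,V^\vee)=R\varprojlim_n R\underline{\Hom}_K((V_n')^\vee,V^\vee)$, and each term again equals $V^\vee\otimes_{K_\sol}V_n'$ by applying the antiequivalence to the $LS$ spaces involved together with Theorem \ref{theorem:duality}(2); the two inverse limits agree.

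I expect the only genuine friction to be in part (1): carefully justifying that the homotopy colimit in Conjecture \ref{ConjectureLocallyanalytic} is detected after evaluation at extremally disconnected sets and survives the reduction to an arbitrary Banach target via the direct-summand trick. Everything downstream is formal manipulation of $\otimes_{K_\sol}$, $\underline{\Hom}_K$ and the Fréchet/$LS$ duality, the main point of care being to keep track of which complexes are ``Fréchet of compact type'' versus ``$LS$ of compact type'' so that derived and naive duals coincide at each step.
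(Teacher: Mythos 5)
Your proposal is correct and follows essentially the same route as the paper's own proof: part (1) by evaluating at extremally disconnected sets, reducing to the Conjecture via the adjunction with $\underline{\Cont}(S\times S',K)$ and the direct-summand trick for a general Banach target; part (2) by passing to the Smith completions $V_n^S$ and computing each term via Corollary \ref{CoroHomBanachSmith}; and parts (3)--(5) by the same d\'evissage, tensor--Hom adjunction, and inverse-limit comparison. The extra care you flag in part (1) (exactness of evaluation at extremally disconnected sets and compatibility with homotopy colimits) is a reasonable elaboration of what the paper leaves implicit.
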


\section{Representation Theory}
\label{SectionRepTheory}
Let $G$ be a compact $p$-adic Lie group.  In this section we translate the theory of analytic and locally analytic representations of $G$ from the classical framework to condensed mathematics. We hope that this new point of view could simplify some proofs and provide a better understanding of the theory.   Our main sources of inspiration are the works of Lazard \cite{Lazard},  Schneider-Teitelbaum \cite{SchTeitGl2,SchTeitDist} and Emerton \cite{Emerton}.

We begin with the introduction of various algebras of distributions, each one serves  a particular purpose in the theory.  Namely, there are distribution algebras arising from affinoid groups which appear naturally in the analytification functor of Corollary  \ref{PropsixFunctorTate},  distribution algebras which are localizations of the Iwasawa algebra,  and  distribution algebras algebras attached to Stein analytic groups relating the previous two.    

Next, we introduce the category of solid $G$-modules, it is a generalisation of the category of non-archimedean topological spaces endowed with a continuous action of $G$ (Banach, Fr\'echet, $LB$, $LF$, ...).  

We continue with the definition of analytic representations for the affinoid and Stein analytic groups of Definition \ref{defAffinoidgroups}.  We recall how the analytic vectors for a Banach representation are defined, and how it serves as motivation for the derived analytic vectors of solid $G$-modules. We prove the main Theorem \ref{TheoMain} which, roughly speaking, says that being analytic for a Stein analytic group as in Definition \ref{defAffinoidgroups} is the same  as being a module over its distribution algebra. 

We end with an application to locally analytic representations, reproving a theorem of Schneider-Teitelbaum describing a duality between   locally analytic representations on $LB$ spaces of compact type, and Fr\'echet modules of compact type over the algebra of locally analytic distributions, cf. Proposition \ref{PropBanachlocally}.  Under the assumption of Conjecture \ref{ConjectureLocallyanalytic},  We  state a   generalisation to a duality between locally analytic  bounded complexes of $LB$ compact type   and bounded Fr\'echet complexes  of compact type endowed with an action of the algebra of locally analytic distributions.

\subsection{Function spaces and distribution algebras}
\label{subsecFuncDist}

In the following paragraph  we define different classes of  spaces of functions and distributions that will be used throughout this text. These are algebras already appearing in the literature (\cite[\S 4]{SchTeitDist}, \cite[\S 5]{Emerton}) that we introduce in a way adapted to our interests and purposes.

\subsubsection{Rigid analytic neighbourhoods}
Let $G$ be a compact $p$-adic Lie group of dimension $d$.

\begin{definition}
The Iwasawa algebra of $G$ is defined as the solid ring
\[ \O_{K, \blacksquare}[G] = \varprojlim_{H \unlhd G }\O_K[G/H] \in \SolidOK,
\]
\[K_\blacksquare[G] = \O_{K, \blacksquare}[G][1/p] = (\varprojlim_{H \unlhd G} \O_K[G/H])[1/p] \in \SolidK, \]
where $H$ runs over all the open and normal subgroups of $G$. 
\end{definition}

\begin{remark}
Classically, the Iwasawa algebra is denoted by $\Lambda(G, \O_K)$ and $\Lambda(G, K)$. We decide to adopt the new notations from the theory of condensed mathematics to best fit our results in this language.
\end{remark}

\begin{remark}
According to the notations of Example \ref{Exampleanalyticrings}, the Iwasawa algebra corresponds precisely to the evaluation at $G$ of the functor of measures of the analytic rings $(\O_K, \O_K)_\blacksquare$ and $(K, \O_K)_\blacksquare$ which, in turn, are commonly denoted by $\O_{K, \blacksquare}$ and $K_\blacksquare$ respectively. 
\end{remark}

In order to define the space of  locally analytic functions attached to $G$, we need to work with coordinates locally around the identity.   By \cite[Corollary 8.34]{Analyticprop}, there exists an open normal subgroup $G_0$ of $G$ which is a uniform pro-$p$ group \footnote{Recall that a uniform pro-$p$ group $H$ is a pro-$p$ group which is finitely generated, torsion free and powerful, i.e., $[H, H] \subseteq H^p$ if $p > 2$ or $[H, H] \subseteq H^4$ if $p = 2$.}. Such a group can be equipped with a $p$-valuation $w$ and an ordered basis $g_1,\ldots, g_d\in G_0$ in the sense of \cite[III.2]{Lazard} (or cf. \cite[\S 4]{SchTeitDist}).  By \cite[Proposition III.3.1.3]{Lazard},  after shrinking $G_0$ if necessary,  we can assume that  $w(g_1) = \hdots = w(g_d) = w_0 > 1$ is an integer.  This basis induces  charts
\[ \phi : \Z_p^d \to G_0 , \;\;\; (x_1, \hdots, x_d)  \mapsto g_1^{x_1} \cdot \hdots \cdot g_d^{x_d} \]
such that $\phi$ is an homeomorphism between $G_0$ and $\Z_p^d$ with $w(g_1^{x_1} \cdot \hdots \cdot g_d^{x_d}) = w_0 + \min_{1 \leq i \leq d} v_p(x_i)$,   and such that the map $\psi: G_0 \times G_0 \to G_0$, $(g, h) \mapsto g h^{-1}$, defining the group structure of $G_0$, is given by an analytic function $\psi: \Z_p^d \times \Z_p^d \to \Z_p^d$ with coefficients in $\Z_p$. In other words, the function $\psi: (x,y)\mapsto \phi^{-1}(\phi(x) \phi(y)^{-1})$  can be written as a tuple of power series with coefficients in $\Z_p$ converging to $0$.      By further shrinking $G_0$ if necessary we can also assume that conjugation on $G_0$ by any element of $g\in G$ is given by  a family of  power series with bounded coefficients. 

Let $r=p^{-s}>0$ for $s\in \Q$,   and $\bb{D}^d_{\Q_p}(r)\subset \bb{A}^d_{\Q_p}$ the affinoid polydisc of radius $r$.  If $s\in \Z$ then $\bb{D}^d_{\Q_p}(r)$ is  the affinoid space defined by the algebra $\Q_p\langle \frac{T_1}{p^s}, \ldots \frac{T_d}{p^s} \rangle $, where $T_1,\ldots, T_d$ are  the coordinates of $\bb{A}^d_{\Q_p}$.   We let $\mathring{\bb{D}}^d(r)= \bigcup_{r'<r} \bb{D}^d(r')$ denote the open polydisc of radius $r$.  

\begin{definition}
\label{defAffinoidgroups}
We define the following rigid analytic groups,  see Lemma \ref{LemmaNormalG0} below
\begin{enumerate}
    \item $\mathbb{G}_0 = (\mathbb{D}_\qp^d(1), \psi)$;  the affinoid group defined by the group law $\psi$ of $G_0$.   
    
    \item For any $h \in \Q_{\geq 0}$, the affinoid groups   $\bb{G}_{h}= (\mathbb{D}_{\qp}^{d}(p^{-h}),  \psi)$ of radius $p^{-h}$.   We also denote  $\mathbb{G}_0^{(h)} = G_0 \bb{G}_{h}\subset \bb{G}_0$.

    \item For any $h\in \Q_{\geq 0}$ the Stein groups $\bb{G}_{h^+}=\cup_{h' > h} \mathbb{G}_{h'}$ and   $\mathbb{G}_0^{(h^+)} = G_0 \bb{G}_{h^+} =\cup_{h' > h} \mathbb{G}_0^{(h)}$.  
\end{enumerate}
\end{definition}

\begin{example} \label{exampleGL2}
If $G=\GL_2(\Z_p)$, an example of a uniform pro-p-subgroup $G_0$ is 
\[
G_0= \left( \begin{array}{cc} 1+ p^n \Z_p & p^n\Z_p \\p^n \Z_p & 1+p^n \Z_p \end{array} \right)
\]
for $n\geq 2$ if $p=2$ and $n\geq 1$ if $p>2$. Let's take $p>2$ and $n=1$.    In this case $\bb{G}_{h}$ is the rigid analytic group 
\[
\bb{G}_h=\left( \begin{array}{cc}
    1+ \bb{D}^1_{\Q_p}(p^{-h-1}) & \bb{D}^1_{\Q_p}(p^{-h-1})   \\
    \bb{D}^1_{\Q_p}(p^{-h-1})  &1+\bb{D}^1_{\Q_p}(p^{-h-1}) 
\end{array} \right) = \left( \begin{array}{cc}
    1+ p^{h+1}\bb{D}^1_{\Q_p}(1) & p^{h+1}\bb{D}^1_{\Q_p}(1)   \\
    p^{h+1}\bb{D}^1_{\Q_p}(1)  &1+p^{h+1}\bb{D}^1_{\Q_p}(1)
\end{array} \right),
\]
whereas the Stein group $\bb{G}_{h^+}$ is equal to 
\[
\bb{G}_{h^+}=\left( \begin{array}{cc}
    1+ p^{h+1}\mathring{\bb{D}}^1_{\Q_p}(1) & p^{h+1}\mathring{\bb{D}}^1_{\Q_p}(1)   \\
    p^{h+1}\mathring{\bb{D}}^1_{\Q_p}(1)  &1+p^{h+1}\mathring{\bb{D}}^1_{\Q_p}(1)
\end{array} \right).
\]
\end{example}


The following lemma says that the rigid spaces of   Definition \ref{defAffinoidgroups} are indeed rigid analytic groups.

\begin{lemma}
\label{LemmaNormalG0}
The  affinoid $\mathbb{G}_{h}$ is an open normal subgroup of $\mathbb{G}_0$ stable by  conjugation of  $G$.
\end{lemma}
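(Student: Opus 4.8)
The plan is to reduce every assertion --- that $\bb{G}_h$ is closed under the group law, that it is open in $\bb{G}_0$, that it is normal in $\bb{G}_0$, and that it is stable under conjugation by $G$ --- to a single elementary fact. \textbf{Observation.} \emph{If $F=\sum_{\underline{\alpha}}c_{\underline{\alpha}}\underline{T}^{\underline{\alpha}}$ lies in the Tate algebra $\zp\langle T_1,\dots,T_n\rangle$ and has zero constant term, then for every $h\in\Q_{\geq 0}$ the series $F(p^h\underline{T})$ has all coefficients of absolute value $\leq p^{-h}$, so $F$ carries the polydisc $\{|T_i|\leq p^{-h}\}_i$ into $\{|F|\leq p^{-h}\}$.} This holds because every monomial of $F$ has total degree $\geq 1$, and scaling the variables by $p^h$ then multiplies it by $p^{-h\cdot\deg}\in p^{-h}\zp$. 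I will also use the relative form: if $F\in\zp\langle\underline{S},\underline{T}\rangle$ has $F(\underline{S},\underline{0})=0$, then $F$ maps $\mathbb{D}^a_{\qp}(1)\times\mathbb{D}^b_{\qp}(p^{-h})$ into $\mathbb{D}^1_{\qp}(p^{-h})$.

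First I would record that the division map $\psi$ of $G_0$ is a tuple $(\psi_1,\dots,\psi_d)$ with $\psi_j\in\zp\langle\underline{T},\underline{S}\rangle$ and $\psi_j(\underline{0},\underline{0})=0$ (since $\phi(\underline{0})=1$, the identity of $G_0$). By the Observation, $\psi$ restricts to $\mathbb{D}^d_{\qp}(p^{-h})\times\mathbb{D}^d_{\qp}(p^{-h})\to\mathbb{D}^d_{\qp}(p^{-h})$; as $\underline{0}$ is the identity and the inverse of $x$ is $\psi(\underline{0},x)$, the disc $\mathbb{D}^d_{\qp}(p^{-h})$ with the law $\psi$ is indeed a subgroup of $\bb{G}_0$ (in particular Definition \ref{defAffinoidgroups}(2) is meaningful). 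It is open in $\bb{G}_0$ because $\mathbb{D}^d_{\qp}(p^{-h})$ is a rational subdomain --- hence an open adic subspace --- of $\mathbb{D}^d_{\qp}(1)$ (writing $h=a/b$, it equals $\{x:|T_i(x)^b|\leq|p^a|\}_i$). For normality in $\bb{G}_0$, I would use that the conjugation morphism $\bb{G}_0\times\bb{G}_0\to\bb{G}_0$, $(\gamma,x)\mapsto\gamma x\gamma^{-1}=\psi(\gamma,\psi(\gamma,x))$, is a double composite of $\psi$, so it is given by power series $c_j(\underline{S},\underline{T})\in\zp\langle\underline{S},\underline{T}\rangle$ with $c_j(\underline{S},\underline{0})=0$ (because $\gamma 1\gamma^{-1}=1$); the relative form of the Observation then shows it maps $\bb{G}_0\times\bb{G}_h$ into $\bb{G}_h$, i.e.\ $\bb{G}_h\unlhd\bb{G}_0$.

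For conjugation by $g\in G$, i.e.\ the automorphism $c_g\colon x\mapsto\phi^{-1}(g\,\phi(x)\,g^{-1})$ of $G_0$, the point of the final shrinking of $G_0$ is that $c_g$ then extends to an automorphism of the affinoid group $\bb{G}_0$. As a morphism of the closed disc $\mathbb{D}^d_{\qp}(1)$ over $\Spa(\qp,\zp)$ its coordinate functions $f_{g,j}$ lie in $\zp\langle\underline{T}\rangle$, and being a group automorphism it fixes $1$, so $f_{g,j}(\underline{0})=0$; the Observation then gives $c_g(\bb{G}_h)\subseteq\bb{G}_h$, completing the proof. The intrinsic reason this works is that the filtration $G_0\supseteq\{g:w(g)\geq w_0+1\}\supseteq\{g:w(g)\geq w_0+2\}\supseteq\cdots$ is the lower $p$-series of the uniform group $G_0$, a chain of verbal --- hence characteristic --- subgroups, which under $\phi$ corresponds to the concentric polydiscs $(p^i\zp)^d$, so $c_g$ carries each $(p^i\zp)^d$ onto itself.

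I expect essentially no real obstacle in the lemma itself: everything is a direct manipulation of the defining power series through the Observation, once one grants the normalisations of \S\ref{subsecFuncDist} --- in particular that conjugation by $g$ extends to an automorphism of $\bb{G}_0$, so that its charts are \emph{integral} and not merely bounded. That integrality is the one point that carries content: being $\zp$-valued on $\zp^d$ does not by itself force a bounded power series to have coefficients in $\zp$ (e.g.\ $\binom{T}{p}$, which moreover fails to preserve $p\zp$), so the relevant input is the $G$-invariance of the whole $p$-valuation filtration of $G_0$, which the prior shrinking of $G_0$ is arranged to provide. Any real difficulty would therefore live in that earlier normalisation rather than in the present lemma.
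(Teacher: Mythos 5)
Your proof is correct and follows essentially the same route as the paper: rescaling the coordinates by $p^h$ and observing that every monomial of a constant-term-free integral power series then acquires a factor of absolute value at most $p^{-h}$. You merely make explicit (via the relative form of your Observation and the integrality of the conjugation charts) the normality and $G$-stability steps that the paper dispatches with ``a similar argument shows''.
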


\begin{proof}
By simplicity we suppose that $h\in \bb{N}$.  The map
\[ \psi: \mathbb{G}_0 \times \mathbb{G}_0 \rightarrow \mathbb{G}_{0} \;\; :\;\; (x,y)\mapsto xy^{-1} \] is defined by a family of power series $(Q_1(X,Y),\ldots, Q_d(X,Y))$ with integral coefficients satisfying the group axioms. In particular, one has $Q_i(0,0)=0$. The inclusion $\mathbb{G}_h \rightarrow \mathbb{G}_0$ is given by the map $\Q_p\langle  T_1, \ldots, T_d\rangle \rightarrow \Q_p\langle \frac{T_1}{p^h},\ldots, \frac{T_d}{p^h} \rangle $. Thus, the image of $\frac{T_i}{p^h}$ by the multiplication map $\psi$ is equal to
\[ \frac{1}{p^h}Q_i(X,Y)= \frac{1}{p^h}\sum_{(\alpha,\beta)\neq 0} a_{\alpha,\beta} X^{\alpha}Y^{\beta}= \sum_{(\alpha,\beta)\neq 0} a_{\alpha,\beta} p^{h(|\alpha|+|\beta|-1)} \left(\frac{X}{p^h} \right)^{\alpha} \left(\frac{Y}{p^h}\right)^{\beta}. \]  This shows that $\psi$ restricts to a map $\mathbb{G}_h \times \mathbb{G}_h \rightarrow \mathbb{G}_h$,  proving that $\mathbb{G}_h$ is a subgroup of $\mathbb{G}_0$. A similar argument shows  that $\mathbb{G}_h$ is normal in $\bb{G}_0$ and that it is stable by the conjugation of elements of $G$.  
\end{proof}

\subsubsection{Analytic distributions} Classically, the analytic vectors of Banach representations are defined via the  affinoid algebras of the analytic groups of Definition \ref{defAffinoidgroups}, see \cite[\S 3]{Emerton}. In order to develop properly this theory for solid $K$-vector spaces we shall need to introduce some notation for the algebras of the analytic groups, as well as for their analytic distributions. Recall once more that we see all complete locally convex $K$-vector spaces as solid objects.

\begin{definition} \leavevmode 
\label{DefifunctDist}
\begin{enumerate} 
\item We consider the following spaces of functions 
\begin{itemize}
    \item[i.] $C(\mathbb{G}_0^{(h)}, \O_K) := \O^+(\mathbb{G}_0^{(h)}) \otimes_{\zp} \O_K$; the power bounded analytic functions of the affinoid group. 
    
    \item[ii.] $C(\mathbb{G}_0^{(h)}, K): = \O(\mathbb{G}_0^{(h)}) \otimes_{\qp} K$; the regular functions of the affinoid group.

    \item[iii.] $C(\mathbb{G}_0^{(h^+)}, K): = \varprojlim_{h'> h^+} C(\mathbb{G}_0^{(h)}, K)$; the regular functions of the Stein group. 
    \end{itemize}
    \item We define the following spaces of distributions 
\begin{itemize}
    \item[i.] $\mathcal{D}^{(h)}(G_0, \O_K) := \iHom_{\O_K}(C(\mathbb{G}_0^{(h)}, \O_K), \O_K)$.
    \item[ii.] $\mathcal{D}^{(h)}(G_0, K): = \iHom_{K}(C(\mathbb{G}_0^{(h)}, K), K)$.
    \item[iii.] $\mathcal{D}^{(h^+)}(G_0, K): = \iHom_{K}(C(\mathbb{G}_0^{(h^+)}, K), K)$.
\end{itemize}
\end{enumerate}
\end{definition}

\begin{remark}
 Let $h\in \bb{N}$,  the algebra $C(\mathbb{G}_0^{(h)}, K)$ is the space of functions on $G_0$ with values in $K$ which are analytic of radius $p^{-h}$. More precisely,  using the coordinates $g_1,\ldots, g_d \in G_0$, and identifying $G_0$ with $\Z_p^d$,   $C(\mathbb{G}_0^{(h)}, K)$ is the space of functions $f: \Z_p^d \to K$ whose restriction at cosets $x+ p^{h}\Z_p^d$  is given by a convergent power series with coefficients in $K$. Thus all the function spaces and distributions of Definition \ref{DefifunctDist} depend on the choice of the ordered basis  $g_1, \ldots, g_d$.  
\end{remark}

\begin{remark} \label{remarkDhplus}
Observe that, by Theorem \ref{theorem:duality}, one has
$ C(\mathbb{G}_0^{(h)}, K) = \mathcal{D}^{(h)}(G_0, K)^\vee $
as well as
$ \mathcal{D}^{(h^+)}(G_0, K) = \varinjlim_{h'>h} \mathcal{D}^{(h')}(G_0, K)$.  In addition,  for $h'>h$,  the inclusions $\bb{G}^{(h^{'+})}_0\subset \bb{G}^{(h')}_0\subset \bb{G}^{(h^+)}_0$ induce maps of distributions $\n{D}^{(h^{'+})}(G_0,K)\to \n{D}^{(h')}(G_0,K)\to \n{D}^{(h^+)}(G_0,  K)$.  Moreover, since the inclusion $\mathbb{G}^{(h')}_0 \subset \mathbb{G}^{(h)}_0$ is strict, the restriction map $C(\mathbb{G}^{(h)}_0, K) \to C(\mathbb{G}^{(h')}_0, K)$ is compact and hence factors through $C(\mathbb{G}^{(h)}_0, K)^S$. Dually, one has a factorisation $\mathcal{D}^{(h')}(G_0, K) \to \mathcal{D}^{(h)}(G_0, K)^B \to \mathcal{D}^{(h)}(G_0, K)$. In particular, $\mathcal{D}^{(h^+)}(G_0, K) = \varinjlim_{h'>h} \mathcal{D}^{(h')}(G_0, K)^B$ is an $LS$ space of compact type. Finally, observe that these algebras are all either solid Fr\'echet or $LS$ spaces, so they are the condensed set associated to their underlying topological spaces, which are nothing but  the classical distribution algebras endowed with the compact-open topology.  
\end{remark}

\begin{lemma}
The solid $\O_K$-module $\mathcal{D}^{(h)}(G_0, \O_K)$ has a natural structure of associative unital $\O_K$-algebra induced by the multiplication map $\bb{G}_0^{(h)} \times \bb{G}_0^{(h)} \rightarrow \bb{G}_0^{(h)}$. In particular, $\mathcal{D}^{(h)}(G_0, K)$ and $\mathcal{D}^{(h^+)}(G_0, K)$ are associative unital  $\O_K$-algebras. 
\end{lemma}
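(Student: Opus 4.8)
The plan is to obtain the algebra structure on $\mathcal{D}^{(h)}(G_0,\O_K)$ by dualizing the natural coalgebra structure that the group law puts on the ring of functions $C(\mathbb{G}_0^{(h)},\O_K)$, using the duality results of Section~\ref{SectionNonArch}. The guiding picture is that multiplication of distributions is convolution: the pushforward along the group multiplication of the exterior tensor product of two measures.

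First I would record that $\mathbb{G}_0^{(h)}=G_0\mathbb{G}_h$ is an affinoid group. By Lemma~\ref{LemmaNormalG0} the disc $\mathbb{G}_h$ is an open normal subgroup of $\mathbb{G}_0$ stable under $G$-conjugation, so $G_0\mathbb{G}_h$ is an affinoid subgroup of $\mathbb{G}_0$ whose underlying adic space is a finite disjoint union of translates of the polydisc $\mathbb{D}_{\qp}^{d}(p^{-h})$, indexed by $G_0/(G_0\cap\mathbb{G}_h)$. Consequently $\O^+(\mathbb{G}_0^{(h)})$ is a $p$-adically complete, topologically free $\Z_p$-module (orthonormal basis: the monomials on each component), so $C(\mathbb{G}_0^{(h)},\O_K)=\O^+(\mathbb{G}_0^{(h)})\otimes_{\Z_p}\O_K\cong\widehat{\bigoplus}_I\O_K$ is an integral Banach $\O_K$-module, and its dual is $\mathcal{D}^{(h)}(G_0,\O_K)\cong\prod_I\O_K$ by Lemma~\ref{LemmaDuals1}. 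Since $\mathbb{G}_0^{(h)}\times\mathbb{G}_0^{(h)}$ is again a finite disjoint union of polydiscs, comparing orthonormal bases (the integral analogue of Lemma~\ref{CorotensorBanach}) gives
\[
C(\mathbb{G}_0^{(h)}\times\mathbb{G}_0^{(h)},\O_K)=C(\mathbb{G}_0^{(h)},\O_K)\otimes_{\O_{K,\sol}}C(\mathbb{G}_0^{(h)},\O_K),
\]
and likewise for the triple product.

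Next, the multiplication, inversion and unit morphisms of the affinoid group $\mathbb{G}_0^{(h)}$ induce, by pullback of functions, morphisms of $\O_K$-algebras $\Delta:=m^*\colon C(\mathbb{G}_0^{(h)},\O_K)\to C(\mathbb{G}_0^{(h)},\O_K)\otimes_{\O_{K,\sol}}C(\mathbb{G}_0^{(h)},\O_K)$ and $\varepsilon:=\mathrm{ev}_e\colon C(\mathbb{G}_0^{(h)},\O_K)\to\O_K$, where $e\in G_0$ is the identity. Associativity and left/right unitality of $m$ translate, via the identifications above, into coassociativity $(\Delta\otimes\mathrm{id})\circ\Delta=(\mathrm{id}\otimes\Delta)\circ\Delta$ and counitality $(\varepsilon\otimes\mathrm{id})\circ\Delta=\mathrm{id}=(\mathrm{id}\otimes\varepsilon)\circ\Delta$, so that $(C(\mathbb{G}_0^{(h)},\O_K),\Delta,\varepsilon)$ is a coassociative counital coalgebra object in $\SolidOK$. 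Applying $\iHom_{\O_K}(-,\O_K)$ and using Lemma~\ref{LemmaDuals1} together with Proposition~\ref{PropTensprodSmith} to identify
\[
\mathcal{D}^{(h)}(G_0,\O_K)\otimes_{\O_{K,\sol}}\mathcal{D}^{(h)}(G_0,\O_K)\;\xrightarrow{\ \sim\ }\;\iHom_{\O_K}\!\big(C(\mathbb{G}_0^{(h)},\O_K)\otimes_{\O_{K,\sol}}C(\mathbb{G}_0^{(h)},\O_K),\O_K\big),
\]
the dual $\Delta^\vee$ becomes the desired multiplication $\mathcal{D}^{(h)}(G_0,\O_K)\otimes_{\O_{K,\sol}}\mathcal{D}^{(h)}(G_0,\O_K)\to\mathcal{D}^{(h)}(G_0,\O_K)$, concretely the convolution $(\mu,\nu)\mapsto m_*(\mu\otimes\nu)$, and $\varepsilon^\vee\colon\O_K\to\mathcal{D}^{(h)}(G_0,\O_K)$ is the unit $\delta_e$. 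Associativity and unitality then follow purely formally by dualizing the coassociativity and counitality diagrams, the bracketings being respected by the natural isomorphisms above; this gives the first assertion. For the statement over $K$ one inverts $p$: base change along the symmetric monoidal localization $-\otimes_{\O_{K,\sol}}K_{\sol}$ endows $\mathcal{D}^{(h)}(G_0,K)=\mathcal{D}^{(h)}(G_0,\O_K)[\tfrac1p]$ (Lemma~\ref{LemmaDuals1}(1)) with an associative unital $K$-algebra structure. Finally, for $h'>h$ the restriction $C(\mathbb{G}_0^{(h)},K)\to C(\mathbb{G}_0^{(h')},K)$ is a coalgebra morphism, being pulled back along the subgroup inclusion $\mathbb{G}_0^{(h')}\hookrightarrow\mathbb{G}_0^{(h)}$, so dually $\mathcal{D}^{(h')}(G_0,K)\to\mathcal{D}^{(h)}(G_0,K)$ is an algebra morphism; since $\otimes_{K_{\sol}}$ commutes with filtered colimits, $\mathcal{D}^{(h^+)}(G_0,K)=\varinjlim_{h'>h}\mathcal{D}^{(h')}(G_0,K)$ inherits an associative unital ($K$-)algebra structure.

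The only non-formal ingredient is the compatibility between the solid tensor product $\otimes_{\O_{K,\sol}}$ and the classical completed tensor product of affinoid algebras, recorded above as the integral form of Lemma~\ref{CorotensorBanach}: this is what makes the classical comultiplication of $C(\mathbb{G}_0^{(h)},\O_K)$ land in the solid tensor square and allows the coalgebra/algebra duality to be carried out entirely inside $\SolidOK$. Everything else is a formal consequence of the duality between Banach and Smith spaces (Lemma~\ref{LemmaDuals1}, Proposition~\ref{PropTensprodSmith}).
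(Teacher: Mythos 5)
Your proof is correct and takes essentially the same route as the paper: the paper also dualizes the comultiplication $\nabla: C(\mathbb{G}_0^{(h)},\O_K) \to C(\mathbb{G}_0^{(h)},\O_K)\otimes_{\O_{K,\sol}} C(\mathbb{G}_0^{(h)},\O_K)$ induced by the group law, using orthonormalizability to identify the dual of the tensor square with the tensor square of the dual, obtaining the convolution product. You merely spell out in more detail the compatibility of the solid and completed tensor products and the formal dualization of the coalgebra axioms, which the paper leaves implicit.
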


\begin{proof}
The  multiplication  $\bb{G}_0^{(h)} \times \bb{G}_0^{(h)} \rightarrow \bb{G}_0^{(h)}$ defines a comultiplication map \[\nabla: C(\mathbb{G}_0^{(h)},\O_K) \rightarrow C(\mathbb{G}_0^{(h)}, \O_K)\otimes_{\O_{K,\sol}} C(\mathbb{G}_0^{(h)},\O_K).\] As $C(\mathbb{G}_0^{(h)}, \O_K)$ is an orthonormalizable Banach $\O_K$-module, taking the dual of $\nabla$ one obtains a map $\n{D}^{(h)}(G_0, \O_K)\otimes_{\O_{K,\sol}} \n{D}^{(h)}(G_0, \O_K) \to \n{D}^{(h)}(G_0, \O_K)$ which is easily seen to be the convolution product.

\end{proof}

\subsubsection{Localizations of the Iwasawa algebra}

Recall that we have fixed an open normal subgroup $G_0$ which is a uniform pro-$p$-group with basis $g_1,\ldots, g_d$ of constant valuation $w_0>1$.   Let  $\mathbf{b}_{i}= [g_i] - 1\in \O_{K, \blacksquare}[G_0](*)$, one has \cite[\S 4]{SchTeitDist} 
\[
\O_{K, \blacksquare}[G_0](*)_{\mathrm{top}} = \prod_{\alpha \in \bb{N}^{d}} \n{O}_K \bbf{b}^{\alpha}
\]
where $\bbf{b}^{\alpha}=\bbf{b}_1^{\alpha_1}\cdots \bbf{b}_d^{\alpha_d}$ for $\alpha=(\alpha_1,\ldots, \alpha_d)\in \bb{N}^d$, and the right hand side is a profinite $\O_{K}$-module. After taking the associated condensed sets we will simply write
\[
\O_{K, \blacksquare}[G_0]= \prod_{\alpha \in \bb{N}^{d}} \n{O}_K \bbf{b}^{\alpha}.
\]

\begin{remark}
Taking Mahler expansion of continuous functions on $\phi: G_0 \cong \Z_p^d$, an explicit computation of finite differences shows that  the elements $\bbf{b}^\alpha$ correspond to the dual basis of the Mahler basis ${x \choose \alpha}$  i.e.,
 $ \bbf{b}^\alpha (f) = c_\alpha  $
for any continuous function $f \in \Cont(G_0, K)$ such that $\phi^*(f) = \sum_{\alpha \in \N^d} c_\alpha {x \choose \alpha}$.
\end{remark}

We now introduce a second family  of distribution algebras using the basis $(\bbf{b}_i)_{1 \leq i \leq d}$,  these can be thought of as localizations of the Iwasawa algebra $K_{\sol}[G_0]$.

\begin{definition} 
\label{DefiDistributionhenbas}
Let $h>0$ be rational. We define the condensed rings $\mathcal{D}_{(h)}(G_0, \O_K)$ and $\mathcal{D}_{(h)}(G_0, K)$ so that, for any extremaly disconnected set $S$, one has
\begin{itemize}
    \item $\mathcal{D}_{(h)}(G_0, \O_K)(S) = \{ \sum_{\alpha \in \N^d} a_\alpha \bbf{b}^\alpha \; : \;  \sup_{\alpha}\{ |a_\alpha |   p^{-\frac{p^{-h}}{p - 1} |\alpha|} \}  \leq 1, \;\;  a_\alpha \in \Cont(S,\O_K)  \},$
    \item $\mathcal{D}_{(h)}(G_0, K)(S) = \{ \sum_{\alpha \in \N^d} a_\alpha \bbf{b}^\alpha \; : \; \sup_{\alpha}\{ | a_\alpha |  p^{-\frac{p^{-h}}{p - 1} |\alpha|} \} < +\infty, \;\; a_\alpha \in \Cont(S,K)  \}$,
\end{itemize}
\end{definition}

\begin{remark}
The condensed module  $\mathcal{D}_{(h)}(G_0, \O_K)$ is in fact a profinite  $\O_K$-module provided $b(h) = \frac{p^{-h}}{p - 1} \in   v_p(K)\otimes_\Z \Q$. Indeed, if $b(h)$ is the valuation of an element of $K$,  we have an isomorphism of profinite $\O_K$-modules
\[ \mathcal{D}_{(h)}(G_0, \O_K) = \prod_{\alpha \in \N^d}  \O_K \left( \frac{\bbf{b}_1}{p^{b(h)}} \right)^{\alpha_1} \hdots \left( \frac{\bbf{b}_d}{p^{b(h)}} \right)^{\alpha_d} . \]
\end{remark}

\begin{remark}
The distribution algebras $\n{D}_{(h)}(G_0,K)$ are variations of the ($p$-adic completions of the) rings $\hat{A}^{(m)}$ of \cite[\S 5.2]{Emerton},  adapted to the Iwasawa algebra instead of the enveloping algebra of $\Lie G$.  
\end{remark}

\begin{lemma}
The multiplication map of $K_{\sol}[G_0]$ extends uniquely to a multiplication map of $\n{D}_{(h)}(G_0, \O_K)$.  
\proof
This follows directly from \cite[Proposition 4.2]{SchTeitDist} after taking the associated condensed sets of the corresponding topological spaces.  
\endproof
\end{lemma}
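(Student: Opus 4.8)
The plan is to establish the classical, topological statement — which is essentially \cite[Proposition~4.2]{SchTeitDist} — and then transport it to solid modules along the functor $T\mapsto\underline{T}$. Write $\rho=p^{-b(h)}$ with $b(h)=\frac{p^{-h}}{p-1}$, and let $D:=\n{D}_{(h)}(G_0,\O_K)(*)_{\mathrm{top}}$, the profinite $\O_K$-module of series $\sum_{\alpha\in\N^d}a_\alpha\bbf{b}^\alpha$ with $\sup_\alpha|a_\alpha|\rho^{|\alpha|}\le1$ (cf.\ the remark after Definition~\ref{DefiDistributionhenbas}), so that $\n{D}_{(h)}(G_0,\O_K)=\underline{D}$. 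The key input is the growth estimate of \cite[Proposition~4.2]{SchTeitDist}: when $\bbf{b}^\alpha\cdot\bbf{b}^\beta$ is expanded in the topological basis $\{\bbf{b}^\gamma\}$ of the Iwasawa algebra $\O_{K,\sol}[G_0]$, only monomials $\bbf{b}^\gamma$ with $|\gamma|\ge|\alpha|+|\beta|$ occur, and their coefficients have absolute value $\le\rho^{|\alpha|+|\beta|-|\gamma|}$ — this is where one uses that $G_0$ is uniform with a $p$-valuation of constant value $w_0>1$ on the ordered basis $g_1,\dots,g_d$. It follows that for $x=\sum_\alpha a_\alpha\bbf{b}^\alpha$ and $y=\sum_\beta b_\beta\bbf{b}^\beta$ in $D$ (so $|a_\alpha|\le\rho^{-|\alpha|}$, $|b_\beta|\le\rho^{-|\beta|}$), the $\bbf{b}^\gamma$-coefficient of the formal product $xy$ is a \emph{finite} sum, indexed by the finitely many pairs $(\alpha,\beta)$ with $|\alpha|+|\beta|\le|\gamma|$, and has absolute value $\le\rho^{-|\gamma|}$; hence $xy\in D$. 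Each coefficient of a triple product is a polynomial in finitely many input coefficients, so multiplication on $D$ is continuous for the product topology, and the associativity and unitality identities — valid on the dense subring of finitely supported elements $\O_K[G_0]\subset D$ — propagate to all of $D$. Thus $D$ is a topological $\O_K$-algebra, and $D[1/\varpi]$ a topological algebra containing $K_\sol[G_0](*)_{\mathrm{top}}$ as a subring.

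To pass to solid modules, recall $D$ is compact Hausdorff, so $\n{D}_{(h)}(G_0,\O_K)=\underline{D}$ (Proposition~\ref{propFunctorTopCond}). Since $T\mapsto\underline{T}$ preserves products, the continuous $\O_K$-bilinear map $m\colon D\times D\to D$ induces a morphism of condensed $\O_K$-modules $\underline{D}\times\underline{D}=\underline{D\times D}\to\underline{D}$, again $\O_K$-bilinear; by Proposition~\ref{PropTensprodSmith} one has $\underline{D}\otimes_{\O_{K,\sol}}\underline{D}=\prod_{(\alpha,\beta)}\O_K$, the condensed object of a profinite $\O_K$-module, through which $m$ visibly factors, yielding the desired map $\n{D}_{(h)}(G_0,\O_K)\otimes_{\O_{K,\sol}}\n{D}_{(h)}(G_0,\O_K)\to\n{D}_{(h)}(G_0,\O_K)$. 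Associativity, unitality, and compatibility with the convolution product of $\O_{K,\sol}[G_0]$ are equalities of morphisms of solid $\O_K$-modules between profinite objects, hence follow from the classical identities by full faithfulness of $T\mapsto\underline{T}$ on compact Hausdorff spaces (Proposition~\ref{propFunctorTopCond}). Inverting $\varpi$, and using Proposition~\ref{PropTensprodSmith} and Lemma~\ref{CorotensorBanach} to identify the relevant tensor products, gives the $K$-linear version, compatible with the multiplication of $K_\sol[G_0]$.

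For uniqueness, any solid multiplication on $\n{D}_{(h)}(G_0,\O_K)$ restricting to the convolution product of $\O_{K,\sol}[G_0]$ (equivalently, after $[1/\varpi]$, to that of $K_\sol[G_0]$) is a morphism of solid $\O_K$-modules out of $\n{D}_{(h)}(G_0,\O_K)\otimes_{\O_{K,\sol}}\n{D}_{(h)}(G_0,\O_K)=\prod_{(\alpha,\beta)}\O_K$, which receives $\O_K[G_0]\otimes_{\O_K}\O_K[G_0]$ (the finitely supported tensors) with dense image on topological realizations; since the target $\n{D}_{(h)}(G_0,\O_K)$ is profinite — hence quasiseparated, with Hausdorff realization — two such extensions agreeing on $\O_K[G_0]\otimes\O_K[G_0]$ coincide. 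The genuinely hard content is packaged in \cite[Proposition~4.2]{SchTeitDist}: reproving that growth estimate — the $p$-adic combinatorics of the Iwasawa algebra of a uniform pro-$p$ group — would be the main obstacle in a self-contained treatment. Granting it, the only delicate point is the bookkeeping with $T\mapsto\underline{T}$, products, and solid tensor products of profinite modules, which is routine.
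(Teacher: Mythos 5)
Your proposal follows exactly the route the paper takes: the paper's entire proof is ``this follows from \cite[Proposition 4.2]{SchTeitDist} after taking associated condensed sets,'' and you have simply unpacked both halves of that sentence (the classical norm estimate, then the transport along $T\mapsto\underline{T}$ using full faithfulness on compact Hausdorff spaces and the computation $\prod_I\O_K\otimes_{\O_{K,\sol}}\prod_J\O_K=\prod_{I\times J}\O_K$). So the approach is the same, just made explicit, and the structure of the condensed bookkeeping and the uniqueness-by-density argument are fine.

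One factual slip is worth correcting. It is not true that in the expansion $\bbf{b}^\alpha\bbf{b}^\beta=\sum_\gamma c_{\alpha\beta\gamma}\bbf{b}^\gamma$ only the $\gamma$ with $|\gamma|\ge|\alpha|+|\beta|$ occur: commutators contribute terms of higher \emph{filtration weight}, which can be achieved either by larger $|\gamma|$ or by extra divisibility of $c_{\alpha\beta\gamma}$ by $p$. Consequently the $\bbf{b}^\gamma$-coefficient of $xy$ is an \emph{infinite} sum over all pairs $(\alpha,\beta)$, not a finite one. The correct content of \cite[Proposition 4.2]{SchTeitDist} is the bound $|c_{\alpha\beta\gamma}|\le r^{|\alpha|+|\beta|-|\gamma|}$ for every $r$ in a whole interval; convergence of $\sum_{\alpha,\beta}a_\alpha b_\beta c_{\alpha\beta\gamma}$ then follows by applying this bound at a radius $\rho'<\rho$, which yields the summand estimate $(\rho'/\rho)^{|\alpha|+|\beta|}(\rho')^{-|\gamma|}\to 0$, whereas the estimate at the single radius $\rho$ only gives the uniform (non-decaying) bound $\rho^{-|\gamma|}$. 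This two-radii argument is also what gives continuity of the multiplication for the \emph{product} topology on $\prod_\alpha\O_K$ (the topology underlying the condensed structure), which is the continuity you actually need for the passage to condensed sets and for propagating associativity from the dense subring $\O_K[G_0]$. With that repair the proof is complete and matches the paper's.
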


One can describe the analytic distributions of Definition \ref{DefifunctDist} in terms of  the basis $\bbf{b}_i$ of the Iwasawa algebra.  

\begin{proposition}
Let $S$ be an extremally disconnected set,  then 
\[ \mathcal{D}^{(h)}(G_0, K)(S) = \bigg\{  \sum_{\alpha \in \N^d} a_\alpha \bbf{b}^\alpha \; : \;  \sup_{\alpha} \{ | a_{\alpha} | p^{- \frac{p^{-h}|\alpha| - s(\alpha)}{p - 1}} \} < + \infty,   \;\;  a_{\alpha}\in \Cont(S,K)  \bigg\}, \]
where $s(\alpha) = \sum_{1 \leq i \leq d} s(\alpha_i)$ and $s(\alpha_i)$ is the sum of the $p$-adic digits of $\alpha_i$. In particular, one has
\[ \mathcal{D}^{(h^+)}(G_0, K)(S) = \bigg\{ \sum_{\alpha \in \N^d} a_\alpha \bbf{b}^\alpha \; : \; \sup_{\alpha} \{ | a_\alpha | p^{- \frac{p^{-h'}|\alpha| - s(\alpha)}{p - 1}}\} < + \infty \text{ for some } h' > h ,  \;\; a_{\alpha}\in \Cont(S,K) \bigg\}. \]
\end{proposition}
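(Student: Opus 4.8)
The plan is to compute $\mathcal{D}^{(h)}(G_0, K) = \iHom_K(C(\mathbb{G}_0^{(h)}, K), K)$ by combining the Banach--Smith duality of Lemma \ref{LemmaDuals1} with the classical theorem of Amice describing (locally) analytic functions, equivalently their distributions, through Mahler expansions. Recall that $C(\mathbb{G}_0^{(h)}, K)$ is an orthonormalizable Banach space and that, under the chart $\phi\colon G_0\xrightarrow{\sim}\Z_p^d$, the $\bbf{b}^\alpha$ are the topological dual basis of the Mahler basis $\{{x\choose\alpha}\}_{\alpha\in\N^d}$ of $\Cont(G_0, K)$; consequently a distribution $\sum_\alpha a_\alpha\bbf{b}^\alpha$ pairs with a function $\sum_\beta c_\beta{x\choose\beta}$ by $\sum_\alpha a_\alpha c_\alpha$. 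By Lemma \ref{LemmaDuals1} and its proof, $\mathcal{D}^{(h)}(G_0, K)$ is a Smith space, and for extremally disconnected $S$ one has $\mathcal{D}^{(h)}(G_0, K)(S) = \Hom_K(C(\mathbb{G}_0^{(h)}, K), \underline{\Cont}(S, K))$, which — using an orthonormal basis of $C(\mathbb{G}_0^{(h)}, K)$ — is the set of $\N^d$-indexed families $(a_\alpha)$ with $a_\alpha\in\Cont(S, K)$ that are bounded for the norm dual to that basis; thus the $S$-dependence in the statement is automatic, and it suffices to treat $S = *$.

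Next I would reduce to $d = 1$: the underlying adic space of $\mathbb{G}_0^{(h)}$ is $\Z_p^d + p^h\mathbb{D}^d_{\mathbf{Q}_p}$, the $d$-fold product of the one-variable space, so by Lemma \ref{CorotensorBanach} there is an identification $C(\mathbb{G}_0^{(h)}, K) \cong C^{(h)}_1\,\widehat{\otimes}_K\cdots\widehat{\otimes}_K\,C^{(h)}_1$ compatible with the Banach norms, under which the Mahler basis is the product basis and the weight $\frac{p^{-h}|\alpha| - s(\alpha)}{p-1} = \sum_i \frac{p^{-h}\alpha_i - s(\alpha_i)}{p-1}$ is additive; hence it is enough to identify the one-variable distribution space. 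This is exactly the content of Amice's theorem in the form worked out in \cite[\S 4]{SchTeitDist} (where it governs the norms on the distribution algebras; see also \cite[\S 5]{Emerton}): the pairing above identifies $\mathcal{D}^{(h)}(\Z_p, K)$ with $\{\sum_n a_n\bbf{b}^n : \sup_n |a_n|\,p^{-\frac{p^{-h}n - s(n)}{p-1}} < \infty\}$, and similarly with $\Cont(S, K)$-coefficients. Since $C(\mathbb{G}_0^{(h)}, K)$ and all the distribution algebras in play are orthonormalizable Banach or Smith spaces — hence the condensed objects attached to their classical topological counterparts, cf.\ Remark \ref{remarkDhplus} — these condensed statements are literal transcriptions of the classical ones via the fully faithful functor $\mathcal{LC}_K\to\SolidK$ and Lemma \ref{LemmaDuals1}. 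Combining the one-variable case with the reduction above gives the formula for $\mathcal{D}^{(h)}(G_0, K)(S)$.

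Finally, the formula for $\mathcal{D}^{(h^+)}(G_0, K)(S)$ follows from that for $\mathcal{D}^{(h')}(G_0, K)(S)$, $h' > h$, together with the identity $\mathcal{D}^{(h^+)}(G_0, K) = \varinjlim_{h' > h}\mathcal{D}^{(h')}(G_0, K)$ of Remark \ref{remarkDhplus} — a filtered colimit along injective transition maps, which commutes with evaluation at an extremally disconnected set — so that a family $(a_\alpha)$ with $a_\alpha\in\Cont(S,K)$ lies in $\mathcal{D}^{(h^+)}(G_0, K)(S)$ exactly when $\sup_\alpha |a_\alpha|\,p^{-\frac{p^{-h'}|\alpha| - s(\alpha)}{p-1}} < \infty$ for some $h' > h$. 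The one point requiring real work is the one-variable input, i.e.\ Amice's theorem in the refined form keeping track of the digit-sum (equivalently, $v_p(\alpha!)$) weight $\frac{p^{-h}|\alpha| - s(\alpha)}{p-1}$: concretely, the computation of the operator norm of the Mahler basis in the space of $p^{-h}$-radius analytic functions, equivalently of the monomials $\bbf{b}^{\alpha}$ in the distribution norm. The remaining steps — the reduction to one variable, the passage to the dual and to $S$-points, and the colimit for $\mathcal{D}^{(h^+)}$ — are formal, given Lemma \ref{LemmaDuals1}, Lemma \ref{CorotensorBanach} and Remark \ref{remarkDhplus}.
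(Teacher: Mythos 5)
Your proposal is correct and follows essentially the same route as the paper: the key input in both is Amice's theorem characterizing $h$-analytic functions by the growth of their Mahler coefficients, followed by dualization (using that the distribution algebras are the condensed objects attached to their classical topological counterparts), and the $(h^+)$ case by passing to the filtered colimit of Remark \ref{remarkDhplus}. The only difference is presentational: the paper cites the $d$-variable form of Amice's theorem directly from Lazard and is terser about the $S$-dependence, whereas you reduce to one variable via the completed tensor product and spell out the reduction to $S=*$.
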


\begin{proof}
Let $\phi: \Z_p^d\to G_0$ be the chart defined by the basis $g_1,\ldots, g_d$.  By a theorem of Amice (c.f. \cite[III.1.3.8]{Lazard}), a continuous function $f: G_0 \to K$ is $h$-analytic (i.e. belongs to $C(\mathbb{G}_0^{(h)}, K)$) if and only if
\[ v(c_\alpha) - \frac{p^{-h}|\alpha| - s(\alpha)}{p - 1} \to + \infty \]
whenever $\alpha \to +\infty$, where $\phi^*f(g) = \sum_{\alpha \in \N^d} c_\alpha {x \choose \alpha}$.
After dualizing this gives the claimed result as  the algebra of  analytic distributions is attached to its underlying topological space.
\end{proof}

\begin{remark}
Using the formula
\[ v_p(\alpha!) = \frac{|\alpha| - s(\alpha)}{p - 1}, \]
one can rewrite the above condition on the valuation of the coefficients as
\[ \sup_{\alpha}\{ | a_\alpha \alpha ! | \, p^{-a(h) |\alpha|} \}< + \infty, \]
where $a(h) = \frac{p^{-h} - 1}{p - 1}$.
\end{remark}

\begin{corollary}
\label{coroDhpluscolimit}
There is an isomorphism of solid $\O_K$-algebras
\[ \mathcal{D}^{(h^+)}(G_0, K) \cong \varinjlim_{h' > h} \mathcal{D}_{(h')}(G_0, K). \]
\end{corollary}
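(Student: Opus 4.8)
The plan is to compare the two descriptions of the relevant algebras as condensed sets coming from their underlying topological spaces, and then to match the norm conditions. Both $\mathcal{D}^{(h^+)}(G_0,K)$ and $\varinjlim_{h'>h}\mathcal{D}_{(h')}(G_0,K)$ are countable filtered colimits of Banach (in fact Smith after Banachification, but for the colimit comparison it suffices to work with Banach) spaces along injective transition maps, so by the identification of solid $LB$/$LS$ spaces with their classical counterparts (Proposition \ref{RemarkBanachSmith} and the functor $\mathcal{LC}_K\to\SolidK$) it is enough to produce an isomorphism of topological $K$-algebras between the underlying classical distribution algebras.

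First I would spell out both sides in coordinates. By the preceding Proposition, for an extremally disconnected $S$ one has
\[
\mathcal{D}^{(h^+)}(G_0,K)(S)=\Big\{\textstyle\sum_{\alpha\in\N^d} a_\alpha\bbf{b}^\alpha \;:\; \sup_\alpha\{|a_\alpha|\,p^{-\frac{p^{-h'}|\alpha|-s(\alpha)}{p-1}}\}<+\infty \text{ for some }h'>h,\ a_\alpha\in\Cont(S,K)\Big\},
\]
while by Definition \ref{DefiDistributionhenbas},
\[
\varinjlim_{h'>h}\mathcal{D}_{(h')}(G_0,K)(S)=\Big\{\textstyle\sum_{\alpha\in\N^d} a_\alpha\bbf{b}^\alpha \;:\; \sup_\alpha\{|a_\alpha|\,p^{-\frac{p^{-h'}}{p-1}|\alpha|}\}<+\infty\text{ for some }h'>h,\ a_\alpha\in\Cont(S,K)\Big\}.
\]
So I must show that the two families of norms $\{|a_\alpha|\,p^{-\frac{p^{-h'}|\alpha|-s(\alpha)}{p-1}}\}_{h'>h}$ and $\{|a_\alpha|\,p^{-\frac{p^{-h'}}{p-1}|\alpha|}\}_{h'>h}$ define the same colimit, i.e. that each norm of one family is dominated by some norm of the other. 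Since $0\le s(\alpha)\le (p-1)\log_p(|\alpha|+1)\cdot\text{(bounded)}$ — more precisely $s(\alpha_i)\le (p-1)(\lfloor\log_p\alpha_i\rfloor+1)$, so $s(\alpha)=o(|\alpha|)$ — the extra factor $p^{s(\alpha)/(p-1)}$ is subexponential in $|\alpha|$. Hence for any $h''$ with $h<h''<h'$ one has $p^{-\frac{p^{-h'}|\alpha|-s(\alpha)}{p-1}}\le C\cdot p^{-\frac{p^{-h''}}{p-1}|\alpha|}$ for a constant $C$ independent of $\alpha$, and symmetrically $p^{-\frac{p^{-h'}}{p-1}|\alpha|}\le p^{-\frac{p^{-h'}|\alpha|-s(\alpha)}{p-1}}$ trivially. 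This interleaving of the two directed families of seminorms shows the colimits of the underlying spaces agree.

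Then I would check that this bijection is a homeomorphism compatible with the ring structures: each $\mathcal{D}_{(h')}(G_0,K)$ carries the multiplication extending that of $K_\sol[G_0]$ (by the Lemma following Definition \ref{DefiDistributionhenbas}, via \cite[Proposition 4.2]{SchTeitDist}), and $\mathcal{D}^{(h^+)}(G_0,K)$ is the colimit of the $\mathcal{D}^{(h')}(G_0,K)$ which are algebras by the Lemma in \S\ref{subsecFuncDist}; both products restrict from the convolution product on $K_\sol[G_0]$, which is dense in each term, so the identification of coefficients $\sum a_\alpha\bbf{b}^\alpha$ is automatically multiplicative, and it is clearly unital and $\O_K$-linear. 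Taking associated condensed objects via \eqref{FunctorLCSolid} upgrades this to an isomorphism of solid $\O_K$-algebras, since all spaces in sight are $LB$ (in fact $LS$ of compact type by Remark \ref{remarkDhplus}) and hence lie in the fully faithful image of $\mathcal{LC}_K\to\SolidK$.

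The main obstacle is the seminorm-interleaving estimate, i.e. controlling the correction term $p^{s(\alpha)/(p-1)}$: one needs the (elementary but slightly fiddly) bound that $s(\alpha)$ grows only logarithmically in each coordinate so that for any fixed drop $h''<h'$ in the radius the factor $p^{-(p^{-h''}-p^{-h'})|\alpha|/(p-1)}$ eventually dominates $p^{s(\alpha)/(p-1)}$ uniformly; everything else is bookkeeping about colimits of condensed $LB$ spaces and the fact that all the products involved are restrictions of the single convolution product on the Iwasawa algebra. I expect no difficulty from the condensed formalism itself — the content is entirely the classical Amice-type estimate already invoked in the proof of the preceding Proposition.
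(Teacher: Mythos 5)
Your overall strategy is the paper's own: express both algebras coefficientwise in the basis $\bbf{b}^\alpha$ via the preceding Proposition and Definition \ref{DefiDistributionhenbas}, and interleave the two families of weights using the fact that $s(\alpha)=o(|\alpha|)$. The reductions you add (passing to underlying topological spaces, checking multiplicativity by density of $K_\sol[G_0]$) are harmless but unnecessary, since both descriptions are already given at the level of $S$-points for extremally disconnected $S$.

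However, the central inequality is stated with the roles of $h'$ and $h''$ reversed, and as written it is false. You claim that for $h<h''<h'$ one has $p^{-\frac{p^{-h'}|\alpha|-s(\alpha)}{p-1}}\le C\,p^{-\frac{p^{-h''}}{p-1}|\alpha|}$ uniformly in $\alpha$; taking $\log_p$ this is equivalent to $(p^{-h''}-p^{-h'})|\alpha|+s(\alpha)$ being bounded above, but for $h''<h'$ one has $p^{-h''}-p^{-h'}>0$, so this quantity tends to $+\infty$. The inequality holds precisely in the opposite regime $h''>h'$, where $(p^{-h'}-p^{-h''})|\alpha|-s(\alpha)\to+\infty$ because $s(\alpha)$ grows only logarithmically; that version yields the nontrivial inclusion $\mathcal{D}_{(h'')}(G_0,K)\subseteq\mathcal{D}^{(h')}(G_0,K)$ for $h''>h'$, which together with the trivial $\mathcal{D}^{(h')}(G_0,K)\subseteq\mathcal{D}_{(h')}(G_0,K)$ interleaves the two directed systems as $h',h''\to h^+$. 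With your choice $h''<h'$ you can only deduce $\mathcal{D}^{(h')}(G_0,K)\subseteq\mathcal{D}_{(h'')}(G_0,K)$, i.e.\ inclusions running in one direction only, so the two colimits are not identified. The fix is purely a matter of swapping the inequality $h''>h'$; once corrected, your proof coincides with the paper's.
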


\begin{proof}
Let $h'' > h'$, then one can write
\[ p^{-\frac{p^{-h'}|\alpha| - s(\alpha)}{p - 1}} = p^{-\frac{p^{-h''}}{p - 1}|\alpha|} p^{-\frac{(p^{-h'} - p^{-h''})|\alpha| - s(\alpha)}{p - 1}}. \]
Since $p^{-h'} - p^{-h''} > 0$, we see that  $(p^{-h'}-p^{-h''})|\alpha|- s(\alpha) \to  +\infty $  as $|\alpha| \to +\infty$.  This implies that for any $h''> h'$ we have
\[ \mathcal{D}_{(h'')}(G_0, K) \subseteq \mathcal{D}^{(h')}(G_0, K) \subset  \mathcal{D}_{(h')}(G_0, K) . \]
Taking limits as $h'\to h^+$ and $h''\to h^+$ one obtains the corollary. 
\end{proof}

\subsubsection{Distribution algebras over $G$}
\label{SubsubSectionDistG}

The algebras we have defined can be extended to the whole compact group $G$ in a obvious way.  Indeed, by Lemma \ref{LemmaNormalG0}  the spaces $\n{D}^{(h)}(G_0,K)$ and $\n{D}_{(h)}(G_0,K)$ admit an action of $G$ extending the inner action of $K_{\sol}[G_0]$. Let us define the distributions
\[ \n{D}^{(h)}(G,K)= K_{\sol}[G] \otimes_{K_{\sol}[G_0]} \n{D}^{(h)}(G_0,K), \]
\[
\n{D}^{(h^+)}(G,K)= \varinjlim_{h'>h} \n{D}^{(h')}(G,K),
\]
\[ \n{D}_{(h)}(G,K)= K_{\sol}[G] \otimes_{K_{\sol}[G_0]} \n{D}_{(h)}(G_0,K), \]
where in the tensor products we see  $\n{D}_{(h)}(G_0,K)$ as a left $K_{\sol}[G_0]$-module.  They are unital associative algebras admitting $K_{\sol}[G]$ as a dense subspace. Notice that even though the distributions algebras over $G$ depend on the choice of the open normal subgroup $G_0\subset G$,  the projective systems $\{\n{D}^{(h)}(G,K)\}_{h>0}$,  $\{ \n{D}^{(h^+)}(G,K)\}_{h>0}$ and $\{\n{D}_{(h)}(G,K)\}_{h>0}$ do not. Moreover,  by Remark \ref{remarkDhplus} and Corollary \ref{coroDhpluscolimit} these inverse systems are isomorphic.    We define the algebra of locally analytic distributions of $G$ as the Fr\'echet algebra 
\[
\n{D}^{la}(G,K)= \varprojlim_{h\to \infty} \n{D}^{(h^+)}(G,K)=\varprojlim_{h\to \infty} \n{D}^{(h)}(G,K),
\]
by Remark \ref{remarkDhplus} it is a Fr\'echet space of compact type.

For future reference let us  introduce  algebras of  analytic functions over $G$.   Let $h\geq 0$  be a rational number and $\bb{G}_0^{(h)}$ the rigid analytic groups of Definition \ref{defAffinoidgroups},  recall that they are stable under conjugation by $G$.  Let $\bb{G}^{(h)}$ be the rigid analytic group given by $G\bb{G}_0^{(h)}$, i.e. if $s_1, \ldots, s_n$ are representatives of the cosets $G/G_0$  then
\[ \bb{G}^{(h)}= \bigsqcup_{i=1}^n s_i\bb{G}_0^{(h)}. \]
We also define
\[ \bb{G}^{(h^+)}= \bigcup_{h'>h} \bb{G}^{(h)}. \]
Let $C(\bb{G}^{(h)}, K)= \mathscr{O}(\bb{G}^{(h)})\otimes_{\Q_p}K$ be the affinoid algebra of analytic functions of $G$ over $K$ of radius $p^{-h}$,  it is immediate to check that $\n{D}^{(h)}(G,K)= \underline{\Hom}_K(C(\bb{G}^{(h)}, K),K)$.   We define $C(\bb{G}^{(h)},\O_K)$ and  $C(\bb{G}^{(h^+)}, K)$ in the obvious way.  The space of $K$-valued locally analytic functions of $G$ is the $LB$ space $C^{la}(G,K  )= \varinjlim_{h\to \infty} C(\bb{G}^{(h)},  K)$.  Note that,  since the inclusion $\bb{G}^{(h')}\subset \bb{G}^{(h)}$ is a strict immersion for $h'>h$,   $C^{la}(G,K)$ is an $LB$ space of compact type,  or equivalently,  an $LS$ space of compact type (Corollary \ref{coroFLScompact}).  By  Theorem  \ref{theorem:duality},   $\n{D}^{la}(G,K)= \iHom_K(C^{la}(G,K), K)$ is a Fr\'echet space of compact type. Summarizing, for $h'>h$,  we have the following maps of distribution algebras and and their corresponding dual spaces of analytic functions
\begin{equation} \label{EqDiagramDistributions}
\begin{aligned}
 K_{\sol}[G]\to  \n{D}^{la}(G,K) \to  \cdots \to   \n{D}^{(h')}(G,K) \to  \n{D}^{(h^+)}(G,K) \to \n{D}^{(h)}(G,K) \\
\underline{\Cont}(G,K) \leftarrow   C^{la}(G,K ) \leftarrow \cdots \leftarrow C(\bb{G}^{(h')},K)  \leftarrow  C(\bb{G}^{(h^+)},K) \leftarrow C(\bb{G}^{(h)},K). 
\end{aligned}
\end{equation}

\subsection{Solid $G$-modules} 
Let $G$ be a profinite group and $\O_{K,\sol}[G]$ its Iwasawa algebra over $\O_K$.

\begin{lemma}
\label{LemmaDefSolidGmod}
Let $V$ be a solid $\O_K$-module. An  $\n{O}_{K,\sol}[G]$-module structure on $V$ is equivalent to the following data:   for all extremally disconnected set $S$,  an  $\O_K$-linear action $C(S,G)\times V(S)\rightarrow V(S)$ which is functorial on $S$.
\end{lemma}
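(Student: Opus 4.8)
The plan is to use the fact that $\O_{K,\sol}[G]$ is itself assembled from the condensed sets $\O_{K,\sol}[S']$ (for $S'$ extremally disconnected) together with the formula $\O_{K,\sol}[G](S') = \Cont(S', \O_{K,\sol}[G])$, and to translate the associativity and unitality axioms for a module structure into the functorial actions on values. First I would recall that, since $\O_{K,\sol}[G] = \varprojlim_{H \unlhd G} \O_K[G/H]$ is a solid $\O_K$-module, giving a solid $\O_{K,\sol}[G]$-module structure on $V$ is the same as giving a map of solid $\O_K$-modules $\mu \colon \O_{K,\sol}[G] \otimes_{\O_{K,\sol}} V \to V$ satisfying the associativity and unit diagrams. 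Evaluating on an extremally disconnected set $S$ and using that $\O_{K,\sol}[G](S) $ contains $\Cont(S,G)$ (via the natural map $G \to \O_{K,\sol}[G]$, i.e. $g \mapsto [g]$, inducing $\Cont(S,G) \to \O_{K,\sol}[G](S)$), we obtain by restriction an $\O_K$-linear, $S$-functorial pairing $\Cont(S,G) \times V(S) \to V(S)$. Conversely, given such functorial pairings, one must reconstruct the module map $\mu$.

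The key point for the converse is that the condensed abelian group $\O_{K,\sol}[G]$ is generated, in a suitable sense, by the image of the profinite sets $G/H$; more precisely, the natural map $\varinjlim_{B} \O_{K,\sol}[G] \to \O_{K,\sol}[G]$ and the description $\O_{K,\sol}[G][S] = \O_{K,\sol}[G \times S]$ (the functor of measures of the analytic ring $(\O_K,\O_K)_\sol$ evaluated at $G$ twisted by $S$) let one recover an action map from the data of how measures act. Concretely, I would argue that an $\O_K$-linear functorial pairing $\Cont(S,G)\times V(S) \to V(S)$ extends uniquely to an $\O_K$-linear functorial pairing $\O_{K,\sol}[G](S) \times V(S) \to V(S)$: indeed, $\O_{K,\sol}[G](S) = \varprojlim_{H} \O_K[(G/H)\times S_i]$ (writing $S = \varprojlim_i S_i$), and an element of $\O_K[(G/H) \times S_i]$ is an $\O_K$-linear combination of characteristic functions of the form $(\bar g, s) \mapsto \mathbf{1}$, so its action on $v \in V(S)$ is forced by $\O_K$-linearity and functoriality in $S$ from the action of the corresponding locally constant $G$-valued function; one then checks convergence of the resulting sum/limit in $V(S)$, which is automatic because $V$ is solid and $\O_{K,\sol}[G]$ is built as the corresponding inverse limit. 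This gives the map $\O_{K,\sol}[G] \otimes_{\O_{K,\sol}} V \to V$ at the level of $S$-points, functorially in $S$, hence a map of solid $\O_K$-modules.

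Finally I would verify that the associativity and unit axioms match up under this correspondence: the unit axiom for $\mu$ is exactly the statement that the identity element $e \in G$, viewed as the constant function $S \to G$, acts trivially on $V(S)$; associativity for $\mu$ restricted to $\Cont(S,G)$ is exactly the condition that $(f_1 f_2) \cdot v = f_1 \cdot (f_2 \cdot v)$ for $f_1, f_2 \in \Cont(S,G)$, using the multiplication map $G \times G \to G$, and this extends automatically to all of $\O_{K,\sol}[G]$ by the density/generation argument above together with $\O_K$-bilinearity. The main obstacle I expect is the convergence/extension step: showing that a pairing defined only on the ``group-like'' elements $\Cont(S,G)$ extends to the full measure object $\O_{K,\sol}[G](S)$ and that this extension is compatible with the solid structure (i.e. passes to the relevant inverse limits and is continuous in the condensed sense). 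This is where one genuinely uses that $V$ is solid and that $\O_{K,\sol}[G]$ is the analytic-ring completion of the group algebra; the rest is a formal diagram chase. Once this is in place, the two constructions are visibly inverse to each other, proving the claimed equivalence.
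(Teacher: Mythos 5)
Your proof follows essentially the same route as the paper: the forward direction is obtained by composing the action map with $[\cdot]\colon G\to\O_{K,\sol}[G]$ and evaluating on $S$-points, and the converse rests on the fact that solidity of $V$ forces the pairing defined on group-like elements to extend uniquely to the whole Iwasawa algebra. The only place where your sketch is looser than the paper's argument is the extension step: rather than checking ``convergence'' of sums in $V(S)$ along the inverse limit presentation (which is delicate to make precise, since $V(S)$ carries no useful topology for a general solid module), the paper packages this as the statement that the functorial action gives a map of condensed sets $G\times V\to V$, hence by the adjunction for $X\mapsto\Z[X]$ a linear map $\Z[G]\otimes_{\Z}V\to V$, which factors uniquely through $\O_{K,\sol}[G]\otimes_{\O_{K,\sol}}V$ by the universal property of solidification because $V$ is solid --- this is exactly the rigorous form of your ``automatic because $V$ is solid''.
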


\begin{proof}

Let $V$ be an  $\O_{K,\sol}[G]$-module and $S$ an extremally disconnected set.  There is a natural map of condensed sets $[\cdot]:G\to \O_{K,\sol}[G]$.  The action of $\O_{K,\sol}[G]$ over $V$ is provided by a linear map $\O_{K,\sol}[G]\otimes_{\O_{K,\sol}} V\to V$ satisfying the usual axioms.  Composing with $[\cdot]$ and evaluating at $S$ we obtain a map $C(S,G)\times V(S)\to V(S)$,  it is easy to check that this provides a functorial action as in (2).

Conversely, to have such a functorial action is equivalent to having a map of condensed sets $G\times V\to V$ making  the usual diagrams commutative. By adjunction of  the functor $X\to \Z[X]$,  this provides a linear map $\Z[G]\otimes_{\Z} V \to  V $. As $V$ is a solid $\O_{K}$-module it extends uniquely to a map $\O_{K,\sol}[G]\otimes_{\O_{K,\sol}}V\to V$ which is easily seen to satisfy  the obvious diagrams of an  $\O_{K,\sol}[G]$-module. 
\end{proof}

\begin{definition}
A  solid $G$-module  over $\O_K$ (or a solid $\O_{K,\sol}[G]$-module) is a solid abelian group endowed with an  $\O_{K,\sol}[G]$-module structure.      We denote the category of solid $\O_{K, \blacksquare}[G]$-modules by $\SolidOKG$ and its derived category by $D(\O_{K,\blacksquare}[G])$.  
\end{definition}

Let us extend the definition of the condensed set of ``continuous functions'' to complexes:
\begin{definition}
Let $V$ be a solid $\n{O}_K$-module and $S$ a profinite set, we denote by $\underline{\Cont}(S,V)$ the $\O_K$-module $\underline{\Hom}(\Z[S], V)= \underline{\Hom}_{\O_K}(\O_{K, \blacksquare}[S], V)$.  More generally, for $C \in D(\O_{K,\blacksquare})$, we denote $\underline{\Cont}(S,C):= R\underline{\Hom}_{\O_K}(\O_{K, \blacksquare}[S],  C)$, which is consistent with the previous definition as $\O_{K, \blacksquare}[S]$ is a projective module.
\end{definition}

\begin{prop}
The functor $V \mapsto \underline{\Cont}(G,V)$ is exact and factors through a functor $\SolidOK \rightarrow \Mod_{\O_{K,\sol}[G^2]}^{\solid}$ induced by the left and right regular actions respectively.  Moreover, it extends to an exact  functor of derived categories $D(\O_{K,\sol}) \rightarrow D(\O_{K,\sol}[G^2])$. 
\proof
As $G$ is profinite,  $\O_{K,\sol}[G]$ is a compact projective $\O_{K,\sol}$-module. This makes the functor $\underline{\Cont}(G,V)$ exact. Thus, the second statement reduces to the first one. We prove the first statement. Let $S$ be an extremally disconnected set, then $\underline{\Cont}(G, V)(S)= \Cont(G\times S, V)=V(G\times S)$.  Let us define a map 
\[
\Cont(S,G^2)\times V(G\times S)\to V(G\times S)
\]
as in (2) of Lemma \ref{LemmaDefSolidGmod}.  Let $f=(f_1,f_2): S\to G^2$ and $v:  G\times S \to V$ be objects in $\Cont(S,G^2)$ and $V(G\times S)$ respectively.  We define the product $f\cdot v$ to be the composition 
\[
\begin{tikzcd}[row sep= 5pt]
S\times G \ar[r]&  S\times G  \ar[r] & V \\
(s,g)  \ar[r, mapsto]& (s,f_1(s)^{-1}g f_2(s)) \ar[r, mapsto] & v(s, f_1(s)^{-1} g f_2(s)).  
\end{tikzcd}
\]
It is immediate so check that this endows $\underline{\Cont}(G,V)$ with a action of $G^2$ which is the left and right regular action on the first and second component respectively. 
\endproof
\end{prop}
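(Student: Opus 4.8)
The plan is to dispatch exactness first, deduce that the derived statement is purely formal, and then spend the real effort constructing the commuting left/right translation actions of $G$ on $\underline{\Cont}(G,V)$.

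\textbf{Exactness and reduction of the derived statement.} Since $G$ is profinite, $\O_{K,\sol}[G]$ is a compact projective object of $\SolidOK$, so $\underline{\Cont}(G,-)=\underline{\Hom}_{\O_K}(\O_{K,\sol}[G],-)$ is exact; being the underived internal Hom out of a projective, it moreover already computes $R\underline{\Hom}_{\O_K}(\O_{K,\sol}[G],-)$. Hence, once $\underline{\Cont}(G,-)$ has been promoted to a functor with values in $\Mod^{\solid}_{\O_{K,\sol}[G^2]}$, applying it degreewise to a complex gives a functor $D(\O_{K,\sol})\to D(\O_{K,\sol}[G^2])$, and this functor is exact since it is exact on each cohomology object. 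So the whole statement reduces to equipping $\underline{\Cont}(G,V)$ with a $G\times G$-action, functorially in the solid $\O_K$-module $V$.

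\textbf{Construction of the $G^2$-action.} By Lemma \ref{LemmaDefSolidGmod} it suffices to give, functorially in an extremally disconnected set $S$, an $\O_K$-linear (left) action of $\Cont(S,G^2)$ on $\underline{\Cont}(G,V)(S)$. The identity $\O_{K,\sol}[G]\otimes_{\O_{K,\sol}}\O_{K,\sol}[S]=\O_{K,\sol}[G\times S]$ together with tensor--Hom adjunction identifies $\underline{\Cont}(G,V)(S)$ with $\Cont(G\times S,V)=V(G\times S)$, i.e.\ with maps of condensed sets $v\colon G\times S\to V$. For $f=(f_1,f_2)\in\Cont(S,G^2)$, I would define $f\cdot v:=v\circ\tau_f$ where $\tau_f\colon G\times S\to G\times S$ is the map of condensed sets $(g,s)\mapsto (f_1(s)^{-1}g\,f_2(s),\,s)$ (built from multiplication and inversion on $G$ and from $f_1,f_2$); explicitly
\[
(f\cdot v)(g,s)=v\bigl(f_1(s)^{-1}\,g\,f_2(s),\,s\bigr).
\]
Three routine verifications remain: $(i)$ $\O_K$-linearity in $v$, which is clear since $v\mapsto v\circ\tau_f$ is precomposition by a fixed condensed map and the $\O_K$-structure on $V(G\times S)$ is pointwise; $(ii)$ naturality in $S$ (for $\alpha\colon S'\to S$ one has $\tau_{f\circ\alpha}=(\mathrm{id}_G\times\alpha)^*\tau_f$, whence compatibility), which is immediate from the pointwise formula; and $(iii)$ that it is a genuine left action, which follows from $((fh)\cdot v)(g,s)=v\bigl(h_1(s)^{-1}f_1(s)^{-1}\,g\,f_2(s)h_2(s),\,s\bigr)=(f\cdot(h\cdot v))(g,s)$ and triviality of the identity section. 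Lemma \ref{LemmaDefSolidGmod} then assembles these data into an $\O_{K,\sol}[G^2]$-module structure on $\underline{\Cont}(G,V)$; unwinding the formula shows that the first factor of $G^2$ acts by the left regular action and the second by the right regular action.

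\textbf{Functoriality and the main difficulty.} Functoriality in $V$ is immediate: a morphism $V\to V'$ of solid $\O_K$-modules postcomposes with $v$ and visibly commutes with all the $\tau_f$, so $\underline{\Cont}(G,-)\colon\SolidOK\to\Mod^{\solid}_{\O_{K,\sol}[G^2]}$ is a functor, and its exactness was recorded above. There is no deep obstacle here; the only genuine content is checking that the $S$-indexed actions above really satisfy the hypotheses of Lemma \ref{LemmaDefSolidGmod} — i.e.\ that they are $\O_K$-linear and natural in $S$, so that they glue to an honest map of condensed sets $G^2\times\underline{\Cont}(G,V)\to\underline{\Cont}(G,V)$ — and the one point requiring a little care is the identification $\underline{\Cont}(G,V)(S)=V(G\times S)$ and its compatibility with the $\O_K$-module structures, both of which rest on $\O_{K,\sol}[G\times S]=\O_{K,\sol}[G]\otimes_{\O_{K,\sol}}\O_{K,\sol}[S]$ and adjunction.
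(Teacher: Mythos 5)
Your proposal is correct and follows essentially the same route as the paper: exactness via compact projectivity of $\O_{K,\sol}[G]$, the identification $\underline{\Cont}(G,V)(S)=V(G\times S)$, and the action $(f\cdot v)(g,s)=v(f_1(s)^{-1}gf_2(s),s)$ assembled through Lemma \ref{LemmaDefSolidGmod}. The only difference is that you spell out the routine verifications (linearity, naturality in $S$, the action axioms) that the paper leaves implicit.
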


\begin{remark}
If we suppose in addition  that $V$ is a $\O_{K,\sol}[G]$-module, then $\underline{\Cont}(G,V)$ is naturally a $\O_{K,\sol}[G^3]$-module.  Namely,  for $S$ be an extremally disconnected set,    $f_1,f_2,f_3\in \Cont(S,G)$ and $v\in V(G\times S)$, we have the action
\[
[(f_1,f_2,f_3)\cdot v](g,s)= f_3(s) v(f_1(s)^{-1}g f_2(s) ,s).
\]
This action induces an exact functor of derived categories $D(\O_{K,\sol}[G])\rightarrow D(\O_{K,\sol}[G^3])$.  
\end{remark}

\begin{definition}
\label{defiStaraction}
Let $V$ be a solid $G^n=\overbrace{G \times \cdots \times G}^{n} $-representation over $\O_K$.  Given $I\subset \{ 1, 2, \ldots , n \}$ a non empty subset,  we denote by $\star_I$ the diagonal action of $G$  on $V$ induced by the  embedding $\iota_I:G\rightarrow G^n$ in the components of $I$.   We denote by  $V_{\star_I}$   the module $V$ endowed with the action $\star_I$.  If $I=\emptyset$ we write $V_0:=V_{\emptyset}$ for the $\O_{K,\sol}$-module $V$ endowed with the trivial action of $G$. 
\end{definition}

The following proposition basically says that any action on a solid module is continuous (compare it with \cite[Definition 3.2.8]{Emerton}).

\begin{prop}
\label{PropCoinductionTrivial}
Let $C$ be an object in $D(\O_{K,\sol}[G])$. Then there is a natural quasi-isomorphism  of $\O_{K,\sol}[G]$-modules
\begin{equation} \label{EqisomCont}
R\underline{\Hom}_{\O_{K, \blacksquare}[G]}(\O_K, \underline{\Cont}(G,C)_{\star_{1,3}})\xrightarrow{\sim} C,
\end{equation}
where the action of $\O_{K,\sol}[G]$ in the left-hand-side is via the  $\star_2$-action. The inverse of this map is called the orbit map of $C$. 
\proof
First, we claim that there exists a natural  quasi-isomorphism $\underline{\Cont}(G,C)_{\star_{1,3}} \simeq \underline{\Cont}(G,C)_{\star_1}$ for $C\in D(\O_{K,\sol}[G])$.   Suppose  that the previous is true, then we have \begin{eqnarray*}
R\underline{\Hom}_{\O_{K,\sol}[G]}(\O_K, \underline{\Cont}(G,C)_{\star_{1,3}}) & \simeq &  R\underline{\Hom}_{\O_{K,\sol}[G]}(\O_K, \underline{\Cont}(G,C)_{\star_{1}}) \\
  & = & R\underline{\Hom}_{\O_{K,\sol}[G]}(\O_K, R\underline{\Hom}_{\O_K}(\O_{K,\sol}[G],  C)_{\star_1}) \\
  & = & R\underline{\Hom}_{\O_K}(\O_K\otimes^L_{\O_{K,\sol}[G]} \O_{K,\sol}[G],  C) \\
  & = & R\underline{\Hom}_{\O_K}(\O_K, C) = C. 
\end{eqnarray*}

To prove the claim, it is enough to define a natural isomorphism $\underline{\Cont}(G,V)_{\star_{1,3}}\rightarrow \underline{\Cont}(G,V)_{\star_1}$ for $V\in \SolidOKG$.  Let $S$ be an extremally disconnected set, and take $v\in V(G\times S)$.   Consider the inverse map $u: G \rightarrow G \;\; g\mapsto g^{-1}$  and the multiplication map $m_V: G\times V\rightarrow V$. Define $\psi_V(v)$ to be the composition 
\[
\psi_V(v):  G\times S \xrightarrow{u\times v} G\times V \xrightarrow{m_V} V. 
\]
The application
\begin{gather*}
   \psi_V: V(G\times S) \rightarrow V(G\times S) \\
    v \mapsto  \psi_V(v)
\end{gather*}
induces an isomorphism of solid $\O_{K}$-modules $\psi_{V}: \underline{\Cont}(G,V)\to \underline{\Cont}(G,V) $.  It is easy to check that it transfers the $\star_{1,3}$-action to the $\star_{1}$-action and the $\star_{2}$-action to the $\star_{2,3}$-action. This proves the claim and that the isomorphism \eqref{EqisomCont} is $G$-equivariant.
\endproof
\end{prop}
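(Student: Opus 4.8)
The plan is to reduce the statement to the standard (co)induction adjunction along the ring map $\O_{K,\sol}\hookrightarrow\O_{K,\sol}[G]$, after first replacing the diagonal $\star_{1,3}$-action on $\underline{\Cont}(G,C)$ by the much simpler left-regular $\star_1$-action. So I would first establish the following claim: for every $V\in\SolidOKG$ there is a natural automorphism $\psi_V$ of the solid $\O_K$-module $\underline{\Cont}(G,V)$ which carries the $\star_{1,3}$-action to the $\star_1$-action and the $\star_2$-action to the $\star_{2,3}$-action. On $S$-points, for $S$ extremally disconnected, $\psi_V$ sends $v\in V(G\times S)=\Cont(G\times S,V)$ to the composite $G\times S\xrightarrow{u\times v}G\times V\xrightarrow{m_V}V$, where $u$ is the inversion map of $G$ and $m_V$ the action map of $V$; this is visibly functorial in $S$, hence a morphism of condensed $\O_K$-modules, and one checks directly that it is an isomorphism with inverse of the same shape. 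Verifying that $\psi_V$ interchanges the two prescribed pairs of actions is an elementary diagram chase at the level of condensed sets.

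Granting the claim and using that $V\mapsto\underline{\Cont}(G,V)$ is exact (so that it derives termwise and $\psi$ extends to a natural isomorphism $\underline{\Cont}(G,C)_{\star_{1,3}}\simeq\underline{\Cont}(G,C)_{\star_1}$ for every $C\in D(\O_{K,\sol}[G])$, carrying the $\star_2$-structure to the $\star_{2,3}$-structure), I would then compute
\[
\begin{aligned}
R\underline{\Hom}_{\O_{K,\sol}[G]}(\O_K,\underline{\Cont}(G,C)_{\star_{1,3}})
&\simeq R\underline{\Hom}_{\O_{K,\sol}[G]}\bigl(\O_K,R\underline{\Hom}_{\O_K}(\O_{K,\sol}[G],C)_{\star_1}\bigr)\\
&\simeq R\underline{\Hom}_{\O_K}\bigl(\O_K\otimes^L_{\O_{K,\sol}[G]}\O_{K,\sol}[G],C\bigr)\\
&\simeq C,
\end{aligned}
\]
where the first step combines the claim with the identification $\underline{\Cont}(G,V)=\underline{\Hom}_{\O_K}(\O_{K,\sol}[G],V)$ (taking $\O_{K,\sol}[G]$ with its left-regular module structure), the second is the tensor--Hom adjunction for $\O_{K,\sol}\to\O_{K,\sol}[G]$, and the last uses $\O_K\otimes^L_{\O_{K,\sol}[G]}\O_{K,\sol}[G]=\O_K$. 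Tracking the residual actions shows that this composite is equivariant for the $\star_2$-action on the left-hand side, which under $\psi$ becomes the $\star_{2,3}$-action on the coinduced module and hence restricts on invariants to exactly the original $G$-action on $C$; by definition its inverse is the orbit map of $C$.

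The main obstacle is purely the bookkeeping of the several commuting $G$-actions ($\star_1$, $\star_2$, $\star_3$ and their diagonals) on $\underline{\Cont}(G,C)$: one must pin down exactly which pair of actions $\psi_V$ interchanges, check that this interchange survives passage to the derived category, and follow the residual action through the adjunction so that the $G$-module structure recovered on $C$ is the given one rather than a twist of it. Once $\psi_V$ and its equivariance are in hand, the remainder of the argument is formal.
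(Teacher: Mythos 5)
Your proposal is correct and follows essentially the same route as the paper: the same automorphism $\psi_V$ (defined on $S$-points via inversion composed with the action map) to untwist $\star_{1,3}$ into $\star_1$, followed by the identification $\underline{\Cont}(G,C)=R\underline{\Hom}_{\O_K}(\O_{K,\sol}[G],C)$ and the tensor--Hom adjunction. The only cosmetic difference is that you make the exactness of $V\mapsto\underline{\Cont}(G,V)$ explicit when passing to the derived category, which the paper leaves implicit.
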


\begin{remark}
The previous proof shows that if $V$ is a $\O_{K,\sol}[G]$-module arising from a topological space,  the isomorphism  $V\rightarrow (\underline{\Cont}(G,V)_{\star_{1,3}})^{G}$ is given by the usual orbit map $v\mapsto (g\mapsto gv)$. 

\end{remark}

\subsection{Analytic representations}
\label{subsecAnalyticrep}

Let $h\geq 0$ and $\bb{G}^{(h)}$ the rigid analytic group of \S \ref{subsecFuncDist} extending the group law of $G$.  We recall that $\bb{G}^{(h)}$ depends on the choice of an open normal uniform pro-$p$-subgroup $G_0\subset G$.    To motivate the forthcoming definitions of analytic vectors let us first recall how this works for Banach spaces, where we follow \cite[\S 3]{Emerton}.

Let $V$ be a $K$-Banach space endowed with a continuous action of $G$, the space of $V$-valued  $\bb{G}^{(h)}$-analytic functions is by definition the projective  tensor product $C(\bb{G}^{(h)},V):= C(\bb{G}^{(h)},K)\widehat{\otimes}_K V$. As $V$ and $C(\bb{G}^{(h)},K)$ are Banach spaces, the projective tensor product coincides with the solid tensor product $ C(\bb{G}^{(h)},K)\otimes_{K_\sol} V$ (Lemma  \ref{CorotensorBanach}).   This space has an action of $G^2$ given by the left and right regular actions of $G$, and an extra action of $G$ induced by the one of  $V$.  Following the notation of Definition \ref{defiStaraction}, the $\bb{G}^{(h)}$-analytic vectors of $V$ is the Banach space 
\begin{equation}
\label{eqanBanach}
V^{\bb{G}^{(h)}-an}:= (C(\bb{G}^{(h)},V)_{\star_{1,3}})^G.
\end{equation}
There is a natural map $V^{\bb{G}^{(h)}-an}\to V$ given by evaluating at $1\in \bb{G}^{(h)}$, and $V$ is $\bb{G}^{(h)}$-analytic if the previous arrow is an isomorphism.

To generalise the previous construction of analytic vectors to  solid $K_{\sol}[G]$-modules we need to  rewrite (\ref{eqanBanach}) in a slightly different way.  Consider the affinoid ring  $(C(\mathbb{G}^{(h)}, K), C(\mathbb{G}^{(h)}, \O_K))$, it is a finite product of Tate power series rings in $d$-variables.  In Example \ref{Exampleanalyticrings}  (4) and (5),   we saw how the affinoid ring provides a natural analytic ring that we denote as $C(\mathbb{G}^{(h)}, K)_{\sol}$.  We also denote by $C(\mathbb{G}^{(h)},\O_K)_{\sol}$ the analytic ring attached to its subalgebra of power-bounded elements.  Now,  as $V$ is a $K$-Banach vector space,  one has 
\[
C(\bb{G}^{(h)},V)= C(\bb{G}^{(h)},K)\otimes_{K_{\sol}}V=  C(\bb{G}^{(h)},K)_{\sol}\otimes_{K_{\sol}}V,
\]
where the last tensor product is the completion functor with respect to the measures of $C(\bb{G}^{(h)},K)_{\sol}$. The last equality follows  by Corollary  \ref{PropsixFunctorTate};  one has that $C(\bb{G}^{(h)},K)_{\sol}\otimes_{K_{\sol}}V= \iHom_{K}(\n{D}^{(h)}(G,K),  V)=C(\bb{G}^{(h)},K)\otimes_{K_{\sol}}V$,  where in the last equality we use the nuclearity of $V$ (cf. Corollary \ref{CoroBanachisNuclear}).  Hence, we can write the $\bb{G}^{(h)}$-analytic vectors of $V$ in the form
\[
V^{\bb{G}^{(h)}-an}= \underline{\Hom}_{K_{\sol}[G]}(K,  (C(\bb{G}^{(h)},K)_{\sol}\otimes_{K_{\sol}}V)_{\star_{1,3}}).
\]

In order to generalise the construction of analytic vectors we need some basic properties of the tensor  $ C(\bb{G}^{(h)},\O_K)_{\sol}\otimes_{\O_{K,\sol}}-$.

\begin{prop} Consider the functor $V \mapsto C(\bb{G}^{(h)},\O_{K})_{\sol}\otimes_{\O_{K,\sol}} V $ for $V\in \SolidOK$. The following statements hold. 
\begin{enumerate}
\item The functor is exact. 

\item It induces an exact functor of derived categories $D(\O_{K_{\sol}})\rightarrow D(\O_{K, \sol}[G^2])$ given by the left and right regular actions.

\item There is a functorial map $C(\mathbb{G}^{(h)},\O_K)_{\sol} \otimes^L_{\O_K} C \rightarrow \underline{\Cont}(G, C)$ for $C\in D(\O_{K_{\sol}})$ compatible with the left and right regular actions. 
\end{enumerate}
\proof
Exactness follows from Corollary \ref{PropsixFunctorTate}.  Indeed, as $\bb{G}^{(h)}$ is a finite disjoint union of polydiscs one has 
\begin{equation}
\label{eqSixfunctGh}
C(\bb{G}^{(h)}, \O_K)_{\sol}\otimes^L_{\O_{K,\sol}} V= R\underline{\Hom}_{\O_K}(\n{D}^{(h)}(G,\O_K), V).
\end{equation}
But $\n{D}^{(h)}(G,\O_K)$ is a projective $\O_K$-module, this implies that 
\[
C(\bb{G}^{(h)},  \O_K)_{\sol} \otimes^{L}_{\O_{K,\sol}} V = \underline{\Hom}_{\O_K}(\n{D}^{(h)}(G,\O_K), V)=C(\bb{G}^{(h)},  \O_K)_{\sol} \otimes_{\O_{K,\sol}} V
\]
is exact.

To prove (2), it is enough to show that $C(\bb{G}^{(h)},\O_K)_{\sol}\otimes_{\O_{K,\sol}} V$ has  natural left and right regular actions for $V\in \SolidOK$.   Writing $V$ as a quotient $P_1 \rightarrow P_0 \rightarrow V$ of objects of the form $P_i= \bigoplus_{I_i} \prod_{J_i} \O_K $, we have an exact sequence 
\[
C(\mathbb{G}^{(h)},\O_K)_{\sol}\otimes_{\O_K} P_1 \xrightarrow{f}   C(\mathbb{G}^{(h)},\O_K)_{\sol} \otimes_{\O_K} P_0 \rightarrow C(\mathbb{G}^{(h)}, \O_K)_{\sol}\otimes_{\O_K}V\rightarrow 0. 
\]
The functor $C(\bb{G}^{(h)},\O_K)_{\sol}\otimes_{\O_K}-$ commutes with colimits and, by Equation \eqref{eqSixfunctGh}, it also commutes with products. Hence
\[ C(\mathbb{G}^{(h)},\O_K)_{\sol}\otimes P_i= \bigoplus_{I_i} \prod_{J_i} C(\mathbb{G}^{(h)}, \O_K) \]
and these modules are equipped with the natural left and right regular actions of $G$. Moreover, the map $f$ is equivariant for these actions. We endow $C(\mathbb{G}^{(h)}, \O_K)_{\sol}\otimes_{\O_K}V$ with the action induced by the quotient map.  It is easy to check that this action is independent of the presentation of $V$, and that it is functorial.

For the last statement, it is enough to construct a functorial equivariant map 
\[
C(\mathbb{G}^{(h)}, \O_K)_{\sol}\otimes_{\O_K} V \rightarrow \underline{\Cont}(G,V)
\]
for $V\in \SolidOK$. Recall that by definition $\underline{\Cont}(G,V)=\underline{\Hom}_{\O_K}(\O_{K, \sol}[G], V )$.  Similarly as before, we are reduced  to constructing the map for an object of the form $P=\bigoplus_I \prod_J \O_K$. As both functors commute with colimits and products, one reduces to treat the case $P=\O_K$, for which we have the natural inclusion $C(\bb{G}^{(h)}, \O_K) \rightarrow \underline{\Cont}(G,\O_K)$ provided by $G\subset \bb{G}^{(h)}$,  which is equivariant for the left and right regular actions of $G$. This ends the proof.
\endproof
\end{prop}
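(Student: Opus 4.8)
The plan is to compute the solidification functor explicitly and then bootstrap from the case of ``free'' objects. Recall that $\bb{G}^{(h)}=\bigsqcup_i s_i\bb{G}_0^{(h)}$ is a finite disjoint union of polydiscs, so that $C(\bb{G}^{(h)},\O_K)\cong\widehat{\bigoplus}_I\O_K$ is an orthonormalizable Banach $\O_K$-module and its dual $\n{D}^{(h)}(G,\O_K)=C(\bb{G}^{(h)},\O_K)^\vee\cong\prod_I\O_K$ is a profinite, hence projective, object of $\SolidOK$. The first step is to establish, for $V\in D(\O_{K,\sol})$, the identification
\[
C(\bb{G}^{(h)},\O_K)_{\sol}\otimes^L_{\O_{K,\sol}}V\;=\;R\underline{\Hom}_{\O_K}\!\big(\n{D}^{(h)}(G,\O_K),V\big),
\]
which is the integral, finite-product variant of Corollary~\ref{PropsixFunctorTate} with $W=\O_K$, obtained by the same argument after base changing analytic rings along $\Z_{\sol}\to\O_{K,\sol}$ (Example~\ref{Exampleanalyticrings}(3)) and splitting off the finite disjoint union. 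Since $\n{D}^{(h)}(G,\O_K)$ is projective, the right-hand side is exact in $V$ and concentrated in degree $0$; hence the derived and underived tensor products coincide and the functor is exact, which is part~(1). In particular $C(\bb{G}^{(h)},\O_K)_{\sol}\otimes_{\O_{K,\sol}}-$ preserves $\SolidOK$, commutes with all colimits (being a tensor product) and with arbitrary products (being an $R\underline{\Hom}$ out of a fixed module).

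For part~(2) I would put the left and right regular $G$-actions on $C(\bb{G}^{(h)},\O_K)_{\sol}\otimes_{\O_{K,\sol}}V$ by reducing to generators. Picking a presentation $P_1\xrightarrow{f}P_0\to V\to0$ with $P_i=\bigoplus_{I_i}\prod_{J_i}\O_K$, the two commutations above give $C(\bb{G}^{(h)},\O_K)_{\sol}\otimes_{\O_{K,\sol}}P_i=\bigoplus_{I_i}\prod_{J_i}C(\bb{G}^{(h)},\O_K)$, which carries the left and right regular actions coming from $G$ acting on $\bb{G}^{(h)}$ by translations (via $G\subset\bb{G}^{(h)}$); the map induced by $f$ is then $G^2$-equivariant, and one equips the cokernel with the quotient action. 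A morphism of presentations induces the identity on $V$ compatibly with these actions, so the action is independent of the choice and functorial; since the functor is exact and lands in $\Mod^{\solid}_{\O_{K,\sol}[G^2]}$, it descends to derived categories. Part~(3) uses the same reduction: commutation with colimits and products reduces the construction of a functorial equivariant map $C(\bb{G}^{(h)},\O_K)_{\sol}\otimes_{\O_K}C\to\underline{\Cont}(G,C)$ to $C=\O_K$, where it is the restriction map $C(\bb{G}^{(h)},\O_K)\to\underline{\Cont}(G,\O_K)=\underline{\Hom}_{\O_K}(\O_{K,\sol}[G],\O_K)$ induced by $G\subset\bb{G}^{(h)}$ (equivalently, the $\O_K$-dual of $\O_{K,\sol}[G]\hookrightarrow\n{D}^{(h)}(G,\O_K)$), which is manifestly compatible with both regular actions; exactness then lets one derive it.

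The step I expect to cost the most care is the first one: pinning down the integral and finite-disjoint-union form of Corollary~\ref{PropsixFunctorTate}, and especially the resulting commutation of $C(\bb{G}^{(h)},\O_K)_{\sol}\otimes_{\O_{K,\sol}}-$ with infinite products --- a property of this particular solidification rather than a formal feature of tensor products, and exactly what legitimizes both the reduction to $\bigoplus\prod\O_K$ and the transport of the regular $G$-actions. Everything downstream is bookkeeping: well-definedness and functoriality of the constructed actions, their $G^2$-equivariance, and the equivariance of the comparison map of~(3).
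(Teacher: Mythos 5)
Your proposal is correct and follows essentially the same route as the paper's proof: the identification $C(\bb{G}^{(h)},\O_K)_{\sol}\otimes^L_{\O_{K,\sol}}V = R\underline{\Hom}_{\O_K}(\n{D}^{(h)}(G,\O_K),V)$ via Corollary \ref{PropsixFunctorTate} plus projectivity of $\n{D}^{(h)}(G,\O_K)$ for exactness, then reduction to presentations by $\bigoplus\prod\O_K$ for the regular actions and the comparison map. The only difference is cosmetic: you are more explicit about the integral/finite-disjoint-union form of the corollary, which the paper takes for granted.
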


\begin{remark}
It is clear that if $V$ is a solid $G$-module then $C(\bb{G}^{(h)},  \O_K)_{\sol}\otimes_{\O_{K,\sol}} V$ can be endowed with an action of $G^3$.  
\end{remark}

\begin{definition} 
\label{DefiAnalyticVectors}
Let $h\geq 0$
\begin{enumerate}
\item  Let $V\in \SolidKG$,  the space of   $\bb{G}^{(h)}$-analytic vectors of $V$ is   the solid $K_{\sol}[G]$-module 
\[
V^{\bb{G}^{(h)}-an}:=\underline{\Hom}_{K_{\sol}[G]}(K, (C(\bb{G}^{(h)},K)_{\sol} \otimes_{K_{\sol}} V)_{\star_{1,3}})
\]
where the action of $G$ is induced by the $\star_2$-action.  Similarly, we define the $\bb{G}^{(h^+)}$-analytic   vectors of $V$ to be \[ V^{\bb{G}^{(h^+)}-an}:= \varprojlim_{h'>h} V^{\bb{G}^{(h')}-an}, \]
where the transition maps are induced by the base change of analytic rings   $C(\bb{G}^{(h')},K)_{\sol}\otimes_{K_{\sol}} V \to C( \bb{G}^{(h'')}, K)_{\sol}  \otimes_{K_{\sol}} V $  for $h''>h'>h$.   
    
\item Given a complex $C\in D(K_{\sol}[G])$ we define the derived $\bb{G}^{(h)}$-analytic vectors of $C$ as the complex in $D({K_{\sol}}[G])$
\[
C^{R\bb{G}^{(h)}-an}:=R\underline{\Hom}_{K_{\sol}[G]}(K, (C(\bb{G}^{(h)},K)_{\sol} {\otimes_{K_{\sol}}}^{L}   C)_{\star_{1,3}})
\]
where the action of $G$ is induced by the $\star_2$-action.  Similarly,  we define the derived $\bb{G}^{(h^+)}$-analytic vectors of $C$ to be
\[ C^{R \bb{G}^{(h^+)}-an}:= R\varprojlim_{h'>h}C^{R \bb{G}^{(h')}-an}. \] 
\end{enumerate}
\end{definition}

\begin{remark}
As $C(\bb{G}^{(h)},K)_{\sol}\otimes_{K_{\sol}}-$ is exact,  $C \mapsto C^{R \bb{G}^{(h)}-an}$ is the right derived functor of $V\mapsto V^{\bb{G}^{(h)}-an}$.  Similarly,  $C \mapsto C^{R \bb{G}^{(h^+)}-an}$ is the right derived functor of $V\mapsto V^{\bb{G}^{(h^+)}-an}$.
\end{remark}

\begin{lemma}
Let $h\geq 0$ and $C\in D(K_{\sol}[G])$.   There is a natural morphism of objects in   $D(K_{\sol}[G])$
\[
 C^{ R\bb{G}^{(h)}-an}\rightarrow C. 
\]
\proof
Notice that we have a natural map 
\[
C(\bb{G}^{(h)}, K)_{\sol}\otimes^L_{K}C \rightarrow \underline{\Cont}(G, C)
\]
which commutes with the three actions of $G$. Taking $\star_{1,3}$-invariant  one gets the lemma by Proposition \ref{PropCoinductionTrivial}.  
\endproof
\end{lemma}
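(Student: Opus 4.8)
The plan is to produce the morphism by combining the restriction map of the preceding proposition with the identification of Proposition \ref{PropCoinductionTrivial}. First I would recall that part (3) of that proposition gives, functorially in $C \in D(\O_{K,\sol})$, a map $C(\bb{G}^{(h)},\O_K)_{\sol} \otimes^L_{\O_K} C \to \underline{\Cont}(G,C)$ equivariant for the left and right regular actions of $G$; running the same construction over $K$ yields a functorial map $C(\bb{G}^{(h)},K)_{\sol} \otimes^L_{K_{\sol}} C \to \underline{\Cont}(G,C)$, which after reducing to $C = K$ is simply the map induced by restriction of analytic functions $C(\bb{G}^{(h)},K) \to \underline{\Cont}(G,K)$ along the inclusion of groups $G \hookrightarrow \bb{G}^{(h)}$. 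When $C$ carries a $G$-action, both sides acquire a further action (the $\star_3$-action coming from $C$), and since the map is obtained from the case $C = K$ by a construction functorial in $C$, it automatically commutes with this third action. Hence one obtains a morphism in $D(K_{\sol}[G^3])$, where on each side $\star_1$ is the left regular action, $\star_2$ the right regular action, and $\star_3$ the action inherited from $C$.

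Next I would apply $R\underline{\Hom}_{K_{\sol}[G]}(K, (-)_{\star_{1,3}})$ to this $G^3$-equivariant map. By Definition \ref{DefiAnalyticVectors}(2) the source of the resulting map is precisely $C^{R\bb{G}^{(h)}-an}$, carrying its residual $\star_2$-action; the target is $R\underline{\Hom}_{K_{\sol}[G]}(K, \underline{\Cont}(G,C)_{\star_{1,3}})$, which by Proposition \ref{PropCoinductionTrivial} (applied with $K$-coefficients, the statement going through unchanged) is canonically quasi-isomorphic to $C$ in $D(K_{\sol}[G])$, again with the $\star_2$-action. Composing gives the natural morphism $C^{R\bb{G}^{(h)}-an} \to C$, and naturality in $C$ follows from the functoriality of every step.

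The only delicate point, and the one I expect to be the main obstacle, is the bookkeeping of the three commuting $G$-actions: one has to check that the restriction map is simultaneously equivariant for $\star_1$, $\star_2$ and $\star_3$, and, crucially, that the $\star_{1,3}$-invariants appearing in the definition of $C^{R\bb{G}^{(h)}-an}$ are taken with respect to exactly the actions for which Proposition \ref{PropCoinductionTrivial} identifies $R\underline{\Hom}_{K_{\sol}[G]}(K, \underline{\Cont}(G,C)_{\star_{1,3}})$ with $C$, so that the residual $\star_2$-actions at the two ends of the composite coincide. Once the $G^3$-equivariance of $C(\bb{G}^{(h)},K)_{\sol} \otimes^L_{K_{\sol}} C \to \underline{\Cont}(G,C)$ is set up with these conventions, the rest of the argument is purely formal and involves no homological-algebra input beyond what is already established.
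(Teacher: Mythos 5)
Your proposal is correct and follows exactly the paper's argument: the map is obtained from the restriction map $C(\bb{G}^{(h)},K)_{\sol}\otimes^L_{K} C \to \underline{\Cont}(G,C)$, which is equivariant for the three $G$-actions, and the conclusion follows by applying $R\underline{\Hom}_{K_{\sol}[G]}(K,(-)_{\star_{1,3}})$ and invoking Proposition \ref{PropCoinductionTrivial} to identify the target with $C$. Your attention to the bookkeeping of the $\star_1,\star_2,\star_3$ actions matches the paper's (tersely stated) proof.
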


\begin{definition} Let $h\geq 0$
\begin{enumerate}
    \item A solid $K_{\sol}[G]$-module $V$ is called $\bb{G}^{(h)}$-analytic  if the natural map $V^{\bb{G}^{(h)}-an}\to V$ is an isomorphism.  Similarly, it is called $\bb{G}^{(h^+)}$-analytic if $V^{\bb{G}^{(h^+)}-an}\to V$ is an isomorphism. 
    
    \item A complex $C\in D(K_{\sol}[G])$ is called derived $\bb{G}^{(h)}$-analytic if the natural map $C^{R \bb{G}^{(h)}-an}\rightarrow C$ is a quasi-isomorphism. Similarly, it is derived $\bb{G}^{(h^+)}$-analytic if the map $C^{R \bb{G}^{(h^+)}-an}\rightarrow C$  is a quasi-isomorphism. 
    
\end{enumerate}
\end{definition}

So far we  have introduced two definitions of analytic vectors depending on whether we choose the radius to be closed or open.  It turns out that to have a theory in terms of modules over distribution algebras, we need  to work with the Stein analytic groups $\bb{G}^{(h^+)}$. The main reason is that these algebras are localizations of the Iwasawa algebra, as it is reflected in the following Lemma (to be proved in \S \ref{subsectionKeyLemmas}, see  Corollary \ref{CoroTechnicalDhplus}).

\begin{lemma}
\label{LemmaDhtensor}
One has $\n{D}^{(h^+)}(G, K) \otimes^L_{K_{\sol}[G]} \n{D}^{(h^+)}(G,K)=\n{D}^{(h^+)}(G,K)$. 
\end{lemma}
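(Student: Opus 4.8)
The plan is to reduce the statement to a purely algebraic computation about the distribution algebras $\mathcal{D}_{(h')}(G_0,K)$ and their behaviour under the multiplication map of the Iwasawa algebra. Since $\mathcal{D}^{(h^+)}(G,K) = K_\sol[G]\otimes_{K_\sol[G_0]}\mathcal{D}^{(h^+)}(G_0,K)$ and $K_\sol[G]$ is finite free over $K_\sol[G_0]$, a standard base change reduces the claim to showing that $\mathcal{D}^{(h^+)}(G_0,K)\otimes^L_{K_\sol[G_0]}\mathcal{D}^{(h^+)}(G_0,K) = \mathcal{D}^{(h^+)}(G_0,K)$. By Corollary \ref{coroDhpluscolimit} and the fact that the derived tensor product commutes with filtered colimits, it suffices to prove the corresponding statement for the rings $\mathcal{D}_{(h')}(G_0,K)$ with $h'>h$: namely that the multiplication map induces a quasi-isomorphism $\mathcal{D}_{(h')}(G_0,K)\otimes^L_{K_\sol[G_0]}\mathcal{D}_{(h')}(G_0,K)\xrightarrow{\sim}\mathcal{D}_{(h')}(G_0,K)$, at least after passing to a cofinal system in $h'$.

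The key point is that $\mathcal{D}_{(h')}(G_0,\O_K)$ is, by the remark following Definition \ref{DefiDistributionhenbas}, an explicit profinite $\O_K$-module of the form $\prod_{\alpha\in\N^d}\O_K\,\bigl(\tfrac{\mathbf{b}_1}{p^{b(h')}}\bigr)^{\alpha_1}\cdots\bigl(\tfrac{\mathbf{b}_d}{p^{b(h')}}\bigr)^{\alpha_d}$, so it is a ``change of rescaled coordinates'' version of the Iwasawa algebra $\O_{K,\sol}[G_0]=\prod_{\alpha\in\N^d}\O_K\,\mathbf{b}^\alpha$. Concretely, $\mathcal{D}_{(h')}(G_0,\O_K)$ is the global sections of an affinoid polydisc of radius $p^{-b(h')}$ built on the same group law, and hence carries an analytic ring structure $\mathcal{D}_{(h')}(G_0,\O_K)_\sol$ of the type in Example \ref{Exampleanalyticrings}(4)–(5). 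Then the statement to prove is an idempotence of base change: $\mathcal{D}_{(h')}(G_0,\O_K)\otimes^L_{\O_{K,\sol}[G_0]}\mathcal{D}_{(h')}(G_0,\O_K)\otimes_{\O_K}K = \mathcal{D}_{(h')}(G_0,K)$ sitting in degree zero. This is exactly the kind of statement one proves by writing $\O_{K,\sol}[G_0]$ as $\O_{K,\sol}\langle \underline{T}\rangle$ for the coordinates $T_i = \mathbf{b}_i$, identifying $\mathcal{D}_{(h')}(G_0,\O_K)$ with the solidification of a suitably scaled Tate algebra, and invoking Corollary \ref{PropsixFunctorTate} (or the six-functor remark after it): the relevant tensor product over $\O_{K,\sol}\langle \underline{T}\rangle$ becomes a $\mathrm{RHom}$ against the dual module $\mathcal{D}_{(h')}(G_0,\O_K)^\vee$, and one checks that the residue pairing of Lemma \ref{lemmares} degenerates the complex onto $\mathcal{D}_{(h')}(G_0,\O_K)$ in a single degree.

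The main obstacle, and the reason the lemma's proof is deferred to \S\ref{subsectionKeyLemmas}, is that the ring map $\O_{K,\sol}[G_0]\to\mathcal{D}_{(h')}(G_0,\O_K)$ is not literally an analytic localisation $\Spa(A\langle T\rangle,\ldots)\to\Spa(A,\ldots)$ for the naive coordinates: the group law is nonabelian and given by power series $\psi$, so one must show that the convolution product on $\mathcal{D}_{(h')}$ still makes the change-of-radius map behave like a flat/idempotent base change. The technical heart will therefore be a careful comparison, using the $p$-valuation estimates of \cite[\S 4]{SchTeitDist} on $\psi$, showing that the two $K_\sol[G_0]$-module structures on $\mathcal{D}^{(h^+)}(G_0,K)\otimes^L_{K_\sol[G_0]}\mathcal{D}^{(h^+)}(G_0,K)$ agree and that no higher Tor appears — equivalently, that $\mathcal{D}^{(h^+)}(G_0,K)$ is flat over $K_\sol[G_0]$ and the multiplication map is surjective with the expected kernel. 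Once flatness and the degree-zero statement are in hand, the identity $\mathcal{D}^{(h^+)}(G,K)\otimes^L_{K_\sol[G]}\mathcal{D}^{(h^+)}(G,K)=\mathcal{D}^{(h^+)}(G,K)$ follows formally, exactly as the idempotence $A_\sol\otimes^L_{A_\sol} A_\sol = A_\sol$ of an analytic localisation.
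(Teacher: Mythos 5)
Your opening reductions are exactly the ones the paper makes: induction from $G_0$ to $G$ via $\mathcal{D}^{(h^+)}(G,K)=K_\sol[G]\otimes_{K_\sol[G_0]}\mathcal{D}^{(h^+)}(G_0,K)$ (Remark \ref{Remarkinductiondistribution}), and passage to the rings $\mathcal{D}_{(h')}(G_0,K)$ via Corollary \ref{coroDhpluscolimit} and commutation of $\otimes^L$ with filtered colimits. The problem is everything after that. The core computation is not carried out: your "technical heart" paragraph asserts that one must show the two module structures agree, that no higher Tor appears, that $\mathcal{D}^{(h^+)}(G_0,K)$ is flat over $K_\sol[G_0]$, and that the multiplication map is surjective with the expected kernel --- but this is a restatement of the claim, not a proof of it. Moreover the route you sketch for it does not apply as stated. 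Corollary \ref{PropsixFunctorTate} and the residue pairing of Lemma \ref{lemmares} concern the commutative base change $A_\sol\to A\langle T\rangle_\sol$, i.e.\ passing from measures on a profinite set to functions on a closed polydisc; here both $K_\sol[G_0]=\prod_\alpha\O_K\mathbf{b}^\alpha[\tfrac1p]$ and $\mathcal{D}_{(h')}(G_0,K)$ are noncommutative convolution algebras of \emph{measures}, and the noncommutativity of $\psi$ is not a perturbation one can absorb by valuation estimates: it is the reason the idempotence is a theorem rather than an instance of analytic localisation. Note also that the paper records flatness of the distribution algebras over the Iwasawa algebra only \emph{algebraically} (Theorem \ref{TheoLSTIwasawa}), and its proof of the lemma never invokes solid flatness.

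The missing idea is the finite free Koszul-type resolution of the \emph{trivial} module: by Lazard--Serre (Theorem \ref{LazardSerre}) one has $K\simeq[K_\sol[G_0]^{\binom{d}{\bullet}};\alpha_\bullet]$, and by Kohlhaase (Theorem \ref{propKohlhaase}) the differentials $\alpha_\bullet$ extend to $\mathcal{D}_{(h')}(G_0,K)$, giving $K\otimes^L_{K_\sol[G_0]}\mathcal{D}_{(h')}(G_0,K)=K$. The paper then deduces the idempotence from this by tensoring the Lazard--Serre resolution over $K$ with $\mathcal{D}_{(h')}(G_0,K)$ and applying the antipode-induced untwisting isomorphism $K_\sol[G_0]\otimes_K\mathcal{D}_{(h')}(G_0,K)\cong K_\sol[G_0]\otimes_K\mathcal{D}_{(h')}(G_0,K)_0$ (diagonal action turned into left multiplication on the first factor alone), which converts the computation of $\mathcal{D}_{(h')}\otimes^L_{K_\sol[G_0]}\mathcal{D}_{(h')}$ into an application of Kohlhaase's resolution. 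Without this resolution --- or some equally concrete replacement --- your argument does not close.
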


An immediate consequence of the previous result is the following fully-faithfulness property.

\begin{cor}
\label{coroAnalyticDh}
The category $\Mod_{\n{D}^{(h^+)}(G,K)}^{\solid}$ (resp.  $D( \n{D}^{(h^+)}(G,K))$) is a full subcategory of $\SolidKG$ (resp. $D(K_{\sol}[G])$).  In other words, if $V,V'\in \Mod_{\n{D}^{(h^+)}(G,K)}^{\solid}$ and $C,C'\in D( \n{D}^{(h^+)}(G,K))$ then 
\begin{gather*}
\underline{\Hom}_{K_{\sol}[G]}(V, V')=\underline{\Hom}_{\n{D}^{(h^+)}(G,K)}(V, V') \\
R\underline{\Hom}_{K_{\sol}[G]}(C,C')= R\underline{\Hom}_{\n{D}^{(h^+)}(G,K)}(C,C'). 
\end{gather*}
\proof
We first address the second statement, the first one follows similarly. It follows from the usual extension of scalars:
\begin{eqnarray*}
R\underline{\Hom}_{K_{\sol}[G]}(C,C') &= & R\underline{\Hom}_{\n{D}^{(h^+)}(G,K)}(\n{D}^{(h^+)}(G,K)\otimes^{L}_{K_{\sol}[G]} C,C')  \\
                & = &  R\underline{\Hom}_{\n{D}^{(h^+)}(G,K)}(\n{D}^{(h^+)}(G,K)\otimes^{L}_{K_{\sol}[G]}  (\n{D}^{(h^+)}(G,K) \otimes_{\n{D}^{(h^+)}(G,K)}^L  C),C') \\ 
                & = & R\underline{\Hom}_{\n{D}^{(h^+)}(G,K)}((\n{D}^{(h^+)}(G,K)\otimes^{L}_{K_{\sol}[G]}  \n{D}^{(h^+)}(G,K) )\otimes_{\n{D}^{(h^+)}(G,K)}^L  C,C')\\  
                & = & R\underline{\Hom}_{\n{D}^{(h^+)}(G,K)}(\n{D}^{(h^+)}(G,K) \otimes_{\n{D}^{(h^+)}(G,K)}^L  C,C') \\
                & = & R\underline{\Hom}_{\n{D}^{(h^+)}(G,K)}( C,C').
\end{eqnarray*}
\endproof
\end{cor}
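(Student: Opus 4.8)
The plan is to deduce the corollary formally from the idempotence property of Lemma \ref{LemmaDhtensor}, by the standard derived extension-of-scalars formalism along the ring map $K_\sol[G] \to \n{D}^{(h^+)}(G,K)$. The only content is that, for $C \in D(\n{D}^{(h^+)}(G,K))$, base change along this map does nothing, i.e. the multiplication map $\n{D}^{(h^+)}(G,K) \otimes^L_{K_\sol[G]} C \to C$ is a quasi-isomorphism. Writing $C = \n{D}^{(h^+)}(G,K) \otimes^L_{\n{D}^{(h^+)}(G,K)} C$ and using associativity of the derived tensor product, this reduces to the single assertion $\n{D}^{(h^+)}(G,K) \otimes^L_{K_\sol[G]} \n{D}^{(h^+)}(G,K) = \n{D}^{(h^+)}(G,K)$, which is precisely Lemma \ref{LemmaDhtensor}. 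Granting that, I would conclude with the two-step computation
\[
R\iHom_{K_\sol[G]}(C,C') = R\iHom_{\n{D}^{(h^+)}(G,K)}\big(\n{D}^{(h^+)}(G,K)\otimes^L_{K_\sol[G]} C,\ C'\big) = R\iHom_{\n{D}^{(h^+)}(G,K)}(C,C'),
\]
where the first identity is derived tensor–Hom adjunction along $K_\sol[G] \to \n{D}^{(h^+)}(G,K)$ and the second is the vanishing just described. The underived statement for solid modules is obtained by the same argument with $\iHom$ in place of $R\iHom$ and $\otimes$ in place of $\otimes^L$, using that $\n{D}^{(h^+)}(G,K)\otimes_{K_\sol[G]}\n{D}^{(h^+)}(G,K) = \n{D}^{(h^+)}(G,K)$ is the $H^0$ of Lemma \ref{LemmaDhtensor} (the derived tensor already sits in degree $0$). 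Full faithfulness of $D(\n{D}^{(h^+)}(G,K)) \hookrightarrow D(K_\sol[G])$ (resp.\ $\Mod^\solid_{\n{D}^{(h^+)}(G,K)} \hookrightarrow \SolidKG$) follows, since the forgetful functor is exact and conservative and we have just checked it induces isomorphisms on Hom-complexes.

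So the real work is Lemma \ref{LemmaDhtensor}, and that is where I expect all the difficulty. My strategy there would be: first reduce to the uniform open normal subgroup $G_0$. Since $K_\sol[G]$ is finite free over $K_\sol[G_0]$ on coset representatives and $\n{D}^{(h^+)}(G,K) = K_\sol[G]\otimes_{K_\sol[G_0]}\n{D}^{(h^+)}(G_0,K)$, induction is exact and commutes with $\otimes^L$, so the case of $G$ follows from $\n{D}^{(h^+)}(G_0,K)\otimes^L_{K_\sol[G_0]}\n{D}^{(h^+)}(G_0,K) = \n{D}^{(h^+)}(G_0,K)$ (twisted by the coset representatives, which act as isomorphisms of $\n{D}^{(h^+)}(G_0,K)$ by Lemma \ref{LemmaNormalG0}). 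Next, use Corollary \ref{coroDhpluscolimit} to write $\n{D}^{(h^+)}(G_0,K) = \varinjlim_{h'>h}\n{D}_{(h')}(G_0,K)$ and commute $\otimes^L$ past the filtered colimit, reducing to the explicit "weighted Iwasawa algebras" $\n{D}_{(h')}(G_0,K)$, which are submultiplicative completions of $K_\sol[G_0] = \prod_\alpha \O_K\bbf{b}^\alpha[\frac1p]$. Then I would compute $\mathrm{Tor}^{K_\sol[G_0]}_\bullet(\n{D}_{(h')}(G_0,K),-)$ against the finite free Koszul-type resolution of the trivial module furnished by Serre's lemma and its Kohlhaase enhancement (Theorems \ref{LazardSerre} and \ref{propKohlhaase}), checking that applying $\n{D}_{(h')}(G_0,K)\otimes_{K_\sol[G_0]}(-)$ to it returns the analogous Koszul resolution over $\n{D}_{(h')}(G_0,K)$ itself — equivalently, that $\n{D}_{(h')}(G_0,K)$ is flat over $K_\sol[G_0]$ and the relevant completed Koszul differentials remain exact — which yields vanishing of higher Tor and identifies $\n{D}_{(h')}(G_0,K)\otimes_{K_\sol[G_0]}\n{D}_{(h')}(G_0,K)$ with $\n{D}_{(h')}(G_0,K)$ via multiplication.

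The main obstacle, then, is controlling the interaction between the noncommutative completion defining $\n{D}_{(h')}(G_0,K)$ and the derived tensor product over $K_\sol[G_0]$: concretely, establishing flatness of $\n{D}_{(h')}(G_0,K)$ over the Iwasawa algebra and the exactness of the Koszul-type complexes after completion. This is exactly the place where the Lazard–Serre–Kohlhaase machinery enters (and, in the paper, is the technical lemma deferred to \S\ref{SecCohomology}); everything else above is formal manipulation of adjunctions, colimits and associativity.
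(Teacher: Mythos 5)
Your proof of the corollary is exactly the paper's: the tensor--Hom adjunction along $K_\sol[G]\to\n{D}^{(h^+)}(G,K)$ combined with the idempotence $\n{D}^{(h^+)}(G,K)\otimes^L_{K_\sol[G]}\n{D}^{(h^+)}(G,K)=\n{D}^{(h^+)}(G,K)$ of Lemma \ref{LemmaDhtensor}, which the paper likewise takes as input here (deferring its proof to \S\ref{subsectionKeyLemmas}). Your supplementary sketch of that deferred lemma also matches the paper's eventual strategy (reduction to $G_0$, the colimit description of $\n{D}^{(h^+)}$, and the Lazard--Serre/Kohlhaase resolutions), so there is nothing to correct.
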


\begin{remark}
As it will be seen in Corollary \ref{CoroTechnicalDhplus},  the same holds true for the distribution algebra $\n{D}^{la}(G,K)$.   In particular, $D(\n{D}^{la}(G,K))$ is a full subcategory of $D(K_{\sol}[G])$.
\end{remark}

We can now state the main theorem of this section.

\begin{theorem}
\label{TheoMain}
Let $W\in D(K_{\sol})$ and $C \in D(K_{\sol}[G])$. The following holds.

\begin{enumerate}
\item There are  natural isomorphisms of $K_{\sol}[G]$-modules 
\begin{gather*}
R\underline{\Hom}_{K_{\sol}[G]}(\n{D}^{(h)}(G,K) {\otimes^{L}_{K_\sol}} W, C) = R\underline{\Hom}_{K}(W, C^{R\bb{G}^{(h)}-an}) \\
R\underline{\Hom}_{K_{\sol}[G]}(\n{D}^{(h^+)}(G,K)  {\otimes^{L}_{K_\sol}} W, C) = R\underline{\Hom}_{K}(W, C^{R \bb{G}^{(h^+)}-an}) .
\end{gather*}
The $K_{\sol}[G]$-module structure of the terms inside the $R\underline{\Hom}_{K_{\sol}[G]}$ in the LHS are the left multiplication on the distribution algebras and the action of $C$.  The $G$-action of the LHS $R\underline{\Hom}_{K_\sol[G]}(-,-)$ is induced by the right multiplication on the distribution algebras.

\item The category of $\bb{G}^{(h^+)}$-analytic representations of $G$ is equal to $\Mod_{\n{D}^{(h^+)}(G,K)}^{\solid}$.  In other words, a $K_{\sol}[G]$-module $V$ is $\bb{G}^{(h^+)}$-analytic if and only if the action of $K_{\sol}[G]$ extends to an action of $\n{D}^{(h^+)}(G,K)$.  

\item Furthermore, a complex $C\in D(K_{\sol}[G])$ is derived $\bb{G}^{(h^+)}$-analytic if and only if for all $n\in \Z$ the cohomology groups $H^n(C)$ are $\bb{G}^{(h^+)}$-analytic.  Equivalently,  $C$ is derived $\bb{G}^{(h^+)}$-analytic if and only if it belongs to the essential image of $D(\n{D}^{(h^+)}(G,K))$. 

\end{enumerate}

\proof
$(1)$ Let $W\in D(K_{\sol})$ and $C\in D(K_{\sol}[G])$, by Corollary \ref{PropsixFunctorTate} there is a natural quasi-isomorphism 
\begin{equation}
\label{eqtheo1}
R\underline{\Hom}_K( \n{D}^{(h)}(G,K) \otimes^L_{K_{\sol}} W, C ) = R\underline{\Hom}_K (W, C(\bb{G}^{(h)},K)_{\sol} \otimes^L_{K} C). 
\end{equation}
It is easy to verify that the left and right regular actions of the RHS are translated in the left and right multiplication of the distributions in the LHS. Indeed, one can reduce to $W= \prod_i \O_K [\frac{1}{p}]$ and $C= K_{\sol}[G]$ in which case it is straightforward.  Then,  the $\star_{1,3}$-action in the RHS translates in the  left multiplication on the distributions and the action on $C$ in the LHS.  Taking $R\underline{\Hom}_{K_{\sol}[G]}(K,-)$ in (\ref{eqtheo1}) one gets
\[
R\underline{\Hom}_{K_{\sol}[G]}(\n{D}^{(h)}(G,K) {\otimes^{L}_{K_\sol}} W, C) = R\underline{\Hom}_{K}(W, C^{R\bb{G}^{(h)}-an}). 
\]
Taking derived inverse limits and using that $R\underline{\Hom}$ commutes with colimits in the first factor and limits in the second factor, one gets 
\[
R\underline{\Hom}_{K_{\sol}[G]}(\n{D}^{(h^+)}(G,K)  {\otimes^{L}_{K_\sol}} W, C) = R\underline{\Hom}_{K}(W, C^{R \bb{G}^{(h^+)}-an}).  
\]

$(2)$ Consider the pre-analytic ring $(K_{\sol}[G],  \n{M}^{(h^+)})$ such that for any extremally disconnected $S$  one has 
\[
\n{M}^{(h^+)}(S)= \n{D}^{(h^+)}(G,K)\otimes_{K_{\sol}} K_{\sol}[S].  
\]
Corollary \ref{coroAnalyticDh} implies that it is  in fact an analytic ring.  Indeed,  let $P^\bullet$ be a complex of $K_{\sol}[G]$-modules concentrated in positive homological degrees whose terms are direct sums of $\n{M}^{(h^+)}[S_i]$ for $\{S_i\}_{i\in I}$ a family of profinite sets.  Let $S$ be a profinite set,  then
\begin{eqnarray*}
R\underline{\Hom}_{K_\sol[G]}(\n{M}^{(h^+)}[S],  P^\bullet) & = & R\underline{\Hom}_{\n{D}^{(h^+)}(G,K)}(\n{M}^{(h^+)}[S],  P^\bullet) \\
            & = & R\underline{\Hom}_{K}(K_{\sol}[S], P^\bullet). 
\end{eqnarray*}
Moreover, the category of solid $(K_\sol[G], \n{M}^{(h^+)})$-modules is equal to the category of solid $\n{D}^{(h^+)}(G,K)$-modules. More precisely, by Theorem \ref{TheoLemmaAnalyticrings}, a family of compact projective generators of $\Mod^{\solid}_{(K_\sol[G], \n{M}^{(h^+)})}$ is given by $\n{M}^{(h^+)}(S)$ for $S$ extremally disconnected, which are naturally $\n{D}^{(h^+)}(G, K)$-modules, and any $K_\sol[G]$-linear map between these objects is automatically $\n{D}^{(h^+)}(G, K)$-linear again by Corollary \ref{coroAnalyticDh} again. Hence, part (1)  and Theorem \ref{TheoLemmaAnalyticrings} imply that a $K_\sol[G]$-module $V$ is $\bb{G}^{(h^+)}$-analytic if and only if it is a $\n{D}^{(h^+)}(G,K)$-module.   

$(3)$ Theorem \ref{TheoLemmaAnalyticrings} and the same argument as before tells us that a complex $C\in D(K_{\sol}[G])$ is derived $\bb{G}^{(h^+)}$-analytic if and only if it belongs to the essential image of $D(\n{D}^{(h^+)}(G,K))$, if and only if for all $n\in \bb{Z}$ the module $H^n(C)$ is a $\n{D}^{(h^+)}(G,K)$-module, finishing the proof. 
\endproof
\end{theorem}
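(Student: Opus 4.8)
The plan is to deduce all three parts from Corollary \ref{PropsixFunctorTate} (the adjunction for the analytification functor of a Tate algebra) together with Lemma \ref{LemmaDhtensor} (the idempotency $\n{D}^{(h^+)}(G,K)\otimes^L_{K_\sol[G]}\n{D}^{(h^+)}(G,K)=\n{D}^{(h^+)}(G,K)$, whose proof is postponed) and the general formalism of analytic rings (Theorem \ref{TheoLemmaAnalyticrings}). The whole point is to recognise $\n{D}^{(h^+)}(G,K)$-modules as the modules of an analytic ring over $K_\sol[G]$, whose completion functor is exactly $C\mapsto C^{R\bb{G}^{(h^+)}-an}$.

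For part (1), I would start from the affinoid group $\bb{G}^{(h)}$, which is a finite disjoint union of $d$-dimensional Tate polydiscs, so that $C(\bb{G}^{(h)},K)_\sol = C(\bb{G}^{(h)},K)_\sol$ is a finite product of Tate power series algebras and $\n{D}^{(h)}(G,K)$ is its dual. Corollary \ref{PropsixFunctorTate} then gives the functorial quasi-isomorphism
\[
R\iHom_K(\n{D}^{(h)}(G,K)\otimes^L_{K_\sol} W, C) = R\iHom_K(W, C(\bb{G}^{(h)},K)_\sol\otimes^L_K C).
\]
The remaining work is bookkeeping of $G$-actions: one checks, by reducing to the generating case $W=\prod_i \O_K[\tfrac1p]$ and $C=K_\sol[G]$, that under this isomorphism the left/right regular $G$-actions on $C(\bb{G}^{(h)},K)$ correspond to left/right multiplication by distributions, so that the diagonal $\star_{1,3}$-action matches the pair (left multiplication on $\n{D}^{(h)}$, action on $C$), and the residual $\star_2$-action matches right multiplication. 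Applying $R\iHom_{K_\sol[G]}(K,-)$ — i.e.\ taking $\star_{1,3}$-invariants — to both sides yields the first displayed isomorphism; the second follows by passing to $R\varprojlim_{h'>h}$, using that $R\iHom$ converts the colimit $\n{D}^{(h^+)}(G,K)=\varinjlim_{h'>h}\n{D}^{(h')}(G,K)$ in its first argument into a limit, and commutes with limits in the second argument, matching the definition of $C^{R\bb{G}^{(h^+)}-an}$ as $R\varprojlim_{h'>h} C^{R\bb{G}^{(h')}-an}$.

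For parts (2) and (3), I would introduce the pre-analytic ring $(K_\sol[G],\n{M}^{(h^+)})$ with $\n{M}^{(h^+)}[S] = \n{D}^{(h^+)}(G,K)\otimes_{K_\sol} K_\sol[S]$ and verify it is analytic. This is where Corollary \ref{coroAnalyticDh} (full faithfulness of $D(\n{D}^{(h^+)}(G,K))$ in $D(K_\sol[G])$, itself a consequence of Lemma \ref{LemmaDhtensor}) enters: for a complex $P^\bullet$ of $K_\sol[G]$-modules with terms direct sums of $\n{M}^{(h^+)}[S_i]$ one computes
\[
R\iHom_{K_\sol[G]}(\n{M}^{(h^+)}[S],P^\bullet) = R\iHom_{\n{D}^{(h^+)}(G,K)}(\n{M}^{(h^+)}[S],P^\bullet) = R\iHom_K(K_\sol[S],P^\bullet) = R\iHom_{K_\sol[G]}(K_\sol[G][S],P^\bullet),
\]
the middle equality because $\n{M}^{(h^+)}[S]$ is a (sum of a) compact projective $\n{D}^{(h^+)}(G,K)$-module and the terms of $P^\bullet$ are $\n{D}^{(h^+)}(G,K)$-modules. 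Then Theorem \ref{TheoLemmaAnalyticrings} identifies $\Mod^\solid_{(K_\sol[G],\n{M}^{(h^+)})}$ with $\Mod^\solid_{\n{D}^{(h^+)}(G,K)}$ (its compact projective generators $\n{M}^{(h^+)}[S]$ are $\n{D}^{(h^+)}(G,K)$-modules and $K_\sol[G]$-linear maps between them are automatically $\n{D}^{(h^+)}(G,K)$-linear by Corollary \ref{coroAnalyticDh}), and identifies the completion functor $-\otimes^L_{K_\sol[G]}(K_\sol[G],\n{M}^{(h^+)})$ as left adjoint to the inclusion $D(\n{D}^{(h^+)}(G,K))\hookrightarrow D(K_\sol[G])$. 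Comparing with part (1): the natural map $C^{R\bb{G}^{(h^+)}-an}\to C$ is a quasi-isomorphism iff $C$ is in the image of this adjoint, iff $C\in D(\n{D}^{(h^+)}(G,K))$, and by the last clause of Theorem \ref{TheoLemmaAnalyticrings} this holds iff each $H^n(C)$ is a $\n{D}^{(h^+)}(G,K)$-module — giving (2) (the $n=0$ abelian case) and (3).

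The main obstacle is Lemma \ref{LemmaDhtensor}, which the paper itself defers to \S\ref{subsectionKeyLemmas}; everything above is essentially formal once it is granted. A secondary, more tedious point is the careful tracking of the three commuting $G$-actions through the adjunction of Corollary \ref{PropsixFunctorTate} — it is routine but must be done on generators to be rigorous, and it is the only place where one genuinely has to compute rather than cite.
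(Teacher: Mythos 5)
Your proposal is correct and follows essentially the same route as the paper: Corollary \ref{PropsixFunctorTate} plus tracking of the three $G$-actions on generators for part (1), and the analytic ring $(K_\sol[G],\n{M}^{(h^+)})$ together with Corollary \ref{coroAnalyticDh} and Theorem \ref{TheoLemmaAnalyticrings} for parts (2) and (3). You also correctly identify that the only non-formal input is the deferred Lemma \ref{LemmaDhtensor}.
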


\begin{remark}
Let us highlight the importance of the compactness assumption on $G$ in the previous theorem.  Let $G$ be a locally profinite $p$-adic Lie group,  and let $G_0\subset G$ an open compact subgroup. One can construct, as we did above, rigid analytic neighbourhoods $\bb{G}_{0}^{(h^+)}$ of $G_0$ and corresponding distribution algebras $\n{D}^{(h^+)}(G_0,K)$.  One may ask if there are analogous of these (analytic) distribution algebras $\n{D}^{(h^+)}(G,K)$ over $G$ in such a way that the $G$-representations whose restriction to $G_0$ is $\bb{G}_0^{(h^+)}$-analytic are  the same as $\n{D}^{(h^+)}(G,K)$-modules,  but this turns out to be false in general.  Indeed, let $W$ be a solid $G$-representation,    by part (1) of Theorem  \ref{TheoMain} the derived  $\bb{G}_0^{(h^+)}$-analytic vectors of $W$ are equal to 
\[
R\iHom_{K_{\sol}[G_0]}(\n{D}^{(h)}(G_0,K), W  )= R\iHom_{K_{\sol}[G]}( K_{\sol}[G] \otimes_{K_{\sol}^L[G_0]}\n{D}^{(h)}(G_0,K), W ),
\]
so that  the natural candidate is $\n{D}^{(h^+)}(G,  K):= K_{\sol}[G]\otimes_{K_{\sol}[G_0]} \n{D}^{(h^+)}(G_0,K)$,  but this $G$-representation is not an algebra unless $G$ normalizes $G_0$.

Another reason why the distribution algebras $\n{D}^{(h^+)}(G,K)$ do not exist in general is that the action of $G$ might change the radius of analyticity of a locally analytic vector.  For example, take  $G= \GL_2(\qp)$,  $G_0 = \left( \begin{array}{cc} 1+p^2\mathbf{Z}_p  & p^2 \mathbf{Z}_p \\ p^2 \zp & 1+p^2 \zp \end{array}\right)$ and $g= \left( \begin{array}{cc} p & 0 \\ 0 & 1  \end{array} \right)$.   Take $\bb{G}_0:= \left( \begin{array}{cc} 1+p^2 \bb{D}^1_{\mathbf{Q}_p}  & p^2 \bb{D}^1_{\mathbf{Q}_p}  \\ p^2 \bb{D}^1_{\mathbf{Q}_p}  & 1+p^2 \bb{D}^1_{\mathbf{Q}_p}  \end{array} \right)$,  then  
\[
g \bb{G}_0 g^{-1} = \left( \begin{array}{cc} 1+ p^2 \bb{D}^1_{\mathbf{Q}_p} & p^3 \bb{D}^1_{\mathbf{Q}_p}  \\ p \bb{D}^1_{\mathbf{Q}_p} & 1+p^2 \bb{D}^1_{\mathbf{Q}_p}   \end{array}  \right).
\]
Therefore,  if $V$ is a locally analytic representation of $G$ and $v\in V$ is a $\bb{G}_0$-analytic vector,  $gv$ is a $g\bb{G}_0 g^{-1}$-analytic vector,  but   $\bb{G}_0 \neq g \bb{G}_0 g^{-1}$.

Notice however,  that the tensor product $ \n{D}^{la}(G,  K):=K_{\sol}[G]\otimes_{K_{\sol}[G_0]} \n{D}^{la}(G_0,K)$ is an algebra since $\n{D}^{la}(G_0,K)$ admits an action of $G$ by conjugation,  this is nothing but the algebra of locally analytic distributions of $G$. 
\end{remark}

\begin{remark}
Notice that Theorem \ref{TheoMain} implies that the tensor product over $K$ of two $\bb{G}^{(h^+)}$-analytic representations is still $\bb{G}^{(h^+)}$-analytic.  Indeed,  if $V$ and $W$ are $\bb{G}^{(h^+)}$-analytic,  then they are modules over $\n{D}^{(h^+)}(G,  K)$ and so it is its tensor product $V\otimes_{K_{\sol}}^L W$,  which in turn implies that $V\otimes_{K_{\sol}}^L W$ is $\bb{G}^{(h^+)}$-analytic.  A similar result holds for locally analytic representations by writing them as colimits of their $\bb{G}^{(h^+)}$-analytic vectors.  
\end{remark}

The diagram \eqref{EqDiagramDistributions} relating the distribution algebras gives rise to the following  fully faithful forgetful functors between derived categories for $h'>h$
\[
D(K_{\sol}[G]) \leftarrow D(\n{D}^{la}(G,K)) \leftarrow D(\n{D}^{(h'^+)}(G,K)) \leftarrow D(\n{D}^{(h^+)}(G,K)).
\]
Since the forgetful functors preserve limits and colimits,  they admit  left and a right adjoints.   For example,  consider the forgetful functor $F:D(\n{D}^{(h^+)}(G,K))\to D(K_{\sol}[G])$, and let $V\in D(K_{\sol}[G])$ and $W\in D(\n{D}^{(h^+)}(G,K))$,  then we have that 
\begin{eqnarray*}
R\iHom_{K_{\sol}[G]}( V ,  F(W)) & = & R\iHom_{\n{D}^{(h^+)}(G,K)}(\n{D}^{(h^+)}(G,K)\otimes^L_{K_{\sol}[G]} V,  W ) \\
R\iHom_{K_{\sol}[G]}(F(W),  V) & = &   R\iHom_{\n{D}^{(h^+)}(G,K)}(W, R\iHom_{K_{\sol}[G] }(\n{D}^{(h^+)}(G,K),  V) )\\
& = &   R\iHom_{\n{D}^{(h^+)}(G,K)}(W,  V^{R\bb{G}^{(h^+)}-la}).  
\end{eqnarray*}

\begin{remark}
By definition $D(K_{\sol}[G])$ is the derived category of solid representations of $G$. By Theorem \ref{TheoMain}, the category $D(\n{D}^{(h^+)}(G,K))$ is the derived category of  $\bb{G}^{(h^+)}$-analytic representations of $G$. We point out that $D(\n{D}^{la}(G,K))$ is not the derived category of locally analytic representations to $G$. Indeed, one has for example that $\n{D}^{la}(G,K)$ is not a locally analytic representation (cf. the discussion just after Definition \ref{DefLocAn} below).
\end{remark}

\subsection{Locally analytic representations}
\label{SubsectionLocallyAnalytic}

We finish this section with some applications of Theorem \ref{TheoMain} to the theory of locally analytic representations.  Let us begin with the definition of the locally analytic vectors. 
\begin{definition} \leavevmode \label{DefLocAn}
\begin{enumerate} 
    \item  Let $V$ be a solid $K_{\sol}[G]$-module, the  space of locally analytic vectors of $V$ is the solid $K_{\sol}[G]$-module 
    \[
    V^{la}:= \varinjlim_{h\to \infty} V^{\bb{G}^{(h)}-an}= \varinjlim_{h\to \infty} V^{\bb{G}^{(h^+)}-an}.   
    \]
    We say that $V$ is locally analytic if the natural map $V^{la}\to V$ is an isomorphism.
    
    \item Let $C\in D(K_{\sol}[G])$, the derived locally analytic vectors of $C$ is the complex
    \[
    C^{Rla}:= \underset{h\to \infty}{\hocolim}\; C^{R\bb{G}^{(h)}-an}= \underset{h\to \infty}{\hocolim}\; C^{R\bb{G}^{(h^+)}-an}.
    \]
    We say that $C$ is derived locally analytic if the natural map $C^{Rla}\to C$ is a quasi-isomorphism. 
\end{enumerate}
\end{definition}

In \S \ref{SubsubSectionDistG} we defined the algebra of locally analytic distributions of $G$ as  the Fr\'echet algebra 
\[
\n{D}^{la}(G,K)= \varprojlim_{h\to \infty} \n{D}^{(h)}(G,K)=  \varprojlim_{h\to \infty} \n{D}^{(h^+)}(G,K). 
\]
Since a locally analytic representation $V$ is a (homotopy) colimit of $\bb{G}^{(h^+)}-$analytic  representations, Theorem \ref{TheoMain} implies that $V$ is naturally a $\n{D}^{la}(G,K)$-module.  Furthermore, this structure is unique as $\n{D}^{la}(G,K)\otimes^L_{K_\sol[G]} \n{D}^{la}(G,K)=\n{D}^{la}(G,K)$, see Corollary \ref{CoroTechnicalDhplus}.   Nevertheless, not all the $\n{D}^{la}(G,K)$-modules are locally analytic representations of $G$, e.g. $\n{D}^{la}(G,K)$ is not a locally analytic representation as it cannot be written as a colimit of $\n{D}^{(h^+)}(G,K)$-modules. Indeed, if $\n{D}^{la}(G,K)$ was locally analytic then $1\in \n{D}^{la}(G,K)$ would be analytic for certain group $\bb{G}^{(h^+)}$, this would provide a section of the map $\n{D}^{la}(G,K)\to \n{D}^{(h^+)}(G,K)$ which is a contradiction: the map $\n{D}^{la}(G,K) \to \n{D}^{(h^+)}(G,K)$ is   injective by (a variant of) Lemma \ref{Lemmanoqs} so it would be an isomorphism,  by taking duals we would have that $C^{la}(G,K)\cong C(\bb{G}^{(h^+)},K)$ which is impossible since there are locally analytic functions which are not $\bb{G}^{(h^+)}$-analytic.     In the following propositions  we  give some conditions for a $\n{D}^{la}(G,K)$-module to be a locally analytic representation of $G$. 

\begin{prop}
\label{PropBanachlocally}
Let $V$ be a Banach  $K_{\sol}[G]$-module. The following   are equivalent.
\begin{enumerate}
    \item  the $K_\sol[G]$-module structure of $V$ extends to   $\n{D}^{la}(G,K)$.
    
    \item the $K_\sol[G]$-module structure of $V^{\vee}$  extends to   $\n{D}^{la}(G,K)$.
    
    \item $V$ is $\bb{G}^{(h^+)}$-analytic for some $h\geq 0$. 
\end{enumerate}
\proof
Formally,  (1) and  (2) are equivalent as the dual of a solid $\n{D}^{la}(G,K)$ module is naturally a solid $\n{D}^{la}(G,K)$-module,  (3) implies (1) is clear from the previous discussion. Let us show that (1) implies (3).  Suppose that $V$ is a $\n{D}^{la}(G,K)$-module. Consider the multiplication map $m_V:\n{D}^{la}(G,K){\otimes_{K_{\sol}}} V \to V$. As $V$ is Banach and $\n{D}^{la}(G,K){\otimes_{K_{\sol}} }V$ is a Fr\'echet space,  Lemma \ref{lemmaFrechetToNormed} and Corollary \ref{coroFLScompact} imply that there exists $h \geq 0$ such that  $m_V$ factors as 
\[
\n{D}^{la}(G, K){\otimes_{K_{\sol}}} V \to  \n{D}^{(h)}(G,K)^{B}\otimes_{K_{\sol}} V   \to   V.
\]
  It is immediate to check that this endows $V$ with an structure of $\n{D}^{(h^+)}(G,K)$-module for $h'>h$. By Theorem \ref{TheoMain} $V$ is $\bb{G}^{(h'^+)}$-analytic.    
\endproof
\end{prop}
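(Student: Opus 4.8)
The plan is to reduce everything to Theorem~\ref{TheoMain}, which identifies the $\bb{G}^{(h^+)}$-analytic solid $K_\sol[G]$-modules with the solid $\n{D}^{(h^+)}(G,K)$-modules, together with the structural fact (Remark~\ref{remarkDhplus}, Corollary~\ref{coroFLScompact}) that $\n{D}^{la}(G,K)=\varprojlim_h\n{D}^{(h^+)}(G,K)=\varprojlim_h\n{D}^{(h)}(G,K)$ is a Fr\'echet space of compact type, whose natural Banach stages are the Banachifications $\n{D}^{(h)}(G,K)^B$. I will prove the three implications $(1)\Leftrightarrow(2)$, $(3)\Rightarrow(1)$ and $(1)\Rightarrow(3)$.

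For $(1)\Leftrightarrow(2)$ I would argue by duality: the anti-automorphism $[g]\mapsto[g^{-1}]$ of $K_\sol[G]$ extends to $\n{D}^{la}(G,K)$, so if $V$ is a $\n{D}^{la}(G,K)$-module then $V^\vee=\iHom_K(V,K)$ is a right $\n{D}^{la}(G,K)$-module, and precomposing with this anti-automorphism turns it into a left $\n{D}^{la}(G,K)$-module; evaluating on group elements checks that this extends the standard $K_\sol[G]$-structure of $V^\vee$. Applying the same construction to the Smith space $V^\vee$ and using $(V^\vee)^\vee=V$ (Lemma~\ref{LemmaDuals1}) gives the reverse implication. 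The implication $(3)\Rightarrow(1)$ is immediate: if $V$ is $\bb{G}^{(h^+)}$-analytic then by Theorem~\ref{TheoMain}(2) it is a $\n{D}^{(h^+)}(G,K)$-module, and restriction of scalars along the canonical algebra map $\n{D}^{la}(G,K)\to\n{D}^{(h^+)}(G,K)$ yields $(1)$.

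The heart of the argument is $(1)\Rightarrow(3)$. Suppose $V$ is a $\n{D}^{la}(G,K)$-module with multiplication map $m_V\colon\n{D}^{la}(G,K)\otimes_{K_\sol}V\to V$. Present $\n{D}^{la}(G,K)=\varprojlim_h\n{D}^{(h)}(G,K)^B$ as an inverse limit of Banach spaces with transition maps of dense image (Lemma~\ref{lemmaFrechetdense}); since $V$ is Banach, Lemma~\ref{LemmaTensorProduct} together with Lemma~\ref{CorotensorBanach} identifies the source with the Fr\'echet space $\varprojlim_h\bigl(\n{D}^{(h)}(G,K)^B\otimes_{K_\sol}V\bigr)$. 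As the target $V$ is Banach, Lemma~\ref{lemmaFrechetToNormed} forces $m_V$ to factor through one of these Banach stages, so $m_V$ factors as $\n{D}^{la}(G,K)\otimes_{K_\sol}V\to\n{D}^{(h)}(G,K)^B\otimes_{K_\sol}V\to V$ for some $h$. For any $h'>h$, the projection $\n{D}^{la}(G,K)\to\n{D}^{(h)}(G,K)^B$ factors through $\n{D}^{la}(G,K)\to\n{D}^{(h'^+)}(G,K)$, using the factorisations $\n{D}^{(h'')}(G,K)\to\n{D}^{(h)}(G,K)^B\to\n{D}^{(h)}(G,K)$ of Remark~\ref{remarkDhplus} for $h''>h'>h$ and passing to the colimit over $h''$. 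Composing, one obtains a pairing $\n{D}^{(h'^+)}(G,K)\otimes_{K_\sol}V\to V$ that agrees with $m_V$ after restriction and with the given action on $K_\sol[G]\otimes_{K_\sol}V$; checking that it is an associative unital $\n{D}^{(h'^+)}(G,K)$-action, one concludes by Theorem~\ref{TheoMain}(2) that $V$ is $\bb{G}^{(h'^+)}$-analytic.

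The main obstacle, and the only non-formal point, is the last step of $(1)\Rightarrow(3)$: having merely factored $m_V$ through a Banach stage, one still has to verify that the resulting pairing is an honest module action rather than a bare bilinear map, i.e.\ that the associativity and unit identities hold. This requires propagating those identities from the dense subalgebra $K_\sol[G]$ to all of $\n{D}^{(h'^+)}(G,K)$, which uses the density of the Iwasawa algebra, the continuity of all the structure maps, and the quasiseparatedness of the solid $K$-vector spaces involved (so that morphisms agreeing on a dense subspace coincide). Everything else is bookkeeping with the already-established properties of Fr\'echet, Banach and $LS$ spaces.
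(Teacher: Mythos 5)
Your proposal is correct and follows essentially the same route as the paper: duality for $(1)\Leftrightarrow(2)$, Theorem \ref{TheoMain} for $(3)\Rightarrow(1)$, and for $(1)\Rightarrow(3)$ the factorisation of $m_V$ through a Banach stage $\n{D}^{(h)}(G,K)^B\otimes_{K_\sol}V$ via Lemma \ref{lemmaFrechetToNormed}, followed by promotion to a $\n{D}^{(h'^+)}(G,K)$-action and an appeal to Theorem \ref{TheoMain}. The only difference is that you make explicit the density/continuity verification of the module axioms, which the paper compresses into ``it is immediate to check.''
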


\begin{prop}[Schneider-Teitelbaum]
\label{PropSchTeillocally}
Let $V $ be an $LS$ space of compact type, cf. Definition \ref{defiBanachificationSmith}. The following are equivalent 
\begin{enumerate}
    \item  the $K_\sol[G]$-module structure of  $V$ extends to   $\n{D}^{la}(G,K)$-module. 
    
    \item  the $K_\sol[G]$-module structure of  $V^{\vee}$ extends to  $\n{D}^{la}(G,K)$-module. 
    
    \item $V$ is a locally analytic representation of $G$. 
    
\end{enumerate}
\proof
By Theorem \ref{theorem:duality},  (1) and (2) are equivalent. It is also clear that (3) implies (1).  Suppose that $V$ is a $\n{D}^{la}(G,K)$-module which is an $LS$ space of compact type. The multiplication map $m_V:  \n{D}^{la}(G,K)\otimes_{K_{\sol}} V \to V$ gives an element of $\Hom(\n{D}^{la}(G,K)\otimes_{K_{\sol}}V, V)$. Let $V= \varinjlim_{n} V_n$ be a presentation as a colimit of Smith spaces by injective transition maps, and let $V_n^{B}$ be the underlying Banach space  of $V_n$ (cf. Definition \ref{defiBanachificationSmith}). As $V$ is of compact type we have $V=\varinjlim_{n} V_n^{B}$.  Therefore 
\begin{eqnarray*}
\underline{\Hom}_K(\n{D}^{la}(G,K)\otimes_{K_{\sol}} V, V ) & = & \varprojlim_n \underline{\Hom}_K(\n{D}^{la}(G,K)\otimes_{K_{\sol}}V_n^B, V ) \\
    & = & \varprojlim_n  \underline{\Hom}_K(\n{D}^{la}(G,K), (V_n^B)^{\vee}{\otimes_{K_{\sol}}} V ) \\
    & = & \varprojlim_n C^{la}(G,K)\otimes_{K_{\sol}} (V_n^{B})^{\vee}\otimes_{K_{\sol}} V  \\
    & = &\varprojlim_n \varinjlim_m C^{la}(G,K) \otimes_{K_{\sol}} (V_n^B)^\vee \otimes_{K_{\sol}} V_m^B,
\end{eqnarray*}
where the first equality is formal, the second follows from the fact that $V$ is nuclear, and  the third equality follows from Theorem \ref{theorem:duality}.  This shows that given $n\in \bb{N}$ there is $m\in \bb{N}$ such that  the map $ m_{V}: \n{D}^{la}(G,K)\otimes_{K_{\sol}} V_n^B\to  V$ factors through $m_{V}:  \n{D}^{la}(G,K)\otimes_{K_{\sol}} V_n^B \to V_{m}^B$. By lemma \ref{lemmaFrechetToNormed} there exists $h\geq 0$ such that $m_{V}$ factors as $\n{D}^{la}(G,K)\otimes_{K_{\sol}} V_n^B \to \n{D}^{(h)}(G,K)\otimes_{K_{\sol}} V_n^B \to V_m^{B}$. Equivalently, there is  $h\geq 0$ (maybe different) such that $m_V$ factors as   $\n{D}^{la}(G,K)\otimes_{K_{\sol}} V_n \to \n{D}^{(h)}(G,K)\otimes_{K_{\sol}} V_n \to V_m$.  Let $V_n'$ be the image of   $\n{D}^{(h)}(G,K)\otimes_{K_{\sol}} V_n \to V_m$, it is a Smith space endowed with an action of $\n{D}^{(h)}(G,K)$ extending the one of $\n{D}^{la}(G,K)$.  It is immediate to see that  $V= \varinjlim_{n} V_n'$, this shows that $V$ is written as a colimit of $\n{D}^{(h'^+)}(G,K)$-modules (for $h'>h$), which implies that it is a locally analytic representation of $G$ by Theorem \ref{TheoMain}. 
\endproof
\end{prop}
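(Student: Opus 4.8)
The plan is to prove the three-way equivalence by establishing $(1)\Leftrightarrow(2)$ formally, deducing $(3)\Rightarrow(1)$ directly from the definition of locally analytic vectors together with Theorem \ref{TheoMain}, and then concentrating essentially all of the work on $(1)\Rightarrow(3)$.

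First I would record that, since $V$ is an $LS$ space of compact type, Theorem \ref{theorem:duality} gives that $V^\vee$ is a Fr\'echet space of compact type and that $(V^\vee)^\vee=V$. Transposing the action and twisting by the anti-automorphism $g\mapsto g^{-1}$ of $G$ (extended to the distribution algebra) turns a solid $\n{D}^{la}(G,K)$-module structure on $V$ into one on $V^\vee$ and conversely, which is exactly $(1)\Leftrightarrow(2)$; reflexivity from Theorem \ref{theorem:duality} is the only input. For $(3)\Rightarrow(1)$, if $V$ is locally analytic then $V=V^{la}=\varinjlim_{h\to\infty}V^{\bb{G}^{(h^+)}-an}$, and by Theorem \ref{TheoMain} each $V^{\bb{G}^{(h^+)}-an}$ is a solid $\n{D}^{(h^+)}(G,K)$-module, hence a $\n{D}^{la}(G,K)$-module via the structure map $\n{D}^{la}(G,K)\to\n{D}^{(h^+)}(G,K)$; as the category of $\n{D}^{la}(G,K)$-modules is closed under colimits, $V$ inherits such a structure, and it is compatible with the ambient $K_{\sol}[G]$-action because $D(\n{D}^{la}(G,K))\hookrightarrow D(K_{\sol}[G])$ is fully faithful (Corollary \ref{CoroTechnicalDhplus}).

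The heart of the proof is $(1)\Rightarrow(3)$. Assume $V$ carries a $\n{D}^{la}(G,K)$-module structure, with multiplication $m_V\colon\n{D}^{la}(G,K)\otimes_{K_{\sol}}V\to V$. Choose a presentation $V=\varinjlim_n V_n$ by Smith spaces with injective transition maps; since $V$ is of compact type, Corollary \ref{coroFLScompact} also lets us write $V=\varinjlim_n V_n^{B}$ with $V_n^{B}$ the associated Banach spaces. Using that $\n{D}^{la}(G,K)$ is Fr\'echet, hence nuclear (Proposition \ref{PropFrechetNuclear}), the tensor--Hom adjunction, and the duality $\n{D}^{la}(G,K)^\vee=C^{la}(G,K)$ of Theorem \ref{theorem:duality}, I would compute
\[
\iHom_K(\n{D}^{la}(G,K)\otimes_{K_{\sol}}V,V)=\varprojlim_n\varinjlim_m C^{la}(G,K)\otimes_{K_{\sol}}(V_n^{B})^\vee\otimes_{K_{\sol}}V_m^{B}.
\]
Reading off the class of $m_V$ under this identification shows that for each $n$ there is an $m$ for which the restriction of $m_V$ to $\n{D}^{la}(G,K)\otimes_{K_{\sol}}V_n^{B}$ factors through $V_m^{B}$; Lemma \ref{lemmaFrechetToNormed} then upgrades this to a factorisation through $\n{D}^{(h)}(G,K)\otimes_{K_{\sol}}V_n^{B}\to V_m^{B}$ for some $h$, equivalently through $\n{D}^{(h)}(G,K)\otimes_{K_{\sol}}V_n\to V_m$ at the level of Smith spaces. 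Letting $V_n'\subseteq V_m$ be the image of the latter map, one checks that $V_n'$ is a Smith space carrying an action of $\n{D}^{(h')}(G,K)$ for $h'>h$ extending the $\n{D}^{la}(G,K)$-action, that the $V_n'$ form a filtered system with $V=\varinjlim_n V_n'$, and therefore that $V$ is a colimit of $\bb{G}^{(h'^+)}$-analytic representations; by Theorem \ref{TheoMain} it is locally analytic.

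The step I expect to be the main obstacle is the final bookkeeping: making the assignments $m=m(n)$ and $h=h(n)$ coherent enough that the images $V_n'$ genuinely assemble into a filtered diagram with colimit $V$, and verifying that each $V_n'$ is stable under the enlarged distribution algebra, i.e. is a quotient of $\n{D}^{(h)}(G,K)\otimes_{K_{\sol}}V_n$ \emph{as a module} and not merely a subspace of $V_m$. This is precisely where the compact-type hypothesis is indispensable: it is what permits the Fr\'echet-to-Banach factorisation of Lemma \ref{lemmaFrechetToNormed} after passing to the Banach spaces $V_n^{B}$, and, via Corollary \ref{coroFLScompact}, what guarantees $V=\varinjlim_n V_n^{B}$ so that no vectors are lost in the $\iHom$ identification above.
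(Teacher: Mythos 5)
Your proposal is correct and follows essentially the same route as the paper's proof: the same duality argument for $(1)\Leftrightarrow(2)$, the same reduction of $(3)\Rightarrow(1)$ to Theorem \ref{TheoMain}, and for $(1)\Rightarrow(3)$ the identical computation of $\iHom_K(\n{D}^{la}(G,K)\otimes_{K_{\sol}}V,V)$ via nuclearity and duality, followed by the factorisation through $\n{D}^{(h)}(G,K)\otimes_{K_{\sol}}V_n\to V_m$ and the passage to the images $V_n'$. The "main obstacle" you flag at the end is exactly the step the paper dispatches with "it is immediate to see that $V=\varinjlim_n V_n'$", so no gap.
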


One may wonder whether the previous propositions can be extended  to the bounded derived category. Under the assumption of Conjecture \ref{ConjectureLocallyanalytic} we can improve Propositions \ref{PropBanachlocally} and \ref{PropSchTeillocally} as follows:

\begin{proposition}
\label{PropUseConjecture}
 Suppose that Conjecture \ref{ConjectureLocallyanalytic} holds.  Let $C\in D(\n{D}^{la}(G,K)_{\sol})^b$ be a bounded solid $\n{D}^{la}(G,K)$-module. Assume that  either \begin{itemize}
\item[(a)] $C$ is quasi-isomorphic to a bounded complex of $K$-Banach spaces as  a $K_{\sol}$-complex. 

\item[(b)] $C$ is quasi-isomorphic to a bounded complex of $LS$ spaces of compact type as a $K_{\sol}$-complex.
\end{itemize}
Then $C$ is a derived locally analytic representation of $G$.   In particular there is a derived duality between locally analytic complexes quasi-isomorphic to bounded complexes of $LS$ spaces of compact type, and $\n{D}^{la}(G,K)$-complexes quasi-isomorphic to bounded complexes of Frech\'et spaces of compact type. 
\proof
We will prove in Corollary \ref{CoroTechnicalDhplus} that \begin{equation}
\label{eqDlatensor}
\n{D}^{la}(G,K)\otimes_{K_{\sol}[G]}^{L} \n{D}^{la}(G,K)= \n{D}^{la}(G,K).
\end{equation}
Let $C$ be a complex in $D(\n{D}^{la}(G,K))$ which is quasi-isomorphic to a bounded complex of Banach spaces as $K_{\sol}$-complex.  By Corollary \ref{CoroDualityDerived} (1) we have 
\[
R\underline{\Hom}_K(\n{D}^{la}(G,K), C)= \underset{h\to \infty}{\hocolim}\; R\underline{\Hom}_K(\n{D}^{(h^+)}(G,K), C).
\]
Taking $K_{\sol}[G]$-invariants,  using that $K$ is a compact $K_\sol[G]$-module (cf. Theorem \ref{LazardSerre} below) and hence that it  commutes with filtered colimits, one gets that 
\[
R\underline{\Hom}_{K_{\sol}[G]}(\n{D}^{la}(G,K), C) = \underset{h\to \infty}{\hocolim}\; R\underline{\Hom}_{K_{\sol}[G]}(\n{D}^{(h^+)}(G,K), C).
\]
But (\ref{eqDlatensor}) implies that the LHS is equal to $C$ while Theorem \ref{TheoMain} implies that the RHS is equal to $\hocolim_{h} C^{R \bb{G}^{(h^+)}}$. This shows that $C$ is derived locally analytic.   

Now suppose that $C$ is quasi-isomorphic to a bounded complex of  $LS$ spaces of compact type.  Then from Corollary \ref{CoroDualityDerived} (3) and (4) one has 
\[
R\underline{\Hom}_K(\n{D}^{la}(G,K),  C)= C^{la}(G,K)\otimes^{L}_{K_{\sol}} C= \underset{h\to \infty}{\hocolim}\; C(\bb{G}^{(h)},K)_{\sol}\otimes^L_{K_{\sol}} C.
\]
Taking $K_{\sol}[G]$-invariants one gets again by (\ref{eqDlatensor}) that $C= C^{Rla}$, i.e. that $C$ is a derived locally analytic representation of $G$.  
\endproof
\end{proposition}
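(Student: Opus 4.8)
The strategy is to compute $R\iHom_{K_{\sol}[G]}(\n{D}^{la}(G,K),C)$ in two ways and compare. First, granting the forthcoming identity (proved in Corollary \ref{CoroTechnicalDhplus})
\[
\n{D}^{la}(G,K)\otimes^L_{K_{\sol}[G]}\n{D}^{la}(G,K)=\n{D}^{la}(G,K),
\]
together with the hypothesis that $C$ is a $\n{D}^{la}(G,K)$-module and the adjunction between extension and restriction of scalars along $K_{\sol}[G]\to\n{D}^{la}(G,K)$, one gets
\[
R\iHom_{K_{\sol}[G]}(\n{D}^{la}(G,K),C)=R\iHom_{\n{D}^{la}(G,K)}(\n{D}^{la}(G,K),C)=C.
\]

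Second, I would express the same object as a homotopy colimit over the approximating algebras and identify it with $C^{Rla}$. Writing $\n{D}^{la}(G,K)=\varprojlim_h\n{D}^{(h)}(G,K)$ as a cofiltered limit of Banach (or Smith) spaces with compact, dense transition maps exhibits $\n{D}^{la}(G,K)$ as a Fr\'echet space of compact type. In case (a), $C$ is $K_{\sol}$-quasi-isomorphic to a bounded complex of $K$-Banach spaces, so Corollary \ref{CoroDualityDerived}(1), applied termwise along the stupid filtration of $C$, gives
\[
R\iHom_K(\n{D}^{la}(G,K),C)=\underset{h\to\infty}{\hocolim}\;R\iHom_K(\n{D}^{(h^+)}(G,K),C),
\]
equivariantly for the left- and right-regular $G$-actions on the distribution algebras. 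In case (b), $C$ is $K_{\sol}$-quasi-isomorphic to a bounded complex of $LS$ spaces of compact type, and I would instead use Corollary \ref{CoroDualityDerived}(3)--(4) together with $\n{D}^{la}(G,K)^\vee=C^{la}(G,K)=\varinjlim_h C(\bb{G}^{(h)},K)$ and the commutation of $\otimes^L_{K_{\sol}}$ with colimits to get
\[
R\iHom_K(\n{D}^{la}(G,K),C)=C^{la}(G,K)\otimes^L_{K_{\sol}}C=\underset{h\to\infty}{\hocolim}\;\bigl(C(\bb{G}^{(h)},K)_{\sol}\otimes^L_{K_{\sol}}C\bigr).
\]

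Now I would take $K_{\sol}[G]$-invariants, i.e. apply $R\iHom_{K_{\sol}[G]}(K,-)$ with respect to the residual $G$-action. The key point is that $K$ is a compact object of $D(K_{\sol}[G])$: by the Lazard--Serre lemma (Theorem \ref{LazardSerre}) it has a finite free resolution, so $R\iHom_{K_{\sol}[G]}(K,-)$ commutes with filtered homotopy colimits. Applying this and Theorem \ref{TheoMain}(1) --- which identifies $R\iHom_{K_{\sol}[G]}(\n{D}^{(h^+)}(G,K),C)$ with $C^{R\bb{G}^{(h^+)}-an}$ in case (a), and the $\star_{1,3}$-invariants of $C(\bb{G}^{(h)},K)_{\sol}\otimes^L_{K_{\sol}}C$ with $C^{R\bb{G}^{(h)}-an}$ in case (b) --- the right-hand side becomes $\hocolim_h C^{R\bb{G}^{(h^+)}-an}=C^{Rla}$. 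Comparing with the first computation yields $C^{Rla}\simeq C$, i.e. $C$ is derived locally analytic. The final ``in particular'' assertion then follows by combining this with the derived duality between bounded complexes of Fr\'echet spaces of compact type and bounded complexes of $LS$ spaces of compact type (Corollary \ref{CoroDualityDerived}(3)), since $V\mapsto V^\vee$ sends a $\n{D}^{la}(G,K)$-module to a $\n{D}^{la}(G,K)$-module and hence matches up the two classes.

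I expect the main difficulty to be bookkeeping rather than conceptual: one must carefully track the several $G$-actions (the left- and right-regular actions on $\n{D}^{la}(G,K)$ and on $C(\bb{G}^{(h)},K)_{\sol}$, and the action coming from $C$), and verify that the identifications of Corollary \ref{CoroDualityDerived} and Theorem \ref{TheoMain}(1) are $G$-equivariant, so that the homotopy colimit genuinely commutes with $R\iHom_{K_{\sol}[G]}(K,-)$. The only use of Conjecture \ref{ConjectureLocallyanalytic} is through Corollary \ref{CoroDualityDerived}; beyond that the argument is unconditional.
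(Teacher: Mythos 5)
Your proposal is correct and follows essentially the same route as the paper's proof: the identity $\n{D}^{la}(G,K)\otimes^L_{K_{\sol}[G]}\n{D}^{la}(G,K)=\n{D}^{la}(G,K)$ gives $C$ on one side, Corollary \ref{CoroDualityDerived} (part (1) in case (a), parts (3)--(4) in case (b)) expresses $R\iHom_K(\n{D}^{la}(G,K),C)$ as a homotopy colimit, and compactness of $K$ over $K_{\sol}[G]$ via Theorem \ref{LazardSerre} lets the $G$-invariants pass through the colimit to yield $C^{Rla}$ by Theorem \ref{TheoMain}. Your explicit attention to the $G$-equivariance of the identifications is a reasonable point of care but does not change the argument.
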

\begin{remark}
In the situation (a) of the previous proposition,  one can show in addition that $C$ is  derived $\bb{G}^{(h^+)}$-analytic for some $h>0$. Indeed, it is enough to prove that for all $n\in \Z$ there is $h>0$ such that $H^n(C)$ is derived  $\bb{G}^{(h^+)}$-analytic. As $C$ is bounded, we are left to show that if $V$ is a $\n{D}^{la}(G,K)$-module which is a quotient of two $K$-Banach spaces (not necessarily $\n{D}^{la}(G,K)$-modules),   then $V$ is already a $\n{D}^{(h^+)}(G,K)$-module for some $h>0$.  Let $m_{V}: \n{D}^{la}(G,K)\otimes_{K_\sol} V \to V$ be the multiplication map, by Corollary \ref{CoroDualityDerived} (4)   one has that 
\[
R\underline{\Hom}_{K}(\n{D}^{la}(G,K)\otimes_{K_\sol} V, V) = \underset{h\to \infty}{\hocolim} R\underline{\Hom}_{K}(\n{D}(\bb{G}^{(h)}, K)\otimes_{K_\sol}V , V).
\]
Thus, $m_V$ factors as $m_V: \n{D}(\bb{G}^{(h)},K)\otimes_{K_\sol} V \to V$ for some $h>0$. After taking $h'>h$ one shows that the map $\n{D}(\bb{G}^{(h'^+)},K) \otimes_{K_\sol}  V\to  V$ is in fact an action of $\n{D}(\bb{G}^{(h'^+)},K)$, proving that $V$ is derived $\bb{G}^{(h^+)}$-analytic as desired. 
\end{remark}

\subsection{Admissible representations}

Before writing down the statements of the cohomological comparison results, let us show how the theory developed till now  together with some algebraic facts about the Iwasawa and the distribution algebras, provide a description of the locally analytic vectors of an admissible representation in terms of its dual. All results in this sections were already known (\cite{SchTeitDist}, \cite{LuePan}).

We recall the following important results of the classical Iwasawa and distribution algebras
\begin{theorem}[Lazard, Schneider-Teitelbaum]
\label{TheoLSTIwasawa}
The following holds 
\begin{enumerate}
    \item The (classical) Iwasawa  algebra $K_{\sol}[G](*)$ is a (left and right) noetherian integral domain. 
    \item The (classical) distribution algebras $\n{D}^{(h^+)}(G,K)(*)$ are flat over $\O_{K,\sol}[G](*)$ algebraically.  
    \item Let $h'>h$, then $\n{D}^{(h'^+)}(G,K)(*)$ is a flat algebra over $\n{D}^{(h^+)}(G,K)(*)$ algebraically. 
    \item The (classical) locally analytic distribution algebra $\n{D}^{la}(G,K)(*)$ is faithfully flat over $K_{\sol}[G](*)$. 
\end{enumerate}
\proof
Part (1) is \cite[Remark 4.6]{SchTeitDist}, part (2) is \cite[Proposition  4.7]{SchTeitDist},  part (3) is \cite[Theorem 4.9]{SchTeitDist} and part (4) is \cite[Theorem 4.11]{SchTeitDist}. 
\endproof
\end{theorem}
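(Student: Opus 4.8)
The assertions are classical, due to Lazard \cite{Lazard} and Schneider--Teitelbaum \cite{SchTeitDist}, and the plan is to reduce each of them to a statement about associated graded rings, using the standard principle that a filtered ring homomorphism which is (faithfully) flat on associated graded rings is itself (faithfully) flat, together with the finite free ``crossed product'' description $K_{\sol}[G](*)=\bigoplus_{g\in G/G_0} g\cdot K_{\sol}[G_0](*)$ that reduces all four statements from $G$ to the uniform pro-$p$ subgroup $G_0$. For part (1) I would recall that the $p$-valuation filtration on $\O_{K,\sol}[G_0](*)$ has associated graded ring a polynomial ring $\gr^{\bullet}\O_{K,\sol}[G_0](*)\cong (\gr^{\bullet}\O_K)[\xi_1,\dots,\xi_d]$ on the principal symbols of the $\bbf{b}_i=[g_i]-1$ (here the hypothesis that the $g_i$ have constant valuation $w_0$ is used); a polynomial ring over a graded domain is a noetherian domain, so $\O_{K,\sol}[G_0](*)$ is a noetherian domain, and inverting $p$ and passing to $G$ preserves both properties.

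For parts (2) and (3) the plan is to put on $\n{D}_{(h)}(G_0,K)(*)$ the filtration associated to the norm $\|\sum_\alpha a_\alpha\bbf{b}^\alpha\|_h=\sup_\alpha |a_\alpha|\,p^{-b(h)|\alpha|}$, with $b(h)=\tfrac{p^{-h}}{p-1}$, which one checks is submultiplicative for the convolution product using \cite[Proposition 4.2]{SchTeitDist} and the shape of the group law $\psi$; its associated graded ring is again a polynomial ring over the residue field. Then the inclusions $\O_{K,\sol}[G_0](*)\hookrightarrow \n{D}_{(h)}(G_0,K)(*)$ and $\n{D}_{(h)}(G_0,K)(*)\hookrightarrow\n{D}_{(h')}(G_0,K)(*)$ for $h'>h$ induce on associated graded rings a localization of polynomial rings, which is flat; the filtered flatness criterion upgrades this to flatness of the rings themselves, and passing to $\n{D}^{(h^+)}$ via the colimit of Corollary \ref{coroDhpluscolimit} and to $G$ via the crossed product completes parts (2) and (3).

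For part (4) the plan is to exploit the Fr\'echet--Stein presentation $\n{D}^{la}(G,K)(*)=\varprojlim_h \n{D}^{(h)}(G,K)(*)$ with transition maps flat by (3): for a finitely generated $K_{\sol}[G](*)$-module $M$ one has $\n{D}^{la}(G,K)(*)\otimes_{K_{\sol}[G](*)}M=\varprojlim_h\big(\n{D}^{(h)}(G,K)(*)\otimes_{K_{\sol}[G](*)}M\big)$ with vanishing $R^1\varprojlim$ by a Mittag--Leffler argument, and each term is exact in $M$ by (2), giving flatness; faithfulness follows since the base change is nonzero for $M\neq 0$, as it surjects onto $\n{D}^{(h)}(G,K)(*)\otimes_{K_{\sol}[G](*)}M$, which is nonzero for $h\gg 0$. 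The genuinely nontrivial input, which I would simply cite from \cite{SchTeitDist} and \cite{Lazard} rather than reprove, is the identification of the associated graded rings as honest polynomial rings and the verification that the norms $\|\cdot\|_h$ are submultiplicative; everything else is formal filtered commutative algebra. Accordingly, in the paper we content ourselves with the precise references \cite[Remark 4.6, Proposition 4.7, Theorem 4.9, Theorem 4.11]{SchTeitDist}.
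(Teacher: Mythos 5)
Your final proof coincides with the paper's, which consists precisely of the four citations to \cite{SchTeitDist}; the filtered/graded sketch you prepend is a faithful outline of how those results are actually proved there (associated graded polynomial rings, flatness via graded flatness, and a Mittag--Leffler argument for the Fr\'echet--Stein limit in (4)). One caveat on the sketch: the crossed-product decomposition $K_{\sol}[G](*)=\bigoplus_{g\in G/G_0}g\cdot K_{\sol}[G_0](*)$ transports noetherianness but not the integral-domain property from $G_0$ to $G$ (the group algebra of a nontrivial finite group is never a domain), so the zero-divisor statement in (1) requires $G$ itself to be torsion-free rather than following from the reduction to the uniform subgroup.
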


\begin{definition}
A solid Banach representation $V$ of $G$ is admissible if its dual is a finite module over the Iwasawa algebra.  Equivalently, if $V$  admits an equivariant  closed immersion into a finite direct sum of $\underline{\Cont}(G,K)$. 
\end{definition}

\begin{remark} \label{RemarkAdmissible}
By (1) of Theorem \ref{TheoLSTIwasawa},  any  finite  $K_{\sol}[G](*)$-module $W$   is of finite presentation. The module $W$ has a natural Hausdorff topology given by the quotient topology of any presentation of $W$ as a quotient of finite free $K_{\sol}[G](*)$-modules (equipped with their canonical topology induced from $K_{\sol}[G](*)_{\mathrm{top}}$),  cf.  \cite[Proposition 3.1]{SchTeitBanachIwasawa}. Moreover,  any $K_{\sol}[G](*)$-equivariant map between finite  $K_{\sol}[G](*)$-modules is continuous for this topology. If $P_1\to P_0\to W\to 0$  is a presentation of $W$,  one has an exact sequence of solid $K$-vector spaces $ \underline{P_1}\to \underline{P_0}\to \underline{W}\to 0$,  where the $\underline{P_i}$ are   finite free $K_{\sol}[G]$-modules.  In other words, the category of finite $K_{\sol}[G](*)$-modules embeds fully faithfully in the category of finite $K_{\sol}[G]$-modules.  This shows that a solid Banach representation of $G$ is admissible if and only if $V(*)_{\mathrm{top}}$ is an admissible representation in the classical sense.  
\end{remark}

\begin{prop}
\label{PropAdmissible}
Let $V$ be a Banach $G$-representation, then 
\[
V^{R\bb{G}^{(h^+)}-an} = R\underline{\Hom}_K(\n{D}^{(h^+)}(G,K)\otimes^L_{K_\sol[G]} V^\vee, K).
\]
In particular, if $V$ is admissible then
\[ V^{R\bb{G}^{(h^+)}-an}=V^{\bb{G}^{(h^+)}-an}=\underline{\Hom}_K( \n{D}^{(h^+)}(G,K)\otimes_{K_\sol[G]} V^\vee,K). \]
Furthermore, $V^{la}= \underline{\Hom}_K(\n{D}^{la}(G,K)\otimes_{K_{\sol}[G]}V^\vee, K)$. 
\proof
By Theorem \ref{TheoMain} we have 
\begin{eqnarray*}
V^{R\bb{G}^{(h^+)}-an} & = &  R\underline{\Hom}_{K[G]}(\n{D}^{(h^+)}(G,K), V) \\
            & =& R\underline{\Hom}_{K[G]}(\n{D}^{(h^+)}(G,K),  R\underline{\Hom}_K(V^\vee, K)) \\
            & = & R\underline{\Hom}_{K}(\n{D}^{(h^+)}(G,K)\otimes^L_{K_\sol[G]}V^\vee, K ),
\end{eqnarray*}
this proves the first statement.
Moreover, if $V$ is admissible then $V^\vee$ is a finite $K_{\sol}[G]$-module.  By flatness of the distribution algebra one gets that $\n{D}^{(h^+)}(G,K)\otimes^L_{K_{\sol}[G]} V^\vee= \n{D}^{(h^+)}(G,K)\otimes_{K_{\sol}[G]} V^\vee$ is concentrated in degree $0$ (we warn  that the flatness is only algebraic, and that we use the fact that $V^\vee$ is a finite module over the Iwasawa algebra), this implies the second claim. 

We now prove the last statement. Writing $V^{\vee}$ as a quotient  $K_{\sol}[G]^n\xrightarrow{f} K_{\sol}[G]^m\to V^{\vee}\to0$,    we have  an exact sequence 
\begin{equation}
\label{eqTensorVDla}
\n{D}^{(h^+)}(G,K)^n \xrightarrow{1\otimes f} \n{D}^{(h^+)}(G,K)^{m} \to   \n{D}^{(h^+)}(G,K)\otimes_{K_{\sol}[G]} V^{\vee}\to 0.
\end{equation}
Taking inverse limits in \eqref{eqTensorVDla} one obtains an exact sequence (by topological Mittag-Leffler, cf. Lemma \ref{LemmaML})
\begin{equation} \label{eqeaea}
\n{D}^{la}(G,K)^n \xrightarrow{1\otimes f} \n{D}^{la}(G,K)^{m} \to  \varprojlim_{h\to \infty } (\n{D}^{(h^+)}(G,K)\otimes_{K_{\sol}[G]} V^{\vee}) \to 0,
\end{equation}
proving that  $\varprojlim_{h\to \infty } (\n{D}^{(h^+)}(G,K)\otimes_{K_{\sol}[G]} V^{\vee})= \n{D}^{la}(G,K)\otimes_{K_{\sol}[G]} V^{\vee}$. In particular,  it also follows from \eqref{eqeaea} that $\n{D}^{la}(G,K)\otimes_{K_{\sol}[G]} V^{\vee}$  is a Fr\'echet space of compact type since it is a quotient of such. Moreover,
\begin{eqnarray*}
\n{D}^{la}(G,K)\otimes_{K_{\sol}[G]} V^{\vee} & = &  \varprojlim_{h\to \infty}( \n{D}^{(h)}(G,K)^B \otimes_{K_{\sol}[G]}V^{\vee}) 
\end{eqnarray*}
(recall that we have arrows  $ \n{D}^{(h')}(G,K)\to \n{D}^{(h^+)}(G,K) \to \n{D}^{(h)}(G,K)$ for $h'>h$,  which induce  arrows $\n{D}^{(h')}(G,K)^B\to \n{D}^{(h^+)}(G,K) \to \n{D}^{(h)}(G,K)^B$). Using all this,  one obtains
\begin{eqnarray*}
\underline{\Hom}_{K}(\n{D}^{la}(G,K)\otimes_{K_\sol[G]}V^\vee, K ) &=&  \underline{\Hom}_{K}(\varprojlim_{h\to \infty}( \n{D}^{(h)}(G,K)^B\otimes_{K_\sol[G]}V^\vee), K )  \\
& = &    \varinjlim_{h\to \infty}  
\underline{\Hom}_{K}(\n{D}^{(h)}(G,K)^B\otimes_{K_\sol[G]}V^\vee, K )  \\
&=& \varinjlim_{h\to \infty}  
\underline{\Hom}_{K}(\n{D}^{(h^+)}(G,K)\otimes_{K_\sol[G]}V^\vee, K )  \\
& = &  \varinjlim_{h\to \infty} V^{\bb{G}^{(h^+)}-an} \\
& = & V^{la},
\end{eqnarray*}
where the second equality follows from   Lemma \ref{lemmaFrechetToNormed}.
\endproof

\end{prop}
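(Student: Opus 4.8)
The plan is to derive all three assertions from the comparison Theorem~\ref{TheoMain}(1), the Banach--Smith duality of Lemma~\ref{LemmaDuals1}, and the flatness statements of Theorem~\ref{TheoLSTIwasawa}. First I would prove the general identity: by Definition~\ref{DefiAnalyticVectors} and Theorem~\ref{TheoMain}(1) applied with trivial coefficients $W = K$ and $C = V$ one has $V^{R\bb{G}^{(h^+)}-an} = R\underline{\Hom}_{K_\sol[G]}(\n{D}^{(h^+)}(G,K), V)$. Since $V$ is Banach its dual $V^\vee$ is a Smith space, and because Smith spaces are projective (Lemma~\ref{LemmaBanachSmith}(2)) the Banach--Smith duality of Lemma~\ref{LemmaDuals1} holds derived, giving $R\underline{\Hom}_K(V^\vee, K) = V$. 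Substituting and using the tensor--Hom adjunction relating $K_\sol[G]$ and $K$ yields
\[
V^{R\bb{G}^{(h^+)}-an} = R\underline{\Hom}_{K_\sol[G]}\big(\n{D}^{(h^+)}(G,K), R\underline{\Hom}_K(V^\vee,K)\big) = R\underline{\Hom}_K\big(\n{D}^{(h^+)}(G,K)\otimes^L_{K_\sol[G]} V^\vee, K\big).
\]

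Next I would treat the admissible case. Here $V^\vee$ is a finite module over the Iwasawa algebra, which is noetherian by Theorem~\ref{TheoLSTIwasawa}(1); choosing a finite free presentation of $V^\vee$ as a solid $K_\sol[G]$-module (possible by Remark~\ref{RemarkAdmissible}) lets one compute $\n{D}^{(h^+)}(G,K)\otimes^L_{K_\sol[G]} V^\vee$ as an algebraic Tor, which is concentrated in degree $0$ by the (algebraic) flatness of Theorem~\ref{TheoLSTIwasawa}(2). Thus $M_h := \n{D}^{(h^+)}(G,K)\otimes_{K_\sol[G]} V^\vee$ is a module, finite over the $LS$-space-of-compact-type $\n{D}^{(h^+)}(G,K)$, hence itself an $LS$ space of compact type; writing $M_h = \varinjlim_n Q_n$ with $Q_n$ Smith and trace-class transition maps (Corollary~\ref{coroFLScompact}), whose duals $Q_n^\vee$ are Banach with dense transition maps (Lemma~\ref{Lemmanoqs}(2)), topological Mittag-Leffler (Lemma~\ref{LemmaML}) shows $R\underline{\Hom}_K(M_h, K) = R\varprojlim_n Q_n^\vee$ is concentrated in degree $0$ and equals $M_h^\vee$. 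Combining with the previous paragraph, $V^{R\bb{G}^{(h^+)}-an}$ is concentrated in degree $0$, hence equals $V^{\bb{G}^{(h^+)}-an} = \underline{\Hom}_K(\n{D}^{(h^+)}(G,K)\otimes_{K_\sol[G]} V^\vee, K)$.

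Finally I would compute the locally analytic vectors. Tensoring a finite free presentation $K_\sol[G]^n\xrightarrow{f}K_\sol[G]^m\to V^\vee\to 0$ with $\n{D}^{(h^+)}(G,K)$ and passing to the inverse limit over $h$, topological Mittag-Leffler applied to the Fr\'echet algebra $\n{D}^{la}(G,K) = \varprojlim_h \n{D}^{(h)}(G,K)$ (dense transition maps) gives an exact sequence $\n{D}^{la}(G,K)^n\to\n{D}^{la}(G,K)^m\to\varprojlim_h M_h\to 0$, so $\varprojlim_h M_h = \n{D}^{la}(G,K)\otimes_{K_\sol[G]}V^\vee$ is a Fr\'echet space of compact type. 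Using the factorisations $\n{D}^{(h')}(G,K)\to\n{D}^{(h^+)}(G,K)\to\n{D}^{(h)}(G,K)^B\to\n{D}^{(h)}(G,K)$ of Remark~\ref{remarkDhplus}, this inverse limit is equally computed by the system of Banach spaces $(\n{D}^{(h)}(G,K)^B\otimes_{K_\sol[G]}V^\vee)_h$, which has dense transition maps; Lemma~\ref{lemmaFrechetToNormed} then gives
\[
\underline{\Hom}_K\big(\n{D}^{la}(G,K)\otimes_{K_\sol[G]}V^\vee, K\big) = \varinjlim_h \underline{\Hom}_K\big(\n{D}^{(h)}(G,K)^B\otimes_{K_\sol[G]}V^\vee, K\big) = \varinjlim_h V^{\bb{G}^{(h^+)}-an} = V^{la},
\]
the second equality by the admissible case and the last by Definition~\ref{DefLocAn}.

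The main obstacle, and the step I would be most careful about, is the interplay between the derived and underived tensor products and internal Homs: the flatness in Theorem~\ref{TheoLSTIwasawa}(2) is only algebraic, so one must genuinely exploit that $V^\vee$ is finitely presented over the Iwasawa algebra to reduce $-\otimes^L_{K_\sol[G]}V^\vee$ to an algebraic computation, and then one must verify that $R\underline{\Hom}_K(-,K)$ stays in degree $0$ on the relevant $LS$ (resp. Fr\'echet) spaces of compact type --- which is precisely where Banach--Smith duality and the topological Mittag-Leffler lemma enter, and the reason the derived and classical forms of the statement coincide for admissible $V$.
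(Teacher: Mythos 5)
Your proof is correct and follows essentially the same route as the paper: Theorem \ref{TheoMain} plus derived Banach--Smith duality and tensor--Hom adjunction for the first identity, algebraic flatness over the noetherian Iwasawa algebra applied to a finite presentation of $V^\vee$ for the admissible case, and topological Mittag-Leffler together with Lemma \ref{lemmaFrechetToNormed} for the passage to locally analytic vectors. The only (welcome) difference is that you spell out, via the $LS$-of-compact-type structure of $\n{D}^{(h^+)}(G,K)\otimes_{K_\sol[G]}V^\vee$, why $R\underline{\Hom}_K(-,K)$ remains concentrated in degree $0$, a point the paper leaves implicit.
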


\begin{corollary}
\label{CoroadmissibleDense}
The functor $V \mapsto V^{la}$ is exact  on the category of admissible Banach $G$-representations. Moreover  $V^{la}(*)_{\mathrm{top}}\subset V(*)_{\mathrm{top}}$ is a dense subspace. 
\proof

First,  note that  the category of admissible representations is an abelian category (being the dual of the category of finite $K_{\sol}[G]$-modules,  cf.  Remark \ref{RemarkAdmissible}), and  that a closed subrepresentation of an admissible representation is admissible (being the dual of a quotient of a finite $K_{\sol}[G]$-module). 

Let us prove  that the functor $V \mapsto V^{la}$ is exact for admissible Banach $G$-representations. Indeed, by Proposition \ref{PropAdmissible} we have
\[ V^{la} = \iHom_{K}(\n{D}^{la}(G, K) \otimes_{K_\sol[G]} V^\vee, K). \]
Then, the exactness follows by the antiequivalence $V\mapsto V^{\vee}$ between admissible representations and finite $K_{\sol}[G]$-modules,  the faithful flatness of $\n{D}^{la}(G, K)$ over $K_{\sol}[G]$,  and the duality Theorem \ref{theorem:duality}.  Furthermore,  this also shows that $V^{la}\neq 0$ provided $V\neq 0$.

Finally, let $V' \subseteq V$ be the closure of $V^{la}$ (i.e. the condensed object associated to the closure of $V^{la}(*)_{\mathrm{top}}$ in $V(*)_{\mathrm{top}}$). Then $V'$ and $Q = V / V'$ are admissible representations and we have an exact sequence $0\to (V')^{la} \to V^{la} \to Q^{la}\to 0$. The density of  $V^{la}(*)_{\mathrm{top}}$ in  $V(*)_{\mathrm{top}}$  follows since the functor $V \mapsto V^{la}$ is non zero.
\endproof
\end{corollary}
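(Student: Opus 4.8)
Corollary \ref{CoroadmissibleDense} (exactness of $V\mapsto V^{la}$ on admissible Banach $G$-representations, and density of $V^{la}(*)_{\mathrm{top}}$).

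The plan is to reduce the whole statement to the duality and flatness results already available. By Proposition \ref{PropAdmissible}, for an admissible Banach $G$-representation $V$ one has
\[ V^{la} = \iHom_K\big(\n{D}^{la}(G,K)\otimes_{K_\sol[G]} V^\vee,\, K\big), \]
and the space $\n{D}^{la}(G,K)\otimes_{K_\sol[G]} V^\vee$ is a Fr\'echet space of compact type. Hence, on the category of admissible Banach $G$-representations, the functor $V\mapsto V^{la}$ is the composite of (i) the contravariant functor $V\mapsto V^\vee$, which by Remark \ref{RemarkAdmissible} is an anti-equivalence onto the category of finite $K_\sol[G]$-modules; (ii) the functor $M\mapsto \n{D}^{la}(G,K)\otimes_{K_\sol[G]} M$ from finite $K_\sol[G]$-modules to Fr\'echet spaces of compact type; and (iii) the contravariant functor $N\mapsto N^\vee$, which by Theorem \ref{theorem:duality} restricts to an anti-equivalence between Fr\'echet and $LS$ spaces of compact type. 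Before running this, I would record two easy preliminaries: the category of admissible Banach $G$-representations is abelian, being anti-equivalent via $V\mapsto V^\vee$ to the category of finite modules over the noetherian ring $K_\sol[G](*)$ (Theorem \ref{TheoLSTIwasawa}(1)); and a closed subrepresentation of an admissible representation is again admissible, since its dual is a quotient of a finite $K_\sol[G]$-module, hence finite.

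With this in hand, exactness of $V\mapsto V^{la}$ is immediate: step (i) is exact because it is an anti-equivalence, step (iii) is exact by Theorem \ref{theorem:duality}(1), and step (ii) is exact on finite $K_\sol[G]$-modules because the classical distribution algebra $\n{D}^{la}(G,K)(*)$ is faithfully flat over the classical Iwasawa algebra by Theorem \ref{TheoLSTIwasawa}(4), while for finite modules over the Iwasawa algebra the solid base change computes the algebraic tensor product and stays in degree $0$ (the same observation used in the proof of Proposition \ref{PropAdmissible}). Composing the three exact functors gives exactness of $V\mapsto V^{la}$. The same chain of functors also shows that $V\mapsto V^{la}$ is non-zero on non-zero objects: if $V\neq 0$ then $V^\vee\neq 0$, so $\n{D}^{la}(G,K)\otimes_{K_\sol[G]} V^\vee\neq 0$ by faithful flatness, so $V^{la}\neq 0$ by the anti-equivalence of step (iii).

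For the density assertion, I would let $V'\subseteq V$ be the $G$-stable solid Banach subspace attached to the closure of $V^{la}(*)_{\mathrm{top}}$ inside $V(*)_{\mathrm{top}}$, and put $Q=V/V'$; by the preliminaries, $V'$ and $Q$ are admissible. For a Banach representation $V$ the canonical map $V^{la}\to V$ is injective (its $H^0$ recovers Emerton's locally analytic vectors, which sit inside $V$) and its image lies in $V'$; moreover $V^{la}$ is itself locally analytic (for admissible Banach $V$ it is an $LS$ space of compact type that is a $\n{D}^{la}(G,K)$-module, hence locally analytic by Proposition \ref{PropSchTeillocally}), so the map factors through $(V')^{la}$, and together with the injection $(V')^{la}\hookrightarrow V^{la}$ from functoriality one gets $(V')^{la}=V^{la}$. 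Applying the exactness just established to $0\to V'\to V\to Q\to 0$ yields a short exact sequence $0\to (V')^{la}\to V^{la}\to Q^{la}\to 0$, whence $Q^{la}=0$, and therefore $Q=0$ since $V\mapsto V^{la}$ is non-zero on non-zero objects. Thus $V'=V$, i.e. $V^{la}(*)_{\mathrm{top}}$ is dense in $V(*)_{\mathrm{top}}$.

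The only genuinely delicate point is step (ii): one must know that the derived solid tensor product $\n{D}^{la}(G,K)\otimes^L_{K_\sol[G]}(-)$, restricted to finite $K_\sol[G]$-modules, is concentrated in degree $0$ and agrees there with the algebraic tensor product over $K_\sol[G](*)$ — so that flatness of the classical distribution algebra over the classical Iwasawa algebra can be imported — and that the output is again of compact type. Both facts are already contained in the proof of Proposition \ref{PropAdmissible} (via the presentation $\n{D}^{la}(G,K)^n\to \n{D}^{la}(G,K)^m\to \n{D}^{la}(G,K)\otimes_{K_\sol[G]} V^\vee\to 0$ together with topological Mittag-Leffler), so in practice this step is a citation rather than new work; everything else is formal manipulation with the two anti-equivalences.
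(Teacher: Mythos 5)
Your proposal is correct and follows essentially the same route as the paper: the same factorization of $V\mapsto V^{la}$ through the anti-equivalence with finite $K_{\sol}[G]$-modules, the faithful flatness of $\n{D}^{la}(G,K)$, and the duality theorem for exactness and non-vanishing, followed by the same closure-and-quotient argument for density. The extra details you supply (why $(V')^{la}=V^{la}$, and why the solid tensor product with a finite Iwasawa module is computed algebraically and stays in degree $0$) are exactly the points the paper leaves implicit, and they are handled correctly.
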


\section{Cohomology}
\label{SecCohomology}

In this last chapter we present our main applications to  group cohomology.  We obtain in particular
\begin{enumerate}
    \item An isomorphism between the continuous cohomology  and  the cohomology of the derived locally analytic vectors for solid representations. This can be seen as a $p$-adic version of theorems of  P. Blanc and  G. D. Mostow for real Lie groups, cf.  \cite{Blanc, Mostow}.
    \item A comparison theorem, for a locally analytic $G$-representation $V$,  between its continuous and its locally analytic cohomology. This generalises the classical result of Lazard \cite[Th\'eor\`eme V.2.3.10]{Lazard} for finite dimensional $\qp$-representations to arbitrary solid $K$-vector spaces.
    \item A comparison theorem between locally analytic cohomology and Lie algebra cohomology. This recovers and generalises a result of Tamme \cite{Tamme} (cf. also \cite{HuberKingsNaumann} and \cite{Lechner}), which in turns was a generalisation of the other main result of Lazard (\cite[Th\'eor\`eme V.2.4.9]{Lazard}) for finite dimensional representations.
\end{enumerate}


The proof of (1) is an immediate consequence of our main Theorem \ref{TheoMain} and a result of Kohlhaase (Theorem \ref{propKohlhaase}).

The key input for the comparison result in (2) is the existence of a finite free resolution of the trivial representation, as a module over the Iwasawa algebra of a small neighbourhood of $1$ in $G$.  This is an application of a lemma of Serre used by Lazard in \cite[Definition  V.2.2.2]{Lazard}. We shall also need a version of the  lemma  proved by   Koohlhase for distribution algebras, cf. \cite[Theorem 4.4]{Kohlhaase}.    Once one has this lemma at hand, the proof of (2) is rather formal using the machinery developed throughout this text.  

Finally, to show (3) we  follow the proof of Tamme  constructing a resolution of the trivial representation in terms of the de Rham complex of the analytic groups. Then, we apply the same formal computation as before. 

\subsection{Continuous, analytic and locally analytic cohomology}
\label{subsecConAnCoho}
In this section we define different cohomology groups,   they correspond to  continuous, analytic and locally analytic cohomology in the literature. Indeed, using the bar resolutions and Corollary \ref{PropsixFunctorTate}, one verifies that  whenever $V$ is a solid representation coming from a ``classical space''  (i.e. a Banach, Fr\'echet, $LB$ or $LF$ space) our definitions coincide with the usual ones.

In the following we will use the conventions of Section  \ref{subsecFuncDist}. In particular, we fix a compact $p$-adic Lie group $G$ and an open normal uniform pro-$p$-group $G_0\subset G$,  we denote the $h$-analytic neighbourhood of $G$ as $\bb{G}^{(h)}$, and define its  open $h$-analytic neighbourhood as  $\bb{G}^{(h^+)}= \bigcup_{h'>h} \bb{G}^{(h')}$.  Recall from Lemma \ref{LemmaDefSolidGmod} that  a solid $\O_K$-module $V$  is an $\O_K$-linear $G$-representation  if and only if it is a  module over the Iwasawa algebra $\O_{K, \sol}[G]$. If $V$ is in addition a $\mathbb{G}^{(h^+)}$-analytic representation, by the  main Theorem \ref{TheoMain}, $V$ is naturally equipped with a $\Dist^{(h^+)}(G, K)$-module structure. Let $\f{g}= \mathrm{Lie} \, G$ be the Lie algebra of $G$ and $U(\f{g})$ its universal enveloping algebra. There is a natural map of solid $K$-algebras $U(\f{g}) \to \n{D}^{(h^+)}(G,K)$ given by derivations of $\f{g}$ on $C(\bb{G}^{(h^+)},K)$ (cf. the discussion after \cite[Proposition 2.3]{SchTeitGl2}).  In particular,  a solid (locally) analytic  representation of $G$ has a natural action of $U(\f{g})$.   We let $D(U(\f{g})_{\sol})$ denote the derived category of solid $U(\f{g})$-modules.  

\begin{definition} \leavevmode
\begin{enumerate}
\item Let $C \in D(\O_{K,\sol}[G])$. We define the continuous group cohomology of $C$ as
$  R\iHom_{\O_{K, \sol}[G]}(\O_K, C) $.
\item Let  $C\in D(\O_{K,\sol}[G])$ be a derived $\mathbb{G}^{(h^+)}$-analytic representation. We define the $\mathbb{G}^{(h^+)}$-analytic cohomology of $C$ as
$  R\iHom_{\Dist^{(h^+)}(G, K)}(K, C)$. 
\item Let $C\in D(\O_{K,\sol}[G])$ be a derived locally analytic representation. We define the locally analytic cohomology of $C$ as
$  \underset{h\to\infty}{\hocolim} R\iHom_{\Dist^{(h^+)}(G, K)}(K, C^{R\bb{G}^{(h^+)}-an})$.

\item Let $C \in D(U(\f{g})_{\sol})$. We define the Lie algebra cohomology of $C$ as $R\iHom_{U(\f{g})}(K,C)$.

\end{enumerate}
\end{definition}

\begin{lemma} \leavevmode 
\begin{enumerate} 

\item There is a solid  projective resolution of the trivial representation
\begin{equation}
    \label{eqbarResoludionsolid}
     \cdots \to  K_{\sol}[G^{n+1}] \xrightarrow{d_{n}} K_{\sol}[G^{n}] \to   \cdots \to  K_{\sol}[G] \to K\to 0,
\end{equation}
with differentials induced by the formula $d_n( (g_0, g_1, \ldots, g_{n}))= \sum_{i=0}^{n} (-1)^{i} (g_0, \ldots, \widehat{g_i},  \ldots, g_{n+1})$. In particular, if $V$ is a complete locally convex $G$-representation, one has that 
\[
\Ext^{i}_{K_{\sol}[G]}(K, \underline{V}) = H^i_{cont}(G,V).  
\]

\item  Let $h > 0$. The complex \eqref{eqbarResoludionsolid} has a unique extension to a projective resolution of   $\n{D}^{(h)}(G,K)$-modules 
\begin{equation}
\label{eqResolutionDistbar}
\cdots  \to \n{D}^{(h)}(G^{n+1},K) \to  \cdots \to \n{D}^{(h)}(G,K) \to   K \to 0,
\end{equation}
where  $\n{D}^{(h)}(G^{n+1},K) = \n{D}^{(h)}(G,K)\otimes_{K_{\sol}} \cdots \otimes_{K_{\sol}} \n{D}^{(h)}(G,K)$ ($(n+1)$-times).  In particular,  if $V$ is a classical $LB$ locally analytic  representation,  we have that 
\[
\varinjlim_{h\to \infty} \Ext^{i}_{\n{D}^{(h^+)}(G, K)}(K, \underline{V}^{\bb{G}^{(h^+)-an}})  = \varinjlim_{h\to \infty} \Ext^{i}_{\n{D}^{(h)}(G, K)}(K, \underline{V}^{\bb{G}^{(h)-an}}) = H^{i}_{la}(G, V) ,
\]
where $H^{i}_{la}(G, V)$ is the cohomology of locally analytic cochains of $G$ with values in $V$.  

\end{enumerate}
\end{lemma}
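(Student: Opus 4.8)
The plan is to construct the two resolutions explicitly and then identify the resulting $\Ext$-groups with the classical cohomology theories via the bar complexes that compute them. For part (1), I would first check that \eqref{eqbarResoludionsolid} is a complex, which is the usual simplicial identity $d_{n-1}\circ d_n=0$, and that it is exact. Exactness is the standard contracting homotopy argument: picking any element $g\in G$ (e.g.\ the identity), the maps $s_n\colon K_\sol[G^{n+1}]\to K_\sol[G^{n+2}]$, $(g_0,\dots,g_n)\mapsto (g,g_0,\dots,g_n)$, give $d_{n+1}s_n+s_{n-1}d_n=\mathrm{id}$, after sheafifying. Each $K_\sol[G^{n+1}]$ is a projective $K_\sol[G]$-module, since $G$ is profinite so $K_\sol[G^{n+1}]=K_\sol[G]\otimes_{K_\sol}K_\sol[G^{n}]$ is a direct summand of a free module (here one uses that $K_\sol[G^{n}]$ is a retract of $K_\sol[S]$ for $S$ extremally disconnected surjecting onto $G^{n}$). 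Therefore $R\iHom_{K_\sol[G]}(K,\underline V)$ is computed by applying $\iHom_{K_\sol[G]}(-,\underline V)$ to \eqref{eqbarResoludionsolid}, which yields the complex with terms $\iHom_{K_\sol[G]}(K_\sol[G^{n+1}],\underline V)=\iHom_{K_\sol}(K_\sol[G^n],\underline V)=\underline{\Cont}(G^n,\underline V)$ and the usual inhomogeneous coboundary; evaluating at $\ast$ gives precisely the complex of continuous cochains computing $H^i_{cont}(G,V)$, using that $V$ is complete locally convex so that $\underline{\Cont}(G^n,\underline V)(\ast)=\Cont(G^n,V)$.

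For part (2), the point is that applying $\n{D}^{(h)}(G,K)\otimes_{K_\sol[G]}^L(-)$ to \eqref{eqbarResoludionsolid} produces \eqref{eqResolutionDistbar}. First I would observe $\n{D}^{(h)}(G,K)\otimes_{K_\sol[G]}K_\sol[G^{n+1}]=\n{D}^{(h)}(G,K)\otimes_{K_\sol}K_\sol[G^n]$ and then, since $K_\sol[G^n]=\varprojlim_H K[(G/H)^n]$ and $\n{D}^{(h)}(G^{n+1},K)$ is built from $n+1$ copies of $\n{D}^{(h)}(G,K)$, identify $\n{D}^{(h)}(G,K)\otimes_{K_\sol}K_\sol[G^n]$ with $\n{D}^{(h)}(G^{n+1},K)$ by using that $\n{D}^{(h)}(G,K)=K_\sol[G]\otimes_{K_\sol[G_0]}\n{D}^{(h)}(G_0,K)$ and that $\n{D}^{(h)}(G_0,K)=\iHom_K(C(\bb{G}_0^{(h)},K),K)$ with $C(\bb{G}_0^{(h)},K)$ orthonormalizable, so the relevant tensor/Hom identities from Lemma \ref{CorotensorBanach}, Theorem \ref{theorem:duality} and Corollary \ref{PropsixFunctorTate} apply. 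For the derived tensor product to land in degree $0$ and produce an honest resolution, one uses flatness of $\n{D}^{(h)}(G,K)$ over $K_\sol[G]$ in the relevant sense; algebraic flatness from Theorem \ref{TheoLSTIwasawa}(2) together with the fact that the terms $K_\sol[G^{n+1}]$ are projective suffices to see that \eqref{eqResolutionDistbar} is exact and consists of projective $\n{D}^{(h)}(G,K)$-modules. Then $R\iHom_{\n{D}^{(h)}(G,K)}(K,C)$ for a $\bb{G}^{(h)}$-analytic $C$ is computed by this bar complex, whose terms are $\iHom_{\n{D}^{(h)}(G,K)}(\n{D}^{(h)}(G^{n+1},K),C)=\iHom_{K}(\n{D}^{(h)}(G^n,K),C)=C(\bb{G}^{(h)},K)^{\otimes n}\otimes_{K_\sol}C$ in the admissible/nuclear case, and evaluating at $\ast$ gives the complex of $p^{-h}$-analytic cochains.

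Finally, for the identification with $H^i_{la}(G,V)$, I would pass to the colimit over $h\to\infty$: the transition maps $\n{D}^{(h')}(G,K)\to\n{D}^{(h)}(G,K)$ for $h'>h$ induce maps of the bar resolutions, and since $\hocolim$ is exact and commutes with $R\iHom$ in the appropriate variable (using that $K$ is a compact object of $D(K_\sol[G])$, as recorded around Theorem \ref{LazardSerre}), we get
\[
\varinjlim_{h\to\infty}\Ext^i_{\n{D}^{(h^+)}(G,K)}(K,\underline V^{\bb{G}^{(h^+)}-an})=H^i\Big(\varinjlim_{h\to\infty}\big[C(\bb{G}^{(h)},K)^{\otimes\bullet}\otimes_{K_\sol}\underline V^{\bb{G}^{(h)}-an}\big]\Big),
\]
and the right-hand complex, evaluated at $\ast$, is exactly the complex of locally analytic cochains $C^{la}(G^\bullet,K)\otimes V$ (using $C^{la}(G,K)=\varinjlim_h C(\bb{G}^{(h)},K)$ and compatibility of $\hocolim$ with the tensor products, plus that $\Dist^{(h^+)}(G,K)=\varinjlim_{h'>h}\Dist^{(h')}(G,K)$ so the two colimits over $h$ and over $h^+$ agree). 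The main obstacle I anticipate is the bookkeeping in part (2): precisely matching $\n{D}^{(h)}(G,K)\otimes_{K_\sol[G]}K_\sol[G^{n+1}]$ with $\n{D}^{(h)}(G^{n+1},K)$ — i.e.\ commuting the completed tensor product past the profinite-limit structure of $K_\sol[G^n]$ — and making sure the derived tensor product genuinely degenerates to degree $0$, where one must invoke the (only algebraic) flatness of the distribution algebra carefully, restricting to the finitely-presented situation as in the proof of Proposition \ref{PropAdmissible}.
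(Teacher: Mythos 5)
Part (1) of your proposal is correct and matches the paper's argument (contracting homotopy by inserting the identity, projectivity of $K_{\sol}[G^{n+1}]$ via the untwisting isomorphism $K_{\sol}[G]\otimes_{K_\sol}(K_{\sol}[G^n])_0\cong K_{\sol}[G^{n+1}]$, and adjunction to recover continuous cochains).

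Part (2) has a genuine gap. Your plan is to obtain \eqref{eqResolutionDistbar} by base-changing \eqref{eqbarResoludionsolid} along $K_\sol[G]\to\n{D}^{(h)}(G,K)$, and you claim $\n{D}^{(h)}(G,K)\otimes_{K_\sol[G]}K_\sol[G^{n+1}]\cong\n{D}^{(h)}(G^{n+1},K)$. This identification is false: the untwisting isomorphism gives $\n{D}^{(h)}(G,K)\otimes_{K_\sol[G]}K_\sol[G^{n+1}]\cong \n{D}^{(h)}(G,K)\otimes_{K_\sol}K_\sol[G^{n}]$, whose second factor is the Iwasawa algebra of $G^n$, not the $n$-fold tensor power $\n{D}^{(h)}(G^n,K)=\n{D}^{(h)}(G,K)^{\otimes n}$; the map $K_\sol[G]\to\n{D}^{(h)}(G,K)$ is a strict completion, so these are genuinely different objects. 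Consequently $\iHom$ out of your base-changed complex would produce continuous cochains with values in spaces of analytic functions, not the analytic cochain complex the lemma requires. Moreover, your fallback for exactness — algebraic flatness from Theorem \ref{TheoLSTIwasawa}(2) — cannot be invoked here: that flatness is only algebraic, the solid tensor product is a completed tensor product, and the terms $K_\sol[G^{n+1}]$ are not finitely generated over $K_\sol[G]$, so the reduction to the finitely presented case (as in the proof of Proposition \ref{PropAdmissible}) is unavailable.

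The correct route, which is the paper's, avoids base change entirely: build \eqref{eqResolutionDistbar} directly as the complex associated to the augmented simplicial rigid analytic group $(\bb{G}^{(h),\bullet+1})\to *$. The extra degeneracy inserting $1\in\bb{G}^{(h)}$ dualizes to a $K_\sol$-linear contracting homotopy of \eqref{eqResolutionDistbar}, so exactness is free and no flatness is needed. Projectivity of each term follows from the untwisting isomorphism $\n{D}^{(h)}(G^{n+1},K)\cong\n{D}^{(h)}(G,K)\otimes_{K_\sol}(\n{D}^{(h)}(G^n,K))_0$ together with the fact that $\n{D}^{(h)}(G^n,K)$ is a Smith space. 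Finally, the identification of cochains uses Corollary \ref{PropsixFunctorTate}, which gives $\iHom_K(\n{D}^{(h)}(G^n,K),W)=C(\bb{G}^{(h),n},K)_\sol\otimes_{K_\sol}W$ for an arbitrary solid $W$ — no nuclearity or admissibility hypothesis is needed, contrary to your restriction. Your treatment of the colimit over $h$ at the end is fine.
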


\begin{proof}
Let $(G^{n+1})_{n\in \N}\xrightarrow{\epsilon} *$ be   the augmented simplicial space with boundary maps $d_i^{n}:  G^{n+1}\to G^{n}$ given by $d^{n}_{i}(g_0,  \ldots,  g_{n})= (g_0,  \ldots,  \widehat{g}_i, \ldots, g_{n})$ for $n\geq 1$ and  $0\leq i\leq n$,  degeneracy maps $s_i^{n}: G^{n+1}\to G^{n+2}$  given by $s^n_i(g_0,  \ldots, g_n)= (g_0, \ldots, g_{i-1},1, g_{i+1},  \ldots, g_n)$ for $n\geq 0$ and  $0\leq i\leq n+1$, and augmentation $\epsilon: G \to  *$.     Taking the total complex of the augmented simplicial solid $K$-vector space $(K_{\sol}[G^{n+1}])_{n\in \N}\xrightarrow{\epsilon} K$ one obtains the    bar resolution with homogeneous cochains of $K$
\begin{equation}
\label{eqBarResolution1}
 \ldots \xrightarrow{d_2} K_{\sol}[G^2] \xrightarrow{d_1}  K_{\sol}[G] \xrightarrow{\epsilon} K \to 0,
\end{equation}
where  $d_n$ is the unique map extending $d_n(g_0,\ldots,g_n )=\sum_{i=0}^n (-1)^{i} (g_0, \cdots, \widehat{g_i},\cdots, g_n)$, 
as well as homotopies $h_n: K_{\sol}[G^{n+1}]\to K_{\sol}[G^{n+2}]$ (for $i\geq -1$), defined by $h_n(g_0, \ldots, g_n) \mapsto (1, g_0, \ldots, g_n)$, between  the identity of  \eqref{eqBarResolution1} and $0$. Note in addition that,  since $G$ is profinite,  the terms $K_{\sol}[G^{n+1}]$ are compact projective $K_{\sol}[G]$-modules,  where  $G$ acts diagonally. Moreover, the map $  g_0 \otimes  ( g_1, \ldots, g_n) \mapsto (g_0,g_0 g_1, \ldots, g_0 g_1 \cdots  g_n) $ induces an isomorphism $ K_{\sol}[G] \otimes_{K_\sol} (K_{\sol}[G^{n}])_0 \cong K_{\sol}[G^{n+1}] $, where the second term in the tensor product is equipped with the trivial action of $G$. The second statement then follows from the fact that $\underline{V}$ is a solid $K$-vector space if $V$ is  complete locally convex, and that
\[
\Hom_{K_{\sol}[G]}(K_{\sol}[G^{n+1}],\underline{V}) \cong \Hom_{K_{\sol}}((K_{\sol}[G^{n}])_0,\underline{V}) = \Cont(G^{n},V),
\]
where the last equality follows from adjunction (cf. Proposition \ref{propFunctorTopCond}(1)). Note that the differential on the right hand side translates into $d_n f (g_1, \ldots, g_{n}) \mapsto g_1 f(g_2, \ldots g_n) + \sum_{i = 1}^{n-1} (-1)^i f(g_1, \ldots, g_i g_{i+1}, \ldots, g_n)+ (-1)^n f(g_1, \ldots, g_{n-1})$, for $f: G^n \to V$, as usual.

For the second point we reason similarly. Consider the augmented  simplicial space $(\bb{G}^{(h),n+1})_{n\in \N}\to  *$ with boundary maps  and degeneracy maps   defined as above.     Taking the total complex of the  associated distribution spaces $(\n{D}^{(h)}(G^{n+1},K))_{n\in \N}\to K$ of $(\bb{G}^{(h),n+1})_{n\in \N} \to *$,  one obtains that   \eqref{eqResolutionDistbar} is a resolution of $K$. Notice that the term $\n{D}^{(h)}(G^{n+1},K) $ is a projective $\n{D}^{(h)}(G, K)$-module since it is isomorphic to $\n{D}^{(h)}(G,K) \otimes_{K_{\sol}} (\n{D}^{(h)}(G^n,K))_0$ where the second term in the tensor product has the trivial action of $G$,  and  $\n{D}^{(h)}(G^n,K)$ is a Smith $K$-vector space.    Furthermore, Corollary  \ref{PropsixFunctorTate} implies that for any solid $K$-vector space $W$,  
\[
C(\bb{G}^{(h),n}, W)= C(\bb{G}^{(h),n},K)_{\sol}\otimes_{K_{\sol}} W = \iHom_{K}(\n{D}^{(h)}(G^n, K),  W).
\]
This implies that 
\[
\Ext^i_{\n{D}^{(h)}(G,K)}(K, V^{\bb{G}^{(h)}-an})= H^i( C(\bb{G}^{(h),\bullet } , V^{\bb{G}^{(h)}-an})),
\]
where  $C(\bb{G}^{(h),\bullet } , V^{\bb{G}^{(h)}-an}))$ is the complex of $\bb{G}^{(h)}$-analytic cochains of $V^{\bb{G}^{(h)}-an}$.  Finally, taking colimits as $h\to \infty$ one obtains 
\[
\varinjlim_{h\to \infty}  \Ext^i_{\n{D}^{(h^+)}(G,K)}(K, V^{\bb{G}^{(h^+)}-an})=\varinjlim_{h\to \infty}  \Ext^i_{\n{D}^{(h)}(G,K)}(K, V^{\bb{G}^{(h)}-an}) = H^i_{la}(G, V) 
\]
as wanted.
\end{proof}

\subsection{Comparison results} \label{SubsecMainResults}

Next we state the main theorems of this section, which will be proved in \S \ref{Subsectionproof}. 

\begin{theorem} \label{theocohom1}
Let $C\in D(K_{\sol}[G])$, then 
\[
R\iHom_{K_{\sol}[G]}(K, C)= R\iHom_{K_{\sol}[G]}(K, C^{Rla}).
\]

\end{theorem}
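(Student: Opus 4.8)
The statement to prove is that for any $C \in D(K_\sol[G])$, one has a natural quasi-isomorphism
\[
R\iHom_{K_{\sol}[G]}(K, C)\;\cong\; R\iHom_{K_{\sol}[G]}(K, C^{Rla}).
\]
The plan is to reduce this to a statement about the bar resolution of the trivial representation together with Theorem \ref{TheoMain} and Kohlhaase's enhancement of Serre's lemma. First, recall from \S\ref{subsecConAnCoho} that there is an explicit solid projective resolution $P_\bullet \to K$ of the trivial $K_\sol[G]$-module by the terms $P_n = K_\sol[G^{n+1}]$, which are compact projective, so that $R\iHom_{K_\sol[G]}(K, C) = R\iHom_{K_\sol[G]}(P_\bullet, C)$. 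Since $K$ is a compact object of $D(K_\sol[G])$ (this is the content of the Serre--Lazard lemma, Theorem \ref{LazardSerre}, combined with Kohlhaase's Theorem \ref{propKohlhaase}: $K$ admits a \emph{finite} free resolution over the Iwasawa algebra of a small uniform subgroup, hence over $K_\sol[G]$ after inducing up), the functor $R\iHom_{K_\sol[G]}(K, -)$ commutes with filtered colimits and in particular with the homotopy colimit defining $C^{Rla} = \hocolim_{h\to\infty} C^{R\bb{G}^{(h^+)}-an}$. So it suffices to show
\[
R\iHom_{K_{\sol}[G]}(K, C)\;=\;\underset{h\to\infty}{\hocolim}\; R\iHom_{K_{\sol}[G]}(K, C^{R\bb{G}^{(h^+)}-an}).
\]

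The second step is to compute the right-hand side term by term. By Theorem \ref{TheoMain}(1), applied with $W = K$ (the trivial $K_\sol$-module), one has
\[
R\iHom_{K_{\sol}[G]}(\n{D}^{(h^+)}(G,K), C)\;=\;R\iHom_{K}(K, C^{R\bb{G}^{(h^+)}-an})\;=\;C^{R\bb{G}^{(h^+)}-an}.
\]
Now I would take $R\iHom_{K_\sol[G]}(K, -)$ of the canonical map $\n{D}^{(h^+)}(G,K)\to C^{R\bb{G}^{(h^+)}-an}$ — wait, more carefully: since $C^{R\bb{G}^{(h^+)}-an}$ is a $\n{D}^{(h^+)}(G,K)$-module (Theorem \ref{TheoMain}), Corollary \ref{coroAnalyticDh} gives
\[
R\iHom_{K_\sol[G]}(K, C^{R\bb{G}^{(h^+)}-an}) = R\iHom_{\n{D}^{(h^+)}(G,K)}\big(\n{D}^{(h^+)}(G,K)\otimes^L_{K_\sol[G]} K,\; C^{R\bb{G}^{(h^+)}-an}\big).
\]
The key input — essentially Kohlhaase's Theorem \ref{propKohlhaase} — is that the natural map $\n{D}^{(h^+)}(G,K)\otimes^L_{K_\sol[G]} K \to K$ is a quasi-isomorphism of $\n{D}^{(h^+)}(G,K)$-modules (one uses the finite free resolution of $K$ over the Iwasawa algebra of a uniform subgroup, base-changes it along $K_\sol[G]\to\n{D}^{(h^+)}(G,K)$, and checks it stays a resolution, which is exactly the Serre/Kohlhaase computation). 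Granting this, the displayed expression becomes $R\iHom_{\n{D}^{(h^+)}(G,K)}(K, C^{R\bb{G}^{(h^+)}-an})$, and then unwinding via Theorem \ref{TheoMain} and the bar resolution of $K$ over $\n{D}^{(h^+)}(G,K)$ from the previous section, this identifies with $R\iHom_{K_\sol[G]}(K, C)$ \emph{compatibly in $h$} — more precisely, the transition maps in the homotopy colimit correspond to the identity on $R\iHom_{K_\sol[G]}(K,C)$, because the square relating $\n{D}^{(h'^+)}(G,K)\otimes^L_{K_\sol[G]}K$, $\n{D}^{(h^+)}(G,K)\otimes^L_{K_\sol[G]}K$ and $K$ commutes. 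Taking the homotopy colimit then yields $R\iHom_{K_\sol[G]}(K,C)$ back.

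\textbf{Main obstacle.} The crux — and the step I expect to absorb most of the technical work — is the claim that $\n{D}^{(h^+)}(G,K)\otimes^L_{K_\sol[G]} K = K$, i.e. that base-changing the trivial representation along the localisation $K_\sol[G]\to\n{D}^{(h^+)}(G,K)$ does not introduce higher Tor. Algebraically this is the flatness statement of Theorem \ref{TheoLSTIwasawa}(2), but one must be careful that the flatness there is only \emph{algebraic} (on underlying abstract modules) whereas here one needs the solid/derived version; this is precisely where the finite free resolution of $K$ from the Serre--Lazard--Kohlhaase lemma (Theorems \ref{LazardSerre}, \ref{propKohlhaase}) is indispensable, since for a perfect complex algebraic flatness upgrades to the derived solid statement. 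A secondary but genuine point is the compactness of $K$ as a $K_\sol[G]$-module (needed to commute $R\iHom_{K_\sol[G]}(K,-)$ with the homotopy colimit over $h$), which again follows from the existence of the \emph{finite} free resolution. Once these two facts are in place, the rest of the argument is the formal bookkeeping sketched above: identify $C^{R\bb{G}^{(h^+)}-an}$ via Theorem \ref{TheoMain}, apply Corollary \ref{coroAnalyticDh}, collapse the $\n{D}^{(h^+)}$-cohomology back to $K_\sol[G]$-cohomology using the resolution, and pass to the colimit noting the transition maps are the identity.
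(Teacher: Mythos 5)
Your proof is correct and follows essentially the same route as the paper's: both rest on the identification $C^{R\bb{G}^{(h^+)}-an}=R\iHom_{K_\sol[G]}(\n{D}^{(h^+)}(G,K),C)$ from Theorem \ref{TheoMain}, an adjunction, and Kohlhaase's computation $\n{D}^{(h^+)}(G,K)\otimes^L_{K_\sol[G]}K=K$ (Theorem \ref{propKohlhaase}); the paper simply applies the tensor-Hom adjunction over $K_\sol[G]$ directly rather than detouring through $\n{D}^{(h^+)}(G,K)$-module cohomology via Corollary \ref{coroAnalyticDh}. Your explicit appeal to the compactness of $K$ (via the finite Lazard--Serre resolution) to pass to the homotopy colimit over $h$ is the right justification for a step the paper leaves implicit.
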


\begin{remark}
More concretely, we will show that 
\[
R\iHom_{K_{\sol}[G]}(K, C)= R\iHom_{K_{\sol}[G]}(K, C^{R\bb{G}^{(h^+)-an}})
\]
for $h>>0$. 
\end{remark}

\begin{theorem} [Continuous vs. analytic vs. Lie algebra cohomology] \label{theocohom2}
Let $C \in D(K_{\sol}[G])$ be a derived  $\mathbb{G}^{(h^+)}$-analytic representation. Then
\[ 
R\iHom_{K_{\sol}[G]}(K, C) \cong R\iHom_{\n{D}^{(h^+)}(G, K)}(K, C)  \cong (R\iHom_{U(\f{g})}(K, C))^G.
\]
\end{theorem}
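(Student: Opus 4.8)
\textbf{Proof plan for Theorem \ref{theocohom2}.}

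The strategy is to prove the two isomorphisms separately, reducing each to a statement about resolutions of the trivial representation $K$ over the relevant algebra, and then to invoke the key technical lemmas on the Iwasawa and distribution algebras (Theorem \ref{LazardSerre}, Theorem \ref{propKohlhaase}) that are recalled in this section. First I would establish the middle-to-left comparison $R\iHom_{K_{\sol}[G]}(K,C)\cong R\iHom_{\n{D}^{(h^+)}(G,K)}(K,C)$. By hypothesis $C$ is derived $\bb{G}^{(h^+)}$-analytic, so by Theorem \ref{TheoMain} it lies in the essential image of $D(\n{D}^{(h^+)}(G,K))\hookrightarrow D(K_{\sol}[G])$. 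Hence it suffices to show that the natural map of complexes computing these two derived $\Hom$'s is a quasi-isomorphism. Writing $A=\n{D}^{(h^+)}(G,K)$, the point is that $A\otimes^L_{K_\sol[G]}K = K$: concretely one uses a finite free resolution $P_\bullet\to K$ of the trivial representation over a small enough open subgroup (the Lazard--Serre resolution, Theorem \ref{LazardSerre}, enhanced by Kohlhaase, Theorem \ref{propKohlhaase}) and flatness of $A$ over $K_\sol[G]$ (Theorem \ref{TheoLSTIwasawa}, or rather its analytic enhancement proved in \S\ref{subsectionKeyLemmas}) to see that $A\otimes_{K_\sol[G]}P_\bullet$ is a resolution of $K$ by finite projective $A$-modules. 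Then
\[
R\iHom_{K_\sol[G]}(K,C)=R\iHom_{K_\sol[G]}(K, F(C)) = R\iHom_A(A\otimes^L_{K_\sol[G]}K, C)=R\iHom_A(K,C),
\]
where $F$ is the forgetful functor and the middle equality is adjunction. Alternatively, and perhaps more cleanly, one can invoke Corollary \ref{coroAnalyticDh} (full faithfulness of $D(\n{D}^{(h^+)}(G,K))$ in $D(K_\sol[G])$) together with the observation that $K$ is itself derived $\bb{G}^{(h^+)}$-analytic (being a module over every distribution algebra), so that $R\iHom_{K_\sol[G]}(K,C)=R\iHom_{\n{D}^{(h^+)}(G,K)}(K,C)$ is immediate from Corollary \ref{coroAnalyticDh}.

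Next I would prove $R\iHom_{\n{D}^{(h^+)}(G,K)}(K,C)\cong (R\iHom_{U(\f{g})}(K,C))^G$. The idea, following Tamme, is to build an explicit resolution of $K$ as a $\n{D}^{(h^+)}(G,K)$-module using the de Rham (Chevalley--Eilenberg) complex of the analytic group $\bb{G}^{(h^+)}$. More precisely, the Lie algebra $\f{g}$ acts on $C(\bb{G}^{(h^+)},K)$ by invariant derivations, yielding a Koszul-type complex
\[
\n{D}^{(h^+)}(G,K)\otimes_K \textstyle\bigwedge^\bullet \f{g} \longrightarrow K
\]
which one must show is a resolution by finite free $\n{D}^{(h^+)}(G,K)$-modules; this is the analytic analogue of the Poincar\'e lemma for the Stein group $\bb{G}^{(h^+)}$, and it is exactly the content of the technical input one imports from Tamme's argument (adapted to the solid setting via Corollary \ref{PropsixFunctorTate} and the identification of analytic cochains with $\iHom_K(\n{D}^{(h^+)}(G^n,K),-)$). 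Applying $R\iHom_{\n{D}^{(h^+)}(G,K)}(-,C)$ to this resolution computes $R\iHom_{\n{D}^{(h^+)}(G,K)}(K,C)$ as the totalization of $\iHom_K(\bigwedge^\bullet\f{g},C)$, i.e. as the Chevalley--Eilenberg complex computing $R\iHom_{U(\f{g})}(K,C)$, but the group $G$ still acts (through the adjoint action on $\f{g}$ and its action on $C$), and the $\n{D}^{(h^+)}(G,K)$-linear Hom is the $G$-invariants of the $U(\f{g})$-linear one. Since $\mathrm{char}\,K=0$ and the Lie algebra cohomology lands in smooth $G$-representations on solid $K$-vector spaces, taking $G$-invariants there is exact (see Remark \ref{RemLiealgcohom}), so it commutes with passing to the derived category, giving the claimed identification with $(R\iHom_{U(\f{g})}(K,C))^G$.

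The main obstacle is the second isomorphism, specifically verifying that the Chevalley--Eilenberg--type complex $\n{D}^{(h^+)}(G,K)\otimes_K\bigwedge^\bullet\f{g}\to K$ is genuinely a resolution in the solid category --- i.e. an analytic Poincar\'e lemma for the Stein group $\bb{G}^{(h^+)}$ --- and that the resulting spectral comparison is compatible with the $G$-action so that the $G$-invariants match up on the nose. In the classical (finite-dimensional) setting this is Lazard's / Tamme's theorem; here one needs to run it uniformly in the analytic radius and check exactness of the relevant complexes of Fr\'echet / $LS$ spaces using the functional-analytic results of \S\ref{SectionNonArch} (nuclearity, the duality Theorem \ref{theorem:duality}, and exactness of the solidification functor $C(\bb{G}^{(h)},K)_\sol\otimes_{K_\sol}-$). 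Once the resolution is in hand, everything else --- the adjunctions, the passage to derived categories, the exactness of $G$-invariants on smooth representations --- is formal. I would therefore spend the bulk of the write-up on constructing and verifying the de Rham resolution, and dispatch the first isomorphism quickly via Corollary \ref{coroAnalyticDh}.
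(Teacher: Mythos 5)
Your proposal is correct and follows essentially the same route as the paper: the first isomorphism is the adjunction combined with $\n{D}^{(h^+)}(G,K)\otimes^L_{K_\sol[G]}K=K$ (Theorem \ref{propKohlhaase}; your alternative via Corollary \ref{coroAnalyticDh} is equivalent, since that corollary rests on the same tensor computation), and the second is Tamme's de Rham/Chevalley--Eilenberg resolution (Proposition \ref{PropTamme}) followed by taking $G$-invariants. The only imprecision is in the bookkeeping of the second step: the complex $\n{D}^{(h^+)}(G,K)\otimes_{U(\f{g})}CE(\f{g})$ resolves $K[G/G_h]$ rather than $K$, so the paper instead works with $\n{D}(\bb{G}_{h^+},K)$ for the connected Stein polydisc group $\bb{G}_{h^+}$ (where the Poincar\'e lemma applies), writes $\n{D}^{(h^+)}(G,K)=\O_{K,\sol}[G]\otimes_{\O_{K,\sol}[G_h]}\n{D}(\bb{G}_{h^+},K)$, and only then passes to $G/G_h$-invariants --- which is exactly the content of your final ``the $\n{D}^{(h^+)}$-linear Hom is the $G$-invariants of the $U(\f{g})$-linear one,'' made precise.
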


\begin{remark} \label{RemLiealgcohom}
The RHS term of the  equation above means the following:  if $C$ is a derived $\bb{G}^{(h^+)}$-analytic complex, then we will show that there is an open normal subgroup $G_h \subset G$ such that 
\[
R\underline{\Hom}_{U(\f{g})}(K,C)= R\underline{\Hom}_{K_{\sol}[G_h]}(K,C),
\]
and the group $G/G_h$ acts on the previous cohomology complex. As we are working in characteristic $0$ and $G/G_h$ is a finite group, taking invariants in the category of solid $K[G/G_h]$-modules is exact and one can form the complex 
\[
R\underline{\Hom}_{U(\f{g})}(K,C)^G:= R\underline{\Hom}_{U(\f{g})}(K,C)^{G/G_h}. 
\]
\end{remark}

\subsection{Key lemmas }
\label{subsectionKeyLemmas}

In the following we will work with complexes with equal terms but different differential maps. To make explicit the differentials we use the following notation:  let $C$ be a (homological) complex of $\O_{K,\sol}$-modules with $i$-th term $C_i$ and $i$-th differential $d_i:C_i\to C_{i-1}$, we  note
\[
C=[\cdots \rightarrow C_{i+1} \rightarrow C_{i} \rightarrow C_{i-1} \rightarrow \cdots ;  d_\bullet]. 
\]

\subsubsection{Iwasawa and distribution algebras}

The following result is the main input for our calculations.

\begin{theorem} [Lazard-Serre] \label{LazardSerre} 
Let $G_0$ be a uniform pro-$p$ group of dimension $d$. Then there exists a projective resolution of the trivial module $\Z_p$ of the form
\[ P:= [ 0\rightarrow \Z_{p,\sol}[G_0]^{\binom{d}{d}} \rightarrow \cdots \rightarrow \Z_{p,\sol}[G_0]^{\binom{d}{i}} \rightarrow \cdots \rightarrow \Z_{p,\sol}[G_0]^{\binom{d}{0}}   ; \alpha_{\bullet}]. \]
\end{theorem}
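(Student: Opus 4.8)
The plan is to reduce the statement to a classical computation in the theory of $p$-valued groups, following Lazard's original approach via a lemma of Serre, and then translate it into the condensed setting. First I would recall the classical input: for a uniform pro-$p$ group $G_0$ of dimension $d$, Lazard constructs (see \cite[V.2.2]{Lazard}) a finite free resolution of $\Z_p$ over the classical Iwasawa algebra $\Z_p[[G_0]]$ of the shape $0 \to \Z_p[[G_0]]^{\binom{d}{d}} \to \cdots \to \Z_p[[G_0]]^{\binom{d}{i}} \to \cdots \to \Z_p[[G_0]]^{\binom{d}{0}} \to \Z_p \to 0$. The construction goes by choosing an ordered basis $g_1, \dots, g_d$ of $G_0$, considering the elements $\mathbf{b}_i = [g_i] - 1$, and using the associated graded ring of $\Z_p[[G_0]]$ with respect to the $p$-valuation filtration, which is a polynomial ring over $\F_p$; the Koszul complex on the images of the $\mathbf{b}_i$ then lifts, via the lemma of Serre on filtered complexes, to a resolution over $\Z_p[[G_0]]$ itself.

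The second step is to upgrade this to the condensed/solid world. The key point is that the classical Iwasawa algebra $\Z_p[[G_0]]$, with its profinite (weak) topology, corresponds under the functor $T \mapsto \underline{T}$ precisely to the solid ring $\Z_{p,\sol}[G_0]$, by the description $\O_{K,\blacksquare}[G_0] = \prod_{\alpha \in \N^d} \O_K \mathbf{b}^\alpha$ recalled in the text (applied with $\O_K = \Z_p$), which identifies it as the condensed object attached to its underlying profinite abelian group. Since $\Z_{p,\sol}[G_0]$ and its finite free modules $\Z_{p,\sol}[G_0]^n$ are the condensed sets associated to their (compactly generated, in fact profinite) underlying topological modules, and since the classical maps $\alpha_i$ in Lazard's resolution are continuous $\Z_p[[G_0]]$-linear maps between finite free modules, applying the fully faithful functor from compactly generated spaces to condensed sets produces a complex of solid $\Z_{p,\sol}[G_0]$-modules with the same terms and differentials. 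Exactness is preserved because the functor $T \mapsto T(*)$ from condensed abelian groups to abelian groups is exact and faithful on the relevant objects, so a classically exact complex of profinite modules remains exact after taking associated condensed objects; alternatively, one checks exactness after evaluating at extremally disconnected sets and uses that $\Cont(S, -)$ is exact on the profinite modules occurring here (all the modules are $\varpi$-adically complete with discrete quotients, so $R^1\varprojlim$ vanishes). Finally, each $\Z_{p,\sol}[G_0]^{\binom{d}{i}}$ is projective in $\SolidOKG$ (with $\O_K = \Z_p$): it is a finite direct sum of copies of the free rank-one module $\Z_{p,\sol}[G_0]$, which is a direct summand of $\Z_{p,\sol}[S]$ for $S$ a profinite set mapping onto $G_0$, and these are compact projective generators by Theorem \ref{TheoLemmaAnalyticrings}.

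The main obstacle I anticipate is not any single deep step but rather the careful bookkeeping needed to verify that ``solidifying'' a classical resolution is legitimate — specifically, confirming that the classical resolution can be chosen with all terms profinite (not merely pseudocompact) $\Z_p[[G_0]]$-modules so that the equivalence between profinite modules and quasiseparated solid modules of the appropriate type applies, and that exactness transfers. Here one leans on the fact that $\Z_p[[G_0]]$ is a compact (profinite) ring and its finite free modules are profinite, together with Proposition \ref{propFunctorTopCond} and the exactness of $T \mapsto T(*)$. A secondary subtlety is making sure the construction of the $\alpha_\bullet$ is compatible with base change, since in later sections one wants to tensor this resolution up to $\O_K$ and to the various distribution algebras; but this is automatic once the resolution is finite free, as finite free modules base-change trivially. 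In the write-up I would therefore structure the proof as: (i) invoke Lazard's classical resolution, emphasizing finiteness and freeness; (ii) identify $\Z_{p,\sol}[G_0]$ with the condensed object of the profinite Iwasawa algebra and note that finite free modules and their maps transport faithfully; (iii) deduce exactness and projectivity in $\SolidOKG$, completing the argument.
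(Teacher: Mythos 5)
Your proposal is correct and follows essentially the same route as the paper: both rest on Lazard's classical construction (lifting the Koszul complex of the associated graded polynomial algebra via the lemma of Serre, together with the lifted chain homotopy), with the passage to the solid category being a formal transfer of a finite free complex of profinite modules. The paper's proof is in fact just a sketch of this classical input and is, if anything, less explicit than you are about why the condensed upgrade is harmless.
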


\begin{proof}
We briefly sketch how the complex $P$ is constructed from  \cite[D\'efinition V.2.2.2.1, Lemme V.2.1.1]{Lazard}. Let $g_1,\ldots, g_d\in G_0$ be a basis of the group and $b_i= [g_i]-1\in \Z_{p,\sol}[G_0](*)$. The valuation of $G_0$ defines a filtration in $\Z_{p,\sol}[G_0](*)$  whose graded algebra $\gr^{\bullet} (\Z_{p,\sol}[G_0](*))$ is isomorphic to $\bb{F}_p[\pi] [\overline{b}_1, \ldots, \overline{b}_d]$, where $\bb{F}[\pi]= \gr^{\bullet} (\Z_p)$ is the graduation of $\Z_p$ for the filtration induced by $(p)$.  Then, the Koszul complex of $\bb{F}[p][\overline{b}_1,\ldots, \overline{b}_d]$ with respect to the regular sequence $(\overline{b}_1,\ldots, \overline{b}_d)$ can be lifted by approximations to the complex $P$ of the theorem. Furthermore, the proof also lifts  a chain homotopy $\tilde{s}_{\bullet}$ between the identity and the augmentation map $\epsilon :K[\overline{b}_1, \ldots, \overline{b}_d]\to \bb{F}_p[\pi]$, to a chain homotopy $s_{\bullet}$ between the identity and the augmentation map $\epsilon: \Z_{p,\sol}[G_0]\to \Z_p$. 
\end{proof}

\begin{theorem} [Kohlhaase] \label{propKohlhaase}
Let $G_0$ a uniform pro-$p$ group of dimension $d$ and $h>0$. Then
\[ \O_{K} \otimes^L_{\O_{K, \sol}[G_0]} \Dist_{(h)}(G_0, K) = K. \]
More precisely, the differentials $\alpha_i : \O_{K, \sol}[G_0]^{d \choose {i}} \to \O_{K, \sol}[G_0]^{d \choose {i - 1}}$ of the resolution given by Theorem \ref{LazardSerre} extend to maps $\alpha_i : \Dist_{(h)}(G_0, K)^{d \choose {i}} \to \Dist_{(h)}(G_0, K)^{d \choose {i - 1}}$, inducing a resolution of the trivial module $K$ of the form
\[ P_{(h)}:=[ 0\rightarrow \Dist_{(h)}(G_0,K)^{\binom{d}{d}} \rightarrow \cdots \rightarrow  \Dist_{(h)}(G_0, K)^{\binom{d}{i}} \rightarrow \cdots \rightarrow \Dist_{(h)}(G_0,K)^{\binom{d}{0}}; \alpha_{\bullet}]. \]
\end{theorem}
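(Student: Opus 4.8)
The statement is the enhancement of the Lazard--Serre resolution (Theorem \ref{LazardSerre}) after base change to the distribution algebra $\Dist_{(h)}(G_0,K)$, together with the computation that this base change is exact (so the resolution $P_{(h)}$ really computes $K$). The whole point is that the differentials $\alpha_i$ and the contracting homotopy $s_\bullet$ of Theorem \ref{LazardSerre} were built by successive approximation with respect to the valuation filtration on $\Z_{p,\sol}[G_0](*)$, and this construction can be tracked on the completed rings $\Dist_{(h)}(G_0,K)$. So the plan is: first extend the maps, then check acyclicity via the homotopy, then deduce the $\mathrm{Tor}$-vanishing.

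\emph{Step 1: extension of the differentials and the homotopy.} Recall from Definition \ref{DefiDistributionhenbas} that $\Dist_{(h)}(G_0,K)(*)$ is the completion of $K_{\sol}[G_0](*)$ for the norm $\|\sum a_\alpha \mathbf{b}^\alpha\| = \sup_\alpha |a_\alpha| p^{-\frac{p^{-h}}{p-1}|\alpha|}$, and by \cite[Proposition 4.2]{SchTeitDist} the ring structure is continuous for this norm; equivalently $\Dist_{(h)}(G_0,K)$ is a solid algebra over $K_{\sol}[G_0]$. I would first observe that $\gr^\bullet$ of this norm recovers $\F_p[\pi][\overline{b}_1,\dots,\overline{b}_d]$ just as for the Iwasawa algebra, so the ``approximation'' estimates of Lazard \cite[Lemme V.2.1.1]{Lazard} that produced $\alpha_\bullet$ and $s_\bullet$ on $\Z_{p,\sol}[G_0]$ are valid verbatim with respect to the $\Dist_{(h)}$-norm. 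Concretely: the matrices of $\alpha_i$ and $s_i$ have entries in $\Z_{p,\sol}[G_0](*) \subset \Dist_{(h)}(G_0,K)(*)$, so they define $\Dist_{(h)}(G_0,K)$-linear maps $\alpha_i\colon \Dist_{(h)}(G_0,K)^{\binom d i}\to\Dist_{(h)}(G_0,K)^{\binom d{i-1}}$ and $s_i$ on the same modules simply by extension of scalars along $K_{\sol}[G_0]\to\Dist_{(h)}(G_0,K)$. Since $\Z_{p,\sol}[G_0](*)$ is dense in $\Dist_{(h)}(G_0,K)(*)$ and multiplication is continuous, the identities $\alpha_{i}\alpha_{i+1}=0$, $\epsilon\alpha_1 = 0$ and $\alpha_{i+1}s_i + s_{i-1}\alpha_i = \mathrm{id} - (\text{augmentation terms})$ pass to the completion. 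I would do this at the level of underlying topological spaces and then pass to associated condensed objects, exactly as is done repeatedly in \S\ref{subsecFuncDist}; no new analytic input is needed here.

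\emph{Step 2: $P_{(h)}$ is a resolution of $K$.} By Step 1 the complex $P_{(h)}$ together with the maps $s_\bullet$ is a contractible augmented complex: the homotopy $s_\bullet$ witnesses that $\mathrm{id}_{P_{(h)}}$ is homotopic to the map factoring through the augmentation $\epsilon\colon \Dist_{(h)}(G_0,K)\to K$. Hence $H_i(P_{(h)}) = 0$ for $i>0$ and $H_0(P_{(h)}) = K$, i.e. $P_{(h)}$ is a resolution of the trivial module. Each term $\Dist_{(h)}(G_0,K)^{\binom d i}$ is a finite free $\Dist_{(h)}(G_0,K)$-module, hence projective in $\Mod^{\solid}_{\Dist_{(h)}(G_0,K)}$. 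Therefore $P = P_{(h)}\otimes_{\Dist_{(h)}(G_0,K)}^{L}(-)$ computes nothing new, but more importantly: since $P$ (Theorem \ref{LazardSerre}) is a finite projective resolution of $\Z_p$ over $\Z_{p,\sol}[G_0]$, and $P_{(h)} = P\otimes_{\Z_{p,\sol}[G_0]}\Dist_{(h)}(G_0,\Z_p)$ term by term (up to inverting $p$), the complex $P_{(h)}$ literally computes $\Z_p\otimes^{L}_{\Z_{p,\sol}[G_0]}\Dist_{(h)}(G_0,\Z_p)[\tfrac1p] = \O_K\otimes^L_{\O_{K,\sol}[G_0]}\Dist_{(h)}(G_0,K)$. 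Combining with the acyclicity just established, this derived tensor product is concentrated in degree $0$ and equals $K$, which is the first assertion $\O_K\otimes^L_{\O_{K,\sol}[G_0]}\Dist_{(h)}(G_0,K) = K$.

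\emph{Main obstacle.} The one genuinely non-formal point is justifying that Lazard's approximation procedure converges in the $\Dist_{(h)}$-norm rather than just the $p$-adic (Iwasawa) one — i.e. that the graded ring of $\Dist_{(h)}(G_0,K)(*)$ for the $b(h)$-weighted norm is still a polynomial ring over $\F_p[\pi]$ on $\overline b_1,\dots,\overline b_d$, so that $(\overline b_1,\dots,\overline b_d)$ is a regular sequence and the Koszul-type lifting argument applies. This is exactly the content extracted from \cite[Theorem 4.4]{Kohlhaase}; I would cite that statement for the existence of $P_{(h)}$ and the compatible homotopy, and present Step 1 above as an unwinding of it, rather than reproving the approximation estimates. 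Everything else — functoriality, passage to condensed objects, projectivity of finite free modules, deducing the $\mathrm{Tor}$-statement from an explicit finite free resolution — is routine given the machinery of \S\ref{Sectionrecollections} and \S\ref{subsecFuncDist}.
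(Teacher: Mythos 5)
Your proposal is correct and follows essentially the same route as the paper: both defer the genuine analytic input to Kohlhaase's Theorem 4.4 (the differentials $\alpha_\bullet$ and the contracting homotopy $s_\bullet$ of the Lazard--Serre resolution are continuous for the weighted norms $|\sum_\alpha a_\alpha \mathbf{b}^\alpha|_r = \sup_\alpha |a_\alpha| r^{|\alpha|}$, hence extend to the completions, which are exactly the $\n{D}_{(h)}(G_0,K)$), and then conclude formally. One small correction to your Step 1: the homotopy $s_\bullet$ is only $K$-linear, not $K_\sol[G_0]$-linear, so it does not extend ``by extension of scalars''; its extension is precisely the norm estimate you correctly isolate afterwards as the main obstacle.
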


\begin{proof}
This is essentially \cite[Theorem 4.4]{Kohlhaase}. Let $g_1,\ldots, g_d\in G_0$ be a basis and $b_i\in \Z_{p,\sol}[G_0](*)$. The idea of the proof is to show that the differentials $\alpha_{\bullet}$ and the chain homotopy $s_{\bullet}$ of Theorem \ref{LazardSerre} are continuous with respect to the norms $|\sum_{\alpha}a_{\alpha} \bbf{b}^{\alpha}|_r= \sup_{\alpha} |a_{\alpha}|r^{|\alpha|} $ for $\frac{1}{p}<r<1$. Thus, the differentials $\alpha_{\bullet}$ and the chain homotopy $s_{\bullet}$ extend to the weak completion of these norms (i.e. the completion with respect to a radius $r$ seen as a subspace in the completion of a slightly bigger radius $r<r'<1$). Note that  the distribution algebras  constructed in this way are precisely the algebras $\n{D}_{(h)}(G_0,K)$ of Definition \ref{DefiDistributionhenbas}. 
\end{proof}

\begin{remark}
\label{Remarkinductiondistribution}
By definition $\n{D}_{(h)}(G,K):= K_{\sol}[G]\otimes_{K_{\sol}[G_0]} \n{D}_{(h)}(G_0,K)$.  Therefore  $\n{D}_{(h)}(G,K) \otimes_{K_{\sol}[G]}K= \n{D}_{(h)}(G_0,K) \otimes_{K_{\sol}[G_0]}K=K$.  This implies that 
\[
\mathcal{D}_{(h)}(G, K) \otimes_{K_{\sol}[G]}^L \mathcal{D}_{(h)}(G, K) = K_{\sol}[G]\otimes_{K_{\sol}[G_0]}^L(\mathcal{D}_{(h)}(G_0, K) \otimes_{K_{\sol}[G]}^L \mathcal{D}_{(h)}(G_0, K) ). 
\]
\end{remark}

With the help of the previous theorem we can compute the following derived tensor product 

\begin{proposition}
\label{PropKeyLemma}
We have 
\[ \mathcal{D}_{(h)}(G, K) \otimes_{K_{\sol}[G]}^L \mathcal{D}_{(h)}(G, K) = \mathcal{D}_{(h)}(G, K). \]
\end{proposition}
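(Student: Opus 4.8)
By Remark \ref{Remarkinductiondistribution} one has
\[ \mathcal{D}_{(h)}(G, K) \otimes^L_{K_{\sol}[G]} \mathcal{D}_{(h)}(G, K) = K_{\sol}[G] \otimes^L_{K_{\sol}[G_0]}\big(\mathcal{D}_{(h)}(G_0, K) \otimes^L_{K_{\sol}[G_0]} \mathcal{D}_{(h)}(G_0, K)\big), \]
so the plan is to reduce to $G_0$ and prove that the multiplication map
\[ m\colon \mathcal{D}_{(h)}(G_0,K) \otimes^L_{K_{\sol}[G_0]} \mathcal{D}_{(h)}(G_0,K) \longrightarrow \mathcal{D}_{(h)}(G_0,K) \]
is a quasi-isomorphism. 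Write $\Lambda = K_{\sol}[G_0]$, $D = \mathcal{D}_{(h)}(G_0,K)$, let $I \subset D$ be the augmentation ideal (so $D/I = K$ as left $D$-modules), and set $E := D \otimes^L_\Lambda D$, viewed as a complex of solid right $D$-modules through the second tensor factor.

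First I would feed in the resolutions of Theorems \ref{LazardSerre} and \ref{propKohlhaase}: the trivial module $K$ has a finite free resolution $P = [\,K_{\sol}[G_0]^{\binom{d}{\bullet}};\,\alpha_\bullet\,]$ over $\Lambda$, and by Kohlhaase's theorem the same differentials $\alpha_\bullet$ make $P_{(h)} := D \otimes_\Lambda P$ into a finite free resolution of $K$ over $D$; in particular $D \otimes^L_\Lambda K = K$. Since $P_{(h)}$ is a bounded complex of finite free left $D$-modules quasi-isomorphic to $K$, associativity of the derived tensor product gives
\[ E \otimes^L_D K \;=\; E \otimes^L_D P_{(h)} \;=\; D \otimes^L_\Lambda \big(D \otimes_D P_{(h)}\big) \;=\; D \otimes^L_\Lambda P_{(h)} \;=\; D \otimes^L_\Lambda K \;=\; K, \]
where in the penultimate step $P_{(h)}$ is regarded as a complex of $\Lambda$-modules by restriction of scalars. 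The map $m$ is right $D$-linear and compatible with the augmentations to $K$, so $m \otimes^L_D K$ is the identity of $K$; hence the cone $C := \operatorname{cofib}(m)$, a complex of solid right $D$-modules, satisfies $C \otimes^L_D (D/I) = 0$.

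It then remains to conclude that $C = 0$, and this is the crux. The complex $E = D \otimes^L_\Lambda D$ is bounded --- the classical ring $\Lambda(*)$ has finite global dimension and $P$ is finite free, so $D$ has finite Tor-amplitude over $\Lambda$ --- and it is assembled by derived tensor product from $\Lambda$ and $D$, both of which are derived $I$-complete: up to inverting $p$ they are profinite $\O_K$-modules for the $\mathbf{b}^\alpha$-adic topology, with $I$ topologically generated by the finite set $\mathbf{b}_1,\dots,\mathbf{b}_d$. Granting that $E$ (hence $C$) is derived $I$-complete, derived Nakayama applied to $C$ --- a derived $I$-complete complex with $C \otimes^L_D D/I = 0$ and $I$ finitely generated --- yields $C = 0$, so $m$ is a quasi-isomorphism and the proposition follows. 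The delicate point is exactly the derived $I$-completeness of $E$; an alternative route that sidesteps it is to invoke the algebraic flatness of $\mathcal{D}_{(h)}(G_0,K)$ over $K_{\sol}[G_0]$ (the analogue of \cite[Proposition 4.7]{SchTeitDist}), which makes $E = D \otimes_\Lambda D$ concentrated in degree $0$ and reduces the claim to the statement that $K_{\sol}[G_0] \to \mathcal{D}_{(h)}(G_0,K)$ is an epimorphism of rings, verifiable directly in the $\mathbf{b}^\alpha$-coordinates of Definition \ref{DefiDistributionhenbas}.
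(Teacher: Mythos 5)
Your reduction to $G_0$ and the first half of the argument are sound: the computation $E\otimes^L_D K=K$ via Kohlhaase's resolution, and the conclusion that the cone of the multiplication map dies after applying $-\otimes^L_D K$, are correct and use exactly the inputs the paper uses. But the concluding step is a genuine gap, and you have in effect flagged it yourself. Derived Nakayama requires the cone to be derived $I$-complete, and nothing in your argument establishes this: $D=\n{D}_{(h)}(G_0,K)$ is a $K$-vector space whose natural (Smith) topology is not the $I$-adic one, the object $E=D\otimes^L_{K_\sol[G_0]}D$ is built from a \emph{solid} (i.e.\ completed) tensor product whose interaction with derived $I$-completion is precisely what would have to be proved rather than ``granted'', and even the boundedness of $E$ cannot be deduced from the finite global dimension of the classical ring $K_\sol[G_0](*)$, since classical homological algebra over the underlying abstract ring does not control solid Tor groups. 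The alternative route fails for the same reason: Theorem \ref{TheoLSTIwasawa} gives only \emph{algebraic} flatness of the underlying classical rings, and the paper is explicit (see the proof of Proposition \ref{PropAdmissible}) that this may only be fed into the solid tensor product against finitely presented $K_\sol[G]$-modules, where the algebraic and solid tensor products agree; $D\otimes^L_{K_\sol[G_0]}D$ is not of that form, so you cannot conclude that $E$ sits in degree $0$. Finally, reducing to ``$K_\sol[G_0]\to D$ is a ring epimorphism'' is essentially circular, since the epimorphism condition in the solid category is $D\otimes_{K_\sol[G_0]}D=D$, i.e.\ the statement being proved.

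The paper's proof sidesteps all of this with an explicit construction. Starting from the Lazard--Serre resolution $[K_\sol[G_0]^{\binom{d}{\bullet}};\alpha_\bullet]\simeq K$, it tensors over $K$ (not over $K_\sol[G_0]$) with $D$, obtaining a resolution of $D$ with the diagonal $G$-action, and then uses the antipode to untwist the diagonal action into the left-regular action on the $K_\sol[G_0]$-factor times the trivial action on a copy $D_0$ of $D$. This exhibits an explicit bounded resolution of $D$ by projective $K_\sol[G_0]$-modules of the form $K_\sol[G_0]^{\binom{d}{i}}\otimes_K D_0$ (projective because $D_0$ is a Smith space). The derived tensor product is then computed term by term, and untwisting back identifies the answer with Kohlhaase's resolution $[D^{\binom{d}{\bullet}};\alpha_\bullet]\simeq K$ tensored over $K$ with $D$, which is quasi-isomorphic to $D$. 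If you want to retain your formal framework, the missing ingredient is precisely such a bounded $K_\sol[G_0]$-projective resolution of $D$; without it, neither the completeness route nor the flatness route goes through.
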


\begin{proof}
First, we reduce to the case when $G$ is a uniform pro-$p$-group by Remark \ref{Remarkinductiondistribution}.  By Theorem \ref{LazardSerre} we can write
\[  [0 \to K_{\sol}[G] \to \cdots \to K_{\sol}[G]^{ d} \to K_{\sol}[G] ; \alpha] \simeq K. \]
Tensoring  with $\n{D}_{(h)}(G,K)$ over $K$ we get 
\[ [0 \to   K_{\sol}[G] \otimes_K\Dist_{(h)}(G, K)  \to \cdots K_{\sol}[G]^d \otimes_K\Dist_{(h)}(G, K)   \to K_{\sol}[G] \otimes_K\Dist_{(h)}(G, K)   ; \alpha \otimes 1] \simeq \mathcal{D}_{(h)}(G, K) . \]
The quasi-isomorphism above is of $K_{\sol}[G]$-modules for the diagonal action of $G$ in the terms of the complex.

 Let $\iota: K_{\sol}[G] \rightarrow K_{\sol}[G]$ be the antipode, i.e. the map induced by the inverse of the group, and denote in the same way its extension to the distribution algebra $\n{D}_{(h)}(G,K)$.   Consider the composition
\[  K_{\sol}[G] \otimes_K \Dist_{(h)}(G, K) \xrightarrow{ (1 \otimes \iota)\otimes 1}  K_{\sol}[G] \otimes_K K_{\sol}[G] \otimes_K \Dist_{(h)}(G, K)  \xrightarrow{1\otimes m }  K_{\sol}[G] \otimes_K \Dist_{(h)}(G, K),  \]
where $m$ is the  left multiplicaiton of $K_{\sol}[G]$ on the distribution algebra.   This map defines a $G$-equivariant isomorphism
\[
\phi: K_{\sol}[G]\otimes_K \n{D}_{(h)}(G,K)\cong K_{\sol}[G] \otimes_{K} \n{D}_{(h)}(G,K)_0
\]
where the action of $G$ in the image is left multiplication on $K_{\sol}[G]$ and trivial on $ \n{D}_{K}(G,K)_0$. Notice that $\phi$ can be extended naturally to a $G$-equivariant  isomorphism 
\[
\phi: \n{D}_{(h)}(G,K)\otimes_K \n{D}_{(h)}(G,K) \cong \n{D}_{(h)}(G,K)\otimes_K \n{D}_{(h)}(G,K)_0.  
\]
We define the complex
\begin{equation}
\label{eqcomplexbeta}
[0 \to K_{\sol}[G]\otimes_K \Dist_{(h)}(G, K)_0  \to \cdots \to  K_{\sol}[G]\otimes_K \Dist_{(h)}(G, K)_0 ; \beta_{\bullet} ]
\end{equation}
to be the complex whose differentials are given by $\beta_{\bullet} =   \phi \circ \alpha_{\bullet} \circ \phi^{-1}$.  Notice that the maps $\beta$ extend to respective complex with terms direct sums of $\n{D}_{(h)}(G,K)\otimes_K \n{D}_{(h)}(G,K)_{0}$.

Using this complex one can easily compute the derived tensor product  by replacing the right $\n{D}_{(h)}(G,K)$ with (\ref{eqcomplexbeta}):
\begin{eqnarray*} 
\n{D}_{(h)}(G,K)\otimes^L_{K_{\sol}[G]} \n{D}_{(h)}(G,K) &\simeq & [\cdots \to \Dist_{(h)}(G, K) \otimes_{K_{\sol}[G]} (K_{\sol}[G]^{ {d \choose i}} \otimes_{K} \Dist_{(h)}(G, K)_0 ) \to \cdots ; 1\otimes \beta] \\
&=& [\hdots \to \Dist_{(h)}(G, K)^{ {d \choose i}} \otimes_K \Dist_{(h)}(G, K)_0 \to \cdots ; \beta] \\
&\simeq& [\cdots \to \Dist_{(h)}(G, K)^{{d \choose i}} \otimes_K \Dist_{(h)}(G, K) \to \cdots ;   \alpha\otimes 1] \\
&\simeq&  \Dist_{(h)}(G, K), 
\end{eqnarray*}
In the above sequence of isomorphisms, the first quasi-isomorphism follows from the observation that the action of $G$ on the complex (\ref{eqcomplexbeta}) representing $\Dist_{(h)}(G, K)$ is trivial on the factor $\Dist_{(h)}(G, K)_0$. The second step is trivial. The third one follows by applying $\phi^{-1}$.   The fourth quasi-isomorphism follows from Theorem \ref{propKohlhaase}.  This finishes the proof.

\end{proof}

\begin{corollary}
\label{CoroTechnicalDhplus}
We have 
\begin{eqnarray*}
\n{D}^{(h^+)}(G,K)\otimes_{K_{\sol}[G]}^L \n{D}^{(h^+)}(G,K) & = & \n{D}^{(h^+)}(G,K) \\
\n{D}^{la}(G,K)\otimes_{K_{\sol}[G]}^L \n{D}^{la}(G,K) & = & \n{D}^{la}(G,K) .
\end{eqnarray*}
\proof
This follows from the previous proposition and the fact that $\n{D}^{(h^+)}(G,K)$ can be written as a colimit of distribution algebras $\n{D}_{(h')}(G,K)$, cf. Corollary \ref{coroDhpluscolimit}. The case of $\n{D}^{la}(G,K)$ follows from the same proof of Proposition \ref{PropKeyLemma} knowing that the complex of Theorem \ref{propKohlhaase} extends to $\n{D}^{la}(G,K)$. 
\endproof
\end{corollary}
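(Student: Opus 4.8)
The plan is to derive both equalities from Proposition \ref{PropKeyLemma} and Theorem \ref{propKohlhaase}, using the descriptions of $\n{D}^{(h^+)}(G,K)$ and $\n{D}^{la}(G,K)$ supplied by Corollary \ref{coroDhpluscolimit}. At the outset one reduces to the case that $G=G_0$ is uniform pro-$p$ exactly as in Remark \ref{Remarkinductiondistribution}: since $K_{\sol}[G]$ is a finite free $K_{\sol}[G_0]$-module, $\n{D}^{(h^+)}(G,K)=K_{\sol}[G]\otimes_{K_{\sol}[G_0]}\n{D}^{(h^+)}(G_0,K)$ and likewise for $\n{D}^{la}$, and the tensor products in question are obtained by base change along $K_{\sol}[G_0]\to K_{\sol}[G]$.

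\textbf{The case of $\n{D}^{(h^+)}(G,K)$.} By Corollary \ref{coroDhpluscolimit}, $\n{D}^{(h^+)}(G,K)=\varinjlim_{h'>h}\n{D}_{(h')}(G,K)$, and since the derived solid tensor product commutes with filtered colimits in each variable (filtered colimits of solid modules being exact, cf. Theorem \ref{TheoScholzeAB}),
\[
\n{D}^{(h^+)}(G,K)\otimes^L_{K_{\sol}[G]}\n{D}^{(h^+)}(G,K)=\varinjlim_{h'>h}\ \varinjlim_{h''>h}\ \n{D}_{(h')}(G,K)\otimes^L_{K_{\sol}[G]}\n{D}_{(h'')}(G,K).
\]
I would evaluate each term by rerunning the proof of Proposition \ref{PropKeyLemma} with two radii: tensoring the Lazard--Serre resolution $P$ of $K$ with $\n{D}_{(h'')}(G,K)$ over $K$ (licit because $\n{D}_{(h'')}(G,K)$ is flat over $K$, being quasiseparated, cf. Lemma \ref{LemmaqsimpliesFlat}) and applying the antipode isomorphism $\phi$ as there presents a flat resolution of $\n{D}_{(h'')}(G,K)$ over $K_{\sol}[G]$, whence
\[
\n{D}_{(h')}(G,K)\otimes^L_{K_{\sol}[G]}\n{D}_{(h'')}(G,K)\simeq\big(\n{D}_{(h')}(G,K)\otimes_{K_{\sol}[G]}P\big)\otimes_K\n{D}_{(h'')}(G,K)\simeq K\otimes_K\n{D}_{(h'')}(G,K)=\n{D}_{(h'')}(G,K),
\]
the middle quasi-isomorphism being Theorem \ref{propKohlhaase}, which says precisely that $\n{D}_{(h')}(G,K)\otimes_{K_{\sol}[G]}P$ is a resolution of $K$. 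Since this isomorphism is induced by the augmentation $\n{D}_{(h')}(G,K)\to K$, it is natural in both $h'$ and $h''$, so the double colimit collapses to $\varinjlim_{h''>h}\n{D}_{(h'')}(G,K)=\n{D}^{(h^+)}(G,K)$, again by Corollary \ref{coroDhpluscolimit}. (Alternatively, noting that the isomorphism of Proposition \ref{PropKeyLemma} is realised by the multiplication map, hence compatible with the transition maps $\n{D}_{(t_1)}(G,K)\to\n{D}_{(t_2)}(G,K)$, one may restrict the double colimit to the cofinal diagonal $h'=h''$ and apply Proposition \ref{PropKeyLemma} directly.)

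\textbf{The case of $\n{D}^{la}(G,K)$.} Here one cannot commute $\otimes^L$ past the Fr\'echet limit defining $\n{D}^{la}(G,K)$, so instead I would repeat the proof of Proposition \ref{PropKeyLemma} verbatim with $\n{D}^{la}(G,K)$ in place of $\n{D}_{(h)}(G,K)$. The only new ingredient is the analogue of Theorem \ref{propKohlhaase} over $\n{D}^{la}(G_0,K)$: that the differentials $\alpha_\bullet$ and the contracting homotopy $s_\bullet$ of the Lazard--Serre complex extend to $\n{D}^{la}(G_0,K)$, yielding a resolution
\[
P^{la}:=[\,0\to\n{D}^{la}(G_0,K)^{\binom{d}{d}}\to\cdots\to\n{D}^{la}(G_0,K)^{\binom{d}{0}};\ \alpha_\bullet\,]\ \simeq\ K.
\]
This follows from Kohlhaase's construction, which produces $\alpha_\bullet$ and $s_\bullet$ continuous for every norm $|\cdot|_r$ with $\tfrac1p<r<1$ (equivalently, for every $\n{D}_{(h)}(G_0,K)$): hence they extend to $\n{D}^{la}(G_0,K)=\varprojlim_h\n{D}_{(h)}(G_0,K)$, and the extended $s_\bullet$ contracts $P^{la}\to K$. (Equivalently, each $P_{(h)}$ is exact and the transition maps $\n{D}_{(h+1)}(G_0,K)\to\n{D}_{(h)}(G_0,K)$ have dense image, so topological Mittag--Leffler, Lemma \ref{LemmaML}, gives $R\varprojlim_h P_{(h)}=P^{la}$ and $R\varprojlim_h K=K$.) Granting this, one tensors $P$ with $\n{D}^{la}(G,K)$ over $K$ (again licit: $\n{D}^{la}(G,K)$ is Fr\'echet, hence quasiseparated, hence $K$-flat by Lemma \ref{LemmaqsimpliesFlat}), applies the antipode twist to present the result by flat $K_{\sol}[G]$-modules, and concludes $\n{D}^{la}(G,K)\otimes^L_{K_{\sol}[G]}\n{D}^{la}(G,K)\simeq P^{la}\otimes_K\n{D}^{la}(G,K)\simeq\n{D}^{la}(G,K)$ exactly as in the proof of Proposition \ref{PropKeyLemma}.

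The main obstacle is the claim that Kohlhaase's resolution genuinely survives the passage to the Fr\'echet limit $\n{D}^{la}(G_0,K)$: this requires inspecting \cite{Kohlhaase} closely enough to confirm that the differentials \emph{and} the contracting homotopy are simultaneously bounded for all the norms in play (so that no $R^1\varprojlim$ obstruction intervenes), and that the complex remains one of flat $K_{\sol}[G_0]$-modules after the antipode twist. The colimit bookkeeping in the $\n{D}^{(h^+)}$ case --- cofinality of the diagonal, or naturality of the isomorphisms in the two radii --- is routine by comparison.
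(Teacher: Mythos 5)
Your proposal is correct and follows essentially the same route as the paper: the first identity is obtained by writing $\n{D}^{(h^+)}(G,K)$ as the colimit of the $\n{D}_{(h')}(G,K)$ (Corollary \ref{coroDhpluscolimit}) and invoking Proposition \ref{PropKeyLemma}, and the second by rerunning the proof of Proposition \ref{PropKeyLemma} with the Lazard--Serre/Kohlhaase complex extended to $\n{D}^{la}(G_0,K)$, which is exactly the paper's argument. Your extra care about naturality of the isomorphisms in the two radii (or cofinality of the diagonal) and about the contracting homotopy surviving the Fr\'echet limit simply fills in details the paper leaves implicit.
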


\subsubsection{Enveloping and distribution algebras}

Let $\bb{G}_{h}$ be a rigid analytic group of  Definition \ref{defAffinoidgroups})  and let $G_h= \bb{G}_{h}(\Q_p)$ be its rational points. Recall that $\bb{G}_{h}$ is a polydisc centered in $1\in G$ of radius $p^{-h}$,  and that $\bb{G}^{(h)}= G \bb{G}_{h}$ is a finite  disjoint union of copies of  $\bb{G}_h$.    Note that $G_h\subset G$ is an open compact subgroup.  We denote by $\n{D}(\bb{G}_{h},K)$ the distribution algebra of $\bb{G}_{(h)}$-analytic functions, i.e. the dual of $C(\bb{G}_{h}, K)$.  We also denote $\n{D}(\bb{G}_{h^+},K)= \varinjlim_{h'>h} \n{D}(\bb{G}_{h'},K)$, in other words, the distribution algebra of the rigid analytic group defined by the open unit polydisc $\bb{G}_{h^+}= \bigcup_{h'>h} \bb{G}_{h'}$. We assume that $\bb{G}_{h^+}(\Q_p)=G_h$.

\begin{proposition} [Tamme]
\label{PropTamme}
Keep the above notation, and let
\[  CE(\f{g}) := [   0\to U(\f{g})\otimes \wedge^{d} \f{g} \to \cdots \to U(\f{g})\otimes \f{g}\to      U(\f{g}) ; d] \]
be the Chevalley-Eilenberg complex resolving the trivial representation $K$. Then $ \n{D}(\bb{G}_{h^+},K)\otimes_{U(\f{g})}^L K=K$. Moreover,   the complex  $\n{D}(\bb{G}_{h^+},K)\otimes_{U(\f{g})}^L CE(\f{g})$ is the dual of the global sections of the de Rham complex of $\bb{G}_{h^+}$. 
\proof
Let $[\Omega_{\bb{G}_{h^+}}^\bullet,d]$ be the de  Rham complex of $\bb{G}_{h^+}$, notice that the global sections of $\Omega_{\bb{G}_{h^+}}^i$ are equal to $C(\bb{G}_{h^+},K)\otimes_K \bigwedge^{i}(\f{g}^{\vee})$.  As $\bb{G}_{h^+}$ is a open polydisc, the Poincar\'e lemma holds (cf. \cite[Lemma 26]{Tamme}) and the global sections of the de Rham complex is  
\begin{equation}
\label{deRahmComplex}
C(\bb{G}_{h^+},K)\xrightarrow{d} C(\bb{G}_{h^+}, K)\otimes_K \f{g}^{\vee} \xrightarrow{d} \cdots \xrightarrow{d} C(\bb{G}_{h^+},K) \otimes_K \bigwedge^d \f{g}^{\vee}\to 0,
\end{equation}
which is quasi-isomorphic to $K$ via the inclusion of the constant functions $K\subset C(\bb{G}_{h^+},K)$.  It is easy to show that the dual of (\ref{deRahmComplex}) is equal to $\n{D}(\bb{G}_{h^+},K)\otimes_{U(\f{g})} CE(\f{g})$.  Finally, the fact that $\n{D}(\bb{G}_{h^+},K)\otimes_{U(\f{g})} CE(\f{g})$ is a projective resolution of $K$ as  $\n{D}(\bb{G}_{h^+},K)$-module follows from the exactness of (\ref{deRahmComplex}) and the duality Theorem \ref{theorem:duality}. 
\endproof
\end{proposition}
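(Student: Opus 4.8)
\textbf{Proof plan for Proposition \ref{PropTamme}.}

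The plan is to identify the de Rham complex of $\bb{G}_{h^+}$ with the complex $\n{D}(\bb{G}_{h^+},K)\otimes_{U(\f{g})}^L CE(\f{g})$ after dualising, and then use the Poincar\'e lemma to deduce acyclicity. First I would recall that $\bb{G}_{h^+}$ is an open polydisc with a trivialisation of its tangent bundle coming from the group structure: left-invariant vector fields give an identification $\Omega^i_{\bb{G}_{h^+}}(\bb{G}_{h^+}) \cong C(\bb{G}_{h^+},K)\otimes_K \bigwedge^i\f{g}^\vee$ (the global differential forms), and the de Rham differential $d$ is the usual one. Since $\bb{G}_{h^+}$ is an open polydisc, the Poincar\'e lemma of \cite[Lemma 26]{Tamme} applies and shows that the inclusion of constant functions $K \hookrightarrow C(\bb{G}_{h^+},K)$ induces a quasi-isomorphism
\[
K \xrightarrow{\sim} \big[ C(\bb{G}_{h^+},K)\xrightarrow{d} C(\bb{G}_{h^+},K)\otimes_K \f{g}^\vee \xrightarrow{d}\cdots\xrightarrow{d} C(\bb{G}_{h^+},K)\otimes_K \textstyle\bigwedge^d\f{g}^\vee \big].
\]
All terms here are Fr\'echet spaces of compact type: $C(\bb{G}_{h^+},K)$ is such a space by Remark \ref{remarkDhplus}, and tensoring with the finite-dimensional $\bigwedge^i\f{g}^\vee$ preserves this.

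Next I would dualise. Apply $\iHom_K(-,K)$ termwise. By the duality Theorem \ref{theorem:duality}, the dual of $C(\bb{G}_{h^+},K)\otimes_K\bigwedge^i\f{g}^\vee$ is $\n{D}(\bb{G}_{h^+},K)\otimes_K\bigwedge^i\f{g}$, and the transpose of the de Rham differential is precisely the Chevalley-Eilenberg differential after identifying $\n{D}(\bb{G}_{h^+},K)\otimes_K\bigwedge^\bullet\f{g}$ with $\n{D}(\bb{G}_{h^+},K)\otimes_{U(\f{g})} CE(\f{g})$; this last identification is the standard base-change formula $CE(\f{g}) = U(\f{g})\otimes_K\bigwedge^\bullet\f{g}$ together with the compatibility of the $U(\f{g})$-action on $\n{D}(\bb{G}_{h^+},K)$ (coming from the map $U(\f{g})\to\n{D}(\bb{G}_{h^+},K)$ of \S\ref{subsecConAnCoho}) and the action by left-invariant derivations on functions. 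The key point is that on the function side the $\f{g}$-action realizing $d$ is by left-invariant vector fields, whose transpose is exactly the left-multiplication action of $\f{g}\subset U(\f{g})$ on $\n{D}(\bb{G}_{h^+},K)$ used to form the Chevalley-Eilenberg resolution. Since the de Rham complex is a bounded complex of Fr\'echet spaces of compact type with bounded cohomology (concentrated in degree $0$, equal to $K$), and since $K$ is reflexive, dualising is exact and sends the quasi-isomorphism $K\xrightarrow{\sim}[\Omega^\bullet]$ to a quasi-isomorphism $\n{D}(\bb{G}_{h^+},K)\otimes_{U(\f{g})} CE(\f{g})\xrightarrow{\sim} K$. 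This gives $\n{D}(\bb{G}_{h^+},K)\otimes_{U(\f{g})}^L K = K$, using that $CE(\f{g})$ is a complex of free, hence flat, $U(\f{g})$-modules so the underived and derived tensor products agree, and that each term $\n{D}(\bb{G}_{h^+},K)\otimes_{U(\f{g})}(U(\f{g})\otimes\bigwedge^i\f{g}) = \n{D}(\bb{G}_{h^+},K)\otimes_K\bigwedge^i\f{g}$ is a projective $\n{D}(\bb{G}_{h^+},K)$-module (being finite free).

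The main obstacle I anticipate is the careful bookkeeping in the dualisation step: one must check that the transpose of the de Rham differential really is the Chevalley-Eilenberg differential, not merely up to sign or up to the antipode, and in particular that the $U(\f{g})$-module structure entering $CE(\f{g})$ matches the one induced on $\n{D}(\bb{G}_{h^+},K)$ from left-invariant derivations on $C(\bb{G}_{h^+},K)$. A secondary technical point is to confirm that all the spaces in play are genuinely Fr\'echet of compact type (resp. $LS$ of compact type for the duals) so that Theorem \ref{theorem:duality} and exactness of dualisation on bounded complexes apply; this is where one invokes Remark \ref{remarkDhplus} and the fact that tensoring with a finite-dimensional vector space preserves these classes. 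Granting these identifications, projectivity of the terms of $\n{D}(\bb{G}_{h^+},K)\otimes_{U(\f{g})} CE(\f{g})$ over $\n{D}(\bb{G}_{h^+},K)$ and acyclicity together show it is a projective resolution of the trivial $\n{D}(\bb{G}_{h^+},K)$-module $K$, completing the proof.
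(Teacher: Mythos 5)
Your proposal is correct and follows essentially the same route as the paper's proof: identify the global de Rham complex of the open polydisc $\bb{G}_{h^+}$ as $C(\bb{G}_{h^+},K)\otimes_K\bigwedge^\bullet\f{g}^\vee$, apply the Poincar\'e lemma of Tamme to see it resolves $K$, and then dualise via Theorem \ref{theorem:duality} to identify the result with $\n{D}(\bb{G}_{h^+},K)\otimes_{U(\f{g})}CE(\f{g})$ as a projective resolution. The extra care you take with the matching of the transposed de Rham differential against the Chevalley--Eilenberg differential and with the compact-type hypotheses is exactly the content the paper compresses into ``it is easy to show.''
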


\begin{lemma}
We have
\[ \n{D}(\bb{G}_{h^+},K)\otimes^L_{U(\f{g})} \n{D}(\bb{G}_{h^+},K)= \n{D}(\bb{G}_{h^+},K). \]
In particular $\Mod_{\n{D}(\bb{G}_{h^+},K)_\sol}^{\solid}$ is a full subcategory of the category of solid $U(\f{g})$-modules. 
\end{lemma}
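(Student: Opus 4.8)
The plan is to mimic the proof of Proposition \ref{PropKeyLemma} and Corollary \ref{CoroTechnicalDhplus}, but now working over the enveloping algebra $U(\f{g})$ instead of the Iwasawa algebra $K_{\sol}[G]$, using the Chevalley--Eilenberg resolution in place of the Lazard--Serre resolution. Concretely, I would start from Proposition \ref{PropTamme}, which provides the quasi-isomorphism
\[
CE(\f{g}) = [0\to U(\f{g})\otimes\wedge^d\f{g}\to\cdots\to U(\f{g})\otimes\f{g}\to U(\f{g});d] \xrightarrow{\ \sim\ } K,
\]
a resolution of the trivial module by finite free $U(\f{g})$-modules (with $G$, or rather $\f{g}$, acting diagonally on each term). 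Tensoring over $K$ with $\n{D}(\bb{G}_{h^+},K)$ and using that $\n{D}(\bb{G}_{h^+},K)\otimes_{U(\f{g})}^L K = K$ (again Proposition \ref{PropTamme}), one obtains a projective resolution of $\n{D}(\bb{G}_{h^+},K)$ as a $\n{D}(\bb{G}_{h^+},K)$-module of the form $\n{D}(\bb{G}_{h^+},K)\otimes_K CE(\f{g})$, with diagonal $\f{g}$-action.

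The next step is the ``untwisting'' trick: one needs an isomorphism of $U(\f{g})$-modules
\[
\n{D}(\bb{G}_{h^+},K)\otimes_K \n{D}(\bb{G}_{h^+},K) \cong \n{D}(\bb{G}_{h^+},K)\otimes_K \n{D}(\bb{G}_{h^+},K)_0
\]
transforming the diagonal action on the left into the action by left multiplication on the first factor only. For the group algebra this was the map built from the antipode; for $U(\f{g})$ the antipode is the canonical anti-involution $S$ of the Hopf algebra $U(\f{g})$ (extending $X\mapsto -X$ on $\f{g}$), which extends continuously to $\n{D}(\bb{G}_{h^+},K)$ since this distribution algebra is a completion of $U(\f{g})$ stable under $S$ (the antipode of the rigid group $\bb{G}_{h^+}$). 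Using $S$ and the left multiplication $m$ of $U(\f{g})$ on $\n{D}(\bb{G}_{h^+},K)$, I would define $\phi$ as the composite
\[
\n{D}(\bb{G}_{h^+},K)\otimes_K\n{D}(\bb{G}_{h^+},K) \xrightarrow{(1\otimes S)\otimes 1} \n{D}(\bb{G}_{h^+},K)^{\otimes 3} \xrightarrow{1\otimes m} \n{D}(\bb{G}_{h^+},K)\otimes_K\n{D}(\bb{G}_{h^+},K)_0,
\]
and check $G$- (equivalently $\f{g}$-) equivariance. Conjugating the differentials of $CE(\f{g})$ by $\phi$ gives a complex $[0\to \n{D}(\bb{G}_{h^+},K)\otimes_K\wedge^d\f{g}\otimes_K\n{D}(\bb{G}_{h^+},K)_0\to\cdots;\beta_\bullet]$ with trivial $\f{g}$-action on the last tensor factor.

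Finally, I would compute the derived tensor product by substituting this resolution for the right-hand $\n{D}(\bb{G}_{h^+},K)$:
\[
\n{D}(\bb{G}_{h^+},K)\otimes_{U(\f{g})}^L\n{D}(\bb{G}_{h^+},K) \simeq [\cdots\to \n{D}(\bb{G}_{h^+},K)\otimes_{U(\f{g})}(\n{D}(\bb{G}_{h^+},K)\otimes_K\wedge^i\f{g}\otimes_K\n{D}(\bb{G}_{h^+},K)_0)\to\cdots;1\otimes\beta],
\]
which, using the triviality of the $\f{g}$-action on the $\n{D}(\bb{G}_{h^+},K)_0$-factor, simplifies to $[\cdots\to\n{D}(\bb{G}_{h^+},K)\otimes_K\wedge^i\f{g}\otimes_K\n{D}(\bb{G}_{h^+},K)_0\to\cdots;\beta]$, then (applying $\phi^{-1}$) to $[\cdots\to\n{D}(\bb{G}_{h^+},K)\otimes_K\wedge^i\f{g}\otimes_K\n{D}(\bb{G}_{h^+},K)\to\cdots;d\otimes 1] = \n{D}(\bb{G}_{h^+},K)\otimes_K CE(\f{g})$, which by Proposition \ref{PropTamme} is quasi-isomorphic to $\n{D}(\bb{G}_{h^+},K)$. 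The ``in particular'' clause then follows exactly as in Corollary \ref{coroAnalyticDh}: idempotence of the tensor product implies the forgetful functor $D(\n{D}(\bb{G}_{h^+},K))\to D(U(\f{g})_{\sol})$ is fully faithful, and likewise on abelian categories, by the standard extension-of-scalars computation. The main obstacle I anticipate is purely bookkeeping: verifying carefully that $S$ extends to $\n{D}(\bb{G}_{h^+},K)$ and that $\phi$ is genuinely $\f{g}$-equivariant and compatible with all the tensor factors and the $\solid$ structures — the homological skeleton is identical to Proposition \ref{PropKeyLemma}, so no new ideas should be required beyond replacing the antipode of $K_{\sol}[G]$ with that of $U(\f{g})$.
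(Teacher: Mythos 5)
Your proposal is correct and is exactly the paper's argument: the paper's proof is the single sentence that one repeats Proposition \ref{PropKeyLemma} verbatim with the Lazard--Serre resolution replaced by the Chevalley--Eilenberg resolution of Proposition \ref{PropTamme}, and your write-up simply makes explicit the details (in particular that the antipode of $U(\f{g})$, extended to $\n{D}(\bb{G}_{h^+},K)$, plays the role of the group inversion in the untwisting isomorphism $\phi$).
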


\begin{proof}
The proof follows identically as that of Proposition \ref{PropKeyLemma} by replacing the Lazard-Serre resolution by the Chevalley-Eilenberg resolution $CE(\mathfrak{g})$ of Proposition \ref{PropTamme}. 
\end{proof}

\subsection{Proofs}
\label{Subsectionproof}

\begin{proof}[Proof of Theorem \ref{theocohom1}]

 Let $C\in D(K_{\sol}[G])$. By Theorem \ref{TheoMain} we have 
\[
C^{R \bb{G}^{(h^+)}-an} =  \iRHom_{K_{\sol}[G]}(\Dist^{(h^+)}(G, K), C).
\]
We now compute
\begin{eqnarray*}
\iRHom_{K_\sol[G]}(K, C^{R \bb{G}^{(h^+)}-an}) &=& \iRHom_{K_\sol[G]}(K, \iRHom_{K_{\sol}[G]}(\Dist^{(h^+)}(G, K), C)) \\
&=& \iRHom_{K_\sol[G]}(K \otimes^L_{K_\sol[G]} \Dist^{(h^+)}(G, K), C) \\
&=& \iRHom_{K_\sol[G]}(K, C),
\end{eqnarray*}
where the second equality is the tensor-Hom adjunction, and the third one follows from Theorem \ref{propKohlhaase}.
\end{proof}

\begin{proof}[Proof of Theorem \ref{theocohom2}]
Let $C$ be a derived $\bb{G}^{(h^+)}$-analytic representation of $G$.  Theorem \ref{TheoMain} says that $C$ is a $\n{D}^{(h^+)}(G,K)$-module.  By Theorem \ref{propKohlhaase} one has 
\begin{eqnarray*}
 R \iHom_{K_{\sol}[G]}(K, C) &=&    R \iHom_{\Dist^{(h^+)}(G, K)}(\Dist^{(h^+)}(G, K) \otimes_{K_{\sol}[G]}^{L} K, C) \\
&=& R \iHom_{\Dist^{(h^+)}(G, K)}(K, C).
\end{eqnarray*}

On the other hand, since $\n{D}^{(h^+)}(G,K)= \O_{K,\sol}[G]\otimes_{\O_{K,\sol}[G_h]} \n{D}(\bb{G}_{h^+},K)$, one has  
\begin{eqnarray*}
R\iHom_{\n{D}^{(h^+)}(G,K)}(K,C)= R\iHom_{\n{D}(\bb{G}_{h^+},K)}(K, C)^{G/G_h}.
\end{eqnarray*}
By Proposition \ref{PropTamme} we get 
\begin{eqnarray*}
R\iHom_{U(\f{g})}(K, C) & = & R\iHom_{\n{D}(\bb{G}_{h^+},K)}(\n{D}(\bb{G}_{h^+},K)\otimes_{U(\f{g})}^L K, C)  \\
& = &  R\iHom_{\n{D}(\bb{G}_{h^+},K)}(K, C).
\end{eqnarray*}
Putting all together we obtain 
\[
R\iHom_{K_{\sol}[G]}(K,C)= R\iHom_{\n{D}^{(h^+)}(G,K)}(K,C)= R\iHom_{U(\f{g})}(K,C)^{G} 
\]
as we wanted. 
\end{proof}

\subsection{Shapiro's lemma and Hochschild-Serre}

Let $G$ be a compact $p$-adic Lie group of dimension $d$ and  $H$  a closed   subgroup of dimension $e$.  One can find an open uniform pro-$p$-group $G_0\subset G$ satisfying the following conditions: 
\begin{enumerate}
    \item $H_0:= H\cap G_0$ is an uniform pro-$p$-group. 

\item There are charts $\phi_{G_0}: \mathbf{z}_p^{d} \to G_0 $ and $\phi_{H_0}:  \mathbf{Z}_p^{e}\to H_0$ such that $\phi_{G_0}\circ \iota_{e} =\phi_{H_0}$, where $\iota_e: \mathbf{Z}_p^{e}\to \mathbf{Z}_{p}^{d}$ is the inclusion in the last $e$-components.
\end{enumerate}
Indeed, taking $\f{g}_0\subset \f{g}$ a small enough lattice as in  \cite[\S 5.2]{Emerton}  and $\f{h}_0:=\f{h}\cap \f{g}_0$,  one can take $G_0:= \exp(\f{g}_0)$ and $H_0:=\exp(\f{h}_0)$. The profinite groups $H_0$ and $G_0$ allow us to define compatible rigid analytic neighbourhoods $\bb{H}^{(h^+)}$ and $\bb{G}^{(h^+)}$ of $H$ and $G$ respectively,   with   $\bb{G}^{(h^+)}/\bb{H}^{(h^+)}$ a finite disjoint union of open polydiscs of dimension $d-e$,  and such that   
\[
C(\bb{G}^{(h^+)},  K)=  C(\bb{G}^{(h^+)}/\bb{H}^{(h^+)}, K) \otimes_{K_\sol} C(\bb{H}^{(h^+)},  K) .  
\]
In other words,  if  $\n{D}^{(h^+)}(G/H,K)$ denotes the dual of $C(\bb{G}^{(h^+)}/\bb{H}^{(h)},K)$,  we have an isomorphism of right $\n{D}^{(h^+)}(H,K)$-modules 
\[
\n{D}^{(h^+)}(G,K)= \n{D}^{(h^+)}(G/H,K)\otimes_{K_\sol}  \n{D}^{(h^+)}(H,K). 
\]
One has a similar description as left  $\n{D}^{(h^+)}(H,K)$-modules.  

\begin{definition}
For  $C \in D(K_\sol[H])$  we  define the solid induction and coinduction of $C$ from $H$ to $G$ as
\[ \mathrm{ind}_H^G(C) := K_\sol[G] \otimes_{K_\sol[H]}^L C, \]
\[ \mathrm{coind}_{H}^G(C) := R \iHom_{K_\sol[H]}(K_\sol[G], C), \]
where the action of $G$ is given by left multiplication on $K_\sol[G]$ for the induction, and by right multiplication on $K_\sol[G]$ for the coinduction. If $C$ is derived $\mathbb{H}^{(h^+)}$-analytic, define the analytic induction and coinduction as
\[ \hind_H^G(C) := \Dist^{(h^+)}(G, K) \otimes_{\Dist^{(h^+)}(H, K)}^L C, \]
\[ \hcoind_{H}^G(C) := R \iHom_{\Dist^{(h^+)}(H, K)}(\Dist^{(h^+)}(G, K), C). \]
\end{definition}

\begin{proposition} (Shapiro's lemma)
Let $C \in D(K_\sol[G])$, $C' \in D(K_\sol[H])$. Then $\mathrm{ind}_{H}^G$ (resp.  $\mathrm{coind}_{H}^G$) is the left (resp. right) adjoint of the restriction map $D(K_\sol[G])\to D(K_{\sol}[H])$. In other words, 
\[ R \iHom_{K_\sol[G]}(\mathrm{ind}_H^G(C'), C) = R \iHom_{K_\sol[H]}(C', C), \]
\[ R \iHom_{K_\sol[G]}(C, \mathrm{coind}_H^G(C')) = R \iHom_{K_\sol[H]}(C, C'). \]
Analogously, if $C$ and $C'$ are derived $\mathbb{G}^{(h^+)}$-analytic and $\mathbb{H}^{(h^+)}$-analytic representations, then
\[ R \iHom_{\Dist^{(h^+)}(G, K)}(\hind_H^G(C'), C) = R \iHom_{\Dist^{(h^+)}(H, K)}(C', C), \]
\[ R \iHom_{\Dist^{(h^+)}(G, K)}(C, \hcoind_H^G(C')) = R \iHom_{\Dist^{(h^+)}(H, K)}(C, C'). \]
\end{proposition}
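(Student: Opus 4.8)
The plan is to prove all four adjunctions simultaneously by pure tensor-Hom formalism, since each of the four statements is an instance of the adjunction between extension of scalars and restriction/coinduction along a ring map. First I would treat the two statements for $K_\sol[G]$. The ring map in question is the inclusion of solid rings $K_\sol[H] \hookrightarrow K_\sol[G]$; note that $K_\sol[G]$ is a projective $K_\sol[H]$-module on both sides because $G/H$ (resp. $H\backslash G$) is profinite, so $K_\sol[G] = \bigoplus$-type statements become clean and there are no derived subtleties coming from the module structure over the subgroup algebra. For the induction adjunction, I would simply compute
\[
R\iHom_{K_\sol[G]}(K_\sol[G]\otimes^L_{K_\sol[H]} C', C) = R\iHom_{K_\sol[H]}(C', R\iHom_{K_\sol[G]}(K_\sol[G], C)) = R\iHom_{K_\sol[H]}(C', C),
\]
where the first equality is the standard base-change/tensor-Hom adjunction for the ring map $K_\sol[H]\to K_\sol[G]$ (valid at the level of derived categories of solid modules because everything in sight lives in the well-behaved derived categories $D(\n A,\n M)$ of Theorem \ref{TheoLemmaAnalyticrings}), and the second is trivial. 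For the coinduction adjunction the argument is the dual one:
\[
R\iHom_{K_\sol[G]}(C, R\iHom_{K_\sol[H]}(K_\sol[G], C')) = R\iHom_{K_\sol[H]}(K_\sol[G]\otimes^L_{K_\sol[G]} C, C') = R\iHom_{K_\sol[H]}(C, C'),
\]
again just tensor-Hom adjunction applied to $K_\sol[H]\to K_\sol[G]$ together with $K_\sol[G]\otimes^L_{K_\sol[G]} C = C$. One should check the claimed $G$-equivariance of the structures (left multiplication on $K_\sol[G]$ for induction, right multiplication for coinduction) is compatible with these identifications; this is routine and is exactly parallel to the discussion around Proposition \ref{PropCoinductionTrivial}.

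For the analytic versions the argument is formally identical, with $K_\sol[H]\to K_\sol[G]$ replaced by $\n D^{(h^+)}(H,K)\to \n D^{(h^+)}(G,K)$. The only genuinely new input needed is that this is a well-behaved map of solid rings for which base-change along it behaves correctly, and for that I would invoke the explicit description recorded just above the statement, namely the isomorphism of right (resp. left) $\n D^{(h^+)}(H,K)$-modules
\[
\n D^{(h^+)}(G,K) = \n D^{(h^+)}(G/H,K)\otimes_{K_\sol} \n D^{(h^+)}(H,K),
\]
which comes from the factorisation $C(\bb G^{(h^+)},K) = C(\bb G^{(h^+)}/\bb H^{(h^+)},K)\otimes_{K_\sol} C(\bb H^{(h^+)},K)$ and the duality Theorem \ref{theorem:duality} (the relevant distribution spaces are Fr\'echet of compact type, so duals of $LS$ spaces of compact type, and the tensor product commutes with duals by Theorem \ref{theorem:duality}(2)). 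This exhibits $\n D^{(h^+)}(G,K)$ as a (split, in particular flat and cofibrant enough) module over $\n D^{(h^+)}(H,K)$ on each side, so that $\n D^{(h^+)}(G,K)\otimes^L_{\n D^{(h^+)}(H,K)} - $ and $R\iHom_{\n D^{(h^+)}(H,K)}(\n D^{(h^+)}(G,K), -)$ are computed naively and the same tensor-Hom manipulation as above goes through verbatim, yielding the two analytic adjunctions.

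The main obstacle, such as it is, is not conceptual but bookkeeping: one must be careful that the ``extra'' $G$- (resp. $\n D^{(h^+)}(G,K)$-)actions used to define $\mathrm{ind}$, $\mathrm{coind}$, $\hind$, $\hcoind$ are the ones that make the displayed adjunction isomorphisms $G$-equivariant, and in the analytic case one must confirm that restriction $D(\n D^{(h^+)}(G,K))\to D(\n D^{(h^+)}(H,K))$ is the functor actually appearing (rather than restriction to $K_\sol[H]$ followed by something), using Corollary \ref{coroAnalyticDh} to pass freely between the distribution-algebra module categories and the corresponding full subcategories of $D(K_\sol[G])$, $D(K_\sol[H])$. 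Once these identifications are pinned down the proof is a one-line application of tensor-Hom adjunction in each of the four cases.
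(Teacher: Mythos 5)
Your proposal is correct and follows essentially the same route as the paper: the paper likewise deduces all four statements from the derived tensor-Hom adjunction for the ring maps $K_\sol[H]\to K_\sol[G]$ and $\Dist^{(h^+)}(H,K)\to \Dist^{(h^+)}(G,K)$, writing out only the induction case and noting the others are analogous. Your additional remarks on projectivity of $K_\sol[G]$ over $K_\sol[H]$ and the splitting of $\Dist^{(h^+)}(G,K)$ are harmless elaborations of what the paper leaves implicit.
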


\begin{proof}
The first statement follows formally:
\[ R \iHom_{K_\sol[G]}(K_\sol[G] \otimes_{K_\sol[H]}^L C, C') = R \iHom_{K_\sol[H]}(C, R \iHom_{K_\sol[G]}(K_\sol[G], C')) = R \iHom_{K_\sol[H]}(C, C'). \]
The rest of the statements are proved in a similar way. 
\end{proof}

\begin{proposition} (Hochschild-Serre)
Let $H\subset G$ be a normal closed  subgroup and $C \in D(K_\sol[G])$. Then
\[ R \iHom_{K_\sol[G]}(K, C) = R \iHom_{K_\sol[G/H]}(K, R \iHom_{K_\sol[H]}(K, C)). \]
If $C$ is derived $\mathbb{G}^{(h^+)}$-analytic, then
\[ R \iHom_{\Dist^{(h^+)}(G, K)}(K, C) = R \iHom_{\Dist^{(h^+)}(G/H, K)}(K, R \iHom_{\Dist^{(h^+)}(H, K)}(K, C)). \]
\end{proposition}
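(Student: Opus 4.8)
The plan is to deduce Hochschild--Serre from Shapiro's lemma, exactly as in the classical case, once the relevant change-of-rings identity is in place. First I would record the key algebraic input: for $H \trianglelefteq G$ closed normal, the quotient $G/H$ is again a compact $p$-adic Lie group, and one has a canonical identification $K_\sol[G] \otimes^L_{K_\sol[H]} K = K_\sol[G/H]$ as $K_\sol[G]$-modules (with $G$ acting through its quotient). This follows from the description of $K_\sol[G]$ as the Iwasawa algebra: $\O_{K,\sol}[G]\otimes^L_{\O_{K,\sol}[H]}\O_K$ is computed by the bar resolution of Lemma in \S\ref{subsecConAnCoho}, or more directly from the fact that $K_\sol[G]$ is a flat $K_\sol[H]$-module (it is a product of copies of $K_\sol[H]$ indexed by a set of coset representatives, using compactness of $G$ and $H$), so the derived tensor product is underived and equals $K_\sol[G/H]$ on the nose. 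Dually, $R\iHom_{K_\sol[H]}(K,-)$ of a $K_\sol[G]$-module naturally lands in $D(K_\sol[G/H])$.

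Next I would carry out the computation. Given $C \in D(K_\sol[G])$, write
\[
R\iHom_{K_\sol[G]}(K,C) = R\iHom_{K_\sol[G]}\bigl(K_\sol[G/H]\otimes^L_{K_\sol[G/H]}K,\; C\bigr).
\]
Using the identification $K_\sol[G/H] = K_\sol[G]\otimes^L_{K_\sol[H]}K$ and the tensor--Hom adjunction along the ring map $K_\sol[H]\to K_\sol[G]$ (i.e.\ Shapiro's lemma, $\mathrm{ind}$--restriction adjunction), this becomes
\[
R\iHom_{K_\sol[G/H]}\bigl(K,\; R\iHom_{K_\sol[H]}(K_\sol[G],C)\bigr)
= R\iHom_{K_\sol[G/H]}\bigl(K,\; R\iHom_{K_\sol[H]}(K,C)\bigr),
\]
where in the last step I use that $R\iHom_{K_\sol[H]}(K_\sol[G],C)$, viewed as a complex of $K_\sol[G/H]$-modules, is nothing but $\mathrm{coind}_H^G$ of the restriction, and that for the trivial $K_\sol[G/H]$-module $K$ one may replace $K_\sol[G]$ by $K$ after taking $H$-cohomology --- concretely, $R\iHom_{K_\sol[H]}(K_\sol[G],C) \cong R\iHom_{K_\sol[H]}(K,C)\otimes_K K_\sol[G/H]$ as $K_\sol[G/H]$-modules, since $K_\sol[G] \cong K_\sol[H]\otimes_K K_\sol[G/H]$ compatibly with the $H$-action (by choosing a continuous section of $G\to G/H$, possible as these are profinite). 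Then $R\iHom_{K_\sol[G/H]}(K, M\otimes_K K_\sol[G/H]) = M$ for the trivial module $M=R\iHom_{K_\sol[H]}(K,C)$ composed with the outer $G/H$-cohomology gives precisely the claimed double complex formula. The analytic statement is proved identically, replacing $K_\sol[G]$, $K_\sol[H]$, $K_\sol[G/H]$ by $\n{D}^{(h^+)}(G,K)$, $\n{D}^{(h^+)}(H,K)$, $\n{D}^{(h^+)}(G/H,K)$, and using the factorisation $\n{D}^{(h^+)}(G,K) = \n{D}^{(h^+)}(G/H,K)\otimes_{K_\sol}\n{D}^{(h^+)}(H,K)$ as left (and right) $\n{D}^{(h^+)}(H,K)$-modules recorded just before the Shapiro's lemma proposition, together with the change-of-rings identity $\n{D}^{(h^+)}(G,K)\otimes^L_{\n{D}^{(h^+)}(H,K)}K = \n{D}^{(h^+)}(G/H,K)$; here one needs that $C$, being derived $\mathbb{G}^{(h^+)}$-analytic, restricts to a derived $\mathbb{H}^{(h^+)}$-analytic complex, so that all the $R\iHom$'s may be computed over the distribution algebras by Theorem~\ref{TheoMain} and Corollary~\ref{coroAnalyticDh}.

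The main obstacle I anticipate is making rigorous the flatness/projectivity claims that let one pass between $K_\sol[G]$ and $K_\sol[G/H]$ (resp.\ between the distribution algebras): one must check that $K_\sol[G]$ really is a product of copies of $K_\sol[H]$ indexed by $G/H$ in the \emph{solid} sense, i.e.\ that $\O_{K,\sol}[G] = \varprojlim_N \O_K[G/N]$ decomposes compatibly with the coset structure, which is fine because $G/H$ is profinite and one can choose a compatible system of coset representatives; and that a continuous section $G/H \to G$ exists and splits the $H$-action appropriately. For the distribution-algebra version the corresponding statement is already supplied by the displayed isomorphism $\n{D}^{(h^+)}(G,K) = \n{D}^{(h^+)}(G/H,K)\otimes_{K_\sol}\n{D}^{(h^+)}(H,K)$, so there the work is only to verify that this tensor decomposition is $\n{D}^{(h^+)}(H,K)$-linear on the correct side and that $\n{D}^{(h^+)}(G/H,K)$ is flat over $K_\sol$ (it is, being an $LS$ space, hence quasiseparated, hence flat by Lemma~\ref{LemmaqsimpliesFlat}), which keeps the derived tensor products underived where needed. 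Once these bookkeeping points are settled, the proof is the formal chain of adjunctions above.
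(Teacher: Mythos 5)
Your overall strategy is the paper's: the only inputs are the change-of-rings identity $K_\sol[G]\otimes^L_{K_\sol[H]}K = K_\sol[G/H]$, Shapiro's lemma, and the tensor--Hom adjunction, and your discussion of why $K_\sol[G]$ is projective over $K_\sol[H]$ (via a continuous section of $G\to G/H$) supplies a justification the paper leaves implicit. However, the middle of your computation is wrong as written. The correct adjunction gives
\[
R\iHom_{K_\sol[G]}(K,C)=R\iHom_{K_\sol[G/H]}\bigl(K,\;R\iHom_{K_\sol[G]}(K_\sol[G/H],C)\bigr),
\]
and the inner term equals $R\iHom_{K_\sol[H]}(K,C)$ directly, by Shapiro's lemma applied to $K_\sol[G/H]=\mathrm{ind}_H^G(K)$; that is the whole proof. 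Your displayed intermediate term instead has $R\iHom_{K_\sol[H]}(K_\sol[G],C)$ in the inner slot. That object is $\mathrm{coind}_H^G(C|_H)$; it still carries a nontrivial action of $H$ (through right multiplication on $K_\sol[G]$), so it is not a $K_\sol[G/H]$-module, and its ``$G/H$-cohomology'' is not $R\iHom_{K_\sol[G]}(K,C)$: already for $H=\{1\}$ your first equality would assert that $R\iHom_{K_\sol[G]}(K,C)$ equals $R\iHom_{K_\sol[G]}(K,\underline{\Cont}(G,C))$, and the latter is just $C$ by Proposition \ref{PropCoinductionTrivial}.

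The auxiliary identities you invoke to bridge this are also false. The isomorphism $R\iHom_{K_\sol[H]}(K_\sol[G],C)\cong R\iHom_{K_\sol[H]}(K,C)\otimes_K K_\sol[G/H]$ takes the $H$-cohomology prematurely and confuses a function space with a tensor product: using $K_\sol[G]\cong K_\sol[H]\otimes_{K_\sol}K_\sol[G/H]$, the left-hand side is $R\iHom_{K}(K_\sol[G/H],C)$, i.e.\ roughly $\underline{\Cont}(G/H,C)$, and for $H=\{1\}$ your formula would identify $\underline{\Cont}(G,C)$ with $C\otimes_{K_\sol}K_\sol[G]$. Likewise $R\iHom_{K_\sol[G/H]}(K,M\otimes_K K_\sol[G/H])=M$ confuses induction with coinduction: by \eqref{Equationchi}, for $M=K$ the left-hand side is $K(\chi)[-\dim(G/H)]$, not $K$. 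None of this is needed. Delete that justification and replace the middle of the argument by the two-line adjunction above; with that repair, your proof (including the analytic case, where the factorisation $\n{D}^{(h^+)}(G,K)=\n{D}^{(h^+)}(G/H,K)\otimes_{K_\sol}\n{D}^{(h^+)}(H,K)$ plays the role of the continuous section) is correct and coincides with the paper's.
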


\begin{proof}
By Shapiro's lemma we have
\[ R \iHom_{K_\sol[H]}(K, C) = R \iHom_{K_\sol[G]}(K_\sol[G] \otimes_{K_\sol[H]}^L K, C). \]
Applying the functor $R \iHom_{K_\sol[G/H]}(K, -)$ to both sides,  using $K_\sol[G] \otimes_{K_\sol[H]}^L K = K_\sol[G/H]$ and the usual  adjunction one obtains
\[ R \iHom_{K_\sol[G/H]}(K, R \iHom_{K_\sol[H]}(K, C)) = R \iHom_{K_\sol[G]}(K, C), \]
as desired. The rest of the statements are proved in a similar way.
\end{proof}

\subsection{Homology and duality}

\begin{definition}
Let $C \in D(K_\sol[G])$. We define the solid group homology of $C$ as
\[ K \otimes_{K_\sol[G]}^L C. \]
Analogously, if $C$ is derived  $\mathbb{G}^{(h^+)}$-analytic, define its  $\bb{G}^{(h^+)}$-analytic homology as
\[ K \otimes_{\Dist^{(h^+)}(G, K)}^L C. \]
\end{definition}

We have the following formal duality between homology and cohomology.

\begin{lemma}
Let $C \in D(K_\sol[G])$. Then
\[ R \iHom_{K}(K \otimes_{K_\sol[G]}^L C, K) = R \iHom_{K_\sol[G]}(K, R \iHom_K(C, K)). \]
If $C$ is $\mathbb{G}^{(h^+)}$-analytic, then
\[ R \iHom_{K}(K \otimes_{\Dist^{(h^+)}(G, K)}^L C, K) = R \iHom_{\Dist^{(h^+)}(G, K)}(K, R \iHom_K(C, K)). \]
\end{lemma}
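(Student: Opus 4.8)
The statement to prove is a formal duality between solid group homology and cohomology, valid both for $K_\sol[G]$-modules and for $\n{D}^{(h^+)}(G,K)$-modules when $C$ is derived $\bb{G}^{(h^+)}$-analytic. The plan is to derive both equalities purely formally from the tensor-Hom adjunction, with no recourse to the geometry of the $p$-adic Lie group; the content of the earlier sections only enters through the fact that $K$ is a compact $K_\sol[G]$-module (Theorem \ref{LazardSerre}) and through Theorem \ref{TheoMain}, but in fact even these are not strictly needed for the bare adjunction identity.

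First I would treat the $K_\sol[G]$ case. The key observation is the base-change/adjunction isomorphism
\[
R\iHom_K(K\otimes^L_{K_\sol[G]} C, K) = R\iHom_{K_\sol[G]}(K, R\iHom_K(C,K)),
\]
which is an instance of the general adjunction $R\iHom_S(M\otimes^L_R N, P) = R\iHom_R(M, R\iHom_S(N,P))$ applied with $R = K_\sol[G]$, $S = K$, $M = K$ (the trivial representation), $N = C$, and $P = K$. Here one uses that $K$ is a $K_\sol[G]$-module via the augmentation, so that $R\iHom_K(C,K)$ inherits a $K_\sol[G]$-action by functoriality, and that the internal $\iHom$ over the analytic rings $K_\sol[G]$ and $K_\sol$ is well-behaved with respect to this adjunction by Theorem \ref{TheoLemmaAnalyticrings}. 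So the first equality is immediate.

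For the second equality, one repeats the same argument verbatim after replacing $K_\sol[G]$ by $\n{D}^{(h^+)}(G,K)$: since $C$ is derived $\bb{G}^{(h^+)}$-analytic, by Theorem \ref{TheoMain} it is naturally a $\n{D}^{(h^+)}(G,K)$-module, and by Corollary \ref{coroAnalyticDh} the internal $\iHom$ and tensor products computed over $\n{D}^{(h^+)}(G,K)$ agree with those computed over $K_\sol[G]$ in the relevant sense, so $R\iHom_K(C,K)$ again carries a $\n{D}^{(h^+)}(G,K)$-module structure via the augmentation $\n{D}^{(h^+)}(G,K)\to K$ (which exists by Theorem \ref{propKohlhaase}, as $K$ is the trivial $\n{D}^{(h^+)}(G,K)$-module). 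Then the adjunction $R\iHom_S(M\otimes^L_R N, P) = R\iHom_R(M, R\iHom_S(N,P))$ with $R = \n{D}^{(h^+)}(G,K)$, $S = K$, $M = K$, $N = C$, $P = K$ gives exactly the displayed identity.

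Since the whole statement is formal, there is no genuine obstacle. The only point requiring a little care is making precise that the $\iHom$-functors over the various analytic rings are compatible with base change and with the tensor-Hom adjunction — this is guaranteed by the fact that $K_\sol$, $K_\sol[G]$ and $\n{D}^{(h^+)}(G,K)$ (via the analytic ring $(K_\sol[G], \n{M}^{(h^+)})$ of the proof of Theorem \ref{TheoMain}) are all analytic rings over $\Z_\sol$ for which the symmetric monoidal structures and internal homs on the derived categories satisfy the expected compatibilities from Theorem \ref{TheoLemmaAnalyticrings}. Once this is noted, both equalities follow by writing down the adjunction. One could also phrase the proof by choosing the bar resolution \eqref{eqbarResoludionsolid} of $K$ (resp.\ its extension \eqref{eqResolutionDistbar}) and checking the identity termwise, but the adjunction argument is cleaner and I would present that.
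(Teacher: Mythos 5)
Your proof is correct and matches the paper's intent: the paper states this lemma as a "formal duality" with no written proof, and the derived tensor--Hom adjunction $R\iHom_K(M\otimes^L_{K_\sol[G]}N,P)\cong R\iHom_{K_\sol[G]}(M,R\iHom_K(N,P))$ (and its analogue over $\n{D}^{(h^+)}(G,K)$) is exactly the intended argument. The only cosmetic remark is that the augmentation $\n{D}^{(h^+)}(G,K)\to K$ needs no appeal to Theorem \ref{propKohlhaase} --- it is just the structure map of the trivial module --- but this does not affect the validity of your argument.
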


Let $K(\chi)=\bigwedge^{d} \f{g}^\vee$ denote the  determinant of the dual  adjoint representation of $G$.  Using Lazard-Serre's Theorem \ref{LazardSerre} one   easily deduces  that  $R\iHom_{K_\sol[G]}(K,K_\sol[G])$, endowed with the right multiplication of $G$, is a character concentrated in degree $-d$.  Moreover, using the de Rham complex of $\bb{G}^{(h^+)}$ one can even prove that 
\begin{equation} \label{Equationchi}
R\iHom_{K_\sol[G]}(K, K_\sol[G])= R\iHom_{\n{D}^{(h^+)}(G,K)}(K,\n{D}^{(h^+)}(G,K)) = K(\chi)[-d]. 
\end{equation}
The following theorem relates cohomology and homology in a more interesting way.

\begin{theorem}
Let $C \in D(K_\sol[G])$. Then there is a natural quasi-isomorphism
\[ R \iHom_{K_\sol[G]}(K, C) = K(\chi)[-d]\otimes^{L}_{K_\sol[G]} C . \]
Furthermore,  if $C$ is derived $\bb{G}^{(h^+)}$-analytic, we have 
\[ R \iHom_{\n{D}^{(h^+)}(G,K)}(K, C) = K(\chi)[-d] \otimes_{\n{D}^{(h^+)}(G,K)}^L  C . \]
\end{theorem}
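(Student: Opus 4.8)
The strategy is to reduce the statement to the computation \eqref{Equationchi} together with the Lazard--Serre resolution (Theorem \ref{LazardSerre}), exactly in the spirit of the ``derived Poincar\'e duality'' arguments one uses for $U(\f{g})$-modules. First I would treat the case $C = K_{\sol}[G]$ (with the regular action): by \eqref{Equationchi} the left-hand side is $K(\chi)[-d]$, while the right-hand side is $K(\chi)[-d] \otimes^L_{K_{\sol}[G]} K_{\sol}[G] = K(\chi)[-d]$, so the two agree. Since the Lazard--Serre resolution gives a bounded complex of finite free $K_{\sol}[G]$-modules $P_\bullet \simeq K$, both sides of the desired identity are, as functors of $C$, given by applying a fixed bounded complex of functors built out of $- \otimes^L_{K_{\sol}[G]} K_{\sol}[G]$-type operations; the content is to compare these two functorial expressions.

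\textbf{Key steps.} (1) Replace the trivial module in $R\iHom_{K_{\sol}[G]}(K, C)$ by the finite free resolution $P_\bullet$ from Theorem \ref{LazardSerre}, so that $R\iHom_{K_{\sol}[G]}(K, C) = R\iHom_{K_{\sol}[G]}(P_\bullet, C)$, a bounded complex with terms $R\iHom_{K_{\sol}[G]}(K_{\sol}[G]^{\binom{d}{i}}, C) = C^{\binom{d}{i}}$. (2) Identify the dual complex $R\iHom_{K_{\sol}[G]}(P_\bullet, K_{\sol}[G])$: by \eqref{Equationchi} it is quasi-isomorphic to $K(\chi)[-d]$, and being a bounded complex of finite free right $K_{\sol}[G]$-modules it is a genuine finite free resolution of $K(\chi)[-d]$ as a right module. (3) Now use the standard ``tensor-Hom'' manipulation for perfect complexes: for a bounded complex of finite free modules $P_\bullet$, one has a natural isomorphism
\[
R\iHom_{K_{\sol}[G]}(P_\bullet, C) \;=\; R\iHom_{K_{\sol}[G]}(P_\bullet, K_{\sol}[G]) \otimes^L_{K_{\sol}[G]} C,
\]
which, combined with step (2), gives $R\iHom_{K_{\sol}[G]}(K, C) = K(\chi)[-d] \otimes^L_{K_{\sol}[G]} C$. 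One must be careful that $P_\bullet$ consists of \emph{left} $K_{\sol}[G]$-modules with a compatible \emph{right} action coming from the diagonal $G$-action on $K_{\sol}[G^{\bullet+1}]$; tracking which action is used where, and checking that the isomorphism above is $G$-equivariant for the remaining action, is the technical heart. (4) For the analytic statement, repeat verbatim with $\n{D}^{(h^+)}(G,K)$ in place of $K_{\sol}[G]$, using that $C$ is a $\n{D}^{(h^+)}(G,K)$-module (Theorem \ref{TheoMain}), that the Lazard--Serre complex extends to a finite free resolution of $K$ over $\n{D}^{(h^+)}(G,K)$ (Theorem \ref{propKohlhaase} together with Corollary \ref{coroDhpluscolimit}), and that \eqref{Equationchi} computes $R\iHom_{\n{D}^{(h^+)}(G,K)}(K, \n{D}^{(h^+)}(G,K)) = K(\chi)[-d]$. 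Since $\n{D}^{(h^+)}(G,K)$-modules form a full subcategory of $K_{\sol}[G]$-modules with the same $R\iHom$ (Corollary \ref{coroAnalyticDh}), the two duality statements are in fact compatible.

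\textbf{Main obstacle.} The delicate point is the tensor-Hom identity in step (3) in the solid/condensed setting: one needs that a bounded complex of \emph{finitely generated free} solid modules is ``perfect'' in the sense that $R\iHom_{K_{\sol}[G]}(P_\bullet, -)$ commutes with the derived tensor product $- \otimes^L_{K_{\sol}[G]} C$, and that the natural evaluation map $R\iHom_{K_{\sol}[G]}(P_\bullet, K_{\sol}[G]) \otimes^L_{K_{\sol}[G]} C \to R\iHom_{K_{\sol}[G]}(P_\bullet, C)$ is an isomorphism. For finite free modules this reduces termwise to the triviality $K_{\sol}[G] \otimes^L_{K_{\sol}[G]} C = C$, so it does go through, but the bookkeeping of the two commuting $G$-actions (diagonal versus regular) and the verification that the resulting quasi-isomorphism respects the $G$-module structure claimed in the statement is where all the care is required; everything else is a formal consequence of \eqref{Equationchi}.
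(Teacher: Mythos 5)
Your proposal is correct and follows essentially the same route as the paper: resolve $K$ by the finite free Lazard--Serre complex, apply the termwise tensor-Hom identity $R\iHom_{K_\sol[G]}(P_\bullet, C) \cong R\iHom_{K_\sol[G]}(P_\bullet, K_\sol[G]) \otimes^L_{K_\sol[G]} C$ (which the paper packages as the equivariant commutative diagram \eqref{commutdiag1}), and conclude via \eqref{Equationchi}. The point you single out as the technical heart --- tracking the two commuting $G$-actions so that the identification is equivariant for the remaining action --- is exactly what the paper's diagram is set up to handle.
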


\begin{proof}
First observe that, given any $G$-equivariant map $\alpha: {K_\sol[G]}_{ \star_1} \to {K_\sol[G]}_{\star_1}$, one has a commutative diagram
\begin{equation} \label{commutdiag1}
\begin{tikzcd}
R\iHom_{K_\sol[G]}(K_{\sol}[G]_{\star_1},C )  \ar[d,"\alpha^*"] & \iHom_{K_\sol[G]}(K_{\sol}[G]_{\star_1},K_{\sol}[G])\otimes_{K_\sol[G]}^{L} C  \ar[d, "\alpha^*\otimes 1"]  \ar[l, "\sim"'] \\
R\iHom_{K_\sol[G]}(K_{\sol}[G]_{\star_1},C ) &\iHom_{K_\sol[G]}(K_{\sol}[G]_{\star_1},K_{\sol}[G] )\otimes_{K_\sol[G]}^{L} C \ar[l, "\sim"'], \\
\end{tikzcd}
\end{equation}
where the rows are $G$-equivariant. The $G$-actions on the different terms of \eqref{commutdiag1} are given as follows: on $R\iHom_{K_\sol[G]}(K_{\sol}[G]_{\star_1},C )$ the group $G$ acts via the right regular action $\star_2$ on $K_\sol[G]$. Concerning the term $\iHom_{K_\sol[G]}(K_{\sol}[G]_{\star_1},K_{\sol}[G])\otimes_{K_\sol[G]}^{L} C$ we have: the $\Hom$ space $\iHom_{K_\sol[G]}(K_{\sol}[G]_{\star_1},K_{\sol}[G])$ is taken with respect to the left regular action $\star_1$ on each term. It is endowed with with an action of $G \times G$ given by the right regular actions. The tensor tensor product $\otimes_{K_\sol[G]}^L C$ is taken with respect to the action given by $\{ 1\} \times G$ (i.e. the $\star_2$-action of $G$ on the target of the $\Hom$ space). Finally, the $G$-action on the whole term is the one induced by the $\star_2$-action of $G$ on the source of the $\Hom$ space.

Notice that there is a natural identification of right $K_\sol[G]$-modules
\[
R\iHom_{K_\sol[G]}(K_\sol[G]_{\star_1}, K_\sol[G])= K_{\sol}[G]_{\star_2}. \]
Recall that, by Theorem \ref{LazardSerre}, we have a projective resolution $K \simeq [K_\sol[G]_{\star_1}^{d \choose \bullet}; \alpha_\bullet]$. We obtain
\begin{eqnarray*}
R \iHom_{K_\sol[G]}(K, C) &=& R \iHom_{K_\sol[G]}([K_\sol[G]_{\star_1}^{d \choose \bullet}; \alpha_\bullet], C) \\
&=& [\iHom_{K_\sol[G]}(K_\sol[G]_{\star_1}, K_\sol[G])^{d \choose \bullet}; \alpha_\bullet^* \otimes 1] \otimes_{K_\sol[G]}^L C \\
&=& R\iHom_{K_\sol[G]}([K_\sol[G]_{\star_1}^{d \choose \bullet}; \alpha_\bullet], K_\sol[G]) \otimes_{K_\sol[G]}^L C \\
&=& R \iHom_{K_\sol[G]}(K, K_\sol[G]) \otimes_{K_\sol[G]}^L C \\
& = & K(\chi)[-d] \otimes^L_{K_\sol[G]} C,
\end{eqnarray*}
where the second equality follows  by \eqref{commutdiag1},  and the last one by \eqref{Equationchi}. The statement for $\bb{G}^{(h^+)}$-analytic cohomology is proven in the same way.
\end{proof}


\bibliographystyle{alpha}
\bibliography{biblio}

\end{document}